% !TEX TS-program = pdflatex
% !TEX encoding = UTF-8 Unicode

\documentclass[11pt]{amsart}

\usepackage[T1]{fontenc}
\usepackage[utf8]{inputenc}

\usepackage[english]{babel}
\usepackage{geometry}
\geometry{a4paper, total={6in,9.5in}}
\usepackage{graphicx}

\usepackage{xcolor}
\usepackage{amsmath,amssymb,amsthm,amsfonts,stmaryrd,mathtools}
\usepackage{bbm}
\usepackage{eurosym}
\usepackage{array,dcolumn,mathdots,multirow}
\usepackage{rotating}
\usepackage{fancyvrb}
\usepackage[activate={true,nocompatibility},spacing,kerning]{microtype}

\graphicspath{%
{./Images/}
}

\definecolor{darkgreen}{rgb}{0,0.5,0}
\definecolor{darkred}{rgb}{0.5,0,0}
\definecolor{darkblue}{rgb}{0.004,0.396,0.741}
%\definecolor{gcolor}{cmyk}{1.0,0.44,0.0,0.0}	% Pantone 300 C in CMYK
%\definecolor{gcolor}{rgb}{0.0,0.435,0.671}	% Pantone 300 C in RGB
\definecolor{gcolor}{rgb}{0.004,0.396,0.741}	% Pantone 300 C in RGB
\definecolor{warning}{rgb}{1.0,0.6,0.0}	% MATLAB WarningD(\sigma_1,\ldots,\sigma_n)
\definecolor{gray}{rgb}{0.6,0.6,0.6}

\usepackage[pdftex,
            colorlinks,
            hyperfootnotes=false,
            pdffitwindow=true,
            plainpages=false,
            pdfpagelabels=true,
            pdfpagemode=UseOutlines,
            pdfpagelayout=TwoPageRight,
            pdftitle={Asymptotic Expansions of the Limit Laws of Gaussian and Laguerre (Wishart) Ensembles at the Soft Edge},
            pdfauthor={Folkmar Bornemann, TU Munich},
%            pagebackref,
            hyperindex,
            setpagesize=false,
            filecolor=purple,
            urlcolor=darkred,
            citecolor=gcolor,
            linkcolor=gcolor]{hyperref}
\usepackage{remreset}
\usepackage{bm}

\usepackage{listings}
\definecolor{mygreen}{rgb}{0,0.6,0}
\definecolor{mygray}{rgb}{0.5,0.5,0.5}
\definecolor{lightgray}{rgb}{0.925,0.925,0.925}
\definecolor{mymauve}{rgb}{0.58,0,0.82}
\definecolor{gcolor}{rgb}{0.004,0.396,0.741}
\lstset{ %
  backgroundcolor=\color{lightgray}, 
  basicstyle=\ttfamily\small, 
  breakatwhitespace=false,         
  breaklines=true,                 
  captionpos=b,                    
  commentstyle=\color{mygreen},    
  columns=fixed,
  keepspaces=true,
  extendedchars=true,              
  keywordstyle=\color{gcolor},       
  numbers=none,                    
  numbersep=5pt,                   
  numberstyle=\tiny\color{mygray}, 
  rulecolor=\color{black},         
  showspaces=false,                
  showstringspaces=false,           
  showtabs=false,                  
  stepnumber=1,                    
  stringstyle=\color{mymauve}, 
  escapeinside={\{\#}{\}},    
  tabsize=2                   
}
\lstset{language=Matlab,
morekeywords={cell,varargin,doc,ones,not,log1p,inf,sinc,OperatorTerm,PrintCorrectDigits}}

\usepackage{caption}
\captionsetup[figure]{font=footnotesize,labelfont=footnotesize}
\captionsetup[table]{font=footnotesize,labelfont=footnotesize}

\theoremstyle{plain}

\newtheorem{theorem}{Theorem}[section]
\newtheorem{lemma}{Lemma}[section]
\newtheorem{corollary}{Corollary}[section]
\theoremstyle{definition}

\theoremstyle{remark}
\newtheorem{remark}{Remark}[section]

\DeclareMathOperator{\arcosh}{arcosh}

\numberwithin{equation}{section}

%\usepackage{anyfontsize}
%\usepackage[sc]{mathpazo}
%\usepackage[scaled=.9125]{helvet}
%\usepackage{gillsans}
%\usepackage{avant}
%\renewcommand{\ttdefault}{lmtt}

% \linespread{1.2}

\input{macros.sty}
\VerbatimFootnotes
\makeindex
\frenchspacing
\flushbottom

\begin{document}

\title[Asymptotic Expansions of the Limit Laws at the Soft Edge]{Asymptotic Expansions of the Limit Laws of Gaussian and Laguerre (Wishart) Ensembles at the Soft Edge}
\author{Folkmar Bornemann}
\address{Department of Mathematics, Technical University of Munich, 80290 Munich, Germany}
\email{bornemann@tum.de}

%\date{\today}

\begin{abstract}
The large-matrix limit laws of the rescaled largest eigenvalue of the orthogonal, unitary, and symplectic $n$-dimensional Gaussian ensembles -- and of the corresponding Laguerre ensembles (Wishart distributions) for various regimes of the parameter $\alpha$ (degrees of freedom~$p$) -- are known to be the Tracy--Widom distributions $F_\beta$ ($\beta=1,2,4$). We establish (paying particular attention to large or small ratios~$p/n$) that, with careful choices of the rescaling constants and of the expansion parameter $h$,  the limit laws embed into asymptotic expansions in powers of $h$, where $h \asymp n^{-2/3}$ resp. $h \asymp (n\,\wedge\,p)^{-2/3}$. We find explicit analytic expressions of the first few expansion terms as linear combinations of higher-order derivatives of the limit law $F_\beta$ with rational polynomial coefficients. The parametrizations are fine-tuned so that the expansion coefficients in the Gaussian cases are, for given~$n$,  the limits $p\to\infty$ of those of the Laguerre cases. Whereas the results for $\beta=2$ are presented with proof, the discussion of the cases $\beta=1,4$ is based on some hypotheses, focusing on the algebraic aspects of actually computing the polynomial coefficients. For the purposes of illustration and validation, the various results are checked against simulation data with large sample sizes.
\end{abstract}
\keywords{soft-edge scaling limit of random matrices, Gaussian and Laguerre ensembles, Wishart matrices, asymptotic expansions, Hermite and Laguerre polynomials, Weber's parabolic cylinder functions, Whittaker's confluent hypergeometric functions, turning point analysis, connection formulas, Painlevé II}
\subjclass[2020]{60B20, 15B52, 62E20, 41A60, 33C15, 33C45, 33E17, 34E20}

\maketitle

\tableofcontents

\section{Introduction}\label{sect:intro}

\subsection{Background and notation}\label{subsect:background} We start with recalling some basic notions from random matrix theory (cf., e.g., the  monographs \cite{MR2760897,MR2641363,Mehta04}), thereby defining the normalizations and the notation used in this paper. 
For $\F$ being one of the (skew) fields of reals  $\R$, complex numbers $\C$ or quaternions $\HH$, we draw 
random matrices $Y\in \F^{n,p}$ with iid standard normals (in the quaternionic case, however, rescaled to variance two for being compatible with the interrelations studied in \cite{MR1842786}, see Sections~\ref{subsect:introAlgebraic} and \ref{subsect:inter}). The different fields are conveniently encoded by their real dimensions
\[
\beta:= \dim_{\R} \F \in \{1,2,4\}.
\] 
Then, if $p=n$, the random $n\times n$ matrices $X = (Y+Y^*)/2$
define the Gaussian orthogonal ensemble (GOE, $\F=\R$), the Gaussian unitary ensemble (GUE, $\F=\C$), and the Gaussian symplectic ensemble (GSE, $\F=\HH$), addressing the ensembles  by the name of their symmetry groups. In multivariate statistical analysis, one considers the random $n\times n$ matrices $X = Y Y^*$
which define the standard $n$-variate Wishart distribution of $p$ degrees of freedom (numbers of observations) with identity covariance matrix (the `null' case).

In this paper, we study the induced probability distribution $\prob(\lambda_{\max} \leq x)$ of the largest eigenvalue $\lambda_{\max}$ of the random matrix $X$ (the spectrum is real in all cases), which we denote~by
\begin{equation}\label{eq:Ebeta}
E_\beta(n;x),\quad E_\beta(n,p; x).
\end{equation}
Since there is no upper bound on the values of $\lambda_{\max}$, the largest eigenvalue marks what is called the soft edge of the spectrum. In particular, we will study the distributions \eqref{eq:Ebeta} in the large matrix limit $n\to \infty$ -- noting that, when addressing the real Wishart case, comparatively small (or, because of the symmetry \eqref{eq:WishartSymmetry} below,  large) values of the ratio  $p/n$ are of particular interest in modern statistics, cf. \cite[p.~296]{MR1863961}. 

From linear algebra, it is known that, for $p\geq n$, the eigenvalues of $Y^*Y$ are exactly the eigenvalues of $YY^*$ padded by $p-n$ additional zeros. Therefore, if we discard  those eigenvalues that are zero for algebraic reasons,  the eigenvalue distribution of  Wishart matrices is invariant under the permutation $p \leftrightarrow n$. In particular, there is the symmetry
\begin{equation}\label{eq:WishartSymmetry}
E_\beta(n,p; x) = E_\beta(p,n;x).
\end{equation}
Because of this symmetry, it does not really matter that,  in the statistics literature, the letters $n$ and $p$ are often used with their roles reversed. 

The joint probability densities of the (unordered) eigenvalues of $X$ are known to be\footnote{The normalization constants (`partition functions') are spelt out in \eqref{eq:Z}.}
\begin{equation}\label{eq:pdfBetaEnsemble}
p(x_1,\ldots,x_n) \propto |\Delta(x)|^\beta \prod_{j=1}^n w_\beta(x_j) \qquad (x \in \R^n),
\end{equation}
where $\Delta(x)$ denotes the Vandermonde determinant 
\[
\Delta(x) := \prod_{j>k} (x_j-x_k),
\]
while the weight functions $w_\beta(\xi)$ are given, for the Gaussian ensembles, by
\begin{equation}\label{eq:GaussianWeight}
w_\beta(\xi) = e^{-\gamma_\beta \xi^2} \qquad (\xi\in\R)
\end{equation}
and, for the Wishart distribution with  $p\geq n$ (i.e., when breaking the symmetry by taking the case with a.s. no zero eigenvalues), by\footnote{We use $\chi_A$ to denote the indicator function of a set $A$.}
\begin{equation}\label{eq:LaguerreWeight}
w_\beta(\xi) = \xi^\alpha e^{-\gamma_\beta \xi}\chi_{\{\xi>0\}}\qquad (\xi\in\R).
\end{equation}
Here, the parameters are\footnote{Without rescaling the variance in the quaternionic (symplectic) case, one would get $\gamma_\beta = \beta/2$ in all three cases. We deviate from that common choice to facilitate a simple formula for interrelating the orthogonal and symplectic ensembles; see Sections~\ref{subsect:introAlgebraic} and~\ref{subsect:inter}. 
As a consequence, the orthogonal and symplectic cases share exactly the same polynomial expansion coefficients; see Eq.~\eqref{eq:FbetaLinForm} -- illustrating a general duality between $\beta$ and $4/\beta$, which is revealed in \cite{MR2522665,MR2461989}.}
\begin{equation}\label{eq:alphabetagamma}
\alpha = \beta(p-n+1)/2-1, \qquad
\gamma_\beta = \begin{cases}
1/2 &\quad \beta = 1,\\*[1mm]
1 & \quad \beta = 2,4.
\end{cases}
\end{equation}
The Laguerre weight function \eqref{eq:LaguerreWeight} renders \eqref{eq:pdfBetaEnsemble} a probability density on $\R^n$ for any real number $\alpha>-1$. The such obtained $n$-level distributions define the Laguerre orthogonal ensemble (LOE, $\beta=1$), Laguerre unitary ensemble (LUE, $\beta=2$) and Laguerre symplectic ensemble (LSE, $\beta=4$). Dumitriu and Edelman \cite{MR1936554} constructed tridiagonal random matrix models that exhibit accordingly distributed eigenvalues (these, and the corresponding models for the Gaussian ensembles, are used to sample efficiently from the distributions).

The induced probability distributions $\prob(\lambda_{\max} \leq x)$ of the largest eigenvalue $\lambda_{\max}$ of a Laguerre $\beta$-ensemble will be denoted by $E_{\LbE,\alpha}(n;x)$.
This way, 
\begin{equation}\label{eq:EbetaLnp}
E_\beta(n,p; x) := E_{\LbE,\alpha}(n;x)\big|_{\alpha=\beta(p-n+1)/2-1}
\end{equation}
extends the definition of the distribution $E_\beta(n,p; x)$ from integer $p$ to any real $p>n-1$. In the course of the paper, we will extend the definition of $E_2(n,p;x)$ even further to any real values $n,p>0$ while upholding the symmetry in $n$ and $p$.

\subsection{Prior work} 

In the large matrix limit $n\to\infty$, with  carefully chosen rescaling constants $\mu_{n'}$, $\sigma_{n'}$, $\mu_{n',p'}$, $\sigma_{n',p'}$ (where $n'$ and $p'$ denote certain modifications of $n$ and $p$ that are necessary in the cases $\beta=1,4$), the following limit law is known to hold uniformly in~$t$ (with certain restrictions on $p$):\footnote{This result was established for the GUE, and the LUE with fixed  parameter $\alpha$ (i.e., $p/n \to 1$),  by Forrester \cite{MR1236195} (with Tracy and Widom \cite{MR1257246} adding the representation of $F_2$ in terms of Painlevé II); for the GOE, GSE and LOE, LSE with fixed  $\alpha$ by Tracy and Widom \cite{MR1385083}, see also Forrester \cite{MR2275509}; for the LUE with $p/n \to \gamma \in (0,\infty)$ by Johansson \cite{MR1737991}, under the same condition on $p$ for the LOE by Johnstone \cite{MR1863961}; for the LOE with $n,p\to\infty$ such that $p/n \to 0,\infty$ by El Karoui \cite{karoui2003largest}. The uniformity in $t$ follows from \cite[Lemma~2.11]{MR1652247}.}
\begin{align}\label{eq:TWlimit}
\lim_{n\to\infty} E_\beta(n;\mu_{n'} + \sigma_{n'} t) = \lim_{n\to\infty} E_\beta(n,p;\mu_{n',p'} + \sigma_{n',p'} t) = F_\beta(t), 
\end{align}
where the functions $F_\beta$ denote the Tracy–Widom distributions\footnote{Because of the scaling of the symplectic case, which leads to $\gamma_4 = 1$ in \eqref{eq:alphabetagamma}, the original $\beta=4$ Tracy–Widom distribution is $F_4(\sqrt{2} s)$ in the notation used here (cf. \cite[Eq.~(54)]{MR1385083}, \cite[Eq.~(2.10)]{MR2895091} and \cite[Eq.~(35)]{MR2165698}).} for $\beta=1,2,4$. 
Exercising even more care in choosing proper rescaling constants (that is, modifications $n',p'$), Johnstone and Ma~\cite{MR3025686}
established a convergence rate of $O(n^{-2/3})$ for the GUE of any dimension and the GOE of even dimension, whereas El Karoui \cite{MR2294977} and Ma \cite{MR2888709}
established a rate of $O((n\wedge p)^{-2/3})$ for the LUE and LOE when $p/n \to \gamma \in (0,\infty)$.\footnote{We use the infix notation $n\wedge p := \min(n,p)$, $n\vee p:=\max(n,p)$.} More details about the genesis of the work of El Karoui,  Johnstone, and Ma on convergence rates and their mutual influence can be found in \cite[§3.2.2]{Taljan}.  

The study of asymptotic expansions (which, in reference to expansions of the central limit theorem, are sometimes called Edgeworth expansions in the probabilistic setting) was initiated for the GUE, and the LUE with a fixed parameter $\alpha$, by Choup \cite{MR2233711,MR2406805}: he established the functional form of the first finite-size correction in terms of Painlevé II. His subsequent study~\cite{MR2492622} of the GOE led to an inconclusive result; see the discussion in \cite[p.~1963]{MR3025686}. Finally, for the LUE with a parameter $\alpha$ proportional to $n$ (which implies $p/n\to \gamma \in (0,\infty)$), Forrester and Trinh \cite{MR3802426}  established  the functional form of the 
first finite-size correction in operator theoretic terms (and, alternatively, in terms of a perturbation system of the $\sigma$-form of Painlevé~II), which is amenable for evaluation by the numerical methods that we developed in \cite{MR2895091, MR2600548,MR3647807}.

\subsection{The new findings of the paper}

In the context of the increasing subsequence problem in random permutations, we studied, in \cite{arxiv:2301.02022} for $\beta=2$ and in \cite{2306.03798} for $\beta=1,4$,  asymptotic expansions of the related Forrester--Borodin hard-to-soft edge transition law~\cite{MR1986402}. We developed a method that allows us to compute the functional form of as many terms of an asymptotic expansion as we care to explicitly look at -- thereby unfolding  a particular layer of additional integrability:\footnote{A magnificent definition of `integrability', attributed to Flaschka, is given by McKean in \cite[Footnote~2]{MR1953973}. See  Footnote~\ref{fn:integrable} for more context.}  the expansion terms are linear combinations of the higher-order derivatives of the limit law $F_\beta$, with coefficients in the space $\Q[t]$ of rational polynomials.

Using this methodology, in the Gaussian cases,  the proper choices of the rescaling parameters and  the expansion parameter are\footnote{The factor $1/4$ fixes the scale of  $h_n$ so that $h_n = \lim_{p\to\infty} h_{n,p}$, where $h_{n,p}$ is the expansion parameter in the Laguerre cases -- whose scale is being fixed according to \eqref{eq:hscale}.} 
\[
\mu_n = \sqrt{2n},\quad  \sigma_n = 2^{-1/2} n^{-1/6}, \quad h_n = 4^{-1} n^{-2/3}.
\]
With the modifications
\begin{equation}\label{eq:nprimeIntro}
n' := 
\begin{cases}
n - \frac12, &\quad \beta = 1,\\*[1mm]
n,           &\quad \beta = 2,\\*[1mm]
2n+ \frac12, &\quad \beta = 4,
\end{cases}
\end{equation}
we will present  asymptotic expansions of the form 
\begin{subequations}\label{eq:GbetaEexpansion}
\begin{equation}
E_\beta(n;\mu_{n'}+\sigma_{n'} t) = F_\beta(t) + \sum_{j=1}^m E_{\beta,j}(t) h_{n'}^j + O(h_{n'}^{m+1}).
\end{equation}
We observe that\footnote{We write $h  \asymp n^{-2/3}$ to denote that there are absolute positive  constants $c, C$ with $c n^{-2/3} \leq h \leq C n^{-2/3}$.} $h_{n'} \asymp n^{-2/3}$.
Moreover, also the expansion terms in \eqref{eq:GbetaEexpansion} exhibit  that additional layer of integrability in taking the multilinear form
\begin{equation}
E_{\beta,j}(t) = \sum_{k=1}^{2j} p_{\beta,jk}(t) F^{(k)}_\beta(t), \quad p_{\beta,jk} \in \Q[t];
\end{equation}
\end{subequations}
and we will provide concrete formulas for the polynomials $p_{\beta,jk}$, up to $m=3$. 

Likewise, in the Laguerre cases, using the Wishart parameter $p$ (`degrees of freedom'),  the proper choices of the rescaling parameters 
and of the expansion parameter  are
\begin{gather*}
\mu_{n,p} = \big(\sqrt{n}+\sqrt{p}\,\big)^2,\quad \sigma_{n,p}= \big(\sqrt{n}+\sqrt{p}\,\big) \bigg(\frac{1}{\sqrt{n}} + \frac{1}{\sqrt{p}}\bigg)^{1/3},\quad 
h_{n,p} = \frac{1}{4} \bigg(\frac{1}{\sqrt{n}} + \frac{1}{\sqrt{p}}\bigg)^{4/3}.
\end{gather*}
To encode the ratio $p/n$, we introduce the additional quantity
\[
%\tau_{n,p} = \frac{\sigma_{n,p}}{h_{n,p}\mu_{n,p}} = 4\left(\Big(\frac{n}{p}\Big)^{1/4}+\Big(\frac{p}{n}\Big)^{1/4}\right)^{-2},
\tau_{n,p} = \frac{\sigma_{n,p}}{h_{n,p}\,\mu_{n,p}} = \frac{4}{\big(\sqrt{n}+\sqrt{p}\,\big) \left(\dfrac{1}{\sqrt{n}} + \dfrac{1}{\sqrt{p}}\right)},
\]
which will enter the rational polynomial coefficients of the expansion terms as an additional indeterminate (see \eqref{eq:rationalInTau}) and satisfies
\[
0 < \tau_{n,p} \leq 1, \qquad \lim_{p/n\to 0,\,\infty} \tau_{n,p} = 0.
\] 
Note that all the parameters take a form that is symmetric in $n$ and $p$. The arbitrary scales of $h_{n,p}$ and $\tau_{n,p}$ have been fixed so that
\begin{equation}\label{eq:hscale}
 \sigma_{n,p} = \tau_{n,p} h_{n,p} \mu_{n,p}, \qquad \tau_{n,n}=1.
\end{equation}
We will present asymptotic expansions of the form 
\begin{subequations}\label{eq:LbetaEexpansion}
\begin{equation}
E_\beta(n,p;\mu_{n',p'}+\sigma_{n',p'} t) = F_\beta(t) + \sum_{j=1}^m E_{\beta,j;\tau_{n',p'}}(t) h_{n',p'}^j + O(h_{n',p'}^{m+1}),
\end{equation}
where $n',p'$ are the $\beta$-dependent modifications of $n,p$ as in \eqref{eq:nprimeIntro}. We observe that 
\[
h_{n',p'} \asymp (n\wedge p)^{-2/3}.
\]
Once again, the expansion terms in \eqref{eq:LbetaEexpansion} take the multilinear form
\begin{equation}\label{eq:rationalInTau}
E_{\beta,j;\tau}(t) = \sum_{k=1}^{2j} p_{\beta,jk}(t,\tau) F^{(k)}_\beta(t), \quad p_{\beta,jk} \in \Q[t,\tau];
\end{equation}
\end{subequations}
and we will provide concrete formulas of the polynomials $p_{\beta,jk}$, up to $m=3$. 

As we will see, the expansion terms of the
Gaussian cases are related to those of the Laguerre cases by the simple equations
\[
E_{\beta,j}(t) = E_{\beta,j;\tau}(t)\big|_{\tau=0}.
\]
Because of $\lim_{p\to\infty} h_{n',p'}=h_{n'}$ and $\lim_{p\to\infty}\tau_{n',p'} =0$, this equality of 
the expansion terms is consistent with the  Laguerre-to-Gaussian transition law
\[
\lim_{p\to\infty} E_\beta\big(n,p;\mu_{n',p'} + \sigma_{n',p'} t\big)  = E_\beta\big(n;\mu_{n'} + \sigma_{n'} t\big),
\]
which we establish in Appendix \ref{sect:Laguerre2Gauss}.

Whereas in Part I, for the unitary cases ($\beta=2$), all these results  are presented with  proof (spelling out all the necessary uniform dependencies and tail estimates of the error terms), in Part II, the discussion of the orthogonal and symplectic cases ($\beta=1,4$)  is based on some hypotheses, focusing on the algebraic aspects of actually computing the polynomials~$p_{\beta,jk}$. 

For the purposes of illustration (Part I) and validation (Part II), each of the expansions~is checked, with close matches, against simulation data with a sample size of a thousand million.

\subsection{Proof strategy for unitary ensembles}\label{subsect:strategy}

In the case $\beta=2$, the eigenvalues of $X$ are  known to constitute a determinantal point process with correlation kernel\footnote{In this Section~\ref{subsect:strategy}, we use a simplified notation  which resembles the Gaussian case, suppressing the dependence on $p$, or $\tau$ for that matter, in the Laguerre case.}
\[
K_n(x,y) = \sum_{k=0}^{n-1} \phi_k(x) \phi_k(y) = c_n \frac{\phi_n(x)\phi_{n-1}(y) - \phi_{n-1}(x)\phi_n(y)}{x-y},
\]
where the $\phi_k$ form the $L^2$-orthonormal system of `wave functions'  belonging to the Hermite polynomials (for the Gaussian ensembles) and Laguerre polynomials (for the Laguerre ensembles and the Wishart distribution); the second equality  follows from the corresponding Christoffel--Darboux formulas. The joint probability distribution of the eigenvalues can then be expressed as
\[
p(x_1,\ldots,x_n) = \det_{j,k=1}^n K_n(x_j,x_k)
\]
and the distribution of the largest eigenvalue is  given by
the Fredholm determinant
\begin{equation}\label{eq:En}
E_2(n;x) = \det\big(I-K_n\big)\big|_{L^2(x,\infty)}.
\end{equation}
Since the eigenvalues of a unitary ensemble of dimension $n$ are, in the large matrix limit, a.s. uniformly approximated by the zeros of 
the wave function $\phi_n$ (see \cite{MR2320657}), the region of the largest eigenvalue marks the transition between oscillatory behavior (`spectrum') and exponential decay (`no spectrum') of $\phi_n$.
Therefore, the rescaling $x = \mu_n + \sigma_n t$ is constructed in such a way that this transition happens in the new variable at about $t \approx 0$ with a fixed scale. Because  the wave functions satisfy certain linear differential equations of second order, they can be matched to the one of the Airy function $\Ai(t)$, which is the standard model of a function exhibiting such a transition. This is spelt out in Part III of this paper, where we will go to great length finding asymptotic formulas of the form $(n_+:=n+1/2)$
\begin{equation}\label{eq:phiExpansion}
c h_{n_+}^\gamma\phi_n\big(\mu_{n_+} + \sigma_{n_+} s\big) = \Ai(s) \sum_{k=0}^{m} p_k(s) h_{n_+}^{k} + \Ai'(s) \sum_{k=0}^{m} q_k(s) h_{n_+}^{k} + h_{n_+}^{m+1} O(e^{-s})
\end{equation}
as $n\to\infty$, uniformly when $s$ is bounded from below; in the Laguerre case, we can arrange for the expansion to be uniformly valid for the Laguerre parameter $\alpha$ bounded proportional to $n$. Here,
\[
h_{n_+} \asymp n^{-2/3}
\] 
is a carefully chosen expansion parameter which prevents the expansion from being polluted\footnote{See Remarks \ref{rem:Her_m01} and \ref{rem:Lag_m01} for examples of such pollutions if one chooses to expand in powers of $n^{-1/3}$.} by  terms of the order of odd positive powers of $n^{-1/3}$, and the $p_k, q_k$ turn out to be certain polynomials starting with $p_0(s)=1$ and $q_0(s)=0$, $c>0$ and $\gamma$ are certain constants. The catch is that we are not just interested in the existence and validity of such an expansion but that we need to provide a method of actually computing the coefficients $p_k, q_k$ by, say, using a computer algebra system. 
 
Plugging the expansions \eqref{eq:phiExpansion} for $\phi_n(x)$ and $\phi_{n-1}(x)$ etc. into the correlation kernel, while re-expanding them at $x = \mu_{n} + \sigma_{n} \xi$ etc., gives, by the symmetry of $n$ being the midpoint of $n_+$ and $n_-:=(n-1)_+ = n-1/2$, a kernel expansion of the form
\[
\sigma_n  K_n(\mu_n + \sigma_n \xi,\mu_n + \sigma_n \eta) 
 = K_{\Ai}(\xi,\eta) + \sum_{j=1}^m K_j(\xi,\eta) h_n^j + h_n^{m+1}\cdot O\big(e^{-(\xi+\eta)}\big)
 \]
such that:
\begin{itemize}\itemsep=3pt
\item it is uniformly valid as $h_n\to 0^+$ when $\xi,\eta$ are bounded from below,
\item  the leading order is given by the Airy kernel (for the integral formula, see \cite[Eq.~(4.5)]{MR1257246})
\begin{equation}\label{eq:AiryKernel}
K_{\Ai}(x,y) := \frac{\Ai(x)\Ai'(y)-\Ai'(x)\Ai(y)}{x-y} = \int_0^\infty \Ai(x+t)\Ai(y+t)\,dt,
\end{equation}
\item the expansion kernels $K_j$ are  finite rank kernels of the form
\begin{equation}\label{eq:finiteRank}
K_j(x,y) = \sum_{\kappa,\lambda\in\{0,1\}}  p_{j,\kappa\lambda}(x,y) \Ai^{(\kappa)}(x)\Ai^{(\lambda)}(y),
\end{equation}
with certain polynomials $p_{j,\kappa\lambda}(x,y)$. This important structure follows from some intricate concrete cancellation and divisibility properties of the coefficients in \eqref{eq:phiExpansion}.
\end{itemize}

\begin{remark}\label{rem:mstar}
Because  our method of proof is based on checking those intricate cancellation and divisibility properties of the polynomial coefficients in \eqref{eq:phiExpansion} by  explicitly inspecting all the cases $m=1,\ldots, m_*$, up to some  $m_*$ (we have to stop somewhere and did so at $m_*=10$), we have to impose  a bound $m_*$ on the number of expansion terms in this paper. In the related case \cite[Theorem~3.1]{arxiv:2301.02022} of the hard-to-soft edge transition for $\beta=2$, Yao and Zhang \cite{arXiv:2309.06733} were able to remove such a restriction by a Riemann--Hilbert analysis which infers the necessary properties of the polynomials from a certain algebraic structure of the Airy function and its derivatives. We expect their analysis to carry over to our present study so that the restriction on $m$ can be removed here, too.
However, other restrictions on $m$, relating to establishing the functional form of the expansion terms $E_{\beta,j}$, would still apply. Throughout the paper, we will use the same bound $m\leq m_*$ for those restrictions, too.
\end{remark}

Now, since the expansion of the wave functions, and therefore also of the kernel, can be differentiated term-wise, lifting the kernel expansion to the Fredholm determinant \eqref{eq:En} is admissible by \cite[Theorem~2.1]{arxiv:2301.02022} and gives
\begin{equation}\label{eq:introE2expan}
\begin{aligned}
E_2(n;\mu_n + \sigma_n t) &= F_2(t) + \sum_{j=1}^m E_{2,j}(t) h_n^j + h_n^{m+1} O(e^{-2t}),\\*[1mm]
F_2(t) &=\det\big(I-K_{\Ai}\big)\big|_{L^2(t,\infty)},
\end{aligned}
\end{equation}
which is uniformly valid as $h_n\to 0^+$ when $t$ is bounded from below. Generally, the functions $E_{2,j}(t)/F_2(t)$ can be evaluated as highly nonlinear expressions in terms of operator traces involving the resolvent of $K_{\Ai}$ and the expansion kernels $K_1,\ldots,K_j$ (see \cite[§2.3]{arxiv:2301.02022} and \eqref{eq:G1G3}). However, because of the particular finite-rank structure \eqref{eq:finiteRank} of the expansion kernels,  first trading  powers of $x$, $y$ for higher derivatives of the Airy function (see \eqref{eq:airyrule}), and then applying the algorithm of \cite[Appendix~B]{arxiv:2301.02022} (which generalizes the work of Shinault and Tracy~\cite{MR2787973}),
yields the multilinear structure
\[
E_{2,j}(t) = \sum_{k=1}^{2j} p_{2,jk}(t) F_2^{(k)}(t),
\]
with certain concretely computable polynomials $p_{jk}$ -- see Appendix~\ref{app:airykernel} for the general mechanics of this type of integrability, and Theorems~\ref{thm:softGUE} and  \ref{thm:softLUE} for the concrete results.

\subsection{The algebraic method for  orthogonal and symplectic ensembles}\label{subsect:introAlgebraic}

Here, in analogy to our study of the hard-to-soft edge transition in \cite{2306.03798}, we start with the Forrester--Rains interrelations \cite{MR1842786}, symbolically written as
\[
\UE_{n} = \even(\OOE_{n} \cup \OOE_{n+1}), \quad \SE_n = \even(\OOE_{2n+1}),
\]
between the unitary, orthogonal, and symplectic ensembles.
For the Laguerre ensembles it is understood that the Wishart parameters $p$ are connected in just the same way as the dimensions  $n$. In Section~\ref{subsect:inter}, we  show that these interrelations can be recast as the formulas\footnote{As in Section~\ref{subsect:strategy}, we use a simplified notation  which resembles the Gaussian cases, suppressing the dependence on $p$, or $\tau$ for that matter, in the Laguerre cases.}
\begin{equation}\label{eq:introEpm}
\begin{gathered}
E_1(n;x) = E_+(n;x),\quad E_2(n;x) = \frac{1}{2} \sum_{\nu=0,1} E_+(n+\nu;x)E_-(n+1-\nu;x),\\*[2mm]
 \quad E_4(n;x) = \frac{1}{2} \big(E_+(2n+1;x) + E_-(2n+1;x)\big),
\end{gathered}
\end{equation}
where $E_-(n;x)$ is a certain function related to the orthogonal ensemble of dimension $n$, involving the second largest eigenvalue. The form of these formulas closely resembles the corresponding limit interrelations 
\begin{equation}\label{eq:introFpm}
F_1(t) = F_+(t),\quad F_2(t) = F_+(t) F_-(t), \quad F_4(t) = \frac{1}{2} \big(F_+(t) + F_-(t)\big)
\end{equation}
between the various Tracy--Widom distributions, see \cite[Eq.~(53/54)]{MR1385083}. In Section~\ref{subsect:selfconsistent}, we show that the `self-consistent expansion hypothesis', asserting that there are expansions of the particular form
\begin{equation}\label{eq:selfconsistent}
E_\pm(n;\mu_{n_-} + \sigma_{n_-} t) = F_\pm(t) + \sum_{j=1}^m E_{\pm,j}(t) h_{n_-}^j + O(h_{n_-}^{m+1}),
\end{equation}
would consolidate, by the symmetry of $n$ being the midpoint of $n_-$ and $(n+1)_-$, the interrelations in \eqref{eq:introEpm} and \eqref{eq:introFpm} with the already established expansion for $\beta=2$. 

Now, by performing all the additional Taylor expansions that are necessary to express the formulas with reference to a common variable $t$, this gives  (differential) relations between the expansion terms $E_{2,j}$ and the $E_{\pm,k}$ ($k=1,\ldots,j$) -- the first of them being simply
\begin{equation}\label{eq:1steq}
E_{2,1} = F_+ E_{-,1} + E_{+,1}F_-.
\end{equation}
In Section~\ref{subsect:multilinear}, we show that these relations suffice to uniquely determine the $E_{\pm,j}$ under the `linear form hypothesis', asserting that the
expansion terms enjoy a multilinear structure of the form\footnote{In the analogous case of the hard-to-soft edge transition, we could prove that multilinear structure to be true for the first case $j=1$, see \cite[§3.3.3]{2306.03798}.}
\[
E_{\pm,j}(t) = \sum_{k=1}^{2j} p_{\pm,jk}(t) F_\pm^{(k)}(t),
\]
with certain polynomials $p_{\pm,jk} \in \Q[t]$ (resp. $p_{\pm,jk} \in \Q[t,\tau]$ in the Laguerre case). For example, the case $m=1$, as displayed in \eqref{eq:1steq}, becomes
\begin{equation}\label{eq:1stA}
p_{2,11} \frac{F_2'}{F_2} + p_{2,12} \frac{F_2''}{F_2} =  \sum_{\sigma = \pm} \left(p_{\sigma,11} \frac{F_\sigma'}{F_\sigma} + p_{\sigma,12} \frac{F_\sigma''}{F_\sigma}\right).
\end{equation}
From equations such as this one, the polynomials $p_{\pm,jk}$ are computed as follows. The Tracy--Widom theory \cite{MR1385083} yields
\[
\frac{F_2^{(k)}}{F_2},\; \frac{F_\pm^{(k)}}{F_\pm} \in \Q[t][q,q'],
\]
where $q$ is the Hastings--McLeod solution of Painlevé II. By the theory of Painlevé transcendents \cite{MR1960811}, it is known that $q,q'$ behave as indeterminates over $\Q[t]$, so that $\Q[t][q,q']$ can be viewed as generic multivariate rational polynomials in three indeterminates. If written in $\Q[t,\tau][q,q']$ (with $\tau=0$ in the Gaussian case), the relations between the $E_{2,j}$ and the $E_{\pm,k}$ ($k=1,\ldots,j$)
give, inductively, a linear equation in $\Q[t,\tau][q,q']$ for the $4m$ unknowns
\[
p_{\pm,m\mu} \in \Q[t,\tau] \quad (\mu=1,\ldots,2m).
\]
By comparing coefficients of the $(q,q')$-monomials, we get an overdetermined linear system of size $(16m^2-8m+5)\times 4m$ over $\Q[t,\tau]$. For example, the $13\times 4$ linear system, which is equivalent to \eqref{eq:1stA} and is displayed in all its glory in \eqref{eq:exampleSystem}, yields
\[
p_{\pm,11}=p_{2,11},\quad p_{\pm,12} = 2p_{2,12}.
\]
The much more involved solutions for $m=2,3$ are presented in Section~\ref{subsect:multilinear} on pp.~\pageref{eq:exampleSystem}--\pageref{eq:plusMinusSame}. By explicit computation
for $m\leq m_*$, these linear systems are  uniquely solvable in  $\Q[t,\tau]^{4m}$ and their solutions satisfy
\[
p_{+,m\mu} = p_{-,m\mu}\quad (\mu=1,\ldots,2m).
\]
In summary, under the hypotheses made
for the functions $E_\pm(n;x)$, using \eqref{eq:introEpm} and \eqref{eq:introFpm}, we obtain the expansions 
\[
E_\beta(n;\mu_{n'} + \sigma_{n'} t) = F_\beta(t) + \sum_{j=1}^m E_{\beta,j}(t) h_{n'}^j + O(h_{n'}^{m+1})\qquad (\beta=1,4),
\]
where $n' = n_- = n-1/2$ for $\beta=1$ and $n'=(2n+1)_- = 2n+1/2$ for $\beta=4$ -- which justifies the particular modifications  in \eqref{eq:nprimeIntro}. Here, the expansion terms take the form
\[
E_{\beta,j}(t) = \sum_{k=1}^{2j} p_{+,jk}(t) F_\beta^{(k)}(t) \qquad (\beta=1,4),
\]
so that, most remarkably, the orthogonal and symplectic ensembles share the same polynomial coefficients. The concrete results up to $m=3$, displayed in \eqref{eq:Fbeta2} and \eqref{eq:F22LbetaE}, are in perfect agreement with the fine properties of  simulation data with a sample size of a thousand million, see Figures~\ref{fig:GOEGSESimulation} and \ref{fig:LOELSESimulation}. Besides the self-consistency of the expansions and the unique solvability of the linear systems over $\Q[t,\tau]$, such a close agreement with actual data provides further strong evidence for the correctness of the underlying hypotheses -- this way, our study resembles theoretical physics: explaining  accurate experiments with a  quantitative theory. 

\part*{Part I. Unitary Ensembles}

\section{Expansion at the soft edge of the GUE}\label{sect:GUE}

The eigenvalues of the GUE are known (cf., e.g., \cite[§4.2]{MR2760897} and \cite[Chap.~5]{MR2641363}) to constitute a determinantal point process, with a correlation kernel given by the finite-rank projection kernel
\begin{equation}\label{eq:KnGUE}
K_n^{\GUE}(x,y) := \sum_{k=0}^{n-1} \phi_k(x)\phi_k(y) = \sqrt\frac{n}{2} \frac{\phi_n(x)\phi_{n-1}(y)-\phi_{n-1}(x)\phi_n(y)}{x-y},
\end{equation}
where the $\phi_n$ are the $L^2$-orthonormal system of Hermite `wave functions'\/\footnote{Lacking a better name, we take this one in reference to quantum mechanics where the $\phi_n$ are known as `oscillator wave functions'. They are sometimes called `Hermite functions' or `Hermite--Gaussian functions' in the literature.} (with $H_n$ denoting the Hermite  polynomials, as defined in \cite[§18.3]{MR2723248})
\begin{equation}\label{eq:HermitePhiN}
\phi_n(x) := \frac{e^{-x^2/2}}{\pi^{1/4}\sqrt{2^n n!}} H_n(x)\quad (-\infty < x < \infty).
\end{equation}
The distribution of the largest eigenvalue is given by the Fredholm determinant (cf., e.g., \cite[§3.4]{MR2760897} and \cite[Chap.~9]{MR2641363})
\begin{equation}\label{eq:EGUE}
E_2(n;x) = \det\big(I- K_n^{\GUE}\big)\big|_{L^2(x,\infty)}.
\end{equation}
By following the strategy of Section~\ref{subsect:strategy}, we will arrive at the following theorem.

\begin{theorem}\label{thm:softGUE} 
With the scaling and expansion parameters\footnote{In view of the later discussion, leading to \eqref{eq:GUEnaturalScaling}, these are, up to a possible rescaling of $h_n$, the unique `natural' choices of parameters such that the leading order is normalized to $F_2(t)$ and the expansion proceeds in powers of $h_n\asymp n^{-2/3}$. Any other choice would pollute the expansion with terms that scale like odd positive powers of order $n^{-1/3}$, or even worse. Here, the particular scaling of $h_n$ has been chosen to facilitate the simple relation to the Laguerre ensemble, which is discussed in Remark~\ref{rem:LUE2GUE}.}
\begin{equation}\label{eq:GUEscaling}
\mu_n=\sqrt{2n}, \quad \sigma_n = 2^{-1/2}n^{-1/6}, \quad h_n =\frac{\sigma_n}{2\mu_n}= 4^{-1} n^{-2/3},
\end{equation}
there holds the large matrix expansion of the $\GUE$ at the soft edge,
\begin{equation}\label{eq:GUE}
E_2\big(n;\mu_n + \sigma_n t\big) = F_2(t) + \sum_{j=1}^m E_{2,j}(t) h_n^j + h_n^{m+1}\cdot O(e^{-2t}),
\end{equation}
which is uniformly valid when $t_0\leq t$ as $h_n\to0^+$, with $m$ being any fixed integer in the range\footnote{For the  reasons explained in Remark~\ref{rem:mstar}, we have to impose a bound $m_*$ on the number of expansion terms (we have explicitly checked the cancellation and divisibility properties in the proof of Lemma~\ref{lem:GUEkernexpan},  up to $m_*=10$); we expect this bound to be removed by extending the analysis of \cite{arXiv:2309.06733}.} 
$0\leq m\leq m_*$ and $t_0$ any fixed real number. Preserving uniformity, the expansion can be repeatedly differentiated w.r.t. the variable $t$.
Here, the $E_{2,j}$ are certain smooth functions -- the first instances are
\begin{subequations}\label{eq:F22}
\begin{align}
E_{2,1}(t) &= \frac{t^2}{5}  F'(t)-\frac{3}{10} F''(t),\label{eq:F21}\\*[1mm]
E_{2,2}(t) &=-\bigg(\frac{141}{350} + \frac{8t^3}{175}\bigg) F_2'(t) + \bigg(\frac{39t}{175} + \frac{t^4}{50}\bigg) F_2''(t) - \frac{3t^2}{50} F_2'''(t) + \frac{9}{200} F_2^{(4)}(t),\\*[2mm]
E_{2,3}(t) &= 
\bigg(\frac{2216 t}{7875} + \frac{148 t^4}{7875}\bigg)  F_2'(t)
-\bigg( \frac{53t^2}{210}  + \frac{8t^5}{875} \bigg) F_2''(t) \\*[0.5mm]
&\qquad +\bigg(\frac{10403}{31500} +\frac{51 t^3}{875} + \frac{t^6}{750}\bigg) F_2'''(t)
-\bigg(\frac{117t}{1750} +\frac{3 t^4}{500} \bigg)F_2^{(4)}(t)\notag\\*[0.5mm]
&\qquad +\frac{9 t^2 }{1000}F_2^{(5)}(t)
-\frac{9}{2000} F_2^{(6)}(t).\notag
\end{align}
\end{subequations}
\end{theorem}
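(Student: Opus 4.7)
The plan is to implement the four-step strategy outlined in Section~\ref{subsect:strategy}, starting from the Fredholm determinant representation \eqref{eq:EGUE} and propagating an asymptotic expansion upward through the correlation kernel. The scaling constants $\mu_n, \sigma_n$ are fixed so that the soft-edge transition of $\phi_n$ (from oscillatory to exponentially decaying behaviour) occurs at a bounded scale in the variable $t$, while the specific choice $h_n = 4^{-1}n^{-2/3}$ is engineered to suppress the appearance of odd fractional powers of $n^{-1/3}$; this is what makes a clean expansion in integer powers of $h_n$ possible.

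The first main step will be to establish, via a turning-point analysis of the Hermite differential equation (carried out in detail in Part~III of the paper), an asymptotic expansion of the form \eqref{eq:phiExpansion} for both $\phi_n$ and $\phi_{n-1}$, developed at the common point $\mu_{n_+} + \sigma_{n_+} s$ with $n_+ = n + 1/2$. The use of the shifted index $n_+$ is essential: since $n$ is the midpoint between $n_+$ and $(n-1)_+ = n - 1/2$, the two expansions are related by $s \mapsto -s$-type symmetries that will later cause half-integer powers to drop out.

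The second step is to substitute the wave function expansion into the Christoffel--Darboux form \eqref{eq:KnGUE}, re-expand everything at the common base point $\mu_n + \sigma_n \xi$, and collect terms. The cited midpoint symmetry will produce a kernel expansion
\begin{equation*}
\sigma_n K_n^{\GUE}(\mu_n + \sigma_n \xi,\mu_n + \sigma_n \eta) = K_{\Ai}(\xi,\eta) + \sum_{j=1}^m K_j(\xi,\eta)\, h_n^j + h_n^{m+1} O\bigl(e^{-(\xi+\eta)}\bigr),
\end{equation*}
uniformly as $h_n\to 0^+$ for $\xi,\eta$ bounded from below. The crucial structural claim is the finite-rank form \eqref{eq:finiteRank} of each $K_j$: only products $\Ai^{(\kappa)}(\xi)\Ai^{(\lambda)}(\eta)$ with $\kappa,\lambda\in\{0,1\}$ appear, with polynomial coefficients in $(\xi,\eta)$. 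This structure hinges on intricate cancellation and divisibility properties of the coefficient polynomials $p_k, q_k$ appearing in \eqref{eq:phiExpansion}, and this is exactly where the restriction $m\leq m_*$ enters: those properties will be verified by explicit inspection for each $m$ up to $m_* = 10$, as flagged in Remark~\ref{rem:mstar}. This is the main obstacle of the proof, and the only step where the bound on $m$ is not merely a matter of bookkeeping.

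The third step is to lift the kernel expansion to a Fredholm determinant expansion for $E_2(n;\mu_n+\sigma_n t)$ of the form \eqref{eq:introE2expan}. Uniform termwise differentiability of the wave-function and kernel expansions, together with the exponential tail bounds, allow an appeal to \cite[Theorem~2.1]{arxiv:2301.02022} to obtain the stated uniform error $h_n^{m+1}O(e^{-2t})$. Finally, the expansion functions $E_{2,j}(t)/F_2(t)$ are a priori given as nonlinear expressions in operator traces involving the resolvent of $K_{\Ai}$ restricted to $L^2(t,\infty)$ against $K_1,\ldots,K_j$. To reduce these to the multilinear form in \eqref{eq:F22}, I will trade factors of $x,y$ against higher Airy derivatives via the identity $\Ai''(x)=x\Ai(x)$ (the rule encoded in \eqref{eq:airyrule}), and then apply the Shinault--Tracy-type algorithm of \cite[Appendix~B]{arxiv:2301.02022} to collapse each trace into a combination of $F_2^{(k)}(t)$ with rational polynomial coefficients. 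The explicit polynomials in \eqref{eq:F22} will then fall out of a finite symbolic computation for $j=1,2,3$, to be carried out in a computer algebra system.
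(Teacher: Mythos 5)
Your proposal is correct and follows the paper's strategy almost verbatim: wave-function expansion at the shifted index $n_+$ from the turning-point analysis of Part~III, re-expansion in the Christoffel--Darboux kernel with the midpoint symmetry of $n\pm 1/2$ killing the half-integer powers of $h_n$, verification of the finite-rank structure \eqref{eq:finiteRank} (the source of the bound $m\leq m_*$, exactly as you identify), lifting to the Fredholm determinant via \cite[Theorem~2.1]{arxiv:2301.02022}, and reduction of the traces to linear $F_2$-forms by the replacement rule \eqref{eq:airyrule} and the algorithm of \cite[Appendix~B]{arxiv:2301.02022}. The one place you deviate is the final computation: you propose to feed the raw kernels $K_1,K_2,K_3$ of Lemma~\ref{lem:GUEkernexpan} directly into the Shinault--Tracy machinery, whereas the paper first applies the nonlinear change of variables $s=u^{2/3}\zeta$ of \eqref{eq:psihGUE} (Lemma~\ref{lem:KhGUEexpan}), computes the much simpler $\tilde E_{2,j}$ from the transformed kernels $\tilde K_j$ via \eqref{eq:G1G3nice}, and only then transfers back to the $t$-variable by Taylor re-expansion. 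Your direct route is viable -- it is essentially how the paper recovers \eqref{eq:F21} from Choup's formula in §\ref{subsect:ChoupGUE}, and the tally at the end of Appendix~\ref{app:airykernel} quantifies the cost (e.g.\ $24$ distinct $1\times 1$, $51$ distinct $2\times 2$ and $10$ distinct $3\times 3$ minors already at order $j=3$, versus a handful after the simplification) -- so the difference is one of symbolic-computation economy rather than mathematical substance. If you do go the direct way, be aware that you will need linear $F_2$-form reductions for higher-order minors of $(u_{jk})$, not just the $1\times 1$ entries tabulated by Shinault and Tracy; the algorithm of \cite[Appendix~B]{arxiv:2301.02022} covers this, but it is the part of your step four that deserves an explicit word.
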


\begin{figure}[tbp]
\includegraphics[width=0.325\textwidth]{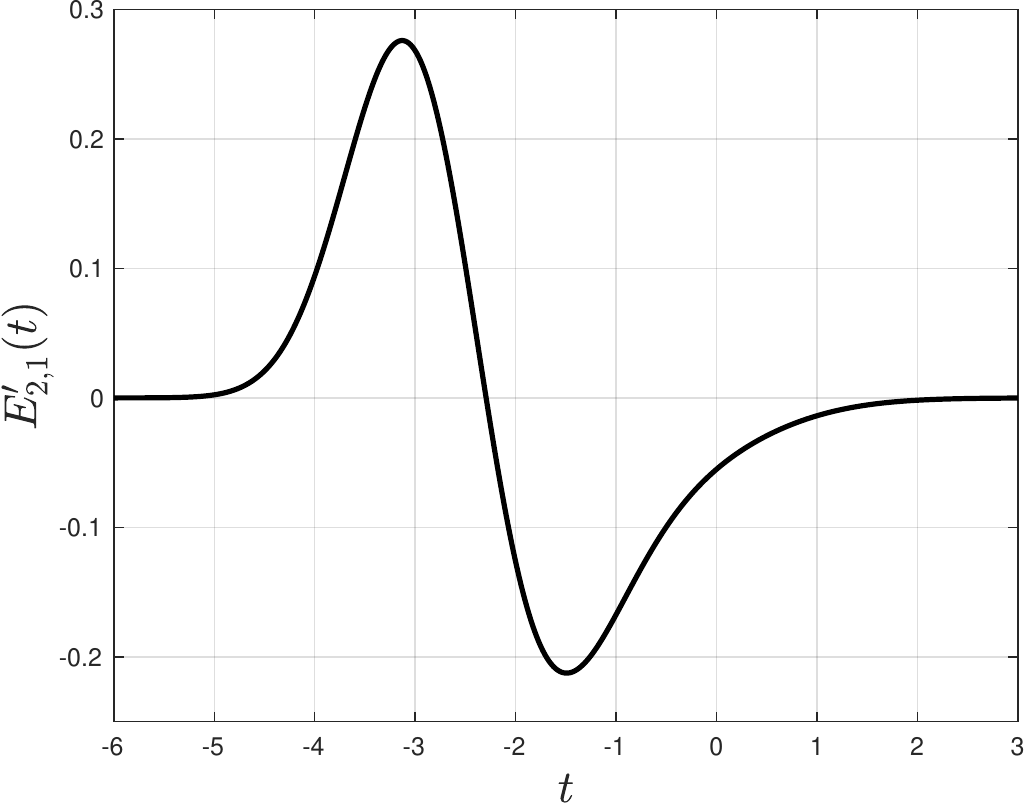}\hfil\,
\includegraphics[width=0.325\textwidth]{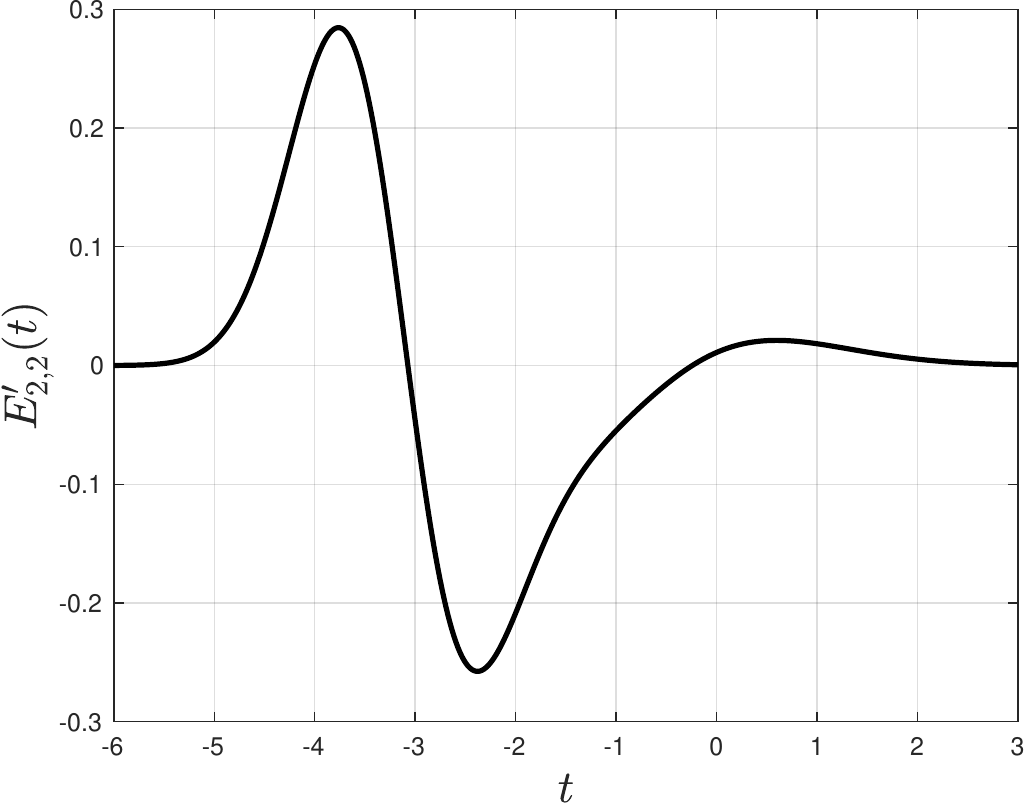}\hfil\,
\includegraphics[width=0.325\textwidth]{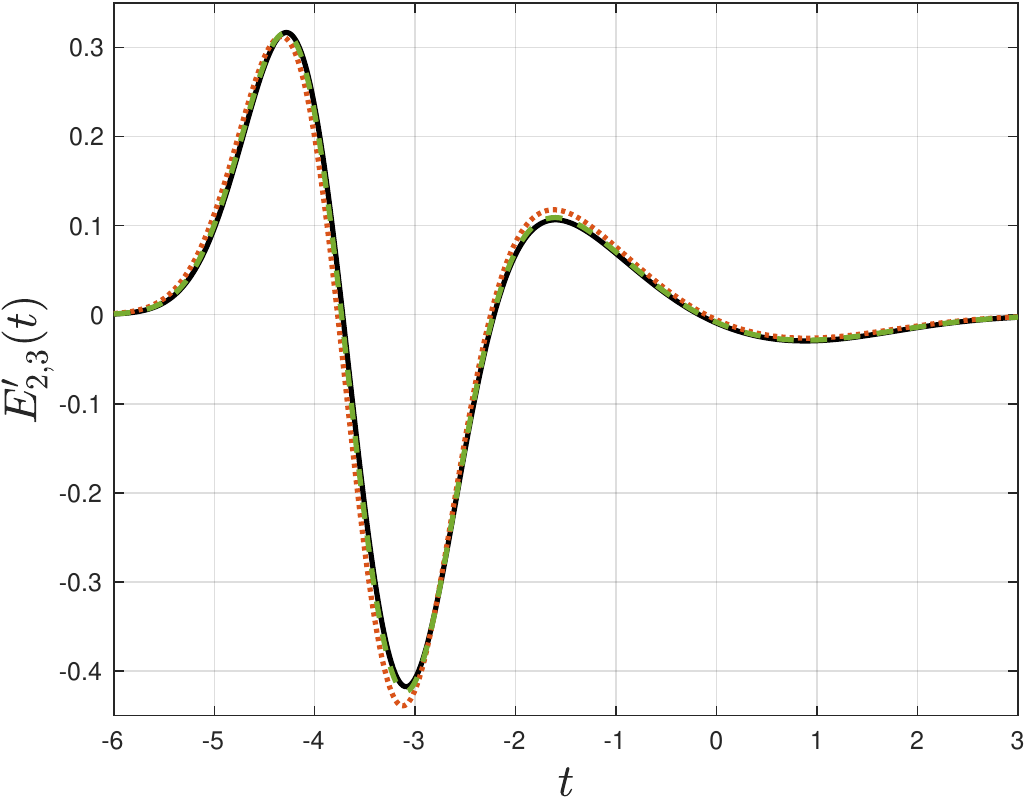}
\caption{{\footnotesize Plots of $E_{2,1}'(t)$ (left panel) and $E_{2,2}'(t)$ (middle panel).
The right panel shows $E_{2,3}'(t)$ (black solid line) with the approximations \eqref{eq:F23} for $n=10$ (red dotted line) and $n=80$ (green dashed line): the close match validates the functional forms displayed in~\eqref{eq:F22} and the differentiability of  the expansion \eqref{eq:GUE}. Details about the numerical method can be found in \cite{MR2895091, MR2600548,arxiv.2206.09411,MR3647807}.}}
\label{fig:GUE}
\end{figure}

The proof of Theorem~\ref{thm:softGUE} is split into several steps and will be given in Sections~\ref{subsect:kernelGUEexpan}--\ref{sect:functionalformGUE}. We will comment on prior work on the cases $m=0$ and $m=1$ in Section~\ref{subsect:previousGUE}.

Using numerical methods we can validate the formulas in (\ref{eq:F22}), and the differentiability of the expansion \eqref{eq:GUE}: Figure~\ref{fig:GUE} plots  $E_{2,1}'(t)$, $E_{2,2}'(t)$, $E_{2,3}'(t)$ next to the approximation
\begin{equation}\label{eq:F23}
E_{2,3}'(t) \approx h_n^{-3} \cdot \big( \tfrac{d}{dt}E_2\big(n;\mu_n + \sigma_n t\big) - F_2'(t) - E_{2,1}'(t)h_n - E_{2,2}'(t) h_n^2\big) 
\end{equation}
for matrix dimensions $n=10$ and $n=80$ and we observe a close match. Note that the absolute scale of the expansion terms  shown in Figure~\ref{fig:GUE} is in itself already quite small, which explains that in actual statistical applications the accuracy of the limit law is mostly sufficient. In particular, we get for  $n\geq 2$ the following numerical bounds for the first approximations of the probability density of the largest eigenvalue of the $\GUE$:
\begin{subequations}
\begin{gather}
\max_{t\in\R} \big|\tfrac{d}{dt}E_2\big(n;\mu_n + \sigma_n t \big) - F_2'(t)\big| < 0.07 \cdot n^{-2/3},\\*[2mm]
\max_{t\in\R} \big|\tfrac{d}{dt}E_2\big(n;\mu_n + \sigma_n t \big) - F_2'(t) - \tfrac14 E_{2,1}'(t) n^{-2/3} \big| < 0.02 \cdot n^{-4/3}.
\end{gather}
\end{subequations}
For a matrix dimension as small as $n=10$,
the latter estimate gives an absolute error bound of the first finite correction of about $10^{-3}$, which is more than an order of magnitude larger than the sampling error of a statistics with $N=10^9$ draws from the $\GUE$. As shown in Figure~\ref{fig:GUESimulation}, for a sample size that large, the graphical forms of the first two finite-size corrections are perfectly matched by the simulation data, with the sampling errors just starting to become visible for the second finite-size correction. In contrast, Figure~\ref{fig:LUESimulation} shows that, in a parameter regime of the $\LUE$ where the absolute size scale of the expansion terms is about one order of magnitude larger than for the $\GUE$, the sampling errors are even less pronounced.

\begin{figure}[tbp]
\includegraphics[width=0.325\textwidth]{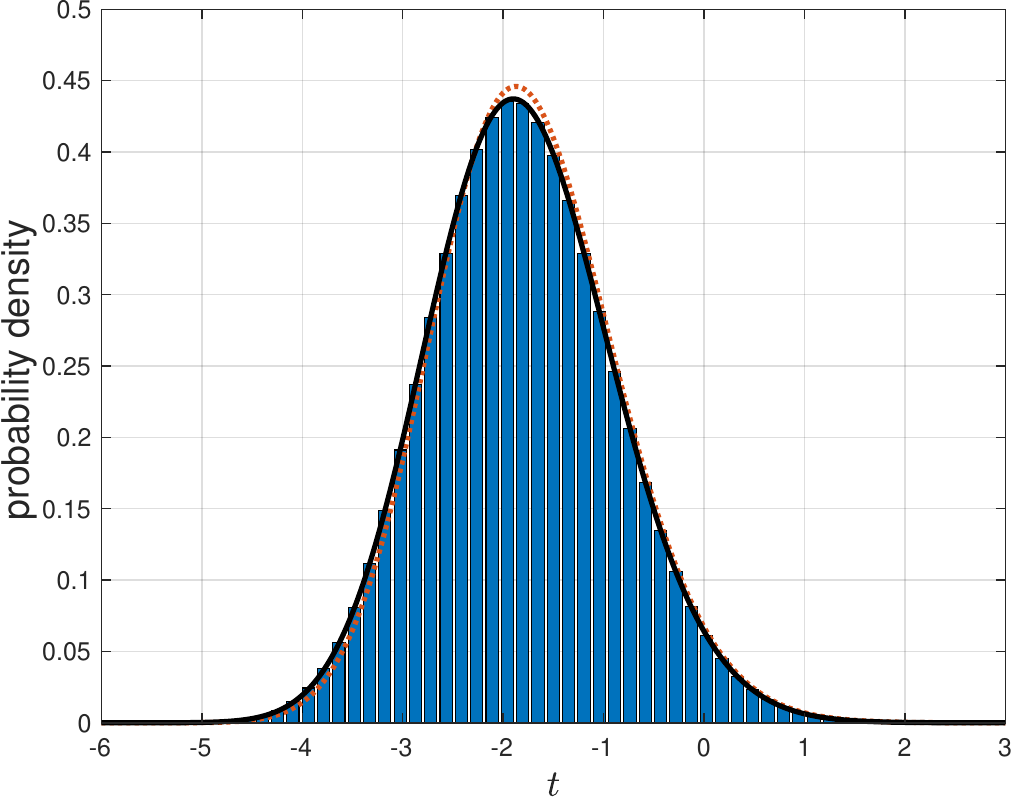}\hfil\,
\includegraphics[width=0.325\textwidth]{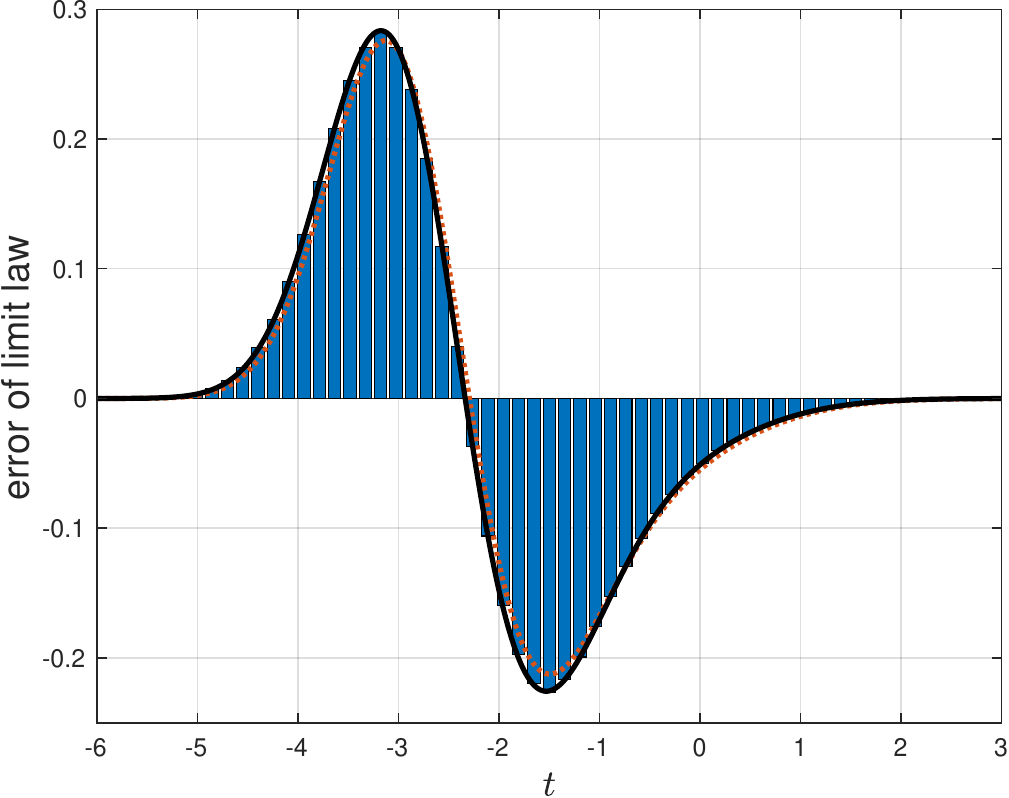}\hfil\,
\includegraphics[width=0.325\textwidth]{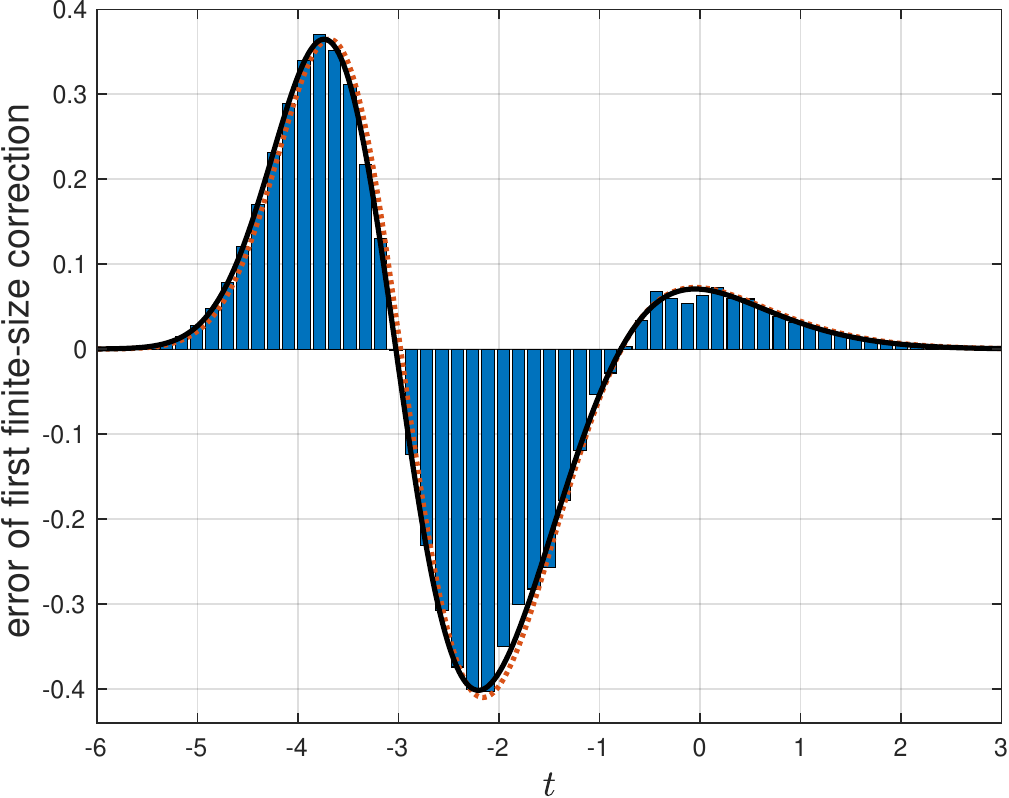}
\caption{{\footnotesize [Expansion terms with a tilde are  `histogram adjusted', see Appendix~\ref{app:histo}.] Left panel: histogram  of the scaled largest eigenvalues $(\lambda_{\max} - \mu_n)/\sigma_n$, for $N=10^9$ draws from the $\GUE$ for matrix dimension $n=10$ (blue bars) vs. the Tracy--Widom limit density $F_2'$ (red dotted line) 
and its first finite-size correction $F_2' + h_n E_{2,1}'$ (black solid line). Middle panel: difference, scaled by $h_n^{-1}$,  between the histogram midpoints and the limit density (blue bars) vs. the correction terms $E_{2,1}'$ (red dotted line) and $E_{2,1}' + h_n \tilde E_{2,2}'$ (black solid line). Right panel: difference, scaled by $h_n^{-2}$, between 
the histogram midpoints and the first finite-size correction (blue bars) vs. the correction terms $\tilde E_{2,2}'$ (red dotted line) and $\tilde E_{2,2}' + h_n \tilde E_{2,3}'$ (black solid line). Note that sampling errors are about to be visible here.}}
\label{fig:GUESimulation}
\end{figure}

\subsection{Kernel expansion}\label{subsect:kernelGUEexpan}

We start with expanding the induced transformation of the kernel. 

\begin{lemma}\label{lem:GUEkernexpan} With the parameters $\mu_n, \sigma_n, h_n$ as introduced in \eqref{eq:GUEscaling}, the  linear change of variables $s=\mu_n + \sigma_n t$ induces the transformed kernel 
\begin{equation}\label{eq:GUEkernexpan}
\begin{aligned}
\hat K_n^{\GUE}(x,y) &:= \sigma_n \, K_n^{\GUE}(\mu_n + \sigma_n x,\mu_n + \sigma_n y) \\
 &= K_{\Ai}(x,y) + \sum_{j=1}^m K_j(x,y) h_n^j + h_n^{m+1}\cdot O\big(e^{-(x+y)}\big),
\end{aligned}
\end{equation}
where the expansion is uniformly valid when $x,y \geq t_0$ as $h_n\to 0^+$, $m$ being any fixed integer in the range $0\leq m\leq m_*$ and $t_0$ any fixed real number. Here, the expansion kernels $K_j$ are certain finite rank kernels of the form
\[
K_j(x,y) = \sum_{\kappa,\lambda\in\{0,1\}}  p_{j,\kappa\lambda}(x,y) \Ai^{(\kappa)}(x)\Ai^{(\lambda)}(y)
\]
with polynomial coefficients $p_{j,\kappa\lambda}(x,y)\in \Q[x,y]$ -- the first instances are
\begin{subequations}\label{eq:K2GUE}
\begin{align}
K_1(x,y) &= \frac{1}{10} \Big( -2\big(x^2+xy+y^2\big)\Ai(x)\Ai(y) \label{eq:K1GUE}\\*[0mm]
&\qquad  + 3 \big(\!\Ai(x)\Ai'(y) + \Ai'(x)\Ai(y)\big)+ 2(x+y)\Ai'(x)\Ai'(y) \Big),\notag\\*[2mm]
K_2(x,y) &= \frac{1}{1400}\Big(6\Big(20(x^3+y^3)+6 x y
   (x+y)+21\Big) \Ai(x)\Ai(y) \\*[0mm]
&\qquad   +\Big(28(x^4+x^3 y-x^2 y^2-x y^3-y^4)-135 x-261 y\Big) \Ai(x)\Ai'(y) \notag\\*[0mm]
&\qquad\quad   -\Big(28(x^4+x^3 y+x^2 y^2-x y^3-y^4)+261 x+135 y\Big) \Ai'(x)\Ai(y) \notag\\*[0mm]
 &\qquad\qquad     +4\Big(5(x^2+y^2)-16 x y\Big) \Ai'(x)\Ai'(y)\Big).\notag
%\\*[1mm]
% K_3(x,y) &= \frac{1}{126000}\Big(-2 \Big(84(x^7+x^6 y+x^5 y^2+x^4 y^3+x^3 y^4+x^2 y^5+x y^6+y^7)\\*[0mm]
% &\qquad -(x+y)(252 x^3 y^3-11375)+3443(x^4+y^4) +1823 x y(x^2+y^2) \notag\\*[0mm]
%  &\qquad\quad +608 x^2 y^2 \Big)\Ai(x)\Ai(y) + \Big(5(1337 x^2+2066 x y+3929 y^2)\notag\\*[0mm]
%  &\qquad  -12(19 x^5+82 x^4 y-98 x^3 y^2-68 x^2 y^3-68 x y^4-131 y^5)\Big) \Ai(x)\Ai'(y) \notag\\*[0mm]
%  &\quad + \Big( - 
% 12  (-131  x^5 - 68 x^4 y - 68 x^3 y^2 - 98 x^2 y^3 + 82 x  y^4 + 
%    19  y^5)\notag\\*[0mm]
%    & + 5  (3929  x^2 + 2066  x  y + 1337  y^2) \Big)\Ai'(x)\Ai(y)\notag\\*[0mm] 
%    &\quad + 2\Big(84(x^6+x^5 y-2 x^4 y^2-2 x^3 y^3-2 x^2 y^4+x y^5+y^6)  \notag\\*[0mm]
%&\qquad    -5 (x + y)  (179  x^2 + 54  x  y + 179  y^2) - 9790 \big) \Ai'(x)\Ai'(y)\Big).\notag
\end{align}
\end{subequations}
Preserving uniformity, the kernel expansion \eqref{eq:GUEkernexpan} can repeatedly be differentiated w.r.t. $x$, $y$.%
\end{lemma}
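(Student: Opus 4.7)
The plan is to substitute the Plancherel--Rotach type expansion \eqref{eq:phiExpansion} of Part~III into the Christoffel--Darboux form of $K_n^{\GUE}$ from \eqref{eq:KnGUE}. The key observation is that \eqref{eq:phiExpansion} is naturally centered at the half-integer $n_+ = n+\tfrac12$, whereas the rescaled kernel $\hat K_n^{\GUE}$ is centered at $n$; similarly $\phi_{n-1}$ is naturally centered at $(n-1)_+ = n_- = n-\tfrac12$. Since $n$ is the \emph{exact midpoint} of $n_+$ and $n_-$, I would exploit this symmetry to re-expand both wave functions with respect to the common variable $x$ defined via $\mu_n + \sigma_n x$. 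The re-expansion produces a Taylor series whose natural parameter is $h_n^{1/2}$ (via the offsets $\mu_{n_\pm} - \mu_n$ and the ratios $\sigma_{n_\pm}/\sigma_n$), and the midpoint symmetry will force all odd half-integer powers to cancel upon combining $\phi_n$ and $\phi_{n-1}$ through the Christoffel--Darboux formula.

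With both wave functions re-expanded in the form
\[
\phi_n\big(\mu_n + \sigma_n x\big) \;=\; \Ai(x)\, P_n(x;h_n) + \Ai'(x)\, Q_n(x;h_n) + O\bigl(h_n^{m+1} e^{-x}\bigr),
\]
and analogously for $\phi_{n-1}$, I would insert these into \eqref{eq:KnGUE} and expand the prefactor $\sqrt{n/2}$ and the Jacobian $\sigma_n$. The crucial algebraic step is to verify that the Christoffel--Darboux numerator $\phi_n(x)\phi_{n-1}(y) - \phi_{n-1}(x)\phi_n(y)$ vanishes on the diagonal $x=y$ order by order in $h_n$, so that division by $x-y$ yields an expansion with \emph{polynomial} (rather than Laurent) coefficients. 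After that division, repeated use of the Airy equation $\Ai''(z) = z\,\Ai(z)$ collapses every higher-order Airy derivative to a $\Q[x,y]$-linear combination of the four products $\Ai^{(\kappa)}(x)\Ai^{(\lambda)}(y)$ with $\kappa,\lambda\in\{0,1\}$, which is precisely the claimed finite-rank structure. The leading order reproduces the Airy kernel $K_{\Ai}(x,y)$ from \eqref{eq:AiryKernel}, by the normalizations $p_0=1$, $q_0=0$ in \eqref{eq:phiExpansion}.

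For the error term, I would use that the remainder in \eqref{eq:phiExpansion} is $h_{n_+}^{m+1} O(e^{-s})$ uniformly for $s$ bounded from below, so that after translating to the $x,y$ variables and combining with the exponential decay of $\Ai$ and $\Ai'$, one obtains the stated bound $h_n^{m+1}\cdot O\bigl(e^{-(x+y)}\bigr)$ uniformly for $x,y \geq t_0$. Differentiability in $x$, $y$ follows because \eqref{eq:phiExpansion} can itself be differentiated term-wise, and both the Christoffel--Darboux manipulation and the re-centering Taylor step commute with differentiation in the arguments.

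The main obstacle is the algebraic cancellation/divisibility check: one needs to verify, explicitly in terms of the polynomials $p_k,q_k$ produced by \eqref{eq:phiExpansion}, that (i) the odd half-integer powers of $h_n^{1/2}$ cancel on combining $\phi_n$ and $\phi_{n-1}$, and (ii) the resulting numerator is divisible by $x-y$ at every order $h_n^j$. Neither property appears to admit a clean closed-form argument; I would therefore verify them by computer algebra for each $j=1,\ldots,m_*$, which is exactly what forces the bound $m \leq m_*$ mentioned in Remark~\ref{rem:mstar}. The explicit formulas \eqref{eq:K2GUE} for $K_1$ and $K_2$ would fall out as a byproduct of the same computation.
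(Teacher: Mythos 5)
Your proposal is essentially the paper's own proof: insert the expansion of Corollary~\ref{cor:Hermite}, re-center $\phi_n$ and $\phi_{n-1}$ at the common point $\mu_n+\sigma_n t$ so that the symmetry of $n\pm\tfrac12$ about $n$ makes their coefficients differ only by signs $(-1)^k$, verify the resulting cancellation and the divisibility by $x-y$ by explicit computation for $j\le m_*$, and read off $K_1,K_2$ as a byproduct. One bookkeeping point to fix: in the antisymmetrized numerator $\phi_n(x)\phi_{n-1}(y)-\phi_{n-1}(x)\phi_n(y)$ the coefficient of $h_n^{(k+l)/2}$ carries the factor $(-1)^{l}-(-1)^{k}$, so it is the \emph{even} powers of $h_n^{1/2}$ that cancel, while the surviving \emph{odd} powers are the ones that must be shown divisible by $x-y$; the kernel lands on integer powers of $h_n$ only after multiplying by the prefactor $\tfrac12 h_n^{-1/2}$ coming from $\sigma_n\sqrt{n/2}$ and the normalization of the wave functions, so your stated parity is reversed. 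Also, no Airy-equation reduction is needed at this stage, since \eqref{eq:phiExpansion} already delivers each wave function as a polynomial combination of $\Ai$ and $\Ai'$ alone; the replacement rule \eqref{eq:airyrule} enters only later (Lemma~\ref{lem:KhGUEexpan}) and in the opposite direction.
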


\begin{proof} 
Upon using $n=h_n^{-3/2}/8$,  the transformed kernel can be rewritten as
\[
\hat K_n^{\GUE}(x,y) =  \frac{1}{2\sqrt{h_n}} \frac{a_n(x,y)-a_n(y,x)}{x-y}
\]
(where the singularities at $x=y$ are removable) with 
\[
a_n(x,y) := \frac{\phi_n(\mu_n + \sigma_n x)}{2^{1/2}h_n^{1/8}}\frac{ \phi_{n-1}(\mu_n + \sigma_n y)}{2^{1/2}h_n^{1/8}}.
\]
According to Corollary~\ref{cor:Hermite}, the transformed oscillator wave function expands as
\[
\frac{\phi_n(\mu_{n+1/2} + \sigma_{n+1/2}\theta)}{2^{1/2}h_{n+1/2}^{1/8}} = \Ai(\theta) \sum_{k=0}^m p_k(\theta) h_{n+1/2}^k + \Ai'(s) \sum_{k=0}^m q_k(\theta) h_{n+1/2}^k  + h_{n+1/2}^{m+1} O(e^{-\theta}),
\]
which is uniformly valid for $\theta$ being bounded from below. If we change variables so that we evaluate $\phi_n(\xi)$ and $\phi_{n-1}(\xi)$ both at a point of the form
\[
\xi = \mu_n + \sigma_n t,
\]
a  re-expansion in the  variable $t$ (arguing as in the proof of Theorem~\ref{thm:localExpan} for the tail),  using~$h_n$ as the expansion parameter, yields, by  the symmetry of $n\pm 1/2$ w.r.t. $n$, that
\begin{align*}
\frac{\phi_n(\mu_n + \sigma_n t)}{2^{1/2}h_n^{1/8}} &= \Ai(t) \sum_{k=0}^{2m+2} \hat p_k(t) h_{n}^{k/2} + \Ai'(t) \sum_{k=0}^{2m+1} \hat q_k(t) h_{n}^{k/2}  + h_{n}^{m+3/2} O(e^{-t}),\\*[2mm]
\frac{\phi_{n-1}(\mu_n + \sigma_n t)}{2^{1/2}h_n^{1/8}}  &= \Ai(t) \sum_{k=0}^{2m+2} (-1)^k \hat p_k(t) h_{n}^{k/2} + \Ai'(t) \sum_{k=0}^{2m+1} (-1)^k \hat q_k(t) h_{n}^{k/2}  + h_{n}^{m+3/2} O(e^{-t}),
\end{align*}
with certain polynomials $\hat p_{k}, \hat q_{k} \in \Q[t]$. These expansions are uniformly valid for $t\geq t_0$ as $h_n\to 0^+$. The first instances of the polynomial coefficients are
\begin{equation}\label{eq:GUEkernelPolynomials}
\begin{gathered}
\hat p_0(t) = 1, \quad\hat p_1(t) = 0,\quad  \hat p_2(t) =\frac{3 t}{10},\quad \hat p_3(t) = - \frac{t^3}{5}-\frac{3}{10},\\*[1mm]
\quad \hat p_4(t) = \frac{t^5}{50}-\frac{27 t^2}{280}, \quad \hat p_5(t) = \frac{3 t^4}{35},\\*[2mm]
\hat q_0(t) = 0,\quad \hat q_1(t) = -1,\quad \hat q_2(t) = \frac{t^2}{5}, \quad \hat q_3(t) =  \frac{3 t}{10},\\*[1mm]
\quad \hat q_4(t) = \frac{t^3}{70}+\frac{69}{140},
\quad \hat q_5(t) = -\frac{t^5}{50}-\frac{69 t^2}{280}.
\end{gathered}
\end{equation}
Using this and similarly obtained expressions for larger indices, a routine calculation shows, at least in the range $m\leq m_*$, that  in evaluating
\[
\frac{a_n(x,y)-a_n(y,x)}{x-y}
\]
the even powers of $h_n^{1/2}$ cancel while the positive odd powers generate polynomial factors of the Airy terms $\Ai^{(\kappa)}(x) \Ai^{(\lambda)}(y)$ which belong to $\Q[x,y]$ and which are divisible by $x-y$. Finally, dividing by $2h_n^{1/2}$ proves, first, the claim about the structure of the expansion kernels. Next, the expressions in \eqref{eq:GUEkernelPolynomials} allow us to straightforwardly compute $K_1$, $K_2$. Since all expansions can repeatedly be differentiated under the same conditions while preserving their uniformity, the same holds for the resulting expansion of the kernel.
\end{proof}

\subsection{Simplifying transformation}\label{subsect:simplifyGUE} When studying the $\beta=2$ hard-to-soft edge transition, we showed in \cite[Lemma~3.5]{arxiv:2301.02022} that the expansion kernels are considerably simplified if a certain {\em nonlinear} transformation is applied.
Proceeding analogously here, we observe from \eqref{eq:ts} and~\eqref{eq:sHer} that the scaling and expansion parameters of  Lemma~\ref{lem:GUEkernexpan} are uniquely constructed in such a way that, with $h=h_n$ and  $u=2n$,
\begin{equation}\label{eq:GUEnaturalScaling}
\mu_n +\sigma_n t = u^{1/2}(z_*+2h t), \qquad u^{2/3} \zeta = t + O(h),
\end{equation}
where the variable $\zeta$ is defined as in Theorem~\ref{thm:genHer}(II) and $z_*=1$ denotes the position of the right turning point of the underlying Weber differential equation.
It is precisely that $O(h)$ error term, induced by choosing a {\em linear} scaling transform in the first place, which generates a spray of additional terms in the expansion kernels.\footnote{These terms are, effectively, generated by Taylor expanding the factors $\Ai(u^{2/3}\zeta)$, $\Ai'(u^{2/3}\zeta)$ as in in the proof of Theorem~\ref{thm:localExpan}. On the other hand,  with the variable $s$ in \eqref{eq:psihGUE}, these factors are simply   $\Ai(s)$, $\Ai'(s)$.} Hence, we can reverse that effect by introducing the new variable 
\begin{equation}\label{eq:psihGUE}
s = u^{2/3} \zeta = t +\frac{t^2}{5}h -\frac{8t^3}{175}h^2 + \frac{148t^4}{7875}h^3 + \cdots \quad(|h t|<1),
\end{equation}
where the power series is taken from \eqref{eq:sHer}. In doing so, the next lemma significantly reduces the complexity of the resulting expressions.

\begin{lemma}\label{lem:KhGUEexpan} For $h>0$ and the first expansion kernels $K_1$, $K_2$, $K_3$ from Lemma~\ref{lem:GUEkernexpan}, we consider the kernel
\[
K_{(h)}(x,y) := K_{\Ai}(x,y) + K_1(x,y) h + K_2(x,y)h^2 + K_3(x,y)h^3
\]
and the transformation $s = \psi_h^{-1}(t)$ defined in \eqref{eq:psihGUE}, which maps
the interval 
\[
\rho_0 h^{-1} < t < \rho_1 h^{-1} \quad\text{with $\rho_0 = -0.57903\cdots$, $\rho_1 = 0.46124\cdots$}
\]
monotonically increasing onto $-h^{-1}/2 < s < h^{-1}/2$. The induced  transformed kernel
\begin{subequations}\label{eq:auxkernelexpanGUE}
\begin{align}
\tilde K_{(h)}(x,y) &:= \sqrt{\psi_h'(x)\psi_h'(y)} \, K_{(h)}(\psi_h(x),\psi_h(y))\\
\intertext{has the expansion}
\tilde K_{(h)}(x,y) &= K_{\Ai}(x,y) + \tilde K_1(x,y)h + \tilde K_2(x,y)h^2 + \tilde K_3(x,y)h^3 + h^4 \cdot O\big(e^{-(x+y)}\big),
\end{align}
which is uniformly valid in $s_0 \leq x,y < h^{-1}/2$ as $h\to 0^+$, $s_0$ being a fixed real number. The three expansion kernels are
\begin{align}
\tilde K_1 &= \frac{3}{10}(\Ai\otimes \Ai' + \Ai'\otimes \Ai)\label{eq:K1tilde},\\*[2mm]
\tilde K_2 &= \frac{117\Ai\otimes\Ai}{175} -\frac{81(\Ai\otimes\Ai'''+\Ai'''\otimes\Ai)}{280} -\frac{159(\Ai'\otimes\Ai''+\Ai''\otimes\Ai')}{1400},\\*[2mm]
\tilde K_3 &= -\frac{3713 (\Ai\otimes \Ai'' + \Ai''\otimes \Ai)}{2250} +\frac{1171(\Ai\otimes \Ai^{\rm V} + \Ai^{\rm V}\otimes \Ai)}{3600}  -\frac{3743\Ai'\otimes \Ai'}{7875} \\*[1mm]
& \qquad  + \frac{3359(\Ai'\otimes \Ai^{\rm IV} + \Ai^{\rm IV}\otimes \Ai')}{42000}  +\frac{271(\Ai''\otimes \Ai''' + \Ai'''\otimes \Ai'')}{1800}.\notag
\end{align}
\end{subequations}
Preserving uniformity, the kernel expansion can repeatedly be differentiated w.r.t. $x$, $y$.
\end{lemma}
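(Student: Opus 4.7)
The plan is to reduce the claim to a routine (if lengthy) power-series computation in $h$. From the defining series \eqref{eq:psihGUE} expressing $s$ as a function of $t$, one first obtains by series inversion the expansions
\begin{equation*}
\psi_h(s) = s - \frac{s^2}{5}h + O(h^2), \quad \psi_h'(s) = 1 - \frac{2s}{5}h + O(h^2), \quad \sqrt{\psi_h'(s)} = 1 - \frac{s}{5}h + O(h^2),
\end{equation*}
as power series in $h$ with coefficients in $\Q[s]$, convergent on the stated interval $-h^{-1}/2 < s < h^{-1}/2$; the bijection claim and the values of $\rho_0, \rho_1$ follow from monotonicity analysis of the radius-one series in $ht$. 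Next, I would Taylor expand $K_{(h)}(\psi_h(x),\psi_h(y))$ around $(x,y)$ to order $h^3$; by the finite-rank structure \eqref{eq:finiteRank} of each $K_j$ (and of $K_{\Ai}$ via Christoffel--Darboux), the result is a finite sum $\sum c_{\kappa\lambda}(x,y;h)\,\Ai^{(\kappa)}(x)\Ai^{(\lambda)}(y)$ whose coefficients $c_{\kappa\lambda}$ are polynomials in $x,y$ with power-series entries in $h$. Multiplying by the series for $\sqrt{\psi_h'(x)\psi_h'(y)}$ and collecting by powers of $h$ then yields the expansion of $\tilde K_{(h)}$.

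The content of the lemma is that, after this calculation, for $j=1,2,3$ all polynomial $(x,y)$-dependence in the $h^j$-coefficient cancels, leaving only rational-number multiples of $\Ai^{(\kappa)}(x)\Ai^{(\lambda)}(y)$. Conceptually this reflects the turning-point analysis of Part III: in the natural variable $s=u^{2/3}\zeta$, the wave functions expand directly in terms of $\Ai(s), \Ai'(s)$ with polynomial coefficients, requiring no secondary Taylor expansion of $\Ai(u^{2/3}\zeta)$ as a function of $t$; by Christoffel--Darboux the kernel inherits the same feature, and the transformation $s=\psi_h^{-1}(t)$ precisely undoes the pollution introduced in Lemma~\ref{lem:GUEkernexpan} by the linear scaling. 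The concrete formulas \eqref{eq:K1tilde}--\eqref{eq:auxkernelexpanGUE} are then read off by explicit symbolic computation, using the coefficients in \eqref{eq:GUEkernelPolynomials} (and analogues at higher order) as input.

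For the error term and uniformity, the remainder $h^4 \cdot O(e^{-(x+y)})$ is controlled as in the proof of Lemma~\ref{lem:GUEkernexpan}, via the tail bound $|\Ai^{(k)}(x)| \leq C_k e^{-x}$ for $x \geq s_0$ together with the uniform boundedness of the polynomial $(x,y)$-coefficients on $s_0 \leq x,y < h^{-1}/2$ (where $|hx|, |hy|<1/2$, so each monomial $(hx)^a (hy)^b$ stays bounded); note that $\psi_h(x)\geq x/2$ on this range, so $\Ai^{(k)}(\psi_h(x)) = O(e^{-x/2})$, and a factor is absorbed into the $O$-constant to restore the $e^{-(x+y)}$ form. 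Differentiability in $x,y$ follows by differentiating the truncated series term-wise, which is permissible since $\psi_h, \psi_h'$ are smooth with uniformly controlled derivatives on the stated range, and the Airy factors and their derivatives obey the same tail estimate.

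The main obstacle is the algebraic bookkeeping: tracking which polynomial terms cancel to produce the clean rational-coefficient forms. This is of exactly the same flavour as the hard-to-soft computation in \cite[Lemma~3.5]{arxiv:2301.02022} and is best discharged with a computer algebra system. An alternative conceptual route -- deriving $\tilde K_{(h)}$ directly from a wave-function expansion in the natural variable $s$, bypassing Lemma~\ref{lem:GUEkernexpan} altogether -- would make the cancellations automatic but requires re-running the turning-point analysis in the $s$-variable; the direct Taylor-expansion approach above seems the most economical.
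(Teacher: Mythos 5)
Your proposal follows essentially the same route as the paper: invert the power series \eqref{eq:psihGUE}, substitute into $K_{(h)}$, multiply by $\sqrt{\psi_h'(x)\psi_h'(y)}$, collect powers of $h$, and control the tail by the decay of the Airy factors, citing the bounds of \cite[\S 2.1]{arxiv:2301.02022} for uniformity and term-wise differentiation. One point is misstated, though, and would trip you up at the computational stage: the polynomial $(x,y)$-dependence does \emph{not} cancel after the change of variables. What the substitution achieves is only a drastic reduction in degree (e.g.\ the degree-$7$ coefficients of $K_3$ drop to degree $2$); the intermediate $\tilde K_2$, $\tilde K_3$ still carry linear and quadratic polynomial factors. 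The clean constant-coefficient forms in \eqref{eq:auxkernelexpanGUE} are obtained only after a \emph{further} step, namely repeated application of the Airy-equation replacement rule \eqref{eq:airyrule}, which trades the residual powers of $x,y$ for higher derivatives of $\Ai$ --- this is precisely why $\Ai'''$, $\Ai^{\rm IV}$, $\Ai^{\rm V}$ appear in $\tilde K_2,\tilde K_3$ even though $K_1,K_2,K_3$ involve only $\Ai$ and $\Ai'$. Your general notation $\Ai^{(\kappa)}\otimes\Ai^{(\lambda)}$ accommodates the outcome, but the mechanism is absorption via the Airy ODE, not cancellation. A second, minor quibble: your tail argument passes from $\psi_h(x)\geq x/2$ to ``$O(e^{-x/2})$, absorb a factor'' --- an $O(e^{-x/2})$ bound is \emph{weaker} than $O(e^{-x})$ and cannot be upgraded by adjusting constants; one must invoke the super-exponential decay $\Ai(\xi)=O(e^{-\frac23\xi^{3/2}})$ (or simply remain within the framework of \cite[\S 2.1]{arxiv:2301.02022}, as the paper does) to recover the stated $e^{-(x+y)}$ form.
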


\begin{proof} Reversing the power series \eqref{eq:psihGUE} gives
\[
t = s-\frac{s^2}{5}h +\frac{22 s^3}{175}h^2 -\frac{823 s^4}{7875}h^3 + \cdots,
\]
which is uniformly convergent for $s_0 \leq s < h^{-1}/2$ (if $h$ is small enough such that $s_0 > -h^{-1}/2$) since then $|ht| \leq \rho_0 < 1$. Taking the expressions for $K_1$, $K_2$ given in \eqref{eq:K2GUE}, and the unwieldy one for the kernel $K_3$, which is not shown here, a routine calculation with truncated power series gives formula \eqref{eq:K1tilde} for $\tilde K_1$, and the expansion kernels 
\[
\begin{gathered}
\tilde K_2(x,y) = \frac{9}{100} \Ai(x)\Ai(y) -\frac{3 (53 x+135 y)}{1400}\Ai(x)\Ai'(y) -\frac{3 (135 x+53 y)}{1400} \Ai'(x)\Ai(y),\\*[2mm]
\tilde K_3(x,y) = -\frac{1787 (x+y)}{9000}\Ai(x)\Ai(y) + \frac{10077 x^2+18970 x y+40985 y^2}{126000}\Ai(x)\Ai'(y)\\*[1mm]
 \qquad + \frac{40985 x^2+18970 x y+10077 y^2}{126000}\Ai'(x)\Ai(y) -\frac{979}{6300} \Ai'(x)\Ai'(y)
\end{gathered}
\]
-- much simpler expressions than those for $K_1, K_2, K_3$ (e.g., the degree of the polynomial coefficients in $K_3$ is reduced from seven to two).

The Airy differential equation $\xi \Ai(\xi) = \Ai''(\xi)$ implies the replacement rule
\begin{equation}\label{eq:airyrule}
\xi^j \Ai^{(k)}(\xi) = \xi^{j-1} \Ai^{(k+2)}(\xi) - k \xi^{j-1} \Ai^{(k-1)}(\xi) \qquad (j\geq 1, \; k\geq 0)
\end{equation}
which, if repeatedly applied to a kernel of the given structure, allows us to absorb any powers of $x$ and $y$ into higher-order derivatives of $\Ai$. This process yields the asserted form of $\tilde K_2$ and $\tilde K_3$, which will be the preferred form in the course of the calculations in Section~\ref{sect:functionalformGUE}.

Since we stay within the range of uniformity of the power series expansions, and calculations with truncated powers series are amenable to repeated differentiation, the result now follows from the bounds in \cite[§2.1]{arxiv:2301.02022}.
\end{proof}

\subsection{Proof of the general form of the expansion}\label{sect:generalformGUE}

By Lemma~\ref{lem:GUEkernexpan} and \cite[Theorem~2.1]{arxiv:2301.02022},  we get for the change of variable $x=\mu_n + \sigma_n t$
\begin{align*}
E_2(n;x) &= \det(I - K_n^{\GUE})|_{L^2(x,\infty)}  \\*[2mm] 
&= \det(I - \hat K_n^{\GUE})|_{L^2(t,\infty)}
= F_2(t) + \sum_{j=1}^m E_{2,j}(t) h_n^j + h_n^{m+1} O(e^{-2t}),
\end{align*}~\\*[-2mm]
uniformly for $t\geq t_0$ as $h_n\to0^+$; preserving uniformity, this expansion can be repeatedly differentiated w.r.t. the variable $t$. By \cite[Theorem~2.1]{arxiv:2301.02022}, the $E_{2,j}(t)$ are certain smooth functions that can be expressed in terms of traces of integral operators. This finishes the proof of \eqref{eq:GUE}.

\subsection{Functional form of the expansion terms}\label{sect:functionalformGUE}

Instead of calculating $E_{2,j}$ directly from the trace formulas in \cite[Theorem~2.1]{arxiv:2301.02022} applied to the increasingly unwieldy expressions for $K_1, K_2, \ldots$  as in (\ref{eq:K2GUE}), we will first compute the corresponding functions $\tilde E_{2,j}$ induced by the much simpler kernels $\tilde K_1$, $\tilde K_2$, $\tilde K_3$ in (\ref{eq:auxkernelexpanGUE}).

This computation is based on a powerful generalization of the Tracy--Widom theory \cite{MR1257246}, which was suggested by  Shinault and Tracy \cite{MR2787973} and which we turned into a general algorithm in our work on the $\beta=2$ hard-to-soft transition limit \cite[Appendix~B]{arxiv:2301.02022}. In Appendix~\ref{app:airykernel}, we recall the results of that work when
applied to kernels of the form $\tilde K_1$, $\tilde K_2$, $\tilde K_3$ with general coefficients. Now, a direct application of formulas \eqref{eq:K1K3} and \eqref{eq:G1G3nice}  to (\ref{eq:auxkernelexpanGUE}) yields
\begin{align*}
\tilde E_{2,1}(t) &= -\frac{3}{10} F''(t), \\*[1mm]
\tilde E_{2,2}(t) &= -\frac{141}{350}F'(t) +\frac{39}{175} t F''(t) + \frac{9}{200} F^{(4)}(t),\\*[3mm]
\tilde E_{2,3}(t) &= \frac{2216 }{7875}t F'(t)-\frac{568 }{2625} t^2F''(t)+\frac{10403}{31500}F'''(t)-\frac{117 }{1750}tF^{(4)}(t)-\frac{9 }{2000}F^{(6)}(t). 
\end{align*}~\\*[-1mm]
The relation between $E_{2,1}(t)$, $E_{2,2}(t)$, $E_{2,3}(t)$ and their counterparts with a tilde is established in Lemma~\ref{lem:KhGUEexpan}: with $t$ being fixed, $h$ sufficiently small and $s=\psi_h^{-1}(t)$, we get by applying \cite[Theorem~2.1]{arxiv:2301.02022} 
\begin{multline*}
F_2(t) + E_{2,1}(t) h + E_{2,2}(t)h^2 + E_{3,2}(t)h^3 + O(h^4)\\*[1mm]
= \det\big(I-K_{(h)}\big)|_{L^2(t, \rho_1 h^{-1})}  = \det\big(I-\tilde K_{(h)}\big)|_{L^2(s, h^{-1}/2)}\\*[1mm]
 = F_2(s) + \tilde E_{2,1}(s) h + \tilde E_{2,2}(s)h^2 + \tilde E_{2,3}(s)h^3 + O(h^4),
\end{multline*}
where we have absorbed the exponentially small error contributions $e^{-\rho_1 h^{-1}} O(e^{-t})$ and $e^{-h^{-1}/2} O(e^{-s})$
into the $O(h^4)$ error term.
Upon inserting the power series \eqref{eq:psihGUE},
followed by  Taylor expansions and comparing coefficients, we obtain the expressions displayed for $E_{2,1}$, $E_{2,2}$, $E_{2,3}$ in~\eqref{eq:F22}. This finishes the proof of Theorem~\ref{thm:softGUE}.

\subsection{Comments on prior work}\label{subsect:previousGUE}
We discuss the relevant prior work in chronological order.

\subsubsection{The case $m=1$}\label{subsect:ChoupGUE} In 2005, Garoni, Forrester and Frankel \cite[Eq.~(72)]{MR2178598} found the diagonal term $K_1(x,x)$ of \eqref{eq:K1GUE} as the finite-size correction to the one-point function at the soft edge of the GUE.

In 2006, upon establishing  formula \eqref{eq:K1GUE} for $K_1(x,y)$, Choup \cite[Eq.~(3.5)]{MR2233711} gave an incomplete\footnote{\label{fn:Choup}The argument given by Choup \cite[pp.~7--8]{MR2233711} does not justify the  error term in \cite[Thm~1.2]{MR2233711}: it misses the subtle technical point that, in general,  uniform kernel bounds do not  lift to trace class, see  \cite[§3]{MR3025686} and \cite[§2]{arxiv:2301.02022}. A later, different proof offered by Choup  \cite[pp.~7]{MR2406805} suffers a similar fate: the Neumann series \cite[Eq.~(2.17)]{MR2406805} does  not, as stated, converge in trace class since the identity operator acting on $L^2(t_0,\infty)$ is not trace class.}  proof that 
\begin{equation}\label{eq:GUEChoup}
E_2\big(n;\mu_n + \sigma_n t\big) = F_2(t) + F_2(t) \det\big(I-h_n(I - K_{\Ai})^{-1} K_1 \big)\big|_{L^2(t,\infty)} + O(n^{-1}),
\end{equation}
 uniformly  for bounded $t$. Initially,  by expressing the Fredholm determinant of a finite-rank perturbation of the identity  in terms of a matrix determinant, and later, by expanding the resolvent kernel, Choup 
inferred from this the expansion
\[
E_2\big(n;\mu_n + \sigma_n t\big) = F_2(t) + E_{2,1}(t) h_n + O(n^{-1}),
\]
see \cite[Theorem~1.4]{MR2233711} and \cite[Theorem~1.3]{MR2406805},  representing 
\begin{equation}\label{eq:E21Choup}
E_{2,1}/F_2 = -\frac{1}{5}\big(2w_1 - 3u_2 +3 v_0 + u_1 v_0 - u_0 v_1 + u_0v_0^2 -u_0^2 w_0\big)
\end{equation}
in terms of the following quantities (the forms involving the Hastings--McLeod solution $q(\xi)$ of Painlevé II follow from the Tracy--Widom theory \cite{MR1257246}):
\begin{align*}
u_k(t) &= \langle(I - K_{\Ai})^{-1}\Ai, \xi^k \Ai \rangle_{L^2(t,\infty)} = \int_t^\infty q(\xi)\, \xi^k \Ai(\xi)\,d\xi,\\*[1mm]
v_k(t) &= \langle(I - K_{\Ai})^{-1}\Ai, \xi^k \Ai' \rangle_{L^2(t,\infty)}= \int_t^\infty q(\xi) \,\xi^k \Ai'(\xi)\,d\xi,\\*[1mm]
w_k(t) &= \langle(I - K_{\Ai})^{-1}\Ai', \xi^k \Ai' \rangle_{L^2(t,\infty)}= \int_t^\infty q'(\xi)\,\xi^k \Ai'(\xi)\,d\xi + u_0(t)v_k(t).
\end{align*}
There are basically two ways to simplify Choup's findings to our expression \eqref{eq:F21} for $E_{2,1}$. First, when recursively applying the replacement rule \eqref{eq:airyrule} to trade the multiplication with powers of $\xi$ for higher order derivatives of the Airy function,  we can rewrite \eqref{eq:E21Choup} in the form (see \eqref{eq:ujk} for the definition of $u_{jk}$)
\[
E_{2,1}/F_2 = -\frac{1}{5} \big(7 u_{01}-3 u_{04}+2 u_{13}+u_{00}^2-u_{00}u_{03} +u_{01} u_{02} - u_{00}^2u_{11}+u_{00} u_{01}^2  \big)
\]
-- an expression which the algorithm developed in \cite[Appendix~B]{arxiv:2301.02022} then readily reduces to~\eqref{eq:F21}.
Second, as we have already sketched in \cite[§3.4]{arxiv:2301.02022}, the expansion \eqref{eq:GUEChoup} allows us to deduce the expression \eqref{eq:F21} rather directly -- without harnessing the power of the Tracy--Widom theory at all; see also \eqref{eq:detexpand2} and Remark~\ref{rem:u01}.

\subsubsection{The case $m=0$}

 In 2012, Johnstone and Ma \cite{MR3025686} studied the convergence rates of the Tracy--Widom limit laws for the GUE and GOE, obtaining an optimal $O(n^{-2/3})$ rate for suitable specific centering and scaling constants. 
By fully addressing the technical difficulties in lifting kernel bounds to trace class bounds, emphasizing differentiability, they proved \cite[Theorem~1]{MR3025686}
\[
E_2\big(n;\mu_n + \sigma_n t\big) = F_2(t) + n^{-2/3} O(e^{-t})
\]
to be uniformly valid as $n\to\infty$ when $t$ is bounded from below. Except for a slightly weaker tail bound, this result is the case $m=0$ of our Theorem~\ref{thm:softGUE}, and their proof uses a technique that is similar to ours, see Remark~\ref{rem:Her_m01}.

\section{Expansion at the soft edge of the LUE and complex Wishart ensembles}\label{sect:LUE}

Proceeding as in Section~\ref{sect:GUE}, we consider the $L^2$-orthonormal system of Laguerre `wave functions' \/(with $L_n^{(\alpha)}$ denoting the generalized Laguerre polynomial with parameter $\alpha>-1$, as defined in \cite[§18.3]{MR2723248})\footnote{Note that, even though the $\phi_n^{(\alpha)}$ belong to $L^2(0,\infty)$ if and only if $\alpha>-1$, the functions themselves are well-defined for $n+\alpha+1 > 0$ (that is, $p>-1$ if $\alpha=p-n$).} 
\begin{equation}\label{eq:LaguerrePhiN}
\phi_n^{(\alpha)}(x) := \sqrt{\frac{n!}{\Gamma(n+\alpha+1)}} x^{\alpha/2} e^{-x/2} (-1)^n L_n^{(\alpha)}(x) \quad (0<x<\infty)
\end{equation}
and the induced finite-rank projection kernel (cf. \cite[§§5.1.2/5.4.1]{MR2641363})
\begin{equation}\label{eq:KnLUE}
K_n^{\LUE,\alpha}(x,y) := \sum_{k=0}^{n-1} \phi_k^{(\alpha)}(x)\phi_k^{(\alpha)}(y) = \sqrt{n(n+\alpha)} \frac{\phi_n^{(\alpha)}(x)\phi_{n-1}^{(\alpha)}(y)-\phi_{n-1}^{(\alpha)}(x)\phi_n^{(\alpha)}(y)}{x-y}.
\end{equation}
The distribution of the largest eigenvalue of the LUE is then given by
\[
E_{\LUE,\alpha}(n;x) = \det\big(I- K_n^{\LUE,\alpha}\big)\big|_{L^2(x,\infty)}.
\]
However, with the complex Wishart distribution and its symmetry \eqref{eq:WishartSymmetry} in mind, we prefer a setup that extends these definitions to general real $n>0$ and $p =  n + \alpha >0$. As explained in Section~\ref{subsect:symmetric}, using Whittaker's confluent hypergeometric function $W_{\kappa,\mu}$, we can extend the definition of the wave functions
\[
\phi_{n,p}(x) :=\phi^{(p-n)}_n(x) 
\]
to all real $n, p >-1$ -- an extension, which now enjoys the symmetry (a consequence of Kummer's transformation for confluent hypergeometric functions, see~\eqref{eq:Kummer})
\[
\phi_{n,p}(x) = \phi_{p,n}(x).
\]
By likewise extending the kernel $K_{n,p}^{\LUE}(x,y):= K_n^{\LUE,p-n}(x,y)$ from integer $n$ to real $n$, we define the kernel\footnote{Because of differentiability and exponential decay the kernel $K_{n,p}^{\LUE}$ is easily seen to induce 
a trace class operator on the spaces $L^2(t,\infty)$, $t>0$.}
\begin{equation}\label{eq:KnpLUE}
K_{n,p}^{\LUE}(x,y) := \sqrt{n p} \,\frac{\phi_{n,p}(x)\phi_{n-1,p-1}(y)-\phi_{n-1,p-1}(x)\phi_{n,p}(y)}{x-y} \quad (n,p>0),
\end{equation}
which satisfies the symmetry 
\[
K_{n,p}^{\LUE}(x,y) = K_{p,n}^{\LUE}(x,y).
\]
In this fashion, the largest eigenvalue distribution $E_{\LUE,p-n}(n;x)$, and $E_2(n,p;x)$ for that matter, generalizes from integer $n$ to
\begin{equation}\label{eq:E2np}
E_2(n,p;x) := \det\big(I- K_{n,p}^{\LUE}\big)\big|_{L^2(x,\infty)}
\end{equation}
which  satisfies the symmetry relation \eqref{eq:WishartSymmetry} now for any real $n,p>0$. We note, however, that 
the probabilistic interpretation of $E_2(n,p;x)$  is, so far, only known to be valid  if (i) $n$ is an integer and $p> n-1$ or vice versa (the LUE case with real $\alpha=p-n>-1$) or (ii) $n,p$ are integers (the complex Wishart case).

In pursuing the strategy of Section~\ref{subsect:strategy}, we took particular care that the centering, scaling, and expansion parameters $\mu, \sigma, h$ are symmetric in $n$ and $p$ and that the ratio $p/n$ is encoded by yet another symmetric parameter $\tau$, such that the coefficients of all the expansion terms are rational polynomials in $\tau$. This way, we are led to the following result.

\begin{theorem}\label{thm:softLUE} For any real $n, p >0$, consider the symmetric parameters\/\footnote{In view of the later discussion, leading to \eqref{eq:LUEnaturalScaling}, these are the unique `natural' choices of parameters.}
\begin{subequations}\label{eq:LUEscaling}
\begin{gather}
\mu_{n,p} = \big(\sqrt{n}+\sqrt{p}\,\big)^2,\quad \sigma_{n,p}= \big(\sqrt{n}+\sqrt{p}\,\big) \bigg(\frac{1}{\sqrt{n}} + \frac{1}{\sqrt{p}}\bigg)^{1/3},\\*[2mm]
\tau_{n,p} = \frac{4}{\big(\sqrt{n}+\sqrt{p}\,\big) \left(\dfrac{1}{\sqrt{n}} + \dfrac{1}{\sqrt{p}}\right)},\quad 
h_{n,p} = \frac{\sigma_{n,p}}{\tau_{n,p}\cdot\mu_{n,p}}= \frac{1}{4} \bigg(\frac{1}{\sqrt{n}} + \frac{1}{\sqrt{p}}\bigg)^{4/3},
\end{gather}
\end{subequations}
which satisfy $h_{n,p}\asymp (n\wedge p)^{-2/3}$ and $0< \tau_{n,p}\leq 1$, where $\tau_{n,p}=1$ corresponds to $n=p$.
Then there holds the large matrix expansion of the $\LUE$ at the soft edge,
\begin{equation}\label{eq:LUE}
E_2\big(n,p;\mu_{n,p} + \sigma_{n,p} t\big) = F_2(t) + \sum_{j=1}^m E_{2,j;\tau_{n,p}}(t) h_{n,p}^j + h_{n,p}^{m+1}\cdot O(e^{-2t}),
\end{equation}
which is uniformly valid as $h_{n,p}\to0^+$ when $t\geq t_0$ and $\tau_{n,p}\geq \tau_0 >0$,
with~$m$ being any fixed integer in the range\footnote{For the  reasons explained in Remark~\ref{rem:mstar}, we have to impose a bound $m_*$ on the number of expansion terms (we have explicitly checked the cancellation and divisibility properties in the proof of Lemma~\ref{lem:LUEkernexpan},  up to $m_*=10$); we expect this bound to be removed by extending the analysis of \cite{arXiv:2309.06733}.}  $0\leq m\leq m_*$ and  $t_0,\tau_0$  any fixed real numbers. Preserving uniformity, the expansion can be repeatedly differentiated w.r.t. the variable $t$.
Here, the $E_{2,j;\tau}(t)$ are certain smooth functions -- the first instances are\/\footnote{Figure~\ref{fig:LUE} plots the density correction terms $E_{2,1;\tau}'(t)$, $E_{2,2;\tau}'(t)$, $E_{2,3;\tau}'(t)$ for various values of $0\leq \tau \leq 1$; note that (if we break the symmetry by considering $p\geq n$) the boundary cases correspond to $p=\infty$, $p=n$.}
\begin{subequations}\label{eq:F22LUE}
\begin{align}
E_{2,1;\tau}(t) &= -\frac{2 \tau -1}{5} t^2 F_2'(t) + \frac{\tau -3}{10} F_2''(t),\label{eq:F21LUE}\\*[1mm]
E_{2,2;\tau}(t) &=\bigg(\frac{\tau ^2+94 \tau -141}{350} +\frac{43 \tau ^2-18 \tau -8 }{175}t^3\bigg) F_2'(t)\\*[0.5mm]
&\qquad\qquad  -\bigg(\frac{4 \tau ^2+26 \tau -39}{175}t  - \frac{(2 \tau -1)^2}{50} t^4\bigg)F_2''(t)\notag\\*[0.5mm]
&\qquad\qquad\qquad -\frac{(\tau -3)(2 \tau -1) }{50}t^2 F_2'''(t) + \frac{(\tau -3)^2}{200} F_2^{(4)}(t),\notag\\*[1mm]
E_{2,3;\tau}(t) &= -\bigg(\frac{ 2(4 \tau ^3-161 \tau ^2+1108 \tau -1108)}{7875}t\\*[0.5mm]
&\qquad\quad +\frac{1384 \tau ^3-551 \tau ^2-212 \tau -148}{7875}t^4\bigg)F'_2(t)\notag\\*[0.5mm]
&\hspace*{-2.5cm} + \bigg(\frac{ 12 \tau ^3-77 \tau ^2+328 \tau -265}{1050}t^2
-\frac{(2 \tau -1)(43 \tau ^2-18 \tau -8)}{875}t^5\bigg) F_2''(t)\notag\\*[0.5mm]
& +\bigg(\frac{61 \tau ^3+1351 \tau ^2-10403 \tau +10403 }{31500}\notag\\*[0.5mm]
&\qquad\quad +\frac{59 \tau ^3-51 \tau ^2-162 \tau +102}{1750} t^3-\frac{(2 \tau -1)^3 }{750} t^6\bigg)F_2'''(t)\notag\\*[0.5mm]
&\qquad-\bigg( \frac{(\tau -3) \left(4 \tau ^2+26 \tau -39\right)}{1750}t
-\frac{(\tau -3) (2 \tau -1)^2 }{500} t^4\bigg)F_2^{(4)}(t)\notag\\*[0.5mm]
&\qquad\qquad\qquad\qquad\qquad -\frac{ (\tau -3)^2 (2 \tau -1)}{1000}t^2 F_2^{(5)}(t) + \frac{(\tau -3)^3 }{6000}F_2^{(6)}(t).\notag
\end{align}
\end{subequations}
\end{theorem}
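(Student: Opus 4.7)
The plan is to follow the same four-step strategy that proves the GUE result (Theorem~\ref{thm:softGUE}), suitably adapted to the Laguerre setting and with particular care paid to uniformity in the additional ratio parameter $\tau_{n,p}$.

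\emph{Step 1 (wave-function expansion).} I would first establish, by a turning-point analysis of the Whittaker representation of $\phi_{n,p}$ (valid for all real $n,p>-1$; details presumably spelt out in Part~III), an asymptotic formula
\begin{equation*}
c\,h^\gamma\,\phi_{n-1/2,p-1/2}\bigl(\mu_{n-1/2,p-1/2}+\sigma_{n-1/2,p-1/2}\,\theta\bigr)
= \Ai(s)\sum_{k=0}^m p_k(s,\tau)h^k + \Ai'(s)\sum_{k=0}^m q_k(s,\tau)h^k + h^{m+1}O(e^{-s}),
\end{equation*}
uniformly valid as $h=h_{n-1/2,p-1/2}\to 0^+$ when $\theta$ is bounded from below and $\tau=\tau_{n-1/2,p-1/2}\geq\tau_0>0$, with $p_k,q_k\in\Q[s,\tau]$, $p_0=1$, $q_0=0$. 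As in Lemma~\ref{lem:GUEkernexpan}, the choice of parameters \eqref{eq:LUEscaling} (symmetric in $n$ and $p$) is the unique one -- up to rescaling -- that forces the expansion to proceed in integer powers of $h$, while the natural variable $s$ agrees with $\theta$ up to $O(h)$.

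\emph{Step 2 (kernel expansion).} Plugging the expansions for $\phi_{n,p}$ and $\phi_{n-1,p-1}$ into the Christoffel--Darboux form \eqref{eq:KnpLUE} and re-expanding both factors at the common point $\mu_{n,p}+\sigma_{n,p}\xi$ -- exploiting that the pairs $(n,p)$ and $(n-1,p-1)$ sit symmetrically around the midpoint $(n-1/2,p-1/2)$ -- the even half-integer powers of $h_{n,p}^{1/2}$ must cancel while the odd ones produce polynomial multiples of $\Ai^{(\kappa)}(x)\Ai^{(\lambda)}(y)$ divisible by $x-y$. I would verify these cancellation and divisibility properties by direct case-by-case computation up to the fixed bound $m\leq m_*$. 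The outcome is the LUE analog of Lemma~\ref{lem:GUEkernexpan}: a kernel expansion
\begin{equation*}
\hat K_{n,p}^{\LUE}(x,y) = K_{\Ai}(x,y) + \sum_{j=1}^m K_{j;\tau}(x,y)\,h_{n,p}^j + h_{n,p}^{m+1}\cdot O(e^{-(x+y)}),
\end{equation*}
uniformly valid for $x,y\geq t_0$ and $\tau_{n,p}\geq\tau_0$, with finite-rank $K_{j;\tau}$ of the form $\sum_{\kappa,\lambda\in\{0,1\}} p_{j,\kappa\lambda}(x,y,\tau)\,\Ai^{(\kappa)}(x)\Ai^{(\lambda)}(y)$ and coefficients $p_{j,\kappa\lambda}\in\Q[x,y,\tau]$.

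\emph{Steps 3 and 4 (simplification, lifting, and functional form).} To reduce the combinatorial complexity of the polynomials involved, I would introduce a $\tau$-dependent nonlinear change of variable $s=\psi_{h,\tau}^{-1}(t)$ dictated by the natural scaling of the Whittaker turning-point problem, analogous to Lemma~\ref{lem:KhGUEexpan}, thereby absorbing the Taylor-expansion clutter that is an artefact of the linear rescaling $x=\mu_{n,p}+\sigma_{n,p}t$. This yields much simpler transformed kernels $\tilde K_{j;\tau}$, from which \cite[Theorem~2.1]{arxiv:2301.02022} lifts the expansion to the Fredholm determinant, producing the general form \eqref{eq:LUE} with smooth functions $E_{2,j;\tau}(t)$ that admit term-by-term differentiation. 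The explicit formulas \eqref{eq:F22LUE} then follow by first computing the functions $\tilde E_{2,j;\tau}(s)$ induced by the simpler $\tilde K_{j;\tau}$ through a direct application of the algorithm of Appendix~\ref{app:airykernel} (the generalisation of the Tracy--Widom--Shinault--Tracy theory), and then reversing the change of variable by Taylor expansion in $h$ and matching coefficients.

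\emph{Main obstacle.} The principal technical difficulty lies in Step~1: producing an expansion of the Whittaker wave function that is uniform in $\tau_{n,p}\geq\tau_0>0$, i.e., uniform over a wide range of the ratio $p/n$. Unlike the Hermite case, the underlying confluent hypergeometric equation has two turning points whose positions depend on $\tau$, and obtaining polynomial coefficients $p_k,q_k$ that lie in $\Q[s,\tau]$ (rather than in some algebraic extension) is tightly coupled to the symmetric choice \eqref{eq:LUEscaling}. A valuable sanity check, which I would carry out at the end, is that formally setting $\tau=0$ in the formulas \eqref{eq:F22LUE} reproduces the GUE expansion \eqref{eq:F22}; this is consistent with the Laguerre-to-Gaussian transition law of Appendix~\ref{sect:Laguerre2Gauss} and provides strong cross-validation of the symbolic computations.
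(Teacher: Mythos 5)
Your proposal follows essentially the same route as the paper's own proof: the symmetrized Whittaker/turning-point expansion of the wave functions (Part III, Corollary~\ref{cor:Laguerre2}), the Christoffel--Darboux kernel expansion with cancellation of half-integer powers by midpoint symmetry (Lemma~\ref{lem:LUEkernexpan}), the $\tau$-dependent nonlinear simplifying transformation (Lemma~\ref{lem:KhLUEexpan}), and the lift to the Fredholm determinant plus the algorithm of Appendix~\ref{app:airykernel} to obtain the explicit multilinear forms. You also correctly identify both the main technical obstacle (uniformity in $\tau$ with two $\tau$-dependent turning points) and the $\tau=0$ consistency check against the GUE expansion.
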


\begin{figure}[tbp]
\includegraphics[width=0.325\textwidth]{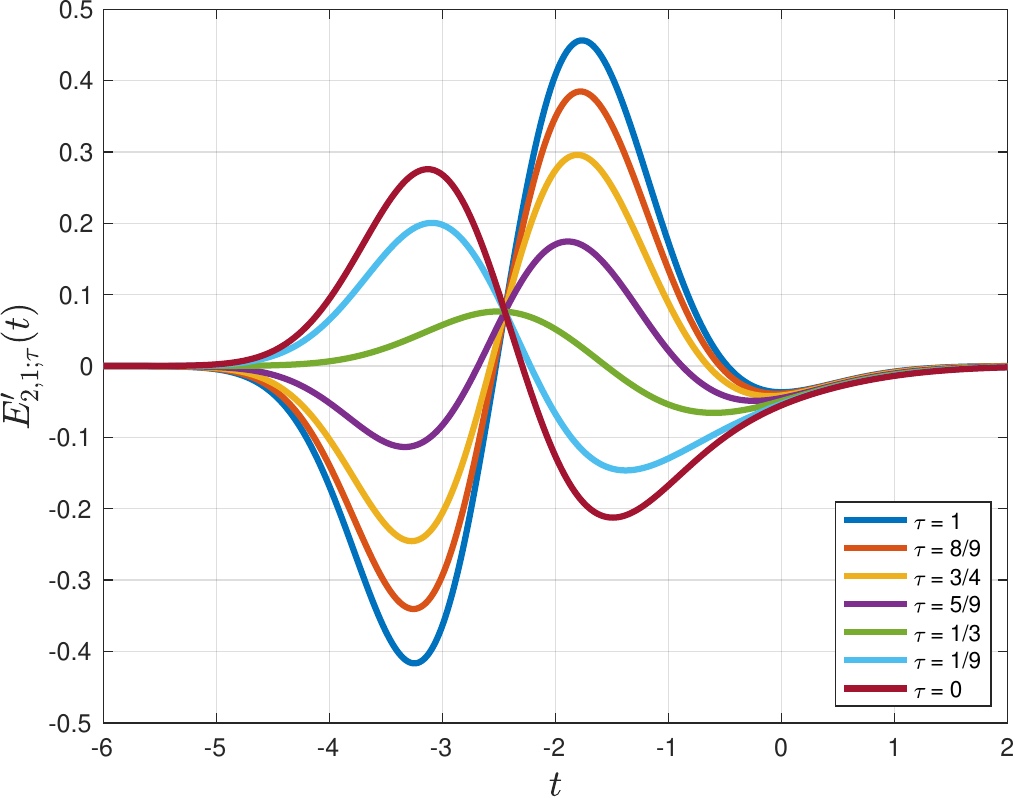}\hfil\,
\includegraphics[width=0.325\textwidth]{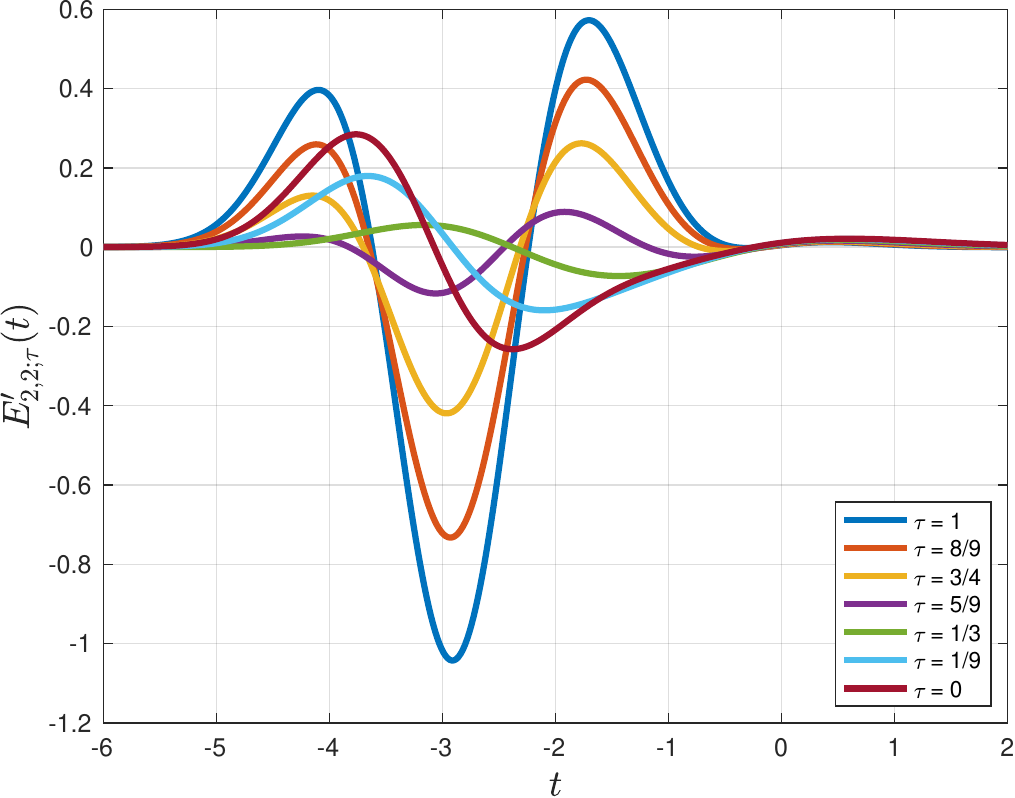}\hfil\,
\includegraphics[width=0.325\textwidth]{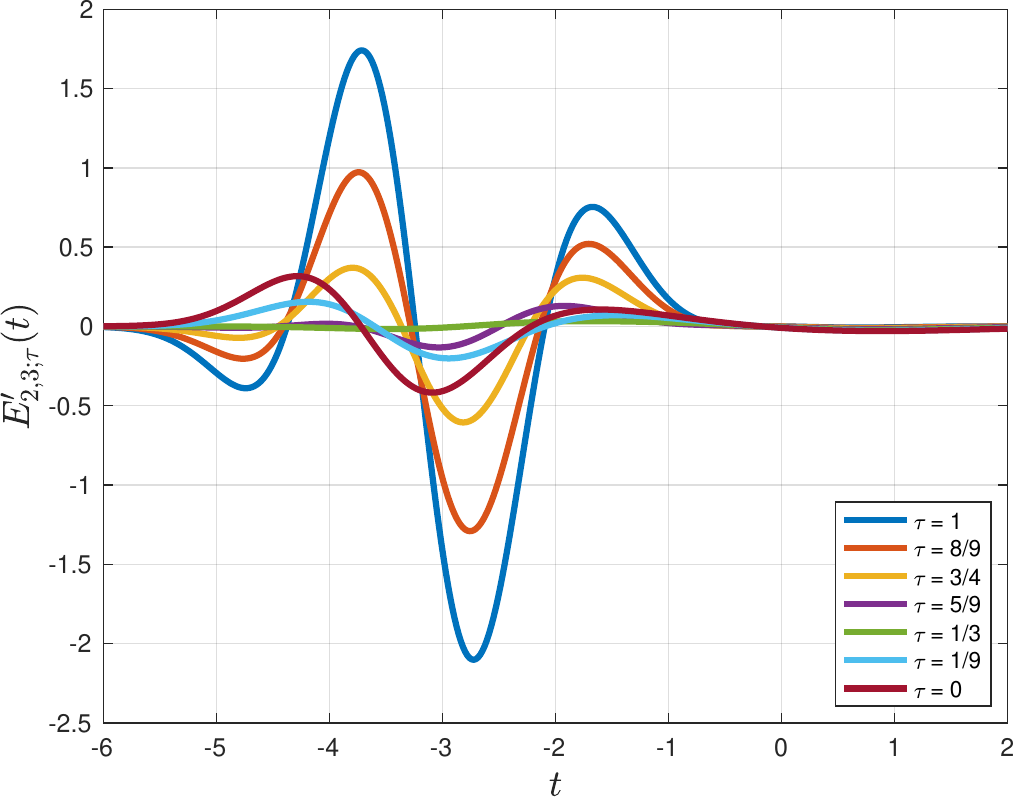}
\caption{{\footnotesize Plots of $E_{2,1;\tau}'(t)$ (left panel), $E_{2,2;\tau}'(t)$ (middle panel) 
and $E_{2,3;\tau}'(t)$ (right panel) for $\tau\in\{1, 8/9, 3/4, 5/9,1/3,1/9,0\}$. In the case $p\geq n$, these values of $\tau$ correspond to the ratios $p=n$, $p=4n$, $p=9n$,  $p= 25 n$, $p\approx 100n$, $p\approx 1000 n$, $p=\infty$. See Figure~\ref{fig:LUESimulation} for a comparison with sampling data in the case $p=4n$.}}
\label{fig:LUE}
\end{figure}

The proof of Theorem~\ref{thm:softLUE} is split into several steps and will be given in Sections~\ref{subsect:kernelLUEexpan}--\ref{sect:functionalformLUE}. We will comment on previous work on the cases $m=0$ and $m=1$ in Section~\ref{subsect:previousLUE}. 

\begin{remark}\label{rem:LUE2GUE} Though we can prove the uniform validity of the expansion \eqref{eq:LUE} only for 
\[
\tau_{n,p}\geq \tau_0 >0
\]
bounded away from zero, numerical experiments (see the example following this remark) show robustness also for very small values of $\tau_{n,p}$. We therefore conjecture that the expansion is, in fact, uniformly valid for all $\tau_{n,p}>0$
-- a conjecture which is also supported by the following observation. Upon comparing Theorem~\ref{thm:softGUE} with Theorem~\ref{thm:softLUE}  (see also Figure~\ref{fig:GUE} and Figure~\ref{fig:LUE}), we get by inspecting the cases $j=1,\ldots,m_*$ that, for given $n$,
\[
h_{n,\infty} := \lim_{p\to\infty} h_{n,p} = h_n,\quad \lim_{p\to\infty} E_{2,j;\tau_{n,p}}(t)  = E_{2,j;0}(t) =  E_{2,j}(t).
\]
On the other hand, in Appendix~\ref{sect:Laguerre2Gauss}, Corollary~\ref{cor:LUE2GUE}, we will establish the limit law
\[
\lim_{p\to\infty} E_2\big(n,p;\mu_{n,p} + \sigma_{n,p} t\big)  = E_2\big(n;\mu_n + \sigma_n t\big),
\]
uniformly in $t$. Thus, by combining these  limit relations between the GUE and the LUE, the expansion \eqref{eq:GUE} implies (at least for $m\leq m_*$) that\footnote{Note that this implies the limit law
\[
\lim_{n\to\infty}\lim_{p\to\infty} E_2\big(n,p;\mu_{n,p} + \sigma_{n,p} t\big) = \lim_{p\to\infty}\lim_{n\to\infty} E_2\big(n,p;\mu_{n,p} + \sigma_{n,p} t\big) = F_2(t),
\]
which is consistent with the limit law, established by El Karoui \cite{karoui2003largest}, when $n, p, p/n \to\infty$.
}
\begin{equation}\label{eq:LUElarge_p}
\lim_{p\to\infty} E_2\big(n,p;\mu_{n,p} + \sigma_{n,p} t\big) = F_2(t) + \sum_{j=1}^m E_{2,j;0}(t) h_{n,\infty}^j + h_{n,\infty}^{m+1}\cdot O(e^{-2t}),
\end{equation}
which is exactly what we obtain from formally passing to the limit $p\to\infty$ in \eqref{eq:LUE} -- an  admissible operation if the conjecture about the uniformity for $\tau_{n,p}>0$ were true.\footnote{A similar observation will be discussed for the 
expansion of the `wave functions' belonging to the Laguerre and Hermite polynomials, in Remark~\ref{rem:Lag2Her2}.}
\end{remark}

As an extreme stress test of the robustness of \eqref{eq:LUE}, for very small $\tau$, 
we compared $p=10^8 n$ (that is, $\tau_{n,p} = 0.00039992\cdots$) for $n=10$ with a statistics of $N=10^9$  draws from the LUE. Except for some different random sampling errors, the result is indistinguishable from Figure~\ref{fig:GUESimulation}, which visualizes the same experiment for the $\GUE$ (which by the preceding remark corresponds to $\tau=0$).

A more modest example with $p=4n$ ($\tau=8/9$) is shown in Figure~\ref{fig:LUESimulation}. It exhibits, at the mode of the distribution, an error of the Tracy--Widom limit law that is noticeably larger than the one in Figure~\ref{fig:GUESimulation}; this can be quantitatively understood
(including the opposite signs of the error) as follows. At the mode of the limit distribution, the first correction term shown in the left panel of Figure~\ref{fig:LUE} is, for $\tau=8/9$, by a factor of $\approx 2.3$ larger in size than for $\tau=0$. Since also $h_{10,40} \approx 1.7 h_{10,\infty}$, the error increases in total by a factor of about $3.9$ in size.

\subsection{Kernel expansion}\label{subsect:kernelLUEexpan}

We start with expanding the induced transformation of the kernel. 

\begin{lemma}\label{lem:LUEkernexpan} With the parameters $\mu_{n,p}, \sigma_{n,p}, h_{n,p}, \tau_{n,p}$ as introduced in \eqref{eq:LUEscaling}, the linear change of variables $s=\mu_{n,p} + \sigma_{n,p}t$ induces the  transformed kernel  
\begin{equation}\label{eq:LUEkernexpan}
\begin{aligned}
\hat K_{n,p}^{\LUE}(x,y) &:= \sigma_{n,p} \, K_{n,p}^{\LUE}(\mu_{n,p} + \sigma_{n,p} x,\mu_{n,p} + \sigma_{n,p} y)\\
 &= K_{\Ai}(x,y) + \sum_{j=1}^m K_j(x,y;\tau_{n,p}) h_{n,p}^j + h_{n,p}^{m+1}\cdot O\big(e^{-(x+y)}\big),
\end{aligned}
\end{equation}
where the expansion is
uniformly valid as $h_{n,p}\to0^+$ when $x,y\geq t_0$ and $\tau_{n,p}\geq \tau_0 >0$,
with~$m$ being any fixed integer in the range $0\leq m\leq m_*$ and  $t_0,\tau_0$  any fixed real numbers. Here, the expansion kernels $K_j$ are certain finite rank kernels of the form
\[
K_j(x,y;\tau) = \sum_{\kappa,\lambda\in\{0,1\}}  p_{j,\kappa\lambda}(x,y;\tau) \Ai^{(\kappa)}(x)\Ai^{(\lambda)}(y)
\]
with polynomial coefficients $p_{j,\kappa\lambda}(x,y;\tau)\in \Q[\tau][x,y]$ -- the first instances are\/\footnote{We refrain from displaying the unwieldy expression for $K_3(x,y;\tau)$ which has polynomial factors of degree $7$ in $x,y$ and degree $3$ in $\tau$.}
\begin{subequations}\label{eq:K2LUE}
\begin{align}
K_1(x,y;\tau) &=  \frac{2\tau-1}{5}\Big((x^2+xy+y^2)\Ai(x)\Ai(y) - (x+y)\Ai'(x)\Ai'(y)\Big)\label{eq:K1LUE}\\*[0mm]
&\qquad  - \frac{\tau-3}{10} \big(\!\Ai(x)\Ai'(y) + \Ai'(x)\Ai(y)\big),\notag\\*[1mm]
K_2(x,y;\tau) &= -\Big(\frac{20 \tau ^2-3 \tau -6}{70}  (x^3+y^3) 
+\frac{114 \tau ^2-64 \tau -9}{350} x y (x+y) \\*[0mm]
&\qquad\quad - \frac{(\tau -3)^2}{100} \Big) \Ai(x)\Ai(y)\notag\\*[0mm]
&\hspace*{-2cm}+ \Big( \frac{(2 \tau -1)^2}{50}  (x^4+x^3 y-x^2 y^2-x y^3-y^4)+\frac{13 \tau ^2-10 \tau -27}{280} x\notag\\*[0mm]
&\qquad\quad    +\frac{51 \tau ^2+34 \tau -261}{1400}y\Big) \Ai(x)\Ai'(y) \notag\\*[0mm]
&\hspace*{-2cm}  - \Big(\frac{(2 \tau -1)^2}{50}  (x^4+x^3 y+x^2 y^2-x y^3-y^4) 
-\frac{51 \tau ^2+34 \tau -261}{1400}x\notag\\*[0mm]
&\qquad\quad -\frac{13 \tau ^2-10 \tau -27}{280} y\Big)\Ai'(x)\Ai(y)\notag\\*[0mm]
 &
 + \Big( \frac{20 \tau ^2-17 \tau +1}{70}  (x^2+y^2)+\frac{43 \tau ^2-18 \tau -8}{175} x y\Big)\Ai'(x)\Ai'(y). \notag
\end{align}
\end{subequations}
Preserving uniformity, the kernel expansion \eqref{eq:LUEkernexpan} can repeatedly be differentiated w.r.t. $x$, $y$.
\end{lemma}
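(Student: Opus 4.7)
The plan is to mirror the proof of Lemma \ref{lem:GUEkernexpan} (the GUE case), adapted to the two-parameter setting $(n,p)$ and carrying the additional indeterminate $\tau_{n,p}$ throughout. First, applying the Christoffel--Darboux form \eqref{eq:KnpLUE} under the linear rescaling $s\mapsto \mu_{n,p}+\sigma_{n,p}s$, I would write
\[
\hat K_{n,p}^{\LUE}(x,y) = c_{n,p}\cdot \frac{a_{n,p}(x,y)-a_{n,p}(y,x)}{x-y},
\]
where $a_{n,p}(x,y)$ is the product of suitably normalized values of $\phi_{n,p}$ at $\mu_{n,p}+\sigma_{n,p}x$ and of $\phi_{n-1,p-1}$ at $\mu_{n,p}+\sigma_{n,p}y$, and $c_{n,p}$ collects the prefactor $\sqrt{np}\,\sigma_{n,p}$ together with the normalization factors pulled out from the Laguerre wave function asymptotics of Part III.

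Next, I would invoke the Laguerre analogue of Corollary~\ref{cor:Hermite} (established in Part III), which expands $\phi_{n,p}(\mu_{n+1/2,p+1/2}+\sigma_{n+1/2,p+1/2}\theta)$, properly normalized, as a series in $h_{n+1/2,p+1/2}$ with coefficients of the form $\Ai(\theta)P_k(\theta;\tau)+\Ai'(\theta)Q_k(\theta;\tau)$, where $P_k,Q_k\in\Q[\theta,\tau]$, uniformly valid for $\theta$ bounded from below and $\tau_{n+1/2,p+1/2}\geq \tau_0>0$. The crucial symmetry is that $(n,p)$ is the midpoint between $(n+1/2,p+1/2)$ and $(n-1/2,p-1/2)$: evaluating $\phi_{n,p}$ at $\mu_{n,p}+\sigma_{n,p}t$ and re-expanding the shift in $t$ (arguing for the exponential tail as in the proof of Theorem~\ref{thm:softGUE}) yields a series in half-integer powers of $h_{n,p}$ with coefficients in $\Q[t,\tau_{n,p}]$. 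Applying the same procedure to $\phi_{n-1,p-1}$, starting from the Laguerre expansion with parameters $(n-1/2,p-1/2)$, produces precisely the same series but with an alternating sign pattern in the half-integer powers.

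A routine but tedious calculation with the antisymmetric combination $(a_{n,p}(x,y)-a_{n,p}(y,x))/(x-y)$ then forces exact cancellation of the even powers of $h_{n,p}^{1/2}$, while the polynomial coefficients of the resulting Airy terms $\Ai^{(\kappa)}(x)\Ai^{(\lambda)}(y)$ multiplying the odd powers turn out to be divisible by $x-y$; both facts are verified by direct inspection for $m=1,\ldots,m_*=10$, exactly as in the GUE proof. Dividing by $x-y$ yields the claimed expansion in integer powers of $h_{n,p}$, whose kernels $K_j(x,y;\tau)$ automatically acquire the asserted finite-rank Airy structure with polynomial coefficients in $\Q[\tau][x,y]$; the explicit formulas \eqref{eq:K2LUE} for $K_1$ and $K_2$ are then read off from the first few polynomial data, in analogy with \eqref{eq:GUEkernelPolynomials}. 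The main obstacle will be twofold: tracking uniformity in $\tau_{n,p}\geq \tau_0>0$ (a restriction inherited from the turning point analysis of Part III, which degenerates as $\tau_{n,p}\to 0^+$; numerical evidence suggests it can likely be relaxed, cf. Remark~\ref{rem:LUE2GUE}), and certifying the cancellation and divisibility properties in a setting where the polynomial identities now carry the additional indeterminate $\tau$ and are considerably bulkier than in the Hermite case --- still within reach of a computer algebra system. Differentiability w.r.t. $x$, $y$ while preserving uniformity finally follows from the uniform convergence of the truncated series together with the bounds of \cite[§2.1]{arxiv:2301.02022}, as at the end of the proof of Lemma~\ref{lem:GUEkernexpan}.
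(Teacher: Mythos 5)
Your proposal follows essentially the same route as the paper's proof: rewrite the Christoffel--Darboux quotient with normalized wave-function products, expand $\phi_{n,p}$ and $\phi_{n-1,p-1}$ via the symmetrized Laguerre expansion of Corollary~\ref{cor:Laguerre2} centered at $(n\pm\tfrac12,p\pm\tfrac12)$, re-expand at the common point $\mu_{n,p}+\sigma_{n,p}t$ so that the midpoint symmetry produces the alternating-sign series in $h_{n,p}^{1/2}$, and then verify (up to $m_*$) the cancellation of even half-powers and the divisibility by $x-y$ of the odd-power coefficients in $\Q[\tau][x,y]$. The remaining details — the explicit prefactor bookkeeping via $1/\sqrt{np}=2\tau h^{3/2}$, the tabulated $\hat p_k(\tau,t),\hat q_k(\tau,t)$, and the differentiability argument — match the paper's treatment.
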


\begin{figure}[tbp]
\includegraphics[width=0.325\textwidth]{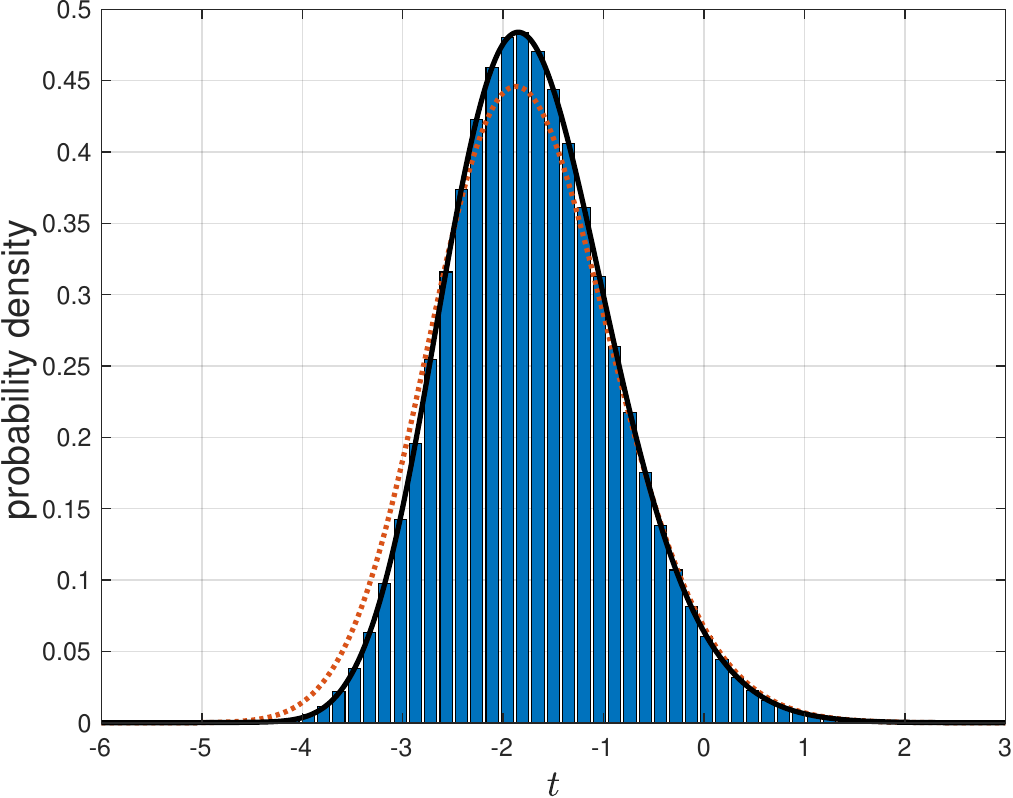}\hfil\,
\includegraphics[width=0.325\textwidth]{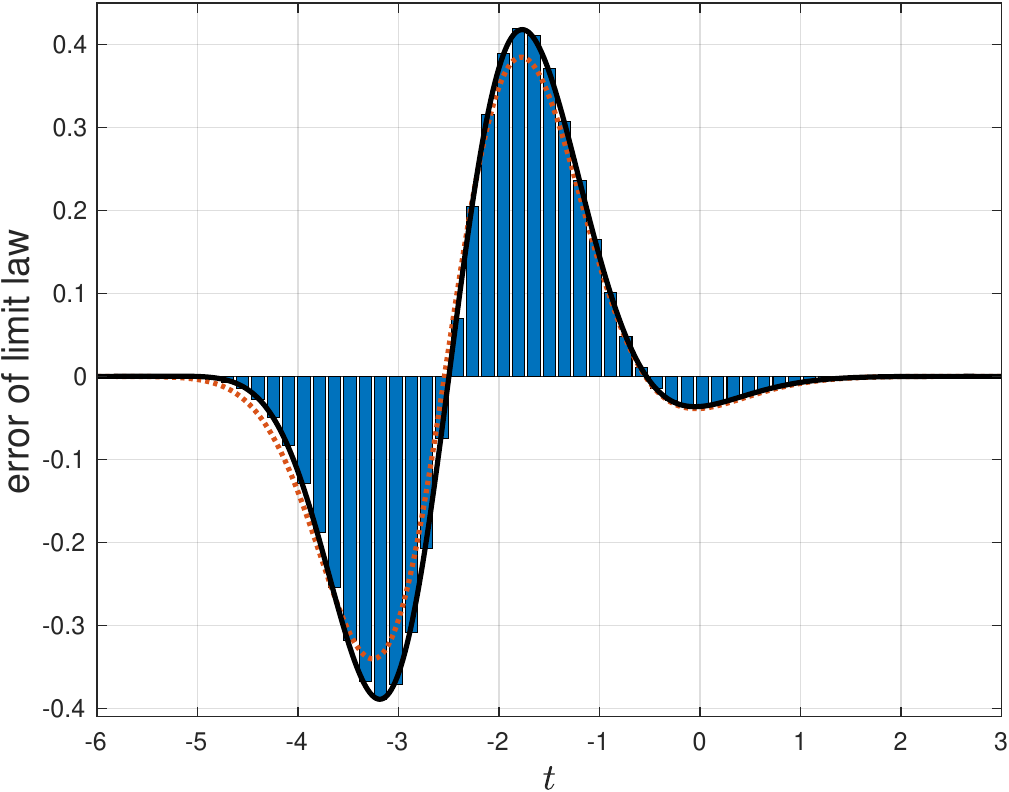}\hfil\,
\includegraphics[width=0.325\textwidth]{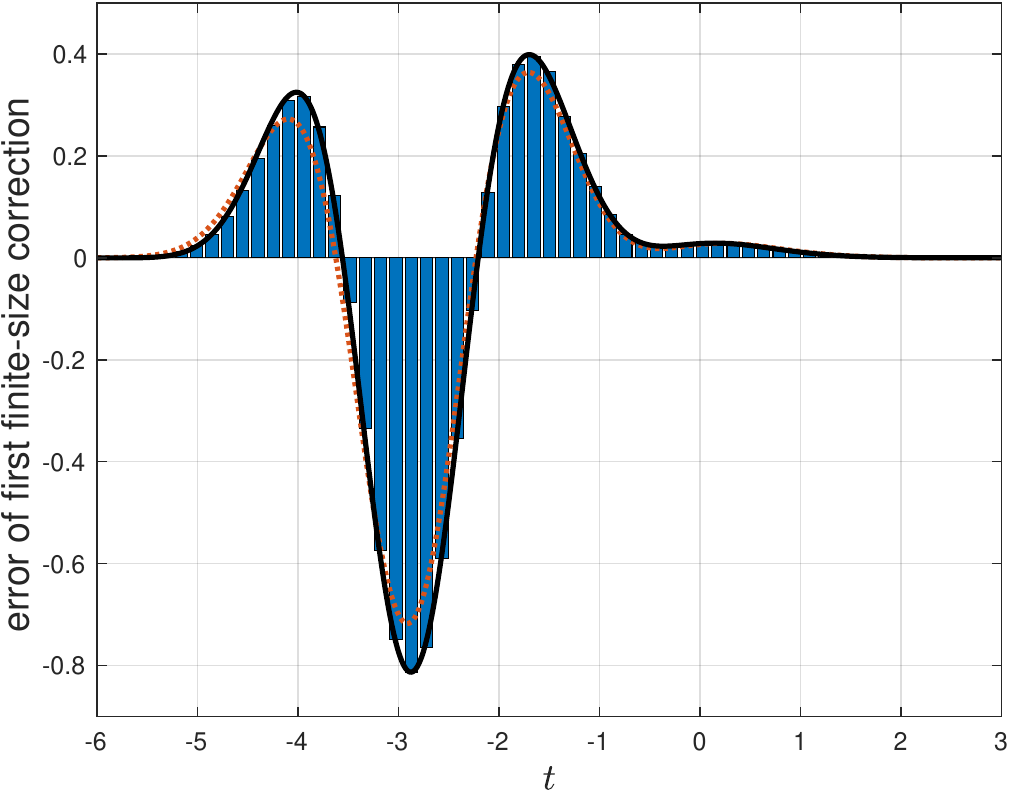}
\caption{{\footnotesize [Expansion terms with a tilde are  `histogram adjusted', see Appendix~\ref{app:histo}.]
Left panel: histogram  of the scaled largest eigenvalues $(\lambda_{\max} - \mu_{n,p})/\sigma_{n,p}$, for $N=10^9$ draws, from the $\LUE$ with  $n=10$, $p=40$, so that $\tau=\tau_{n,p}=8/9$ and $h=h_{n,p}=0.092482\cdots$ (blue bars) vs. the Tracy--Widom limit density $F_2'$ (red dotted line) 
and its second order correction $F_2' + h E_{2,1;\tau}' + h^2 \tilde E_{2,2;\tau}'$ (black solid line). Middle panel: difference, scaled by $h^{-1}$,  between the histogram midpoints and the limit density (blue bars) vs. the correction terms $E_{2,1;\tau}'$ (red dotted line) and $E_{2,1;\tau}' + h \tilde E_{2,2;\tau}'$ (black solid line). Right panel: difference, scaled by $h^{-2}$, between 
the histogram midpoints and the first finite-size correction (blue bars) vs. the correction terms $\tilde E_{2,2;\tau}'$ (red dotted line) and $\tilde E_{2,2;\tau}' + h \tilde E_{2,3;\tau}'$ (black solid line).}}
\label{fig:LUESimulation}
\end{figure}

\begin{proof} Suppressing some explicit dependencies in the notation, we write briefly
\[
h = h_{n,p}, \quad h_\pm = h_{n\pm1/2,p\pm1/2}, 
\]
and in the same fashion $\tau,\tau_\pm,\mu,\mu_\pm,\sigma,\sigma_\pm$.
Upon using $1/\sqrt{np}=2\tau h^{3/2}$  the transformed kernel can be rewritten as
\[
\hat K_{n,p}^{\LUE}(x,y) =  \frac{1}{2\sqrt{h}} \frac{a_{n,p}(x,y)-a_{n,p}(y,x)}{x-y}
\]
(where the singularities at $x=y$ are removable) with 
\[
a_{n,p}(x,y) := \frac{\phi_{n,p}(\mu + \sigma x)}{\tau^{1/2}h^{1/2}}\frac{ \phi_{n-1,p-1}(\mu + \sigma x)}{\tau^{1/2}h^{1/2}}.
\]
According to Corollary~\ref{cor:Laguerre2}, the transformed `wave function' expands as
\[
\frac{\phi_{n,p}(\mu_+ + \sigma_+\theta)}{\tau_+^{1/2}h_+^{1/2}} = \Ai(\theta) \sum_{k=0}^m p_k(\tau_+,\theta) h_+^k + \Ai'(s) \sum_{k=0}^m q_k(\tau_+\theta) h_+^k  + h_+^{m+1} O(e^{-\theta}),
\]
which is uniformly valid for $\theta$ being bounded from below. If we change variables so that we evaluate $\phi_{n,p}(\xi)$ and $\phi_{n-1,p-1}(\xi)$ both at a point of the form
\[
\xi = \mu + \sigma t,
\]
a  re-expansion in the  variable $t$ (arguing as in the proof of Theorem~\ref{thm:localExpan} for the tail),  using~$h$ as the expansion parameter, yields, by the symmetry of $n\pm1/2$, $p\pm 1/2$ w.r.t. $n,p$, that
\begin{align*}
\frac{\phi_{n,p}(\mu + \sigma t)}{\tau^{1/2}h^{1/2}} &= \Ai(t) \sum_{k=0}^{2m+2} \hat p_k(\tau,t) h^{k/2} + \Ai'(t) \sum_{k=0}^{2m+1} \hat q_k(\tau,t) h^{k/2}  + h^{m+3/2} O(e^{-t}),\\*[1mm]
\frac{\phi_{n-1,p-1}(\mu + \sigma t)}{\tau^{1/2}h^{1/2}}  &= \Ai(t) \sum_{k=0}^{2m+2} (-1)^k \hat p_k(\tau,t) h^{k/2} \\*[1mm]
&\qquad\qquad+ \Ai'(t) \sum_{k=0}^{2m+1} (-1)^k \hat q_k(\tau,t) h^{k/2}  + h^{m+3/2} O(e^{-t}),
\end{align*}
with certain polynomials $\hat p_{k}, \hat q_{k} \in \Q[\tau,t]$. These expansions are uniformly valid for $t\geq t_0$ and $\tau\geq \tau_0$ as $h\to 0^+$. The first instances of the polynomial coefficients are
\begin{equation}\label{eq:LUEkernelPolynomials}
\begin{gathered}
\hat p_0(\tau,t) = 1, \quad p_1(\tau,t) = 0, \quad  \hat p_2(\tau,t) =-\frac{\tau-3}{10}t,
\quad \hat p_3(\tau,t) = \frac{2 \tau -1}{5}t^3 - \frac{\tau-3}{10},\\*[1mm]
\hat p_4(\tau,t) = \frac{(2 \tau -1)^2}{50} t^5 +\frac{13 \tau ^2-10 \tau -27}{280}t^2,\quad 
\hat p_5(\tau,t ) = -\frac{20 \tau ^2-3 \tau -6}{70}t^4,\\*[1mm]
\hat q_0(\tau,t) = 0, \quad \hat q_1(\tau,t) = -1, \quad q_2(\tau,t)= -\frac{2 \tau -1}{5}t^2,\quad \hat q_3(\tau,t) =-\frac{\tau-3}{10}t,\\*[1mm]
\hat q_4(\tau,t) = \frac{20 \tau ^2-17 \tau +1}{70} t^3+ \frac{\tau ^2-46 \tau +69}{140},\\*[1mm]
\hat q_5(\tau,t) = -\frac{(2 \tau -1)^2}{50} t^5 -\frac{\tau ^2-46 \tau +69}{280} t^2.
\end{gathered}
\end{equation}
Using this and similarly obtained expressions for larger indices, a routine calculation shows, at least in the range $m\leq m_*$, that  in evaluating
\[
\frac{a_{n,p}(x,y)-a_{n,p}(y,x)}{x-y}
\]
the even powers of $h^{1/2}$ cancel while the positive odd powers generate polynomial factors of the Airy terms $\Ai^{(\kappa)}(x) \Ai^{(\lambda)}(y)$ which belong to $\Q[\tau][x,y]$, being divisible by $x-y$. Finally, dividing by $2h^{1/2}$ proves, first, the claim about the structure of the expansion kernels. Next, the expressions in \eqref{eq:LUEkernelPolynomials} allow us to straightforwardly compute $K_1$, $K_2$. Since all expansions can repeatedly be differentiated under the same conditions while preserving their uniformity, the same holds for the resulting expansion of the kernel.
\end{proof}

\subsection{Simplifying transformation}\label{subsect:simplifyLUE} As in Section~\ref{subsect:simplifyGUE}, the expansion kernels are considerably simplified if a certain {\em nonlinear} transformation is applied.
Proceeding analogously here, we observe from \eqref{eq:ts} and \eqref{eq:sLag} that the scaling and expansion parameters of  Lemma~\ref{lem:LUEkernexpan} are uniquely constructed in such a way that, with  $h=h_{n,p}$, $\tau=\tau_{n,p}$ and $u=n$,
\begin{equation}\label{eq:LUEnaturalScaling}
\mu_{n,p} +\sigma_{n,p} t = u z_* (1+ \tau h t), \qquad u^{2/3} \zeta = t + O(h),
\end{equation}
where the variable $\zeta$ is defined as in Theorem~\ref{thm:genLag}(II) and 
\[
z_*= \left(\sqrt\frac{p}{n}+1\right)^2
\]
denotes the position of the right turning point of the underlying confluent hypergeometric differential equation.\footnote{Note that $u z_* = (\sqrt{n} + \sqrt{p})^2$ is symmetric in $n$ and $p$.} 
Once more, it is precisely that $O(h)$ error term, induced by choosing a {\em linear} scaling transform in the first place, which generates a spray of additional terms in the expansion kernels.\footnote{These terms are, effectively, generated by Taylor expanding the factors $\Ai(u^{2/3}\zeta)$, $\Ai'(u^{2/3}\zeta)$ as in in the proof of Theorem~\ref{thm:localExpan}. On the other hand,  with the variable $s$ in \eqref{eq:psihLUE}, these factors are simply   $\Ai(s)$, $\Ai'(s)$.} 
Hence, we can reverse that effect by introducing the new variable 
\begin{multline}\label{eq:psihLUE}
s = u^{2/3} \zeta = t-\frac{2\tau-1}{5}t^2 h + \frac{43 \tau ^2-18 \tau-8}{175} t^3  h^2 \\*[0mm]
-\frac{ 1384 \tau ^3-551 \tau ^2-212 \tau-148}{7875}t^4 h^3 + \cdots \quad(|h t|<1),
\end{multline}
where the power series is taken from \eqref{eq:sLag}. In doing so, the next lemma significantly reduces the complexity of the resulting expressions.

\begin{lemma}\label{lem:KhLUEexpan} For $h>0$, $0\leq \tau\leq 1$, and the first expansion kernels $K_1$, $K_2$, $K_3$ from Lemma~\ref{lem:LUEkernexpan}, we consider 
\[
K_{(h)}(x,y;\tau) := K_{\Ai}(x,y) + \sum_{j=1}^3 K_j(x,y;\tau) h^j
\]
and the transformation $s = \psi_{h,\tau}^{-1}(t)$ defined in \eqref{eq:psihLUE}, which maps an interval of the form
\[
\rho_{0,\tau} h^{-1} < t < \rho_{1,\tau} h^{-1}, \quad  -1 < \rho_{0,0} \leq \rho_{0,\tau} < 0 < \rho_{1,\tau} \leq \rho_{1,1} < 1
\]
onto $-h^{-1}/2 < s < h^{-1}/2$. The induced transformed kernel
\begin{subequations}\label{eq:auxkernelexpanLUE}
\begin{align}
\tilde K_{(h)}(x,y;\tau) &:= \sqrt{\psi_{h,\tau}'(x)\psi_{h,\tau}'(y)} \, K_{(h)}(\psi_{h,\tau}(x),\psi_{h,\tau}(y);\tau)\\
\intertext{has the expansion}
\tilde K_{(h)}(x,y;\tau) &= K_{\Ai}(x,y) + \sum_{j=1}^3 \tilde K_j(x,y;\tau)h^j + h^4 \cdot O\big(e^{-(x+y)}\big),
\end{align}
which is uniformly valid in $s_0 \leq x,y \leq h^{-1}/2$ and $0\leq \tau\leq 1$ as $h\to 0^+$, $s_0$ being a fixed real number. The three expansion kernels are
\begin{align}
\tilde K_1 &= -\frac{\tau -3}{10} (\Ai\otimes \Ai' + \Ai'\otimes \Ai)\label{eq:K1tildeLUE},\\*[2mm]
\tilde K_2 &= -\frac{3(4 \tau ^2+26 \tau -39)}{175}  \Ai\otimes\Ai + \frac{11 \tau ^2+54 \tau -81}{280}(\Ai\otimes\Ai'''+\Ai'''\otimes\Ai) \\*[1mm]
& \qquad  -\frac{51 \tau ^2-106 \tau +159}{1400}(\Ai'\otimes\Ai''+\Ai''\otimes\Ai'),\notag\\*[2mm]
\intertext{}\notag\\*[-13mm]
\tilde K_3 &= \frac{44 \tau ^3-346 \tau ^2+3713 \tau -3713}{2250} (\Ai\otimes \Ai'' + \Ai''\otimes \Ai) \\*[1mm]
&\qquad -\frac{13 \tau ^3-77 \tau ^2+1171 \tau -1171}{3600}(\Ai\otimes \Ai^{\rm V} + \Ai^{\rm V}\otimes \Ai)\notag\\*[1mm] 
&\qquad\quad -\frac{466 \tau ^3+406 \tau ^2-3743 \tau + 3743}{7875} \Ai'\otimes \Ai' \notag\\*[1mm]
& \qquad\qquad + \frac{583 \tau ^3+553 \tau ^2-3359 \tau +3359}{42000}(\Ai'\otimes \Ai^{\rm IV} + \Ai^{\rm IV}\otimes \Ai')\notag\\*[1mm] 
& \qquad\qquad\quad -\frac{13 \tau ^3-77 \tau ^2+271 \tau -271}{1800}(\Ai''\otimes \Ai''' + \Ai'''\otimes \Ai'').\notag
\end{align}
\end{subequations}
Preserving uniformity, the kernel expansion can repeatedly be differentiated w.r.t. $x$, $y$.
\end{lemma}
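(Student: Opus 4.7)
The plan is to mirror the proof of Lemma~\ref{lem:KhGUEexpan}, now carrying the parameter $\tau$ through every computation as an element of the polynomial ring $\Q[\tau]$. First, I would invert the power series~\eqref{eq:psihLUE} to express $t = \psi_{h,\tau}(s)$ as a formal power series in $h$ whose coefficient of $h^j$ is a polynomial in $s$ of degree $j+1$ with coefficients in $\Q[\tau]$. The radii $\rho_{0,\tau}, \rho_{1,\tau}$ are determined by the nearest obstructions to monotonicity of $\psi_{h,\tau}^{-1}$ on either side of the origin; the defining polynomial equations depend continuously on $\tau$, so the radii can be bounded uniformly on $[0,1]$ away from zero, with the extreme values attained at the endpoints $\tau=0$ (which reproduces Lemma~\ref{lem:KhGUEexpan}, giving $\rho_{0,0}=-0.57903\cdots$) and $\tau=1$.

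Second, I would substitute the change of variables into
\[
\tilde K_{(h)}(x,y;\tau) = \sqrt{\psi_{h,\tau}'(x)\psi_{h,\tau}'(y)}\, K_{(h)}(\psi_{h,\tau}(x), \psi_{h,\tau}(y); \tau)
\]
and compute formally with truncated power series in $h$, using the explicit expressions for $K_{\Ai}, K_1, K_2$ displayed in~\eqref{eq:K2LUE} together with the analogous (unwieldy) expression for $K_3$ provided by Lemma~\ref{lem:LUEkernexpan}. The algebraic point is that the natural nonlinear coordinate~\eqref{eq:LUEnaturalScaling} cancels exactly the spurious polynomial contributions introduced by the linear scaling ansatz, leaving kernels with dramatically simpler polynomial coefficients. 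Then, as in the proof of Lemma~\ref{lem:KhGUEexpan}, I would repeatedly apply the Airy replacement rule~\eqref{eq:airyrule} to absorb all remaining powers of $x$ and $y$ into higher-order derivatives of $\Ai$, producing the compact forms displayed for $\tilde K_1, \tilde K_2, \tilde K_3$, all with coefficients in $\Q[\tau]$.

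Third, I would establish the remainder estimate $h^4 \cdot O\bigl(e^{-(x+y)}\bigr)$, uniformly in $s_0 \leq x, y \leq h^{-1}/2$ and $0 \leq \tau \leq 1$. The Airy factors $\Ai^{(\kappa)}(\cdot)$ supply the exponential decay; since $|hs| \leq 1/2$ forces $|ht|$ to stay bounded away from $1$ uniformly in $\tau \in [0,1]$, the change of variables shifts the arguments by at most an $O(1)$ amount, preserving the exponential tails, while the Jacobian $\sqrt{\psi_{h,\tau}'(x)\psi_{h,\tau}'(y)}$ is bounded above and below uniformly on the same range. Repeated differentiability then follows from the differentiability of the underlying expansion of Lemma~\ref{lem:LUEkernexpan} combined with the lifting estimates of~\cite[§2.1]{arxiv:2301.02022}, since differentiating truncated power series preserves uniform asymptotic bounds.

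The main obstacle I expect is the uniformity in $\tau$ throughout the argument, in particular ruling out any degeneration of $\rho_{0,\tau}, \rho_{1,\tau}$ as $\tau$ varies in $[0,1]$, and verifying that the polynomial identities producing the simplified forms of $\tilde K_1, \tilde K_2, \tilde K_3$ hold as identities in $\Q[\tau][x,y]$ rather than merely at isolated values of $\tau$. Both issues are amenable to direct computer-algebra verification: the truncated series coefficients are explicit polynomials in $\tau$ of bounded degree, so the desired identities reduce to the vanishing of finitely many explicit elements of $\Q[\tau][x,y]$ of known bidegree, and the continuity argument for the radii is quantitative once those coefficients are in hand.
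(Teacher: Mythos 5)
Your proposal follows essentially the same route as the paper's proof: invert the power series \eqref{eq:psihLUE} (with coefficients in $\Q[\tau]$), substitute into $K_{(h)}$ and compute with truncated power series on the range $|ht|\leq(-\rho_{0,0})\vee\rho_{1,1}<1$, absorb the remaining powers of $x,y$ via the replacement rule \eqref{eq:airyrule}, and conclude uniformity, the tail bound, and repeated differentiability from the bounds of \cite[\S 2.1]{arxiv:2301.02022}. The only cosmetic difference is that the paper defines $\rho_{0,\tau},\rho_{1,\tau}$ directly as the preimages of $\pm h^{-1}/2$ rather than via obstructions to monotonicity, but this does not affect the argument.
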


\begin{proof} Reversing the power series \eqref{eq:psihLUE} gives
\[
t = s+\frac{2 \tau -1}{5}s^2 h +\frac{13 \tau ^2-38 \tau +22 }{175} s^3 h^2 +\frac{ 34 \tau ^3-776 \tau ^2+1588 \tau -823}{7875}s^4 h^3 +  \cdots,
\]
which is uniformly convergent for $s_0 \leq s\leq h^{-1}/2$ (if $h$ is small enough such that $s_0 > -h^{-1}/2$) since then $|ht| \leq (-\rho_{0,0}) \vee \rho_{1,1} < 1$. Taking the expressions for $K_1$, $K_2$ given in \eqref{eq:K2LUE}, and a similar one for the kernel $K_3$, which is not shown here, a routine calculation with truncated power series gives formula \eqref{eq:K1tildeLUE} for $\tilde K_1$ and the  expansion kernels 
\begin{align*}
\tilde K_2(x,y) &= \frac{(\tau -3)^2}{100}  \Ai(x)\Ai(y) - \Big(\frac{51 \tau ^2-106 \tau +159}{1400}x-\frac{11 \tau ^2+54 \tau -81}{280}y \Big)\Ai(x)\Ai'(y)\\*[1mm]
&\qquad+ \Big(\frac{11 \tau ^2+54 \tau -81}{280}x-\frac{51 \tau ^2-106 \tau +159}{1400}y \Big)\Ai'(x)\Ai(y),\\*[2mm]
\intertext{}\notag\\*[-13mm]
\tilde K_3(x,y) &= -\frac{19 \tau ^3+229 \tau ^2-1787 \tau +1787}{9000} (x+y)\Ai(x)\Ai(y)\\*[1mm]
&\qquad +\Big(\frac{583 \tau ^3+553 \tau ^2-3359 \tau +3359}{42000} x^2-\frac{13 \tau ^3-77 \tau ^2+271 \tau -271}{1800} x y\\*[1mm]
&\qquad\qquad-\frac{13 \tau ^3-77 \tau ^2+1171 \tau -1171 }{3600}y^2\Big)\Ai(x)\Ai'(y)\\*[1mm]
&\qquad - \Big(\frac{13 \tau ^3-77 \tau ^2+1171 \tau -1171}{3600} x^2+\frac{13 \tau ^3-77 \tau ^2+271 \tau -271}{1800} x y\\*[1mm] 
&\qquad\qquad-\frac{583 \tau ^3+553 \tau ^2-3359 \tau +3359 }{42000}y^2\Big)\Ai'(x)\Ai(y)\\*[1mm]
&\qquad - \frac{23 \tau ^3-7 \tau ^2-979 \tau +979}{6300}\Ai'(x)\Ai'(y)
\end{align*}
-- much simpler expressions than those for $K_1, K_2, K_3$ (e.g., the degree of the polynomial coefficients in $K_3$ is reduced from seven to two).

A repeated application of the replacement rule \eqref{eq:airyrule}
allows us to absorb any powers of $x$ and~$y$ into higher order derivatives of $\Ai$. This process yields the asserted form of $\tilde K_2$ and~$\tilde K_3$, which will be the preferred form in the course of the calculations in Section~\ref{sect:functionalformLUE}.

Since we stay within the range of uniformity of the power series expansions, and calculations with truncated powers series are amenable to repeated differentiation, the result now follows from the bounds in \cite[§2.1]{arxiv:2301.02022}.
\end{proof}

\subsection{Proof of the general form of the expansion}\label{sect:generalformLUE}

By Lemma~\ref{lem:LUEkernexpan} and \cite[Theorem~2.1]{arxiv:2301.02022},  we get for the change of variable $x=\mu_{n,p} + \sigma_{n,p} t$
\begin{align*}
E_2(n,p;x) &= \det(I - K_{n,p}^{\LUE})|_{L^2(x,\infty)}  \\*[1mm] 
&= \det(I - \hat K_{n,p}^{\LUE})|_{L^2(t,\infty)}
= F_2(t) + \sum_{j=1}^m E_{2,j;\tau_{n,p}}(t) h_{n,p}^j + h_{n,p}^{m+1} O(e^{-2t}),
\end{align*}
uniformly for $t\geq t_0$ and $0 < \tau_0\leq \tau_{n,p}\leq 1$ as $h_{n,p}\to0^+$; preserving uniformity, this expansion can be repeatedly differentiated w.r.t. the variable $t$. By \cite[Theorem~2.1]{arxiv:2301.02022}, the $E_{2,j;\tau}(t)$ are certain smooth functions that can be expressed in terms of traces of integral operators. This finishes the proof of \eqref{eq:GUE}.

\subsection{Functional form of the expansion terms}\label{sect:functionalformLUE}

We follow the method of \ref{sect:functionalformGUE} to avoid computing $E_{2,j;\tau}$ directly from the trace formulas in \cite[Theorem~2.1]{arxiv:2301.02022} applied to $K_1, K_2, \ldots$ as in (\ref{eq:K2LUE}). Instead, we will first compute the corresponding functions $\tilde E_{2,j;\tau}$ induced by the much simpler kernels $\tilde K_1$, $\tilde K_2$, $\tilde K_3$ in (\ref{eq:auxkernelexpanLUE}). 

Once again, inserting the coefficients in (\ref{eq:auxkernelexpanLUE}) into the formulas \eqref{eq:K1K3} and \eqref{eq:G1G3nice}   yields
\begin{align*}
\tilde E_{2,1;\tau}(t) &= \frac{\tau -3}{10} F''(t), \\*[2mm]
\tilde E_{2,2;\tau}(t) &= \frac{\tau ^2+94 \tau -141}{350} F'(t) - \frac{4 \tau ^2+26 \tau -39}{175} t   F''(t) +\frac{(\tau -3)^2}{200}  F^{(4)}(t),\\*[3mm]
\tilde E_{2,3;\tau}(t) &= -\frac{2(4 \tau ^3-161 \tau ^2+1108 \tau -1108) }{7875}tF'(t) + \frac{9 \tau ^3-56 \tau ^2+568 \tau -568}{2625}t^2F''(t)\\*[1mm]
&\qquad\qquad +\frac{61 \tau ^3+1351 \tau ^2-10403 \tau +10403 }{31500}F^{(3)}(t)\\*[1mm]
&\qquad\qquad\qquad -\frac{(\tau -3) (4 \tau ^2+26 \tau -39)}{1750}tF^{(4)}(t) +\frac{(\tau -3)^3 }{6000}F^{(6)}(t).
\end{align*}
The relation between $E_{2,1;\tau}(t)$, $E_{2,2;\tau}(t)$, $E_{2,3;\tau}(t)$ and their counterparts with a tilde is established in Lemma~\ref{lem:KhGUEexpan}: with $t$ being fixed, $h$ sufficiently small and $s=\psi_h^{-1}(t)$, we get by applying \cite[Theorem~2.1]{arxiv:2301.02022} 
\begin{multline*}
F_2(t) + E_{2,1;\tau}(t) h + E_{2,2}(t;\tau)h^2 + E_{3,2}(t;\tau)h^3 + O(h^4) \\*[1mm]
= \det\big(I-K_{(h)}\big)|_{L^2(t, \rho_{1,\tau} h^{-1})}  = \det\big(I-\tilde K_{(h)}\big)|_{L^2(s, h^{-1}/2)} \\*[1mm]
= F_2(s) + \tilde E_{2,1;\tau}(s) h + \tilde E_{2,2;\tau}(s)h^2 + \tilde E_{2,3;\tau}(s)h^3 + O(h^4),
\end{multline*}
after absorbing the exponentially small contributions $e^{-\rho_{1,\tau} h^{-1}} O(e^{-t})$ and $e^{-h^{-1}/2} O(e^{-s})$
into the $O(h^4)$ error term.
Upon inserting the power series \eqref{eq:psihLUE},
followed by Taylor expansions and comparing coefficients, we obtain the expressions  for $E_{2,1;\tau}$, $E_{2,2;\tau}$, $E_{2,3;\tau}$ in \eqref{eq:F22LUE}. This finishes the proof of Theorem~\ref{thm:softLUE}.

\subsection{The LUE with bounded parameter}\label{subsect:boundedAlpha}

To compare with  prior work in the literature, we consider the case of fixed, or bounded from above, parameter $\alpha>-1$ in the LUE. Then, with $p=n+\alpha$, as $n\to\infty$, the expansion parameters of Theorem~\ref{thm:softLUE} simplify to leading order as follows: 
\begin{gather*}
\mu_{n,p} = 4n+2\alpha + O(n^{-1}),\quad
\sigma_{n,p}= 2(2n)^{1/3} + O(n^{-2/3}),\\*[1mm]
h_{n,p} = (2n)^{-2/3} + O(n^{-5/3}),\quad
\tau_{n,p} = 1 + O(n^{-2}).
\end{gather*}
Now, a simple re-expansion of the case $m=1$ in Theorem~\ref{thm:softLUE} yields  the following corollary -- a result, which we had given without the tail bound in \cite[§3.4]{arxiv:2301.02022}.

\begin{corollary}\label{cor:softLUE} There holds the large matrix expansion of the LUE at the soft edge
\smallskip
\begin{equation}\label{eq:softLUEalpha}
E_{\LUE,\alpha}\big(n;4n+2\alpha + 2(2n)^{1/3} t\big) = F_2(t) + G(t)\cdot (2n)^{-2/3}  + n^{-1} O(e^{-2t}),
\end{equation}

\smallskip
\noindent
which is uniformly valid as $n\to\infty$ when $t\geq t_0$ and $-1 < \alpha \leq \alpha_0$, with $t_0,\alpha_0$ being any fixed real numbers. Here, the finite-size correction coefficient $G$ is given by the expression\footnote{We note that $G(t)$
 is {\em independent} of the Laguerre parameter $\alpha$. Therefore, the expansion \eqref{eq:softLUEalpha} is not robust for large $\alpha$: there must be a break-down of uniformity when 
$\alpha\to\infty$, since otherwise the LUE-to-GUE transition (as discussed in Remark~\ref{rem:LUE2GUE}) could not be matched.}
\begin{equation}\label{eq:E1LUEalpha}
G(t) := E_{2,1;\tau}(t)\big|_{\tau=1} = -\frac{1}{5}\big (t^2 F'(t) + F''(t)\big).
\end{equation}
Preserving uniformity, the expansion can be repeatedly differentiated w.r.t. the variable $t$. 
\end{corollary}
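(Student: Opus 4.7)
The plan is to specialize Theorem~\ref{thm:softLUE} to the case $m=1$ with $p = n+\alpha$, and then re-expand the result in terms of the simpler centering $4n+2\alpha$ and scaling $2(2n)^{1/3}$ appearing in the Corollary. Starting from
\[
E_2\big(n, n+\alpha; \mu_{n,p} + \sigma_{n,p}\tilde t\big) = F_2(\tilde t) + E_{2,1;\tau_{n,p}}(\tilde t)\, h_{n,p} + h_{n,p}^2 \cdot O(e^{-2\tilde t}),
\]
the first step is to expand the scaling parameters in $n^{-1}$ with $\alpha$ bounded. Using $\sqrt{n+\alpha} = \sqrt{n}\big(1 + \alpha/(2n) - \alpha^2/(8n^2) + \cdots\big)$ and the symmetric defining formulas \eqref{eq:LUEscaling}, a direct computation yields $\mu_{n,p} = 4n+2\alpha - \alpha^2/(4n) + O(n^{-2})$, $\sigma_{n,p} = 2(2n)^{1/3}\big(1+\alpha/(6n)+O(n^{-2})\big)$, $h_{n,p} = (2n)^{-2/3}\big(1+O(n^{-1})\big)$, and, thanks to a cancellation in the symmetric definition, $\tau_{n,p} = 1 - \alpha^2/(16 n^2) + O(n^{-3})$. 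In particular, for all sufficiently large $n$ and all $\alpha \in (-1,\alpha_0]$, one has $\tau_{n,p} \geq 1/2$, so Theorem~\ref{thm:softLUE} applies uniformly.

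Matching the centerings and scalings, i.e., solving $\mu_{n,p} + \sigma_{n,p}\tilde t = 4n+2\alpha+2(2n)^{1/3}t$, gives $\tilde t = \lambda t - \Delta_\mu/\sigma_{n,p}$ with $\lambda := 2(2n)^{1/3}/\sigma_{n,p} = 1 + O(n^{-1})$ and $\Delta_\mu := \mu_{n,p} - (4n+2\alpha) = O(n^{-1})$. Hence $\tilde t - t = t\cdot O(n^{-1}) + O(n^{-4/3})$, uniformly in $t \geq t_0$ and $\alpha \in (-1,\alpha_0]$. Taylor-expanding $F_2(\tilde t)$ and $E_{2,1;\tau_{n,p}}(\tilde t)$ about $t$ and using $\tau_{n,p}-1 = O(n^{-2})$, one obtains $E_{2,1;\tau_{n,p}}(\tilde t)\,h_{n,p} = E_{2,1;1}(t)(2n)^{-2/3} + O(n^{-5/3})$. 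A direct substitution $\tau = 1$ in \eqref{eq:F21LUE} gives $E_{2,1;1}(t) = G(t)$, so the announced leading correction $G(t)(2n)^{-2/3}$ emerges.

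The remaining contributions to be collected are (i) the mean-value remainder $F_2(\tilde t) - F_2(t) = F_2'(\xi)(\tilde t - t)$, of order $n^{-1}(1+|t|)F_2'(\xi)$, and (ii) the intrinsic error $h_{n,p}^2\cdot O(e^{-2\tilde t}) = O(n^{-4/3}e^{-2t})$. The main technical point is to show that both are subsumed in the stated tail bound $n^{-1}\cdot O(e^{-2t})$, uniformly in $t\geq t_0$ and $\alpha\in(-1,\alpha_0]$. For $t$ in a bounded range this is immediate; for $t$ large, one exploits the super-exponential decay of the Tracy--Widom density and its derivatives, so that any apparent linear growth $|\tilde t - t|\sim t/n$ is crushed by the bound $F_2'(\xi) = O(\exp(-c\,t^{3/2}))$ and the product remains $O(n^{-1}e^{-2t})$ with room to spare. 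The differentiability statement is inherited from the repeatedly-differentiable expansion in Theorem~\ref{thm:softLUE} together with the smoothness of the affine change of variable $t\mapsto\tilde t$, whose derivative is $\lambda = 1+O(n^{-1})$. This uniform tail bookkeeping across the joint range of $t$ and $\alpha$ is the only genuine obstacle; everything else reduces to routine power-series manipulation.
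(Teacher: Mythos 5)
Your proposal is correct and follows essentially the same route as the paper, which simply expands the parameters of Theorem~\ref{thm:softLUE} for bounded $\alpha$ (obtaining $\mu_{n,p}=4n+2\alpha+O(n^{-1})$, $\sigma_{n,p}=2(2n)^{1/3}+O(n^{-2/3})$, $h_{n,p}=(2n)^{-2/3}+O(n^{-5/3})$, $\tau_{n,p}=1+O(n^{-2})$) and then re-expands the case $m=1$; your parameter expansions, the identification $E_{2,1;1}=G$, and the observation that the $O(n^{-1})$ shift $\tilde t-t$ acting on $F_2'$ is what degrades the error from $h_{n,p}^2$ to $n^{-1}$ all check out. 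Your write-up is in fact more explicit than the paper's one-line "simple re-expansion," including the (correctly flagged) tail bookkeeping needed to absorb the linearly growing term $|\tilde t-t|\sim t/n$ into $n^{-1}O(e^{-2t})$ via the super-exponential decay of $F_2'$ and its derivatives.
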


\begin{remark} 
The first  correction term $L(x,y)n^{-2/3}$ in the kernel expansion, obtained by accordingly re-expanding \eqref{eq:LUEkernexpan}, is simply given by the kernel
\begin{multline}\label{eq:kernelLalpha}
L(x,y) 
:= K_1(x,y;\tau)\big|_{\tau=1} \\*[1mm]
= \frac{1}{5}\big((x^2+xy+y^2) \Ai(x)\Ai(y) 
- (x+y) \Ai'(x)\Ai'(y) + \Ai(x)\Ai'(y) + \Ai'(x)\Ai(y) \big).
\end{multline}
\end{remark}

\subsection{Comments on prior work}\label{subsect:previousLUE}

We discuss the relevant prior  in chronological order.%

\subsubsection{The case $m=1$, bounded parameter $\alpha$}\label{subsect:ChoupLUE}

In 2005, Garoni, Forrester and Frankel \cite[Eq.~(73)]{MR2178598} had basically\footnote{After adjusting for the different scaling in their paper, by inserting $\xi=x + \alpha/(2n)^{1/3}$ into \cite[Eq.~(73)]{MR2178598} and re-expanding.} found the diagonal term $L(x,x)$ of \eqref{eq:kernelLalpha} as the finite-size correction to the one-point function  at the soft edge of the LUE with bounded $\alpha$.

In 2006, upon establishing  formula \eqref{eq:kernelLalpha} for $L(x,y)$, Choup \cite[Eq.~(3.5)]{MR2233711} gave an incomplete\footnote{See the discussion in Footnote~\ref{fn:Choup}.}  proof that 
\begin{multline}\label{eq:LUEChoup}
E_{\LUE,\alpha}\big(n;4n+2\alpha + 2(2n)^{1/3} t\big) \\*[1mm]
= F_2(t) + F_2(t) \det\big(I-(2n)^{-2/3}(I - K_{\Ai})^{-1} L \big)\big|_{L^2(t,\infty)} + O(n^{-1}),
\end{multline}
 uniformly  for bounded $t$. By  expressing the Fredholm determinant of a finite-rank perturbation of the identity  in terms of a matrix determinant, Choup 
inferred from this the expansion
\[
E_{\LUE,\alpha}\big(n;4n+2\alpha + 2(2n)^{1/3} t\big) = F_2(t) + G(t) \cdot (2n)^{-2/3} + O(n^{-1}),
\]
see \cite[Theorem~1.4]{MR2233711},  representing 
\begin{equation}\label{eq:GChoup}
G/F_2 = \frac{1}{5}\big(2w_1 - 3u_2 -2 v_0 + u_1 v_0 - u_0 v_1 + u_0v_0^2 -u_0^2 w_0\big)
\end{equation}
in terms of the  quantities introduced in Section~\ref{subsect:ChoupGUE} for the GUE.

As in  Section~\ref{subsect:ChoupGUE}, there are basically two ways to simplify Choup's findings to our expression~\eqref{eq:E1LUEalpha} of $G(t)$. First, when recursively applying the replacement rule \eqref{eq:airyrule} to trade the multiplication with powers of $\xi$ for higher order derivatives of the Airy function,  we can rewrite \eqref{eq:GChoup} in the form
\[
G/F_2 = \frac{1}{5} \big(2 u_{01}-3 u_{04}+2 u_{13}+u_{00}^2-u_{00}u_{03} +u_{01} u_{02} - u_{00}^2u_{11}+u_{00} u_{01}^2  \big)
\]
-- an expression which the algorithm developed in \cite[Appendix~B]{arxiv:2301.02022} then readily reduces 
to~\eqref{eq:E1LUEalpha}.
Second, as we have already sketched in \cite[§3.4]{arxiv:2301.02022}, the expansion \eqref{eq:LUEChoup} allows us to deduce the expression \eqref{eq:E1LUEalpha} rather directly -- without harnessing the power of the Tracy--Widom theory at all; see also \eqref{eq:detexpand2} and Remark~\ref{rem:u01}.

\subsubsection{The case $m=0$, parameter $\alpha$ proportional to $n$}
In 2006, El Karoui \cite{MR2294977} studied the convergence rates of the Tracy--Widom limit law for complex Wishart matrices when 
\[
p/n \to \gamma \in (0,\infty)
\]
as $n\to \infty$ (so that $\alpha = p-n \sim (\gamma-1)n$ if the symmetry between $p$ and $n$ is broken with choosing $p\geq n$).
By fully addressing the technical difficulties in lifting kernel bounds to trace class bounds, emphasizing differentiability, El Karoui \cite[Theorem~2]{MR2294977} proved
\[
E_2\big(n,p;\mu_{n,p} + \sigma_{n,p} t\big) = F_2(t) + (n\wedge p)^{-2/3} O(e^{-t})
\]
to be uniformly valid  as $n\to\infty$ when $t$ is bounded from below. Except for  a slightly weaker tail bound, this result is the case $m=0$ of our Theorem~\ref{thm:softLUE}, and El Karoui's proof is using a  technique which is similar to ours, see Remark~\ref{rem:Lag_m01}.

\subsubsection{The case $m=1$, parameter $\alpha$ proportional to $n$}
 In 2018, Forrester and Trinh \cite[§IV.A]{MR3802426} studied the case $m=1$ of Theorem~\ref{thm:softLUE} as $n\to\infty$ while
\[
\tilde\alpha := \alpha/n \geq 0
\]
is kept fixed. Expressed in terms of $n$ and $\tilde\alpha$, but briefly writing $a:=\sqrt{\tilde\alpha+1}\geq 1$,
the parameters of Theorem~\ref{thm:softLUE} become (cf.~\eqref{eq:LagScaling} and \eqref{eq:htau}), with $p = a^2n$,
\[
\mu_{n,p} = \left(a+1\right)^2 n,\quad \sigma_{n,p}= \frac{(a+1)^{4/3}}{a^{1/3}}n^{1/3},\quad
\tau_{n,p} =  \frac{4 a}{(a+1)^2}, \quad h_{n,p}= \frac{(a+1)^{4/3}}{4a^{4/3}} n^{-2/3}.
\]
Forrester and Trinh succeeded in representing the first finite-size correction of Theorem~\ref{thm:softGUE} in an operator theoretic form \cite[Eq.~(2.14)]{MR3802426} (cf. also~\cite[Theorem~2.1]{arxiv:2301.02022}), 
\begin{equation}\label{eq:FTfinitesizecor}
E_{2,1;\tau_{n,p} }(t) h_{n,p}= - F_2(t) \tr((I-K_{\Ai})^{-1} L_{\tilde\alpha}) n^{-2/3},
\end{equation}
where \cite[Prop.~5]{MR3802426} 
\begin{equation}\label{eq:FTexpansion}
\sigma_{n,p} \, K_{n,p}^{\LUE}(\mu_{n,p} + \sigma_{n,p} x,\mu_{n,p} + \sigma_{n,p} y) = K_{\Ai}(x,y) + L_{\tilde\alpha}(x,y) n^{-2/3} + O(n^{-1}),
\end{equation}
with the kernel $L_{\tilde\alpha}$ expressed in a form that contains the Airy kernel,\footnote{The original expression \cite[Eq.~(4.3)]{MR3802426} is written using $\sqrt{\tilde\alpha+1}$ instead of $a$. As such, even  a computer algebra system such as Mathematica has  difficulties in getting from this stack of algebraic functions to the simplification \eqref{eq:simpleLalpha}.}
\begin{multline*}
L_{\tilde\alpha}(x,y) = -\frac{(a-1)^2}{32 a^{4/3} (a+1)^{2/3}} (x^2+y^2)^2 K_{\Ai}(x,y)\\*[0mm]
+ \frac{1}{160 a^{4/3} (a+1)^{2/3} (x-y)} \bigg(-8 (a^2-6 a+1)(x^3-y^3)\Ai(x)\Ai(y)\\*[0mm]
+\Big(4 (3 a^2+2 a+3) (x-y) -5 (a-1)^2 (x^2+y^2)^2\Big)\Ai(y)\Ai'(x) \\*[0mm]
+\Big(4 (3 a^2+2 a+3) (x-y) +5 (a-1)^2 (x^2+y^2)^2\Big)\Ai(x)\Ai'(y) \\*[0mm]
+8 (a^2 - 6 a + 1) (x^2-y^2) \Ai'(x)\Ai'(y)\bigg)
\end{multline*}
-- a form that does not immediately reveal its finite-rank structure. 

However, collecting  equal Airy factors, followed by a polynomial division by $x-y$, yields the much simplified 
\begin{multline*}
L_{\tilde\alpha}(x,y)= \frac{(a+1)^{-2/3}}{40a^{4/3}} \Big(2(a^2-6 a+1)\big((x+y)\Ai'(x)\Ai'(y) - (x^2+xy+y^2)\Ai(x)\Ai(y)\big)\\*[0mm]
+ (3 a^2+2 a+3) \big(\Ai(x)\Ai'(y)+\Ai'(x)\Ai(y)\big) \Big) = \frac{(a+1)^{4/3}}{4a^{4/3}} K_1(x,y;\tau_{n,p}),
\end{multline*}
a kernel that now clearly exhibits its finite-rank structure. In particular, we get
\begin{equation}\label{eq:simpleLalpha}
L_{\tilde\alpha}(x,y) n^{-2/3} = K_1(x,y;\tau_{n,p}) h_{n,p},
\end{equation}
so that the expansion \eqref{eq:FTexpansion} corresponds to the case $m=1$ of Lemma~\ref{lem:LUEkernexpan} with a somewhat weaker error estimate. 

Forrester and Trinh evaluated the finite-size correction \eqref{eq:FTfinitesizecor}, using their expression of the kernel $L_{\tilde\alpha}$, by the numerical methods developed in \cite{MR2895091, MR2600548,MR3647807}. Numerical results, which are similar to the one shown in the middle panel of Figure~\ref{fig:LUESimulation}, are then displayed in \cite[Figure~5]{MR3802426} for the specific values $\tilde\alpha=1/2$ ($\tau_{n,p}=0.98979\cdots$) and $\tilde\alpha=5$ ($\tau_{n,p}=0.82342\cdots$).

\part*{Part II. Orthogonal and Symplectic Ensembles}

\section{The general approach}\label{sect:genAppr}

\subsection{Interrelations of the three symmetry classes}\label{subsect:inter}

The  expansion terms of the orthogonal and symplectic ensembles will be obtained by observing a tight algebraic relation to the expansion terms of the unitary ensemble, which we discussed in Part I. The starting point is the interrelations, through decimation and superposition, between the three symmetry classes as established by Forrester and Rains \cite{MR1842786}.

Specifically, in generalizing results of Dyson, Gunson and Mehta \cite{MR0177643,MR0148401,MR0151232} for the circular ensembles, Forrester and Rains proved, for the particular choices of the variances of the random matrices in Section~\ref{sect:intro}, that \cite[Theorems~4.3/4.7/5.1/5.2]{MR1842786}
\begin{equation}\label{eq:FR2001}
\UE_{n} = \even(\OOE_{n} \cup \OOE_{n+1}), \qquad \SE_n = \even(\OOE_{2n+1});\footnote{Forrester \cite[Theorem 2]{MR2461989} generalized the $\SE$/$\OOE$ interrelation
to one between  $\beta=2d$ and $\beta'=2/d$, 
\[
\text{$\beta$E}_n = \text{Dec}_{d}(\text{$\beta'$E}_{d(n + 1) - 1}),
\]
where $\text{Dec}_{d}$ denotes taking every $d$-th ordered level ($d=1,2,3,\ldots$). Those intriguingly simple interrelations require carefully adjusting the scaling and parametrization of the corresponding classical (Gaussian, Laguerre, Jacobi, or circular) matrix ensembles.}
\end{equation}
where, for any given $\alpha>-1$, the Laguerre parameters $\alpha^\dagger$ of the three symmetry classes are connected according to 
\begin{equation}\label{eq:alphaprime}
\alpha^\dagger:= 
\begin{cases}
(\alpha-1)/2, &\quad \beta = 1,\\*[1mm]
\alpha,           	&\quad \beta = 2,\\*[1mm]
\alpha+1, 			&\quad \beta = 4.
\end{cases}
\end{equation}
In view of \eqref{eq:alphabetagamma}, this means simply that the Wishart parameter $p$ (`degrees of freedom') undergoes the same arithmetic operations as $n$ -- namely,
\begin{equation}\label{eq:FR2001LbE}
\LUE_{n,p} = \even(\LOE_{n,p} \cup \LOE_{n+1,p+1}), \qquad \LSE_{n,p} = \even(\LOE_{2n+1,2p+1}),
\end{equation}
once more emphasizing the symmetry between $n$ and $p$. 

From now on, in the general discussion of this Section~\ref{sect:genAppr}, we will suppress from the notation the dependence on $p$, or  on the parameter $\tau = \tau_{n,p}$ (assuming $\tau_{n,p}\geq \tau_0>0$) for that matter. It will tacitly be understood that whenever the dimension $n$ gets modified, the Wishart parameter $p$ is modified in exactly the same way.

The meaning of the interrelations \eqref{eq:FR2001} is as follows. The first equation states that the statistics of the ordered eigenvalues of the 
$n$-dimensional unitary ensemble are the same as that of the even-numbered (`decimation') ordered levels obtained from joining ('superposition') the eigenvalues of an $n$-dimensional orthogonal ensemble with those of an independent $n+1$-dimensional one. The second equation states that the statistics of the ordered eigenvalues of the 
$n$-dimensional symplectic ensemble are the same as that of the even-numbered ordered eigenvalues of a $2n+1$-dimensional 
orthogonal one. If we briefly write 
\[
E_{\beta,q}(n;x) := \prob\big(\text{exactly $q$ eigenvalue of the $\beta$-ensemble are larger than $x$}\big),
\]
so that $E_{\beta,0}(n;x) = E_\beta(n;x)$,
the interrelations \eqref{eq:FR2001} translate, by elementary probability, into \cite[Eq.~(5-4)]{MR1842786} 
\begin{subequations}\label{eq:FR2001G}
\begin{align}
E_{2,0}(n;x) &= E_{1,0}(n;x)E_{1,0}(n+1;x) \\*[1mm]
&\quad +  E_{1,0}(n;x) E_{1,1}(n+1;x) +  E_{1,1}(n;x)  E_{1,0}(n+1;x),\notag\\*[2mm]
E_{4,0}(n;x) &= E_{1,0}(2n+1;x) + E_{1,1}(2n+1;x).
\end{align}
\end{subequations}

On the other hand, for the limit distributions in \eqref{eq:TWlimit}, Tracy and Widom \cite[Eqs.~(52--54)]{MR1385083} established the limit interrelations
\begin{subequations}\label{eq:TWtheory}
\begin{gather}
F_\pm(t) = e^{\mp \frac12\int_t^\infty q(x)\,dx} \sqrt{F_2(t)},\label{eq:Fpm}\\*[1mm]
F_1(t) = F_+(t), \quad F_2(t)  = F_+(t) F_-(t),\quad F_4(t) = \frac{1}{2}\big(F_+(t) + F_-(t)\big),\label{eq:FerrariSpohn}
\end{gather}
\end{subequations}
where $q(x)$ denotes the Hastings--McLeod solution of Painlevé II. 
Now, upon defining
\begin{subequations}
\begin{equation}\label{eq:Epm}
E_+(n;x) := E_{1,0}(n;x), \qquad E_-(n;x) := E_{1,0}(n;x) + 2E_{1,1}(n;x),
\end{equation}
we can put the interrelations \eqref{eq:FR2001G} in a form that is analogous to \eqref{eq:FerrariSpohn} -- namely,% 
\begin{equation}\label{eq:FR2001PM}
\begin{aligned}
E_{1}(n;x) &= E_+(n;x) \\*[2mm]
E_{2}(n;x) &= \frac{1}{2} \big( E_+(n;x) E_-(n+1;x) + E_+(n+1;x) E_-(n;x)\big),\\*[2mm]
E_{4}(n;x) &= \frac{1}{2} \big( E_+(2n+1;x) + E_-(2n+1;x)\big).
\end{aligned}
\end{equation}
\end{subequations}

\subsection{Limit `laws' and the self-consistent expansion hypothesis}\label{subsect:selfconsistent} From \cite{MR1385083,MR1863961} it is known that in the orthogonal and unitary cases $\beta=1,2$, the limit laws
\[
\lim_{n\to\infty} E_{\beta}(n;\mu_n + \sigma_n t)  = F_\beta(t)
\]
hold uniformly in $t$ for the scaling parameters $\mu_n$, $\sigma_n$, chosen as in Theorem~\ref{thm:softGUE} in the Gaussian cases, and as in Theorem~\ref{thm:softLUE} in the Laguerre cases. Since 
\[
\mu_n + \sigma_n t = \mu_{n+1} + \sigma_{n+1} t_\sharp \quad\Rightarrow\quad t_\sharp = t + O(n^{-1/3}),
\]
it follows from  \eqref{eq:FerrariSpohn} and \eqref{eq:FR2001PM} that
\begin{equation}
\lim_{n\to\infty} E_\pm(n;\mu_n + \sigma_n t) = F_\pm(t),
\end{equation}
locally uniform in $t$. On the other hand, from Theorems \ref{thm:softGUE} and  \ref{thm:softLUE} we know that, in the case of the unitary ensembles, there is an expansion of the form
\begin{equation}\label{eq:E2Expansion}
E_2(n;\mu_n + \sigma_n t) = F_2(t) + \sum_{j=1}^m E_{2,j}(t) h_{n}^j + h_n^{m+1} O(e^{-2t}),
\end{equation}
which, as $h_n\to0^+$, holds uniformly for $t$ bounded from below. Now, in analogy to the hard-to-soft edge transition limit, where we established (with a proof based on determinantal representations) a corresponding expansion \cite[Eq.~(40)]{2306.03798}, we assume the following hypothesis to be true (recall $n_-=n-1/2$).

\medskip
\paragraph{\bf Self-consistent expansion hypothesis} As $h_{n_-}\to 0^+$, there holds the expansion
\begin{equation}\label{eq:EpmSelfConsistent}
E_\pm(n;\mu_{n_-} + \sigma_{n_-} t) = F_\pm(t) + \sum_{j=1}^m E_{\pm,j}(t) h_{n_-}^j + h_{n_-}^{m+1} O(e^{-t}),
\end{equation}
uniformly for $t$ bounded from below. Inductively, this implies tail bounds for the expansion terms of the form $E_{\pm,j}(t) = O(e^{-t}).$ Preserving uniformity, the expansion can be repeatedly differentiated w.r.t. the variable $t$.

\medskip

We are pretty confident that, with persistence and  proper bookkeeping, based on the expansions of the Hermite and Laguerre polynomials of Part III, one can prove this hypothesis by extending the calculations in the work of Johnstone and Ma \cite{MR3025686,MR2888709}, who established the case $m=0$ for the GOE and LOE using Pfaffian representations.

We call the ansatz \eqref{eq:EpmSelfConsistent} `self-consistent' because of the following reasoning. Let us define the variables $t_\nu$ ($\nu=0,1$) by setting
\[
x = \mu_n + \sigma_n t =: \mu_{n_-+\nu} + \sigma_{n_-+\nu} \, t_\nu
\]
which gives, in terms of $t$ and $h=h_n$, a local power series expansion
\begin{subequations}\label{eq:alpha2delta}
\begin{equation}
t_{0,1} = t \left(1 + \sum_{j=1}^\infty (\pm1)^j \alpha_j h^{3j/2}\right) \pm h^{1/2}\left(1 + \sum_{j=1}^\infty (\pm1)^j \beta_j h^{3j/2}\right),
\end{equation}
where the plus signs apply to $t_0$ and the minus signs to $t_1$. Likewise there are, for the  quantities $\tilde h_\nu := h_{n_-+\nu}$
and, in the Laguerre case, also for $\tau_\nu = \tau_{n_-+\nu,p_-+\nu}$, local power series expansions
\begin{align}
\tilde h_{0,1} &:= h \left(1+ \sum_{j=1}^\infty (\pm1)^j \gamma_j h^{3j/2}\right),\\*[2mm]
 \tilde \tau_{0,1} &:= \tau \left(1+ \sum_{j=1}^\infty (\pm1)^j \delta_j h^{3j/2}\right).
\end{align}
\end{subequations}
Here, $\alpha_j,\beta_j,\gamma_j,\delta_j \in \Q[\tau]$, with $\tau=0$ in the Gaussian case (see Remark~\ref{rem:LUE2GUE}).
Now, when plugging the expansions \eqref{eq:EpmSelfConsistent} into \eqref{eq:FR2001PM} and re-expanding in terms of $t$, $\tau$ and $h$, we obtain, by symmetry,
that the odd powers of $h^{1/2}$  cancel, 
\begin{multline}\label{eq:expansionE2Epm}
E_2(n;\mu_n + \sigma_n t)\\*[1mm] = \frac{1}{2} \sum_{\nu=0,1} E_+(n+\nu,\mu_{n_-+\nu} + \sigma_{n_-+\nu} \, t_\nu) E_-(n+1-\nu,\mu_{n_-+1-\nu} + \sigma_{n_-+1-\nu} \, t_{1-\nu}) \\*[0mm]
= \frac{1}{2} \sum_{\nu=0,1} \left(F_+(t_\nu) + \sum_{j=1}^m E_{+,j}(t_\nu)\tilde h_\nu^j + \tilde h_\nu^{m+1} O(e^{-t})\right)\\*[2mm]
\times \left(F_-(t_{1-\nu}) + \sum_{j=1}^m E_{+,j}(t_{1-\nu})\tilde h_{1-\nu}^j + \tilde h_{1-\nu}^{m+1} O(e^{-t})\right)\\*[1mm]
= F_+(t) F_-(t) + \sum_{j=1}^m \tilde E_{2,j}(t) h_n^j +  h_n^{m+1} O(e^{-2t}),
\end{multline}
which is  consistent with $F_2 = F_+ F_-$ and \eqref{eq:E2Expansion}. By uniqueness of the expansions coefficients, we get $\tilde E_{2,j}(t) = E_{2,j}(t)$ -- for example, going through all the details, we have
\begin{equation}\label{eq:E21algebra}
E_{2,1} =F_+ E_{-,1} +E_{+,1}F_- + \frac{1}{2}\big(F_+ F_-'' - 2F_+'F_-' + F_+'' F_-\big) ,
\end{equation}
and similar expressions, though much more involved, for $E_{2,2}(t)$, $E_{2,3}(t)$, \ldots, which, of course, will then depend  on $\alpha_j,\beta_j,\gamma_j,\delta_j$.

Clearly, by  \eqref{eq:FR2001PM}, the ansatz \eqref{eq:EpmSelfConsistent}  induces expansions for $\beta=1,4$. With
\begin{equation}\label{eq:nprime}
n' = 
\begin{cases}
n_- = n - \frac12, &\quad \beta = 1,\\*[1mm]
n,           &\quad \beta = 2,\\*[1mm]
(2n+1)_- = 2n+ \frac12, &\quad \beta = 4,
\end{cases}
\end{equation}
we obtain, in basically one single notation for all the three symmetry classes,
\begin{equation}\label{eq:Ebetaexpansion}
E_\beta\big(n;\mu_{n'} + \sigma_{n'} t\big) = F_\beta(t) + \sum_{j=1}^m E_{\beta,j}(t) h_{n'}^j + h_{n'}^{m+1}\cdot
\begin{cases}
O(e^{-t}), &\quad \beta=1,4,\\*[2mm]
O(e^{-2t}), &\quad \beta=2,
\end{cases}  
\end{equation}
which is uniformly valid for $t$ bounded from below (if the hypothesis for $\beta=1,4$ is true). Preserving uniformity, the expansion can be repeatedly differentiated w.r.t. the variable $t$. In terms of $E_{\pm,j}$ the coefficients $E_{\beta,j}$ 
are given, for $\beta=1,4$, by
\begin{equation}\label{eq:EbetaJ}
E_{1,j}(t) = E_{+,j}(t),\qquad  E_{4,j}(t) = \frac12\big(E_{+,j}(t)+E_{-,j}(t)\big),
\end{equation}
and, for $\beta=2$, by the relations such as \eqref{eq:E21algebra}, which were derived from \eqref{eq:expansionE2Epm}.

\subsection{Multivariate polynomial algebra and the linear form hypothesis}\label{subsect:multilinear}

Theorems~\ref{thm:softGUE} and \ref{thm:softLUE}
give (at least, for $m\leq m_*$) that
\begin{equation}\label{eq:E2LinForm}
E_{2,j} = \sum_{k=1}^{2j} p_{2,jk} F_2^{(k)}, \quad p_{2,jk} \in \Q[t,\tau],
\end{equation}
with $\tau=0$ in the Gaussian case. In analogy to the hard-to-soft edge transition limit, as discussed in \cite[§3.3]{2306.03798} (where we provided proof for the case $j=1$ and strong numerical evidence for the cases $j\leq 3$), we are led to the following hypothesis.

\medskip
\paragraph{\bf Linear form hypothesis} The coefficient functions $E_{\pm,j}$ can be represented in the form
\begin{equation}\label{eq:EpmHypo}
E_{\pm,j} =\sum_{k=1}^{2j} p_{\pm,jk} F_\pm^{(k)}, \quad p_{\pm,jk} \in \Q[t,\tau],
\end{equation}
with $\tau=0$ in the Gaussian case.

\medskip

Actually, based on this hypothesis, it is possible to calculate the polynomials $p_{\pm,jk}$ (at least, for $j\leq m_*$).
The thus obtained expansions for $E_\beta(n;x)$ can then be validated against simulation data, which exhibits perfect agreement -- see Figures~\ref{fig:GOEGSE}, \ref{fig:GOEGSESimulation} and \ref{fig:LOELSESimulation}.

The calculation of the polynomials $p_{\pm,jk}$ is based on the observation that
\[
\frac{F_2^{(k)}}{F_2},\;\frac{F_\pm^{(k)}}{F_\pm} \in \Q[t][q,q'] \qquad (k=1,2,\ldots),
\]
which easily follows, by induction, from the basic formulas of the Tracy--Widom theory \cite{MR1385083} -- that is, the interrelations \eqref{eq:TWtheory},
the Painlevé II equation
\[
q''(t) = t q(t)  + 2 q(t)^3,
\]
so that $\Q[t][q,q']$ is closed under differentiation, and the logarithmic derivative
\begin{subequations}\label{eq:FpmQ2}
\begin{equation}
\frac{F_2'}{F_2} = q'^2 - t q^2 - q^4.
\end{equation}
For instance, we thus get 
\begin{equation}
\begin{aligned}
\frac{F_\pm'}{F_\pm} &= \tfrac{1}{2} q'^2-\tfrac{1}{2} q^4-\tfrac{t}{2}  q^2\pm\tfrac{1}{2}q,\\*[1mm]
\frac{F_2''}{F_2} &= -2 q^4 q'^2-2 t q^2 q'^2+q'^4+t^2 q^4+q^8+2 t q^6-q^2,\\*[1mm]
\frac{F_\pm''}{F_\pm} &= -\tfrac{1}{2} q^4 q'^2-\tfrac{t}{2}  q^2 q'^2\pm\tfrac{1}{2} q q'^2+\tfrac{1}{4} q'^4\pm\tfrac{1}{2}q'+\tfrac{t^2}{4}  q^4 +\tfrac{1}{4}q^8\\
&\qquad +\tfrac{t}{2}  q^6\mp\tfrac{1}{2}q^5\mp\tfrac{t}{2}  q^3-\tfrac{1}{4}q^2.
\end{aligned}
\end{equation}
\end{subequations}
Since  any nonzero solution $q$ of the Painlevé II equation (such as the Hastings--McLeod solution) and its first derivative $q'$  are algebraically independent over the ring $\Q[t]$ (see \cite[Theorems~20.2/21.1]{MR1960811}), the functions $q$ and $q'$ serve as  indeterminates over $\Q[t,\tau]$. That is, a multivariate polynomial in $\Q[t,\tau][q,q']$ vanishes if and only if all the coefficients in $\Q[t,\tau]$ vanish: equality can be tested coefficient-wise.

Now, we plug the linear forms \eqref{eq:E2LinForm} and \eqref{eq:EpmHypo} into those formulas which relate the $E_{2,j}$ to the $E_{\pm,j}$, such as \eqref{eq:E21algebra},  and divide by $F_2=F_+ F_-$. Assuming that we have already
found the $p_{\pm,jk}$ for all $j<m$, we  get inductively a linear equation in $\Q[t,\tau][q,q']$ for the still unknown polynomials $p_{\pm,m\mu}$. As in \cite[§3.3.4]{2306.03798}, comparing coefficients in $\Q[t,\tau]$ shows that the $p_{\pm,m\mu}$ must satisfy an overdetermined linear system of size $(16m^2-8m+5)\times 4m$ over $\Q[t,\tau]$. For instance, in the case $m=1$ displayed in \eqref{eq:E21algebra}, we get\footnote{We observe that \eqref{eq:FpmQ2} implies $F_+ F_-'' - 2F_+'F_-' + F_+'' F_- = 0$.}
\[
p_{2,11} \frac{F_2'}{F_2} + p_{2,12} \frac{F_2''}{F_2} =  \sum_{\sigma = \pm} \left(p_{\sigma,11} \frac{F_\sigma'}{F_\sigma} + p_{\sigma,12} \frac{F_\sigma''}{F_\sigma}\right).
\]
Now, by using \eqref{eq:FpmQ2} and comparing the coefficients of the 13 different $(q,q')$-monomials \[
q, \, q^2,\, q^3,\, q^4,\, q^5,\, q^6,\, q^8,\, q',\, q'^2,\, q'^4,\, q q'^2,\, q^2q'^2,\, q^4q'^2,
\]
which enter the expressions in \eqref{eq:FpmQ2}, we get the $13\times 4$ system (canceling,  right away, common factors in each line)
{\begin{equation}\label{eq:exampleSystem}
\begin{pmatrix}
 1 & -1 & 0 & 0 \\
 2t & 2t & 1 & 1\\
 0 & 0 & 1 & -1 \\
 2 & 2 & -t^2 & -t^2\\
 0 & 0 & 1 & -1 \\
 0 & 0 & 1 & 1\\
 0 & 0 & 1 & 1 \\
 0 & 0 & 1 & -1 \\
 1 & 1 & 0 & 0  \\
 0 & 0 & 1 & 1  \\
 0 & 0 & 1 & -1 \\
 0 & 0 & 1 & 1 \\
 0 & 0 & 1 & 1 
\end{pmatrix}
\begin{pmatrix}
p_{+,11}\\*[1mm]
p_{-,11}\\*[1mm]
p_{+,12}\\*[1mm]
p_{-,12}
\end{pmatrix} =
\begin{pmatrix}
 0 \\
 4tp_{2,11} +  4p_{2,12}\\
0 \\*[1mm]
4p_{2,11}- 4t^2 p_{2,12} \\
 0 \\
 4  p_{2,12} \\
 4p_{2,12} \\
 0 \\
 2p_{2,11} \\
4 p_{2,12} \\
 0 \\*[1mm]
 4  p_{2,12} \\
 4p_{2,12} \\
\end{pmatrix}.
\end{equation}}%
Here, the unique solution is clearly
\[
p_{\pm,11} = p_{2,11},\qquad p_{\pm,12} = 2p_{2,12}.
\]
Continuing in the same fashion, the $53\times 8$ system for $m=2$ is uniquely solved by
\[
\begin{gathered}
p_{\pm,21} = p_{2,21}+2 p_{2,24} -\tfrac{1}{2} \partial_t^2 p_{2,11}+p_{2,12} + \tfrac{1}{12}-\beta_1,\quad
p_{\pm,22} = 2 p_{2,22} -\tfrac{1}{2} p_{2,11}^2-\partial_t^2 p_{2,12},\\*[1mm]
p_{\pm,23} = 4 p_{2,23}-2 p_{2,11} p_{2,12},\quad 
p_{\pm,24} = 8 p_{2,24} -2 p_{2,12}^2,
\end{gathered}
\] 
and the $125\times 12$ system for $m=3$ uniquely by
\begin{align*}
p_{\pm,31} &= 8 t p_{2,36} + p_{2,12} p_{2,11}^2 - 2 p_{2,23} p_{2,11} + p_{2,31} +2 p_{2,34}\\
&\qquad -\tfrac{1}{2} p_{2,11}^2 +\tfrac{1}{3} t p_{2,12}+p_{2,22}+4 t p_{2,24}\\
&\qquad\quad + 2 \alpha _1 t p_{2,12} -\alpha _1 t \partial_t^2 p_{2,11}-(\beta_1 +\gamma_1-\tfrac13) \partial_t p_{2,11}-\delta _1 \tau  \partial_\tau\partial_t p_{2,11} \\
&\qquad\qquad  -2 \partial_t p_{2,12} p_{2,11}+2 \partial_t p_{2,23}-\tfrac{1}{2} \partial_t^2 p_{2,12}-\tfrac{1}{2} \partial_t^2 p_{2,21}\\
&\qquad\qquad\quad  -\partial_t^2 p_{2,24}+\tfrac{5}{24} \partial_t^4 p_{2,11} +\tfrac{1}{3} \alpha_1 t-\alpha_2 t+\tfrac{1}{90}t,\\*[1mm]
p_{\pm,32} &= 8 p_{2,11} p_{2,12}^2 -8 p_{2,23} p_{2,12} -p_{2,11} p_{2,21} -18 p_{2,11} p_{2,24}+2 p_{2,32}+20 p_{2,35}\\
&\qquad -5 p_{2,11} p_{2,12} +6 p_{2,23} +(\tfrac{1}{12}-\beta_1) p_{2,11} -2 (\beta_1 + \gamma_1-\tfrac23) \partial_t p_{2,12} \\
&\qquad\quad -8 \partial_t p_{2,12} p_{2,12}+(\partial_t p_{2,11})^2+16 \partial_t p_{2,24}+\tfrac{1}{2} p_{2,11} \partial_t^2p_{2,11}\\
&\qquad\qquad -\partial_t^2 p_{2,22} +\tfrac{5}{12} \partial_t^4 p_{2,12} -2 \alpha _1 t \partial_t^2 p_{2,12} -2 \delta_1 \tau  \partial_\tau\partial_t p_{2,12},\\*[0mm]
p_{\pm,33} &= \tfrac{1}{2} p_{2,11}^3 -2 p_{2,22} p_{2,11}+16 p_{2,12}^3 -2 p_{2,12} p_{2,21} -68 p_{2,12}p_{2,24}
+4 p_{2,33} \\
&\qquad  +120 p_{2,36} -10 p_{2,12}^2+24 p_{2,24} +(\tfrac{1}{6} -2 \beta_1) p_{2,12}\\
&\qquad\quad +\partial_t^2 p_{2,12} p_{2,11}
+4 \partial_t p_{2,11} \partial_t p_{2,12}+p_{2,12} \partial_t^2 p_{2,11}-2\partial_t^2 p_{2,23},\\*[1mm]
p_{\pm,34} &= 3 p_{2,12} p_{2,11}^2-4 p_{2,23} p_{2,11}-4 p_{2,12} p_{2,22}+8 p_{2,34} \\
&\qquad +4 (\partial_t p_{2,12})^2+2 p_{2,12}\partial_t^2 p_{2,12}-4 \partial_t^2 p_{2,24},\\*[1mm]
p_{\pm,35} &= 6 p_{2,11} p_{2,12}^2-8 p_{2,23} p_{2,12}-8 p_{2,11}p_{2,24}+16 p_{2,35},\\*[1mm]
p_{\pm,36} &= 4 p_{2,12}^3-16 p_{2,24} p_{2,12}+32 p_{2,36}.
\end{align*}
Clearly, in the Gaussian case, all terms starting with the operator $\delta_1\tau \partial_\tau$ vanish.

With the help of a computer algebra system, the calculations were successfully (with unique solutions in each step) continued up to $m=m_*$ -- where we decided to stop. Just by inspection, we obtain that, in all these instances, the $E_{\pm,j}$ share the same coefficient polynomials -- namely,
\begin{equation}\label{eq:plusMinusSame}
p_{+,jk} = p_{-,jk} \quad (k=1,\ldots,2j, \, j=1,\ldots,m_*);
\end{equation}
we conjecture this to be generally true and the systems always to be uniquely solvable. Hence, by the linearity of \eqref{eq:EbetaJ}, we get from \eqref{eq:EpmHypo} and \eqref{eq:plusMinusSame}
that
\begin{equation}\label{eq:FbetaLinForm}
E_{\beta,j} = \sum_{k=1}^{2j} p_{+,jk} F_\beta^{(k)} \quad (\beta=1,4).
\end{equation}
So, if the expansion hypothesis and the linear form hypothesis are true, also the cases $\beta=1,4$ have expansion terms (at least, for $j\leq m_*$) that are multilinear in the derivatives of the limit law with rational polynomial coefficients -- and the two symmetry classes of orthogonal and symplectic ensembles share the same set of such polynomial coefficients.\footnote{Such an agreement of the polynomial coefficients is reminiscent of the $\beta$ to $4/\beta$ duality studied in \cite{MR2522665}.}

\section{Expansions of the GOE and GSE}\label{sect:GOEGSE}
In specifying the general approach of Section~\ref{sect:genAppr} (assuming the hypotheses made there), using the notation and  result of Theorem~\ref{thm:softGUE}, while  introducing $n'$ as in \eqref{eq:nprime}, we get
\begin{equation}\label{eq:GbetaE}
E_\beta\big(n;\mu_{n'} + \sigma_{n'} t\big) = F_\beta(t) + \sum_{j=1}^m E_{\beta,j}(t) h_{n'}^j + h_{n'}^{m+1}\cdot
O(e^{-t}) \qquad (\beta=1,4),
\end{equation}
as $h_{n'}\to 0^+$, uniformly for $t$ bounded from below; preserving uniformity, the expansion can repeatedly be differentiated w.r.t. the variable $t$. For the quantities $t_\nu$ and $\tilde h_\nu$, defined by
\[
x = \mu_n + \sigma_n t =: \mu_{n_-+\nu} + \sigma_{n_-+\nu} \, t_\nu, \quad \tilde h_\nu := h_{n_- + \nu} \qquad (\nu=0,1),
\]
a routine calculation gives expansions in $h=h_n$ (the brackets are power series  in $h^{3/2}$):
\begin{gather*}
t_\nu = t\left(1 \mp\frac{2}{3} h^{3/2}-\frac{10}{9} h^3 \mp \cdots \right)\pm h^{1/2}\left(1 \pm \frac{1}{3}h^{3/2}+\frac{2}{9} h^3 \pm \cdots \right), \\
\tilde h_\nu = h\left(1 \pm \frac83 h^{3/2} + \frac{80}{9} h^3 \pm \cdots \right),
\end{gather*}
where the upper signs apply to $\nu=0$ and the lower signs to $\nu=1$.
In view of \eqref{eq:alpha2delta}, this specifies the parameters
\[
\alpha_1 = -\frac{2}{3},\quad \alpha_2 = -\frac{10}{9}, \quad \beta_1 = \frac{1}{3}, \quad \gamma_1 = \frac{8}{3}.
\]
\begin{figure}[tbp]
\includegraphics[width=0.325\textwidth]{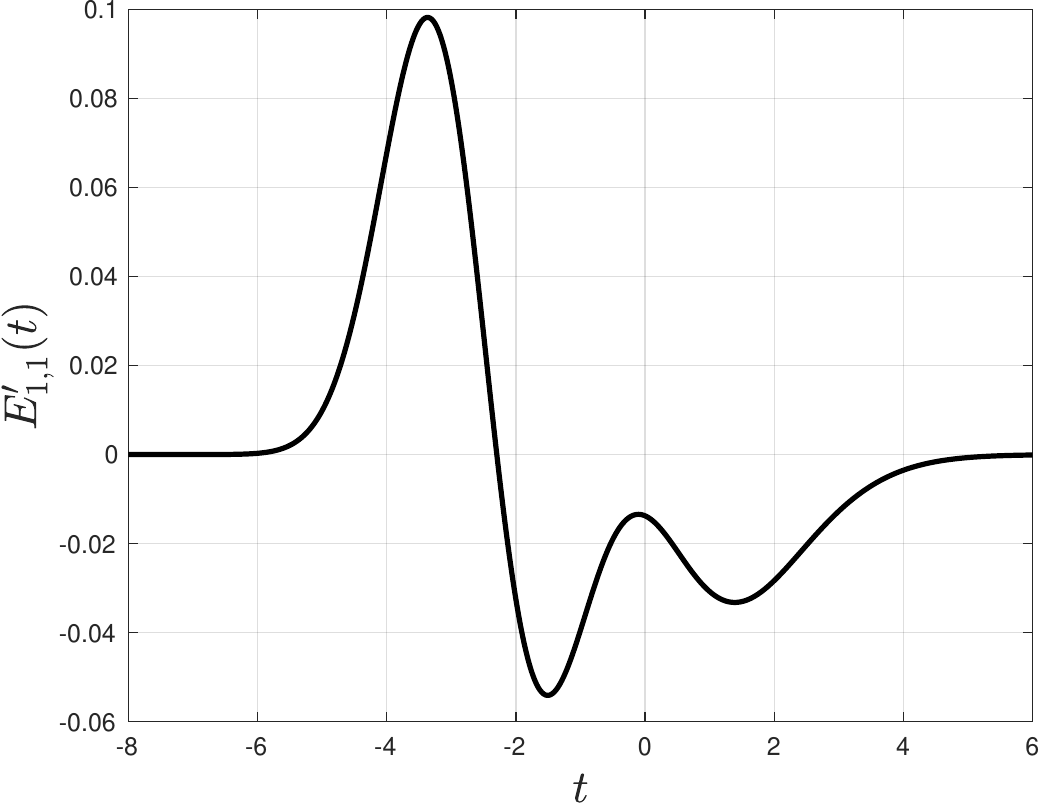}\hfil\,
\includegraphics[width=0.325\textwidth]{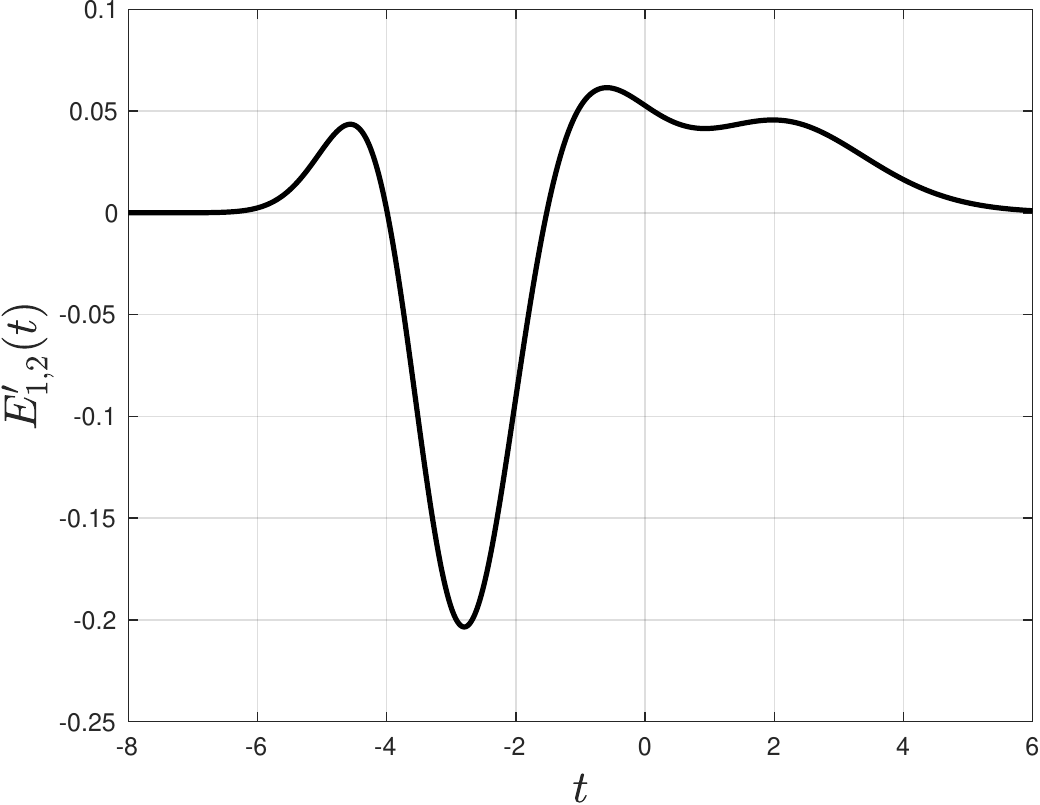}\hfil\,
\includegraphics[width=0.325\textwidth]{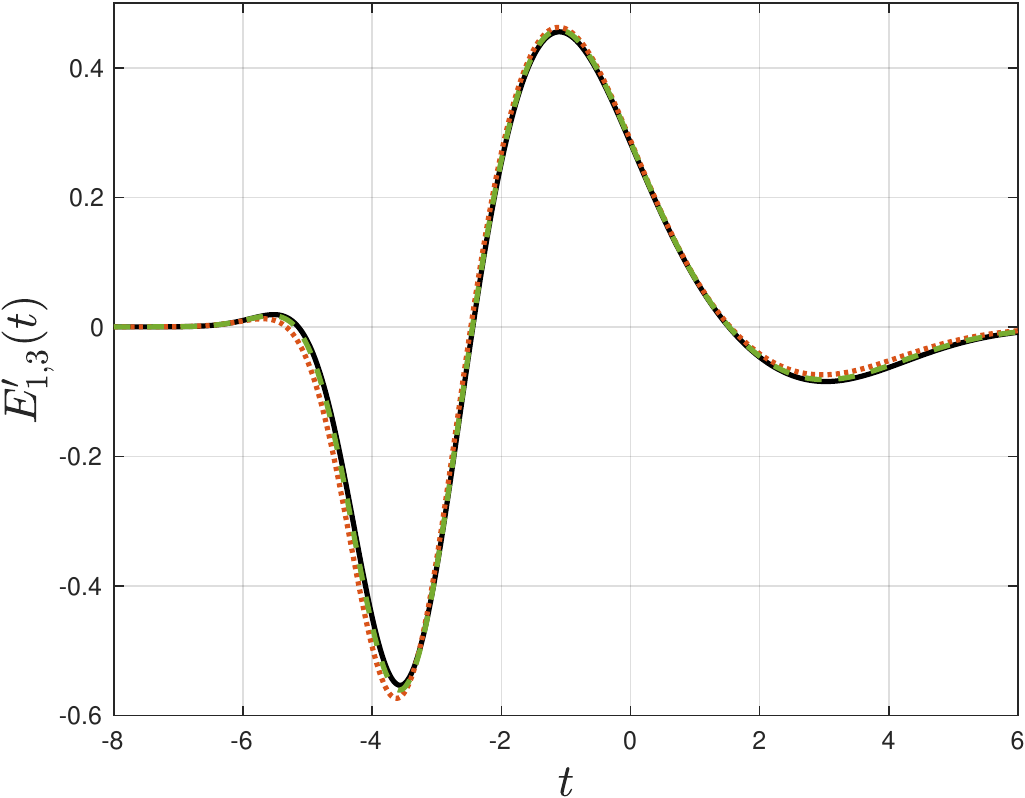}\\*[2mm]
\includegraphics[width=0.325\textwidth]{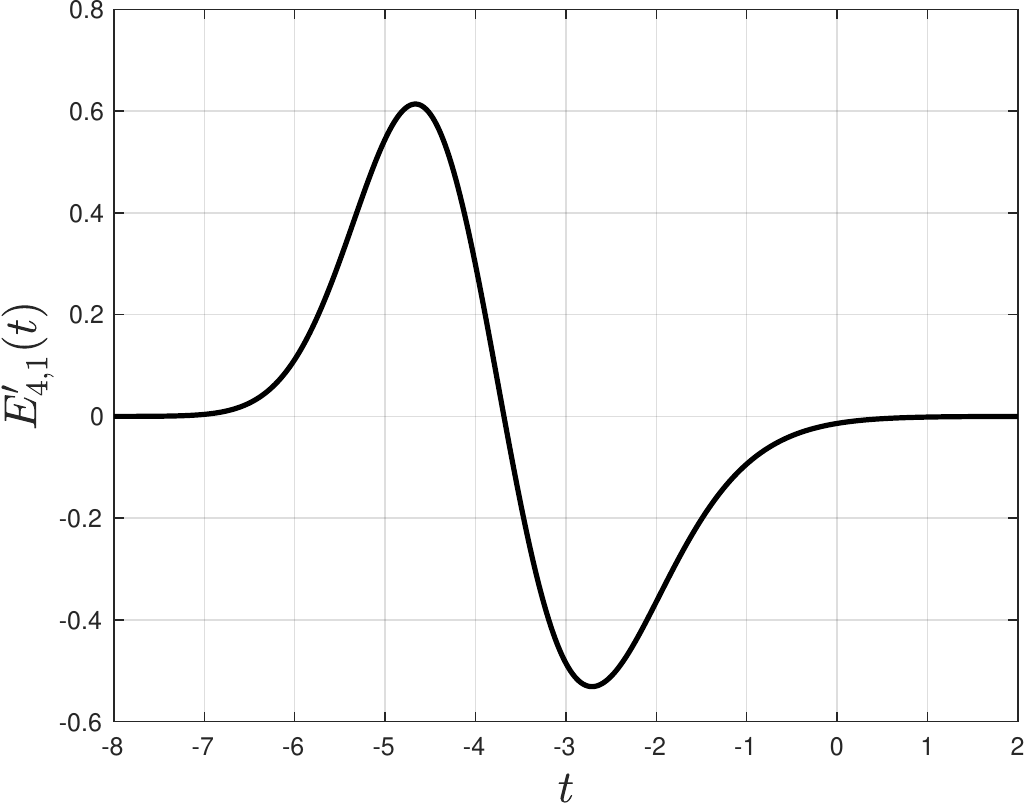}\hfil\,
\includegraphics[width=0.325\textwidth]{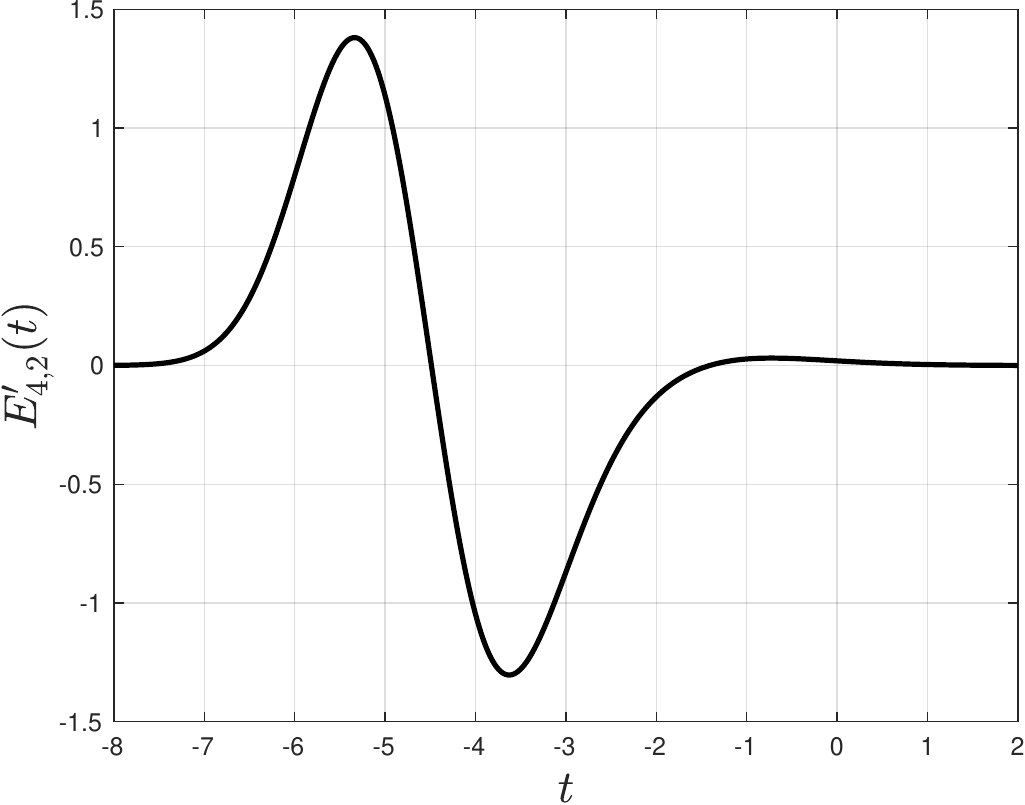}\hfil\,
\includegraphics[width=0.325\textwidth]{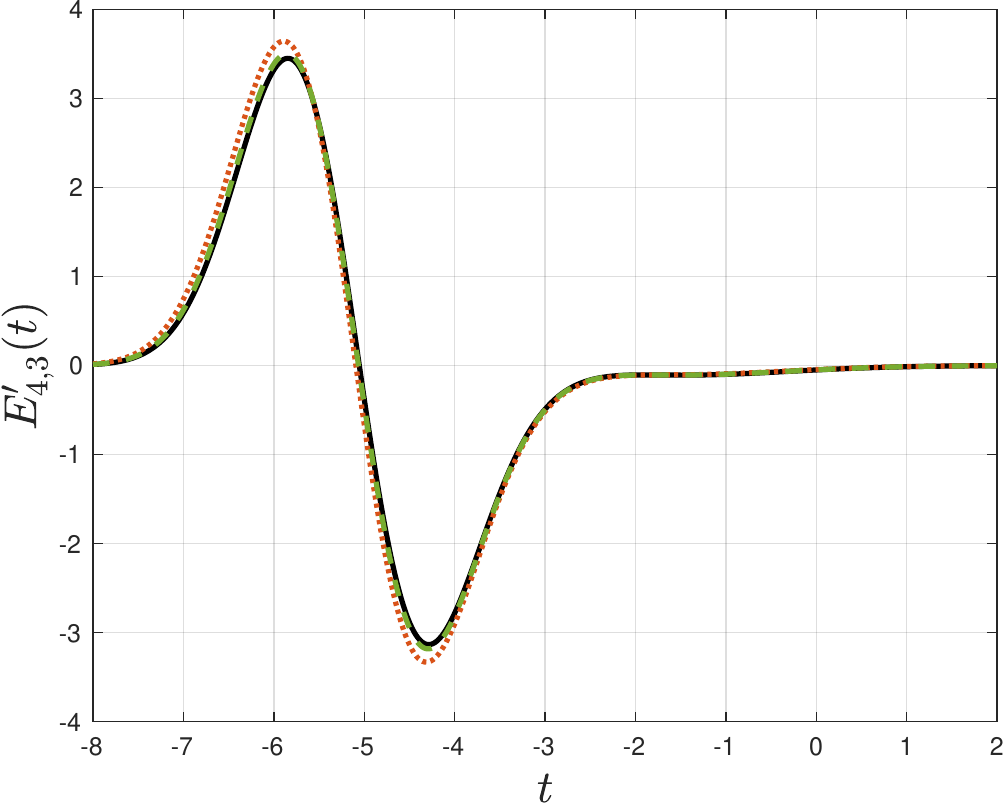}
\caption{{\footnotesize Top row: GOE ($\beta=1$). Bottom row: GSE ($\beta=4$). Plots of $E_{\beta,1}'(t)$ (left panel) and $E_{\beta,2}'(t)$ (middle panel).
The right panel shows $E_{\beta,3}'(t)$ (black solid line) with the approximations \eqref{eq:Fbeta3} for $n=10$ (red dotted line) and $n=80$ (green dashed line): the close match validates the functional forms displayed in~\eqref{eq:Fbeta2} and the differentiability of  the expansion \eqref{eq:GbetaE}.}}
\label{fig:GOEGSE}
\end{figure}%
So,  the formulas for the $p_{+,jk}$ in Section~\ref{subsect:multilinear}, with the polynomials $p_{2,jk}$ taken from  Theorem~\ref{thm:softGUE}, yield the first three instances of \eqref{eq:FbetaLinForm} -- namely, for $\beta=1,4$,
\begin{subequations}\label{eq:Fbeta2}
\begin{align}
E_{\beta,1}(t) &= \frac{t^2}{5} F_\beta'(t) -\frac{3}{5} F_\beta''(t),\label{eq:Fbeta1}\\*[1mm]
E_{\beta,2}(t) &=-\bigg(\frac{186}{175} + \frac{8t^3}{175}\bigg) F_\beta'(t) + \bigg(\frac{78t}{175} + \frac{t^4}{50}\bigg) F_\beta''(t) - \frac{3t^2}{25} F_\beta'''(t) + \frac{9}{50} F_\beta^{(4)}(t),\\*[1mm]
E_{\beta,3}(t) &= 
\bigg(\frac{6392 t}{7875} + \frac{148 t^4}{7875}\bigg)  F_\beta'(t)
-\bigg( \frac{292t^2}{525}  + \frac{8t^5}{875} \bigg) F_\beta''(t) \\*[0.25mm]
&\qquad +\bigg(\frac{11618}{7875} +\frac{102 t^3}{875} + \frac{t^6}{750}\bigg) F_\beta'''(t)
-\bigg(\frac{234t}{875} +\frac{3 t^4}{250} \bigg)F_\beta^{(4)}(t)\notag\\*[0.25mm]
&\qquad +\frac{9 t^2 }{250}F_\beta^{(5)}(t)
-\frac{9}{250} F_\beta^{(6)}(t).\notag
\end{align}
\end{subequations}

Using numerical methods we can validate the formulas in (\ref{eq:Fbeta2}), and the differentiability of the expansion \eqref{eq:GbetaE}: Figure~\ref{fig:GOEGSE} plots, for $\beta=1,4$,  the functions $E_{\beta,1}'(t)$, $E_{\beta,2}'(t)$, $E_{\beta,3}'(t)$ next to the approximation
\begin{equation}\label{eq:Fbeta3}
E_{\beta,3}'(t) \approx h_{n'}^{-3} \cdot \big( \tfrac{d}{dt}E_\beta\big(n;\mu_{n'} + \sigma_{n'} t\big) - F_\beta'(t) - E_{\beta,1}'(t)h_{n'} - E_{\beta,2}'(t) h_{n'}^2\big).
\end{equation}
For matrix dimensions $n=10$ and $n=80$, we observe a close match. Note that, for the GOE, the absolute scale of the first expansion terms is of order $1/10$, which explains that in actual statistical applications the accuracy of the limit law is more than sufficient. 

\begin{remark} Choup \cite[Theorem~1.2]{MR2492622} claimed (with no validation whatsoever) an expansion of the form 
\[
E_1(n;\mu_{n'} + \sigma_n t) = F_1(t) + H(t) n^{-1/3} + J(t) n^{-2/3} + O(n^{-1}),
\]
expressing $H(t), J(t)$ in terms of quantities from the work of Tracy and Widom \cite{MR1385083}.  Since his very involved expression for $H(t)$ is not identically zero (as a plot makes clear), the result would contradict the $O(n^{-2/3})$ convergence rate as  established by Johnstone and Ma -- who state \cite[p.~1963]{MR3025686} to have learned of an error in Choup's argument by a private communication. 
\end{remark}

\begin{figure}[tbp]
\includegraphics[width=0.325\textwidth]{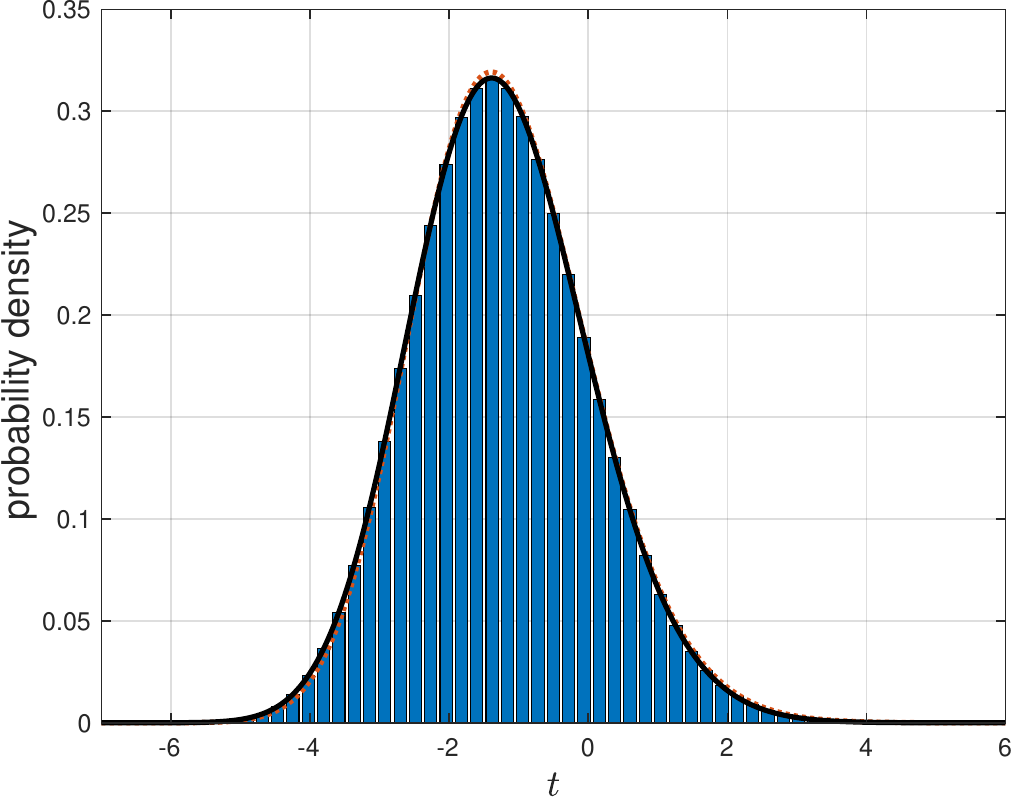}\hfil\,
\includegraphics[width=0.325\textwidth]{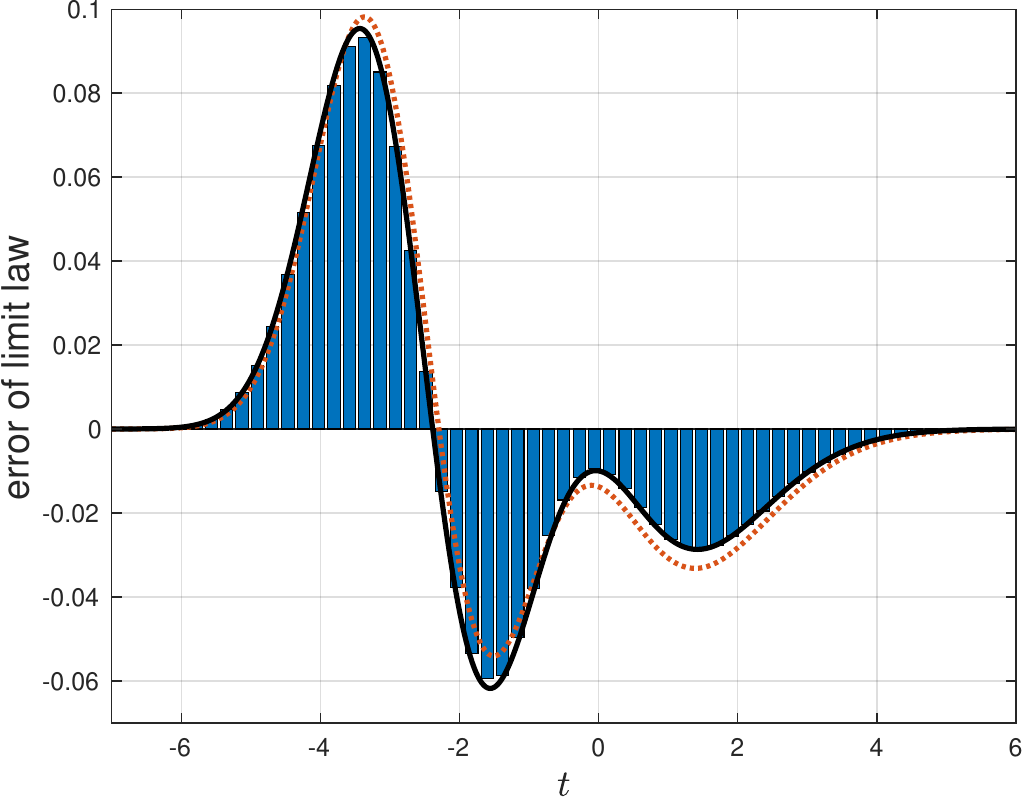}\hfil\,
\includegraphics[width=0.325\textwidth]{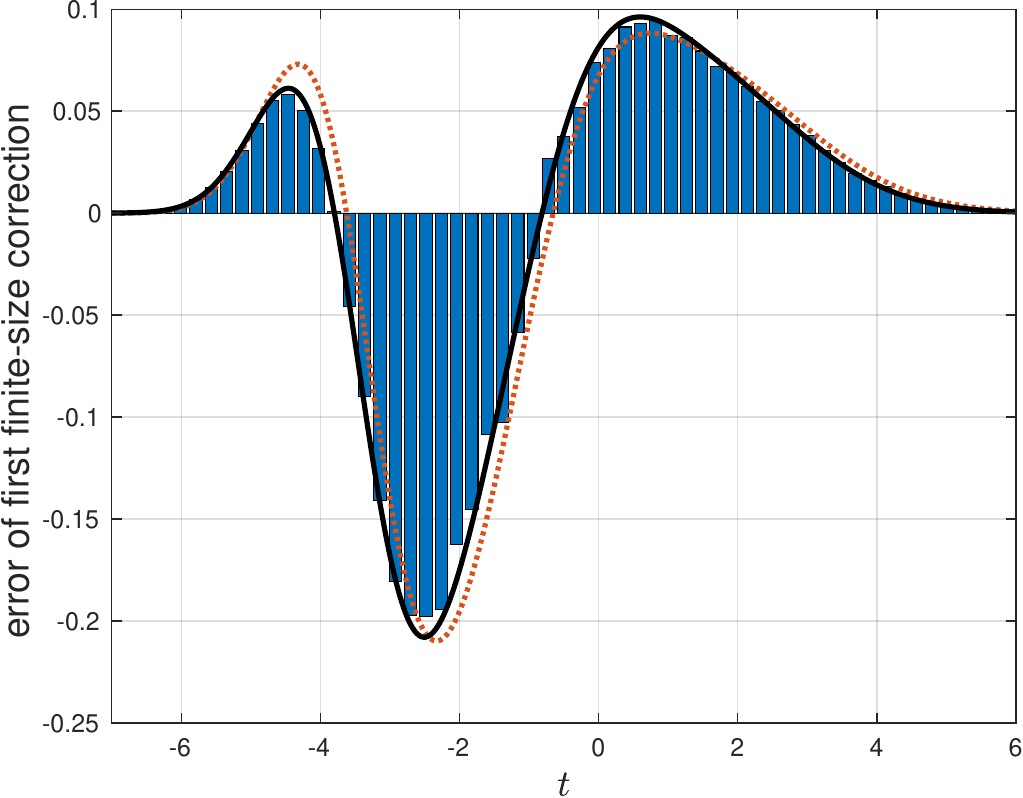}\\*[2mm]
\includegraphics[width=0.325\textwidth]{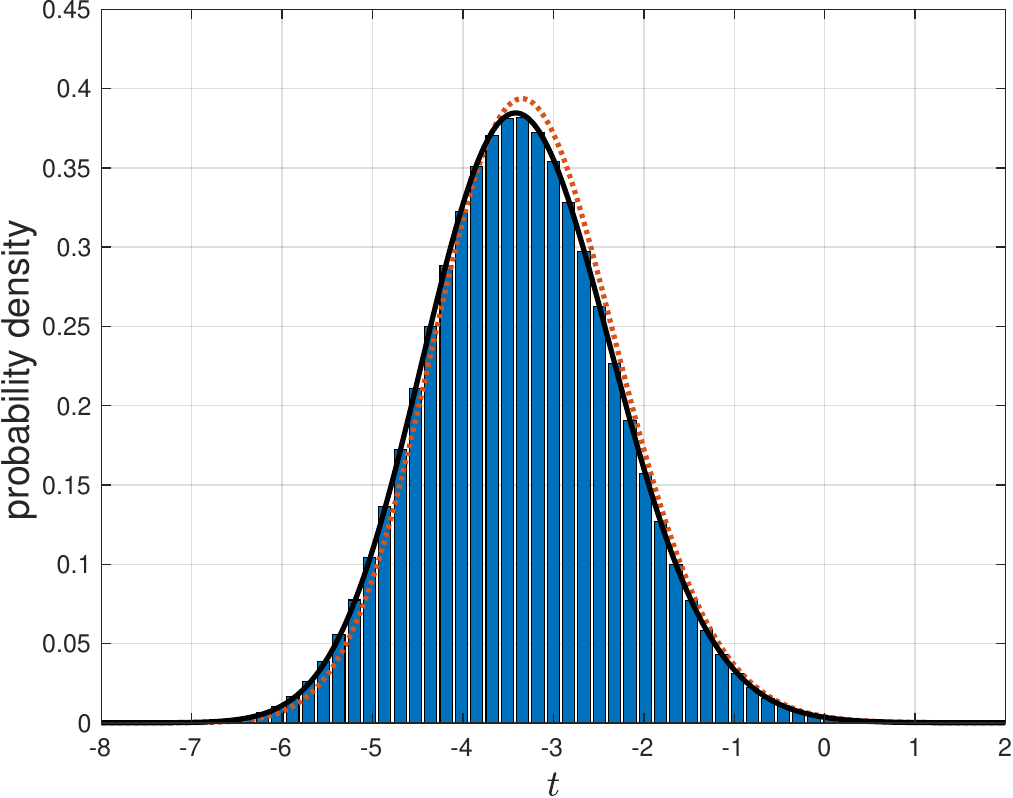}\hfil\,
\includegraphics[width=0.325\textwidth]{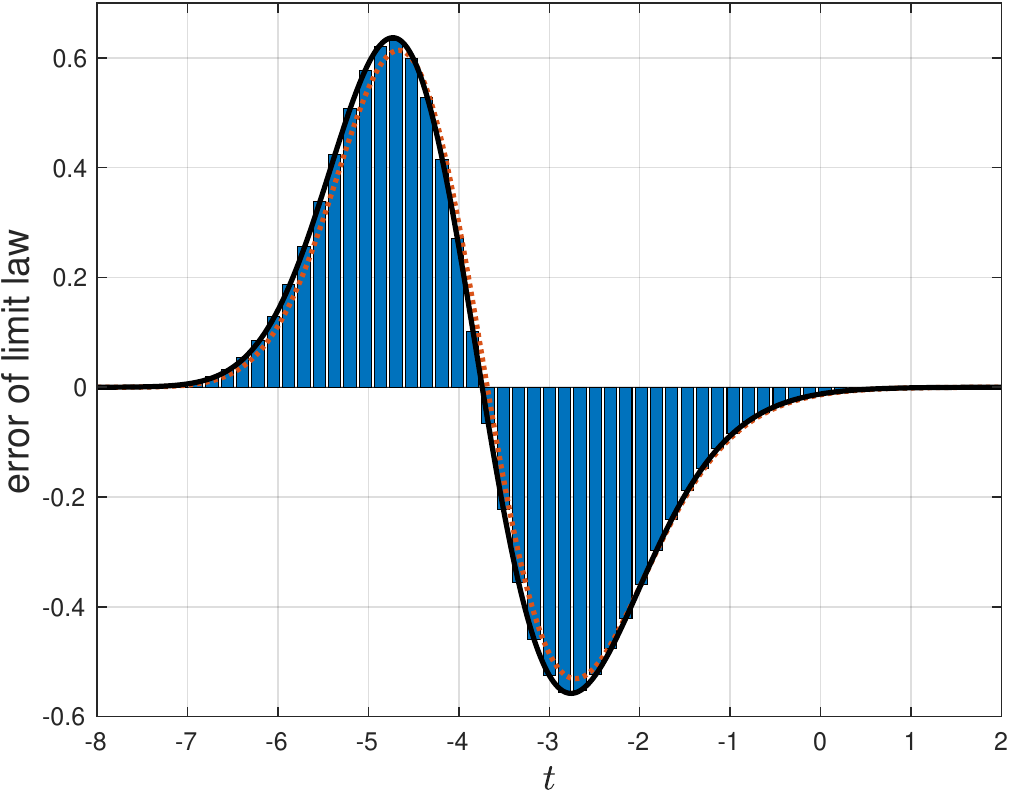}\hfil\,
\includegraphics[width=0.325\textwidth]{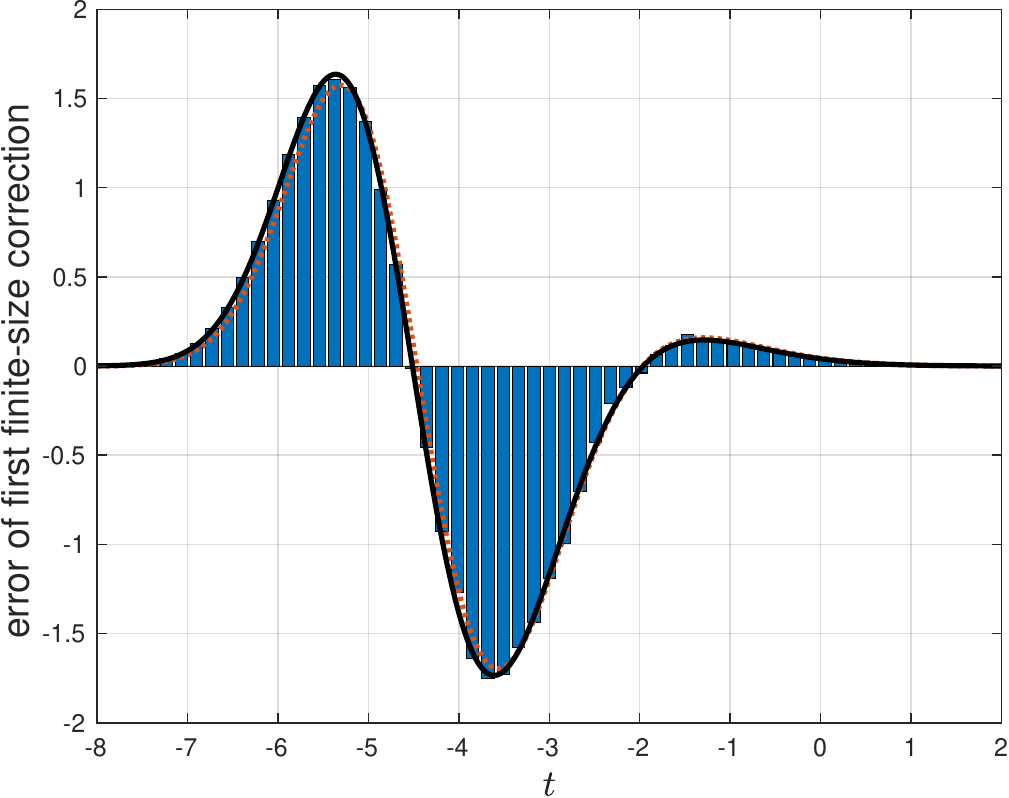}
\caption{{\footnotesize [Expansion terms with a tilde are  `histogram adjusted', see Appendix~\ref{app:histo}.] Top row: GOE ($\beta=1$). Bottom row: GSE ($\beta=4$). Left panels: histogram  of the scaled largest eigenvalues $(\lambda_{\max} - \mu_{n'})/\sigma_{n'}$, for $N=10^9$ draws, from the $\GbE$ for matrix dimension $n=10$ (blue bars) vs. the Tracy--Widom limit density $F_\beta'$ (red dotted line) 
and its first finite-size correction $F_\beta' + h_{n'} E_{\beta,1}'$ (black solid line). Middle panels: difference, scaled by $h_{n'}^{-1}$,  between the histogram midpoints and the limit density (blue bars) vs. the correction terms $E_{\beta,1}'$ (red dotted line) and $E_{\beta,1}' + h_{n'} \tilde E_{\beta,2}'$ (black solid line). Right panels: difference, scaled by $h_{n'}^{-2}$, between 
the histogram midpoints and the first finite-size correction (blue bars) vs. the correction terms $\tilde E_{\beta,2}'$ (red dotted line) and $\tilde E_{\beta,2}' + h_{n'} \tilde E_{\beta,3}'$ (black solid line).}}
\label{fig:GOEGSESimulation}
\end{figure}

\section{Expansions of the LOE (real Wishart) and LSE (quaternionic Wishart)}\label{sect:LOELSE}

In specifying the general approach of Section~\ref{sect:genAppr} (assuming the hypotheses made there), using the notation and  result of Theorem~\ref{thm:softLUE}, while  introducing $n',p'$ as in \eqref{eq:nprime}, we get
\begin{equation}\label{eq:LbetaE}
E_\beta\big(n,p;\mu_{n',p'} + \sigma_{n',p'} t\big) = F_\beta(t) + \sum_{j=1}^m E_{\beta,j;\tau_{n',p'}}(t) h_{n',p'}^j + h_{n',p'}^{m+1}\cdot O(e^{-t}) \quad (\beta=1,4)
\end{equation}
as $h_{n',p'}\to 0^+$, uniformly for $t$ bounded from below and  $\tau_{n',p'}$  bounded away from zero; preserving uniformity, the expansion can repeatedly be differentiated w.r.t. the variable $t$. For the quantities $t_\nu$, $\tilde h_\nu$ and $\tau_\nu$, defined by ($\nu=0,1$)
\[
x = \mu_{n,p} + \sigma_{n,p} t =: \mu_{n_-+\nu,p_-+\nu} + \sigma_{n_-+\nu,p_-+\nu} \, t_\nu, \quad \tilde h_\nu := h_{n_- + \nu,p_-+\nu}, \quad \tau_\nu  := \tau_{n_- + \nu,p_-+\nu},
\]
a routine calculation gives the following expansions in $h=h_{n,p}$ (the brackets are power series  in $h^{3/2}$), briefly writing $\tau=\tau_{n,p}$\,:
\begin{gather*}
t_\nu = t\left(1 \pm\big(\tau - \tfrac{2}{3}\big) h^{3/2}+\big(\tfrac{4}{3}\tau -\tfrac{10}{9}\big) h^3 \pm \cdots \right)\pm h^{1/2}\left(1 \pm \tfrac{1}{3}h^{3/2}+\tfrac{2}{9} h^3 \pm \cdots \right), \\
\tilde h_\nu = h\left(1 \pm \big(\tfrac{8}{3}-2 \tau\big) h^{3/2} + \big(3 \tau ^2-\tfrac{34}{3}\tau +\tfrac{80}{9}\big) h^3 \pm \cdots \right),\\
\tau_\nu = \tau\left(1 \pm 2 (\tau -1) h^{3/2} +(\tau ^2+\tau -2) h^3 \pm \cdots\right),
\end{gather*}
where the upper signs apply to $\nu=0$ and the lower signs to $\nu=1$.
In view of \eqref{eq:alpha2delta}, this specifies the parameters
\[
\alpha_1 = \tau -\frac{2}{3},\quad \alpha_2 = \frac{4}{3}\tau -\frac{10}{9}, \quad \beta_1 = \frac{1}{3}, \quad \gamma_1 = \frac{8}{3} - 2\tau, \quad
\delta_1 = 2(\tau-1).
\]
\begin{figure}[tbp]
\includegraphics[width=0.325\textwidth]{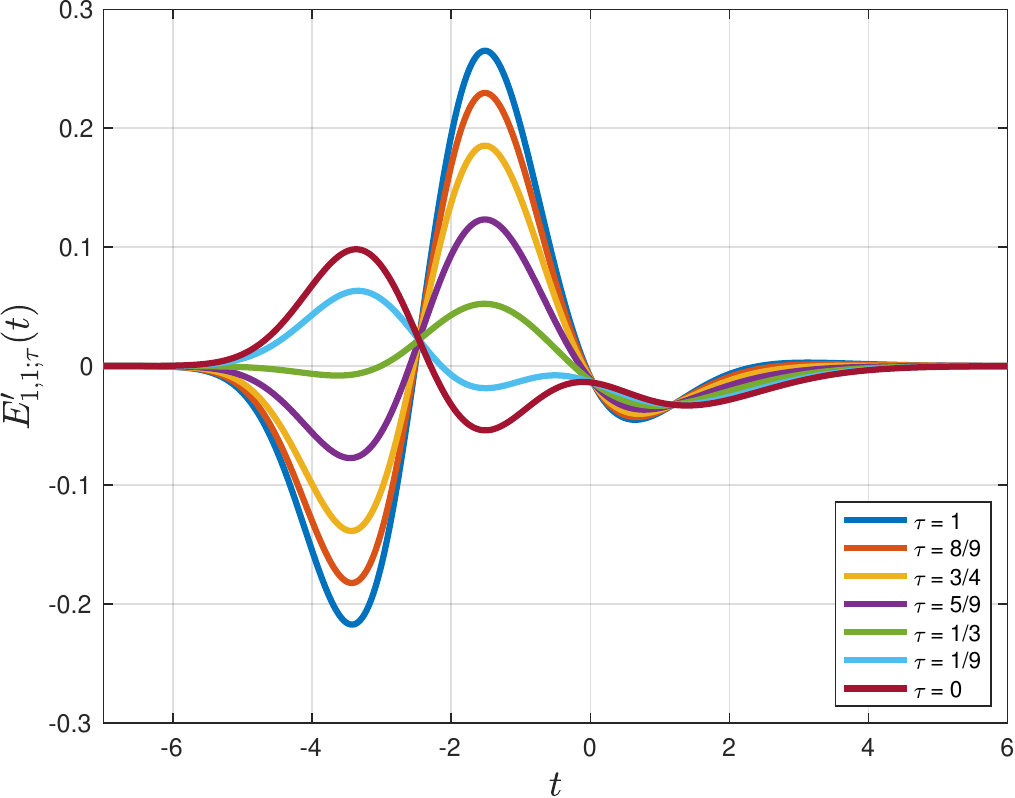}\hfil\,
\includegraphics[width=0.325\textwidth]{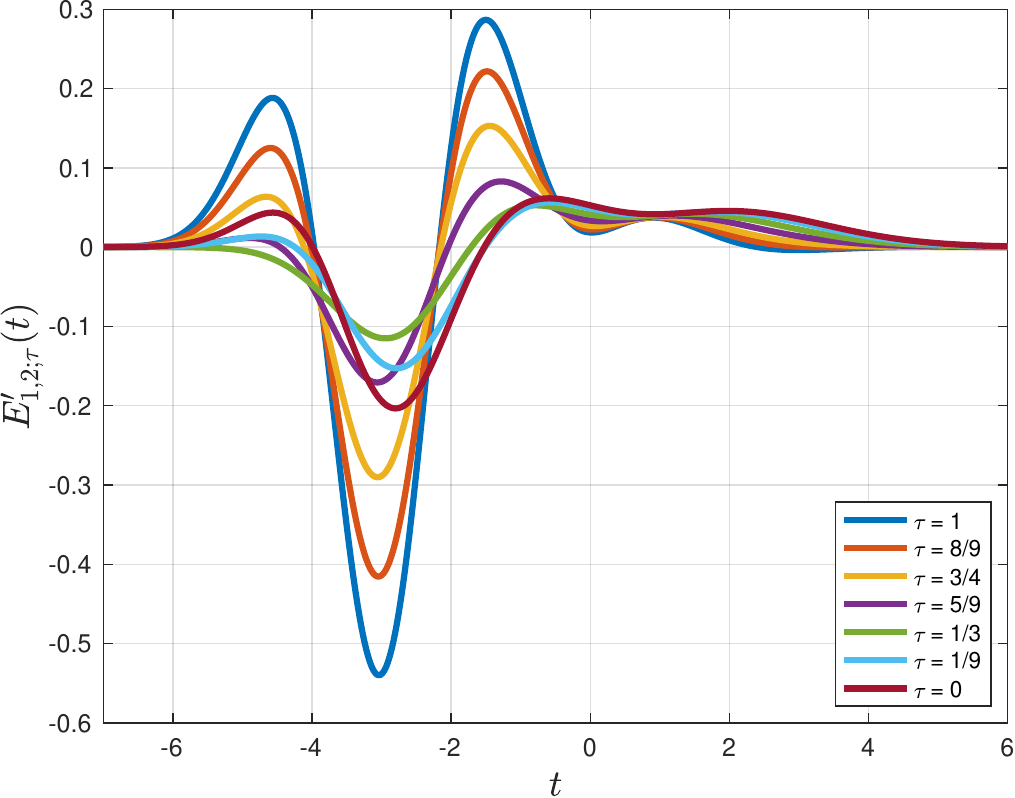}\hfil\,
\includegraphics[width=0.325\textwidth]{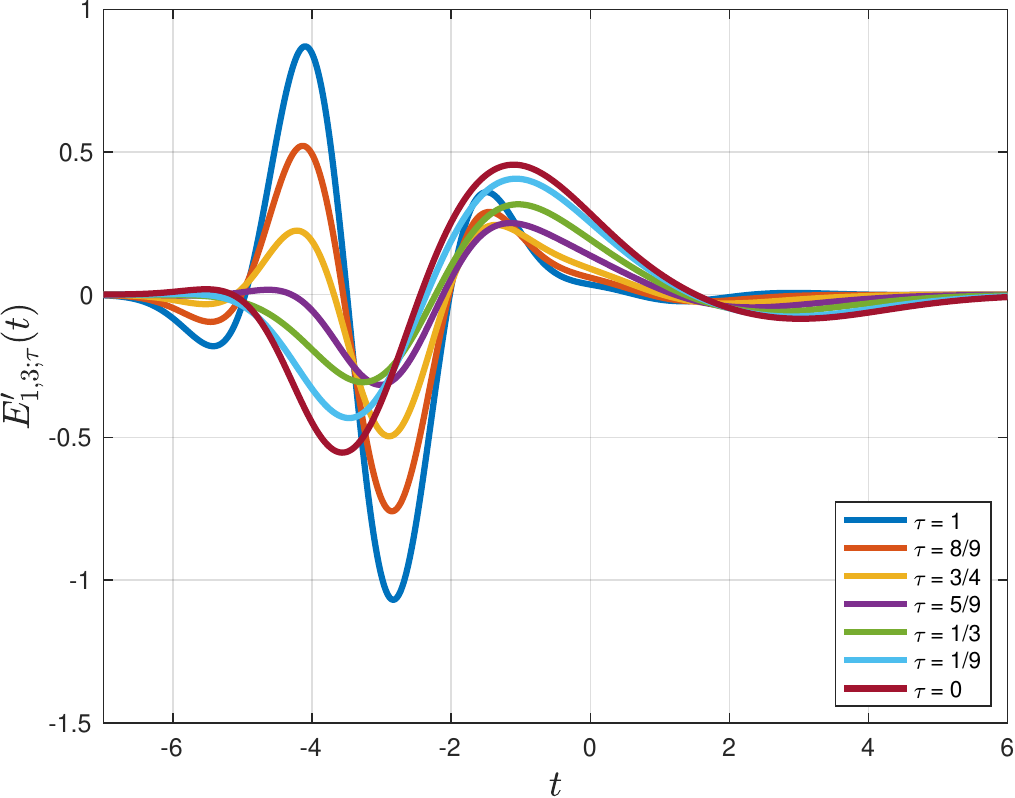}\\*[2mm]
\includegraphics[width=0.325\textwidth]{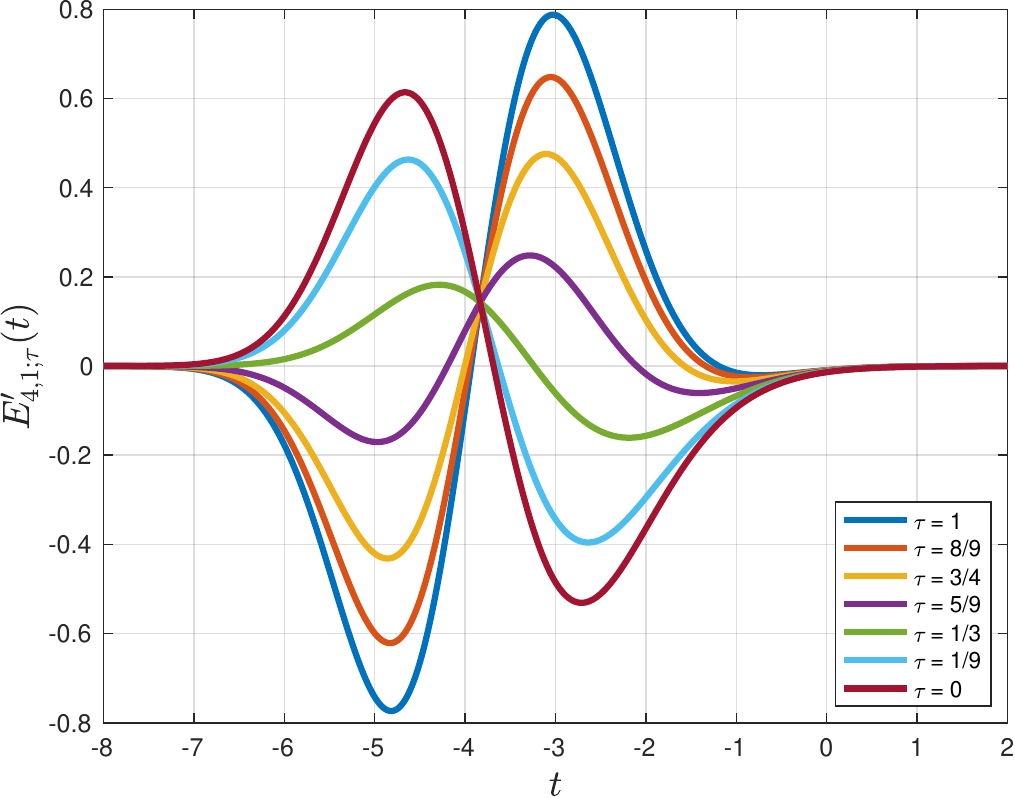}\hfil\,
\includegraphics[width=0.325\textwidth]{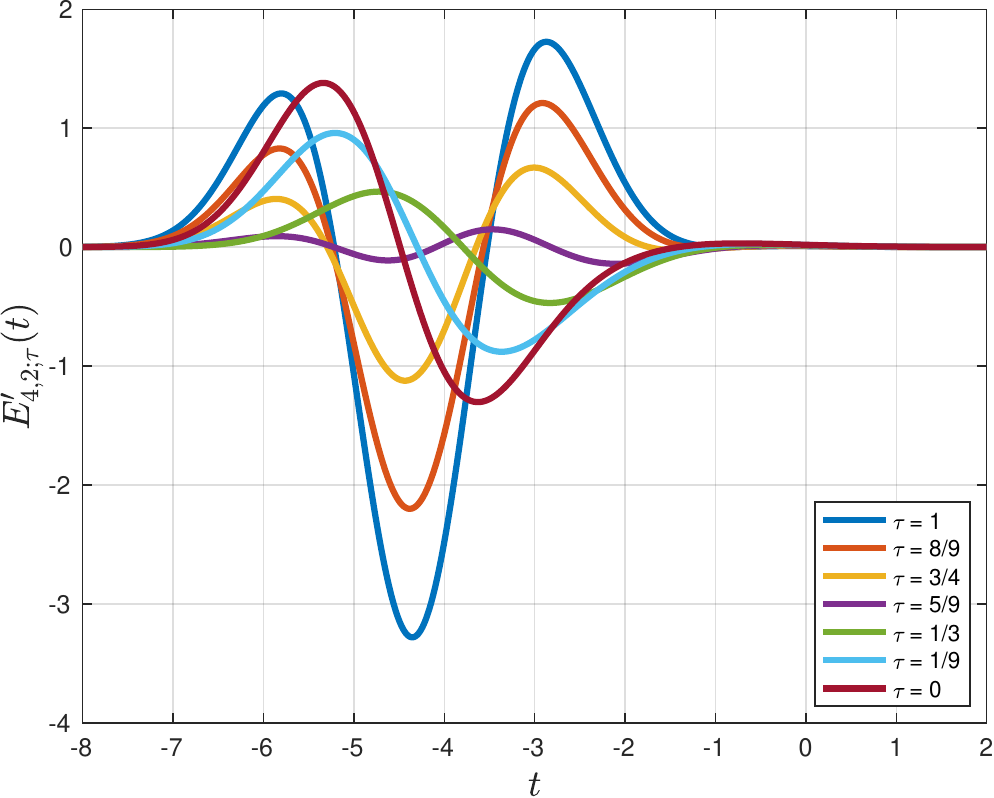}\hfil\,
\includegraphics[width=0.325\textwidth]{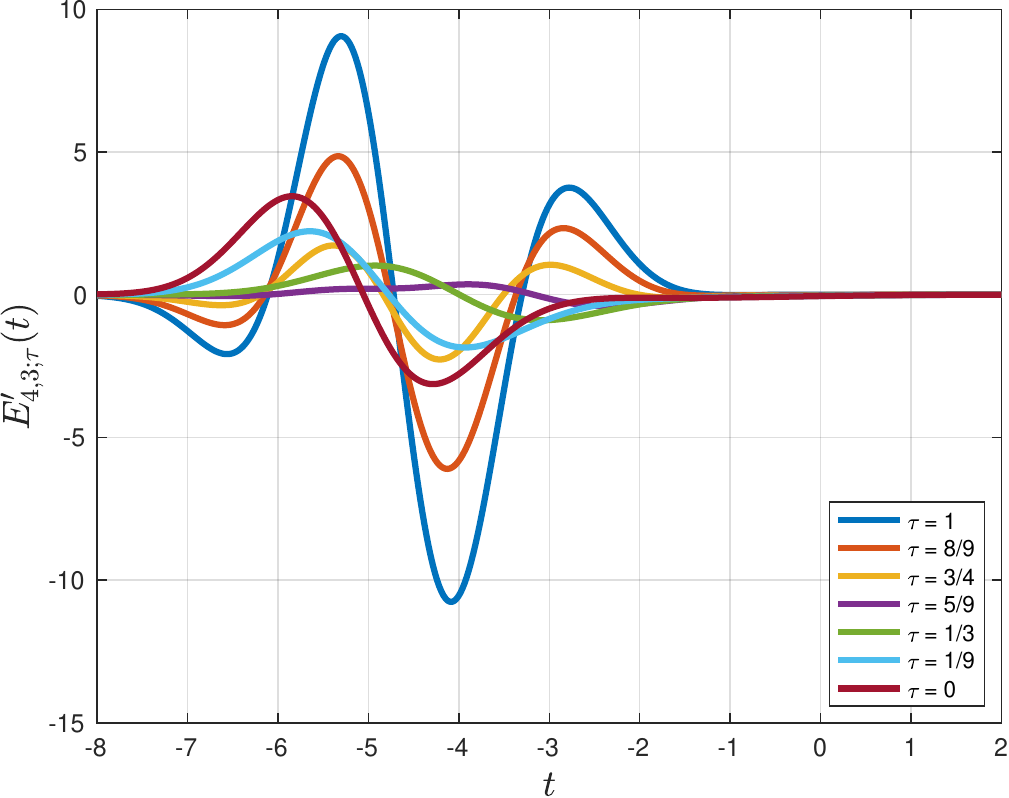}
\caption{{\footnotesize Top row: LOE ($\beta=1$). Bottom row: LSE ($\beta=4$). Plots of $E_{\beta,1;\tau}'(t)$ (left panel), $E_{\beta,2;\tau}'(t)$ (middle panel) 
and $E_{\beta,3;\tau}'(t)$ (right panel) for $\tau\in\{1, 8/9, 3/4, 5/9,1/3,1/9,0\}$. In the case $p\geq n$, these values of $\tau$ correspond to the ratios $p=n$, $p\approx 4n$, $p \approx 9n$,  $p\approx 25 n$, $p\approx 100n$, $p\approx 1000 n$, $p=\infty$. See Figure~\ref{fig:LOELSESimulation} for a comparison with sampling data for $p=4n$.}}
\label{fig:LOELSE}
\end{figure}%
So,  the formulas for the $p_{+,jk}$ in Section~\ref{subsect:multilinear}, with the polynomials $p_{2,jk}$ taken from Theorem~\ref{thm:softLUE}, yield the first three instances of \eqref{eq:FbetaLinForm} -- namely, for $\beta=1,4$,
\begin{subequations}\label{eq:F22LbetaE}
\begin{align}
E_{\beta,1;\tau}(t) &= -\frac{2 \tau -1}{5} t^2 F_\beta'(t) + \frac{\tau -3}{5} F_\beta''(t),\label{eq:F21LbetaE}\\*[1mm]
E_{\beta,2;\tau}(t) &=\bigg(\frac{9\tau ^2+496 \tau -744}{700} +\frac{43 \tau ^2-18 \tau -8 }{175}t^3\bigg) F_\beta'(t)\\*[0.5mm]
&\qquad\qquad  -\bigg(\frac{2(4 \tau ^2+26 \tau -39)}{175}t  - \frac{(2 \tau -1)^2}{50} t^4\bigg)F_\beta''(t)\notag\\*[0.5mm]
&\qquad\qquad\qquad -\frac{(\tau -3)(2 \tau -1) }{25}t^2 F_\beta'''(t) + \frac{(\tau -3)^2}{50} F_\beta^{(4)}(t),\notag\\*[1mm]
\intertext{}\notag\\*[-13mm]
E_{\beta,3;\tau}(t) &= -\bigg(\frac{ 67 \tau ^3-1778 \tau ^2+12784 \tau -12784)}{15750}t\\*[0.5mm]
&\qquad\quad +\frac{1384 \tau ^3-551 \tau ^2-212 \tau -148}{7875}t^4\bigg)F'_\beta(t)\notag\\*[0.5mm]
&\hspace*{-2.5cm} + \bigg(\frac{ 42 \tau ^3-449 \tau ^2+1600 \tau -1168}{2100}t^2
-\frac{(2 \tau -1)(43 \tau ^2-18 \tau -8)}{875}t^5\bigg) F_\beta''(t)\notag\\*[0.5mm]
&\hspace*{-1cm} +\bigg(\frac{289 \tau ^3+6349 \tau ^2-46472 \tau +46472 }{31500}\notag\\*[0.5mm]
&\;  +\frac{59 \tau ^3-51 \tau ^2-162 \tau +102}{875} t^3-\frac{(2 \tau -1)^3 }{750} t^6\bigg)F_\beta'''(t)\notag\\*[0.5mm]
&\qquad -\bigg( \frac{2(\tau -3) \left(4 \tau ^2+26 \tau -39\right)}{875}t
-\frac{(\tau -3) (2 \tau -1)^2 }{250} t^4\bigg)F_\beta^{(4)}(t)\notag\\*[0.5mm]
&\qquad\qquad\qquad\qquad\qquad -\frac{ (\tau -3)^2 (2 \tau -1)}{250}t^2 F_\beta^{(5)}(t) + \frac{(\tau -3)^3 }{750}F_\beta^{(6)}(t).\notag
\end{align}
\end{subequations}
\begin{figure}[tbp]
\includegraphics[width=0.325\textwidth]{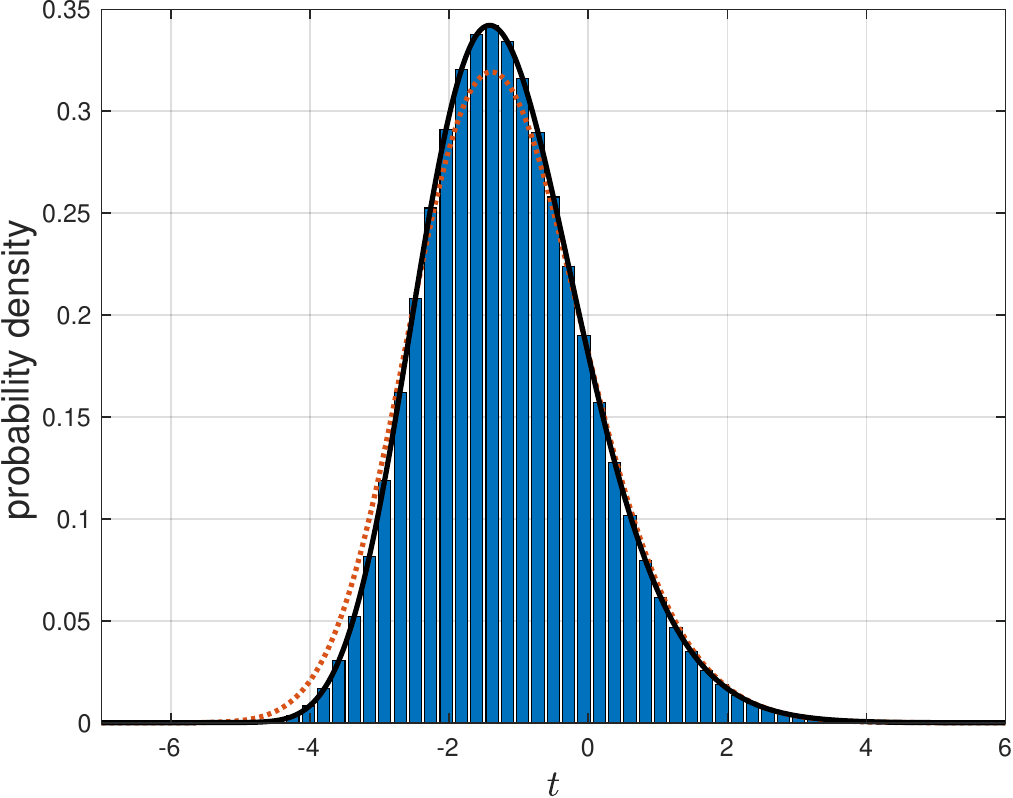}\hfil\,
\includegraphics[width=0.325\textwidth]{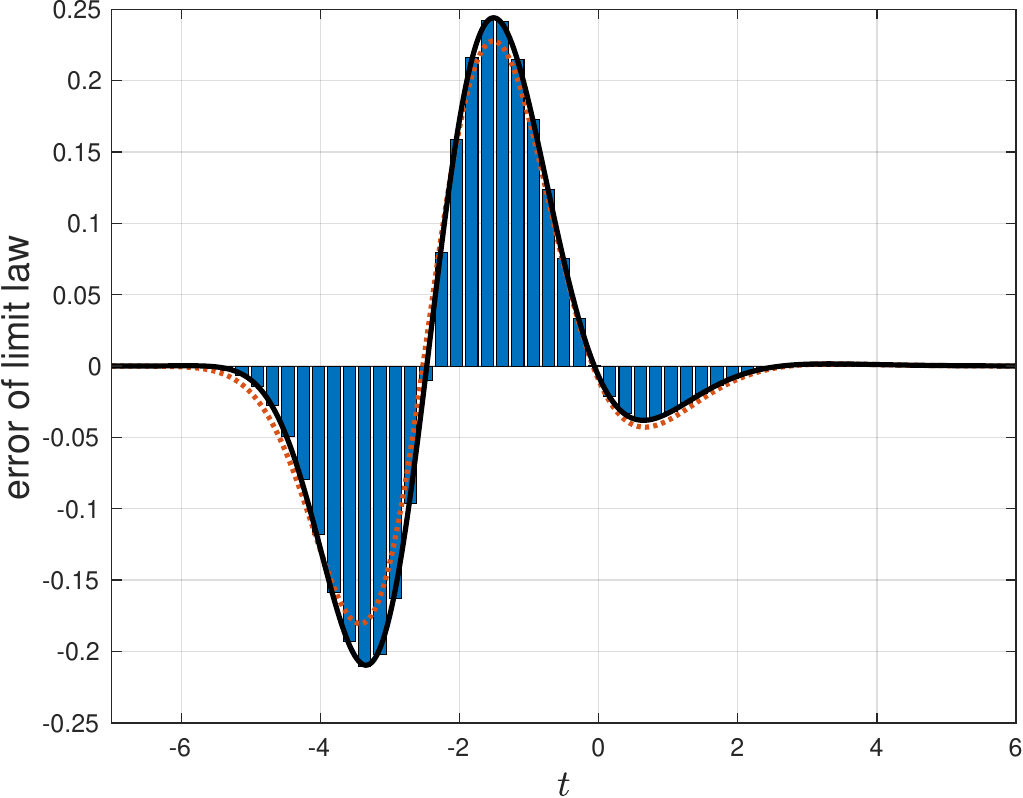}\hfil\,
\includegraphics[width=0.325\textwidth]{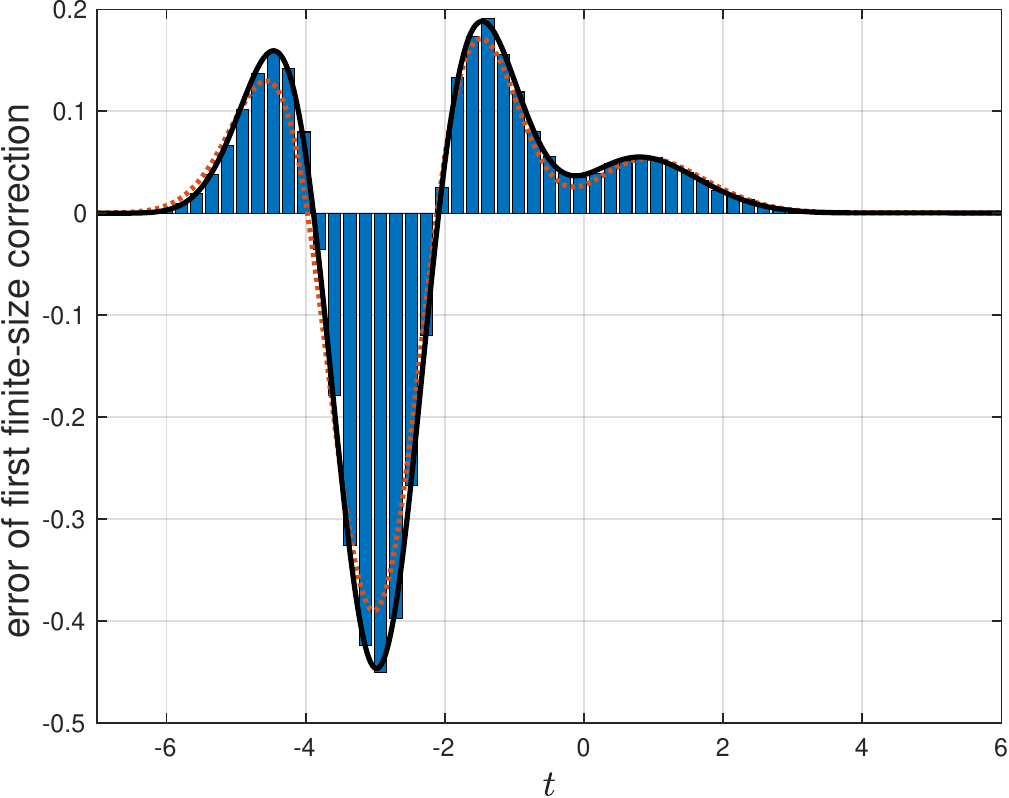}\\*[2mm]
\includegraphics[width=0.325\textwidth]{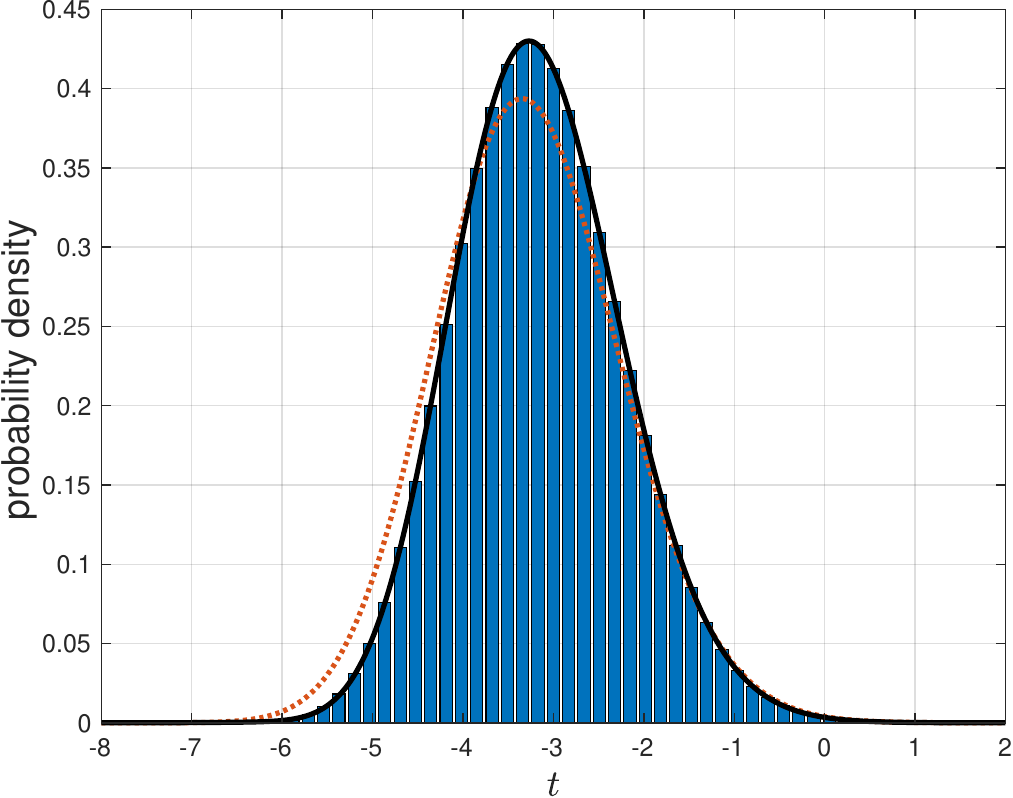}\hfil\,
\includegraphics[width=0.325\textwidth]{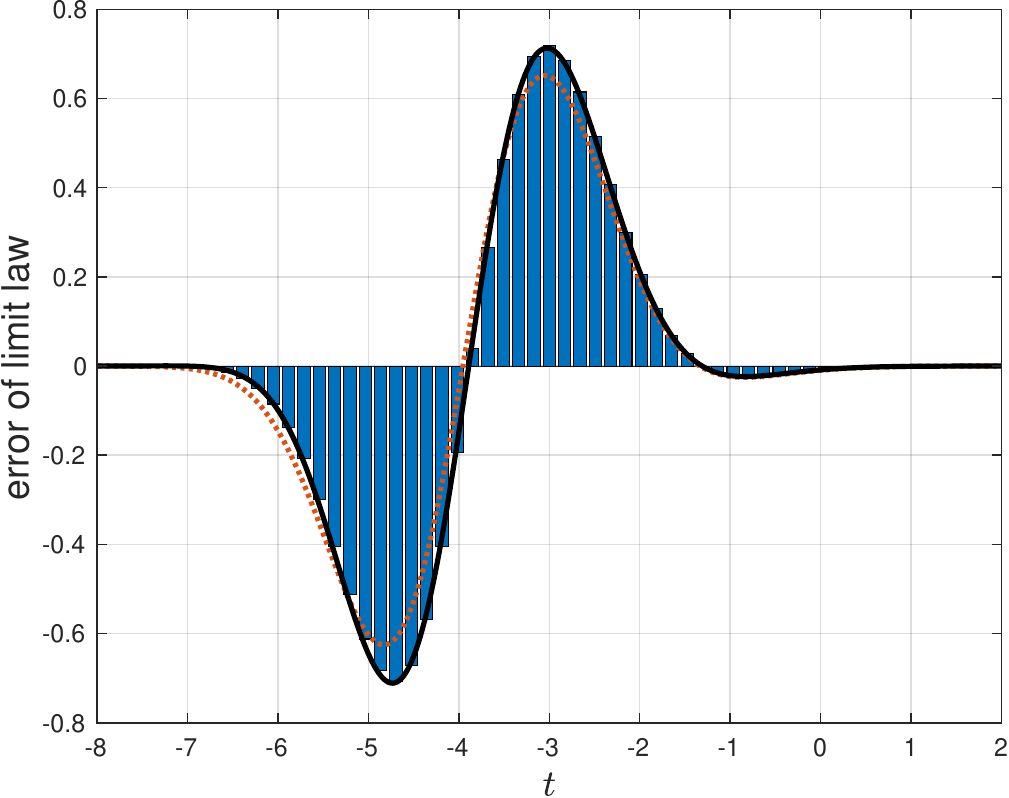}\hfil\,
\includegraphics[width=0.325\textwidth]{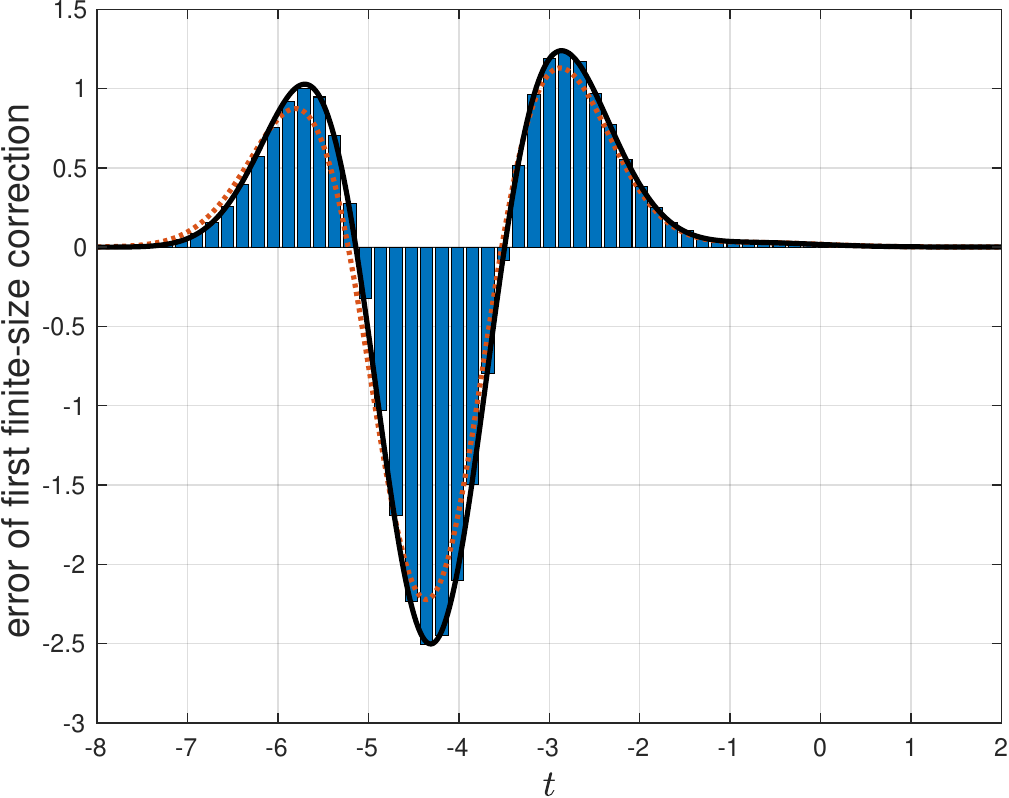}
\caption{{\footnotesize [Expansion terms with a tilde are  `histogram adjusted', see Appendix~\ref{app:histo}.] Comparison
of the expansion \eqref{eq:LbetaE} with $N=10^9$ draws from the $\LbE$ with $n=10$, $p=40$. 
Top row: LOE ($\beta=1$, $\tau=\tau_{n',p'}=0.88309\cdots$, $h=h_{n',p'}=0.094885\cdots$). Bottom row: LSE ($\beta=4$, $\tau=\tau_{n',p'}=0.89161\cdots$, $h=h_{n',p'}=0.057545\cdots$). 
Left panel: histogram of the scaled largest eigenvalues $(\lambda_{\max} - \mu_{n',p'})/\sigma_{n',p'}$ from the $\LbE$ (blue bars) vs. the Tracy--Widom limit density $F_\beta'$ (red dotted line) 
and its second order correction $F_\beta' + h E_{\beta,1;\tau}' + h^2 \tilde E_{\beta,2;\tau}'$ (black solid line). Middle panel: difference, scaled by $h^{-1}$,  between the histogram midpoints and the limit density (blue bars) vs. the correction terms $E_{\beta,1;\tau}'$ (red dotted line) and $E_{\beta,1;\tau}' + h \tilde E_{\beta,2;\tau}'$ (black solid line). Right panel: difference, scaled by $h^{-2}$, between 
the histogram midpoints and the first finite-size correction (blue bars) vs. the correction terms $\tilde E_{\beta,2;\tau}'$ (red dotted line) and $\tilde E_{\beta,2;\tau}' + h \tilde E_{\beta,3;\tau}'$ (black solid line).}}
\label{fig:LOELSESimulation}
\end{figure}%
\begin{remark}
By the very construction of the coefficient functions $E_{\beta,j}$ and $E_{\beta,j;\tau}$, we get from Remark~\ref{rem:LUE2GUE} that, locally uniform in $t$,
\[
h_{n',\infty} := \lim_{p\to\infty} h_{n',p'} = h_{n'},\quad 
 \lim_{p \to\infty}E_{\beta,j;\tau_{n',p'}}(t) = E_{\beta,j;0}(t)=  E_{\beta,j}(t).
\]
If the expansion \eqref{eq:LbetaE} is uniformly valid for all $0<\tau = \tau_{n',p'} \leq 1$ (which we conjecture to be the case), 
this limit relation is consistent with the  $\LbE$-to-$\GbE$ transition law
\[
\lim_{p\to\infty} E_\beta\big(n,p;\mu_{n',p'} + \sigma_{n',p'} t\big)  = E_\beta\big(n;\mu_{n'} + \sigma_{n'} t\big),
\]
which is established in Appendix~\ref{sect:Laguerre2Gauss}, Corollary~\ref{cor:LUE2GUE}. 
\end{remark}

\part*{Part III. Hermite and Laguerre Polynomials in the Transition Region}

We establish explicit asymptotic expansions of the Hermite and Laguerre polynomials $H_n(x)$ and $L^{(\alpha)}_n(x)$ -- or,
for that matter, their associated wave functions $\phi_n(x)$ and $\phi_n^{(\alpha)}$ -- when zooming, by means of  a linear transformation 
\[
x = \mu + \sigma s,
\]
into the transition region between oscillatory behavior and exponential decay near their largest zero (which is a.s. an approximation of the largest eigenvalue of the GUE, resp. LUE). 
The centering $\mu=\mu_n$ and scaling $\sigma=\sigma_n$ are chosen so that the leading order term of the expansion is (up to a constant) given by the prime model of such a transition between oscillatory behavior and exponential decay, namely the Airy function $\Ai(s)$. Specifically, the expansions take the form, including a  tail bound,\footnote{Such an expansion for the Bessel functions $J_\nu$ of large orders $\nu$ in the transition region was used in \cite{arxiv:2301.02022} to study the hard-to-soft edge transition limit of LUE.}%
\begin{equation}\label{eq:genForm}
c h^\gamma \phi_n(\mu + \sigma s) = \Ai(s) \sum_{k=0}^{m} p_k(s) h^{k} + \Ai'(s) \sum_{k=0}^{m} q_k(s) h^{k} + h^{m+1} O(e^{-s}) 
\end{equation}
as $n\to\infty$, uniformly for $s$ bounded from below; likewise for $\phi_n^{(\alpha)}(x)$, where we can arrange for the expansion to be uniformly valid for parameters growing as $\alpha = O(n)$. Here
\[
h=h_n \asymp n^{-2/3}
\] is a carefully chosen expansion parameter which prevents the expansion from being polluted\footnote{See Remarks \ref{rem:Her_m01} and \ref{rem:Lag_m01} for examples of such pollutions if one chooses to expand in powers of $n^{-1/3}$.} by  terms of the order of odd positive powers of $n^{-1/3}$, and the $p_k, q_k$ turn out to be certain polynomials with $p_0(s)=1$ and $q_0(s)=0$, $c>0$ and $\gamma$ are certain constants. The catch is that we are not just interested in the existence and validity of such an expansion but that we need to provide a method of actually computing the coefficients $p_k, q_k$.

Since such expansions (with a uniform tail bound) are not readily available in the existing literature, we have to infer them from known asymptotic studies of the Hermite and Laguerre polynomials. We observe that by \cite[Eqs.~(18.7.19/20)]{MR2723248}, 
\begin{equation}\label{eq:Lag2Her}
H_{2n}(x) =  2^{2n} n! \, (-1)^n L_n^{(-1/2)}(x^2), \quad H_{2n+1}(x) =  x\,2^{2n+1} n! \, (-1)^n L_n^{(1/2)}(x^2),
\end{equation}
expansions of the Hermite polynomials can, basically, be obtained as special cases from those of the Laguerre polynomials (see also Remarks~\ref{rem:Lag2Her} and \ref{rem:Lag2Her2}). Several methods relating to  expansions, which are uniformly valid for sufficiently large regions of $x$ as to capture the required tail bound, have been put forward in the literature:\footnote{We only include references to work where a full expansion, and not just the leading order term, is studied.}
\begin{itemize}\itemsep=4pt
\item the steepest descent method for integrals with the Bleistein's technique of repeated integration by parts: see \cite{MR2462927} for Hermite, \cite{MR0957682} for Laguerre with {\em bounded} parameter~$\alpha$;
\item the Deift--Zhou nonlinear steepest descent method for Riemann--Hilbert problems: see \cite{MR2291866} for Hermite, \cite{MR2457107} for Laguerre with {\em bounded} parameter $\alpha$;
\item turning point analysis of linear differential equations: see \cite{MR0109898} for Weber's parabolic cylinder functions, which includes as special case the Hermite polynomials, and \cite{MR0990876, MR3874026} for confluent hypergeometric functions  in a parameter regime which includes as special case the Laguerre polynomials with parameters growing as $\alpha = O(n)$.
\end{itemize}

We will build on the last method since it addresses, already in the existing literature, the case of a large parameter $\alpha = O(n)$ which is bounded proportionally to $n$, and it  provides also formulas that are sufficiently explicit for establishing an expansion of the form \eqref{eq:genForm}, as well as computing its coefficients.

\section{Turning point analysis}\label{subsect:gen}  The uniform asymptotic description of  solutions\footnote{To avoid typographical clutter, we will mostly suppress the dependence on $u$ in the notation.} $w(t)$ of a linear second order differential equation of the form
\begin{equation}\label{eq:dw}
d^2 w/dt^2 = \big(u^2 f(t) + g(t) \big)w
\end{equation}
in terms of a large positive parameter $u\to\infty$ is an intensively studied problem. One of the basic general ideas is to transform 
the differential equation to a perturbation of a model equation that has suitable special function solutions (e.g., the exponential, trigonometric, Airy, Bessel functions). The induced perturbation of the solution itself is then obtained recursively by successive approximation or, equivalently, by matching coefficients of some ansatz while controlling the error. The definitive treatment of this method can be found in the classic monograph \cite[Chaps.~6, 10--13]{MR0435697} of Olver.\footnote{In the case of oscillatory behavior or exponential growth/decay, obtaining leading order approximations by transforming to an equation of the form
\[
d^2 V/d\xi^2 = \pm \big(u^2 + \psi(\xi)\big) V,
\]
is known as the LG (Liouville--Green, sometimes also called Liouville--Steklov) or WKB (Wentzel--Kramers--Brillouin) approximation. The important contribution, however, of the latter authors was the determination of connection formulas for linking exponential and oscillatory LG approximations across a turning point on the real axis. For a history of the asymptotics of turning point problems, see \cite{MR1554147} and \cite[Chap.~I]{MR0771669}.}

In applying the method to a concrete problem, three tasks have to be addressed:
\begin{enumerate}\itemsep=4pt
\item identify the domain of uniform validity of the expansion (in the complex plane);\footnote{Even if one is interested in real  arguments only, extending the uniform validity to the complex plane allows one to justify the repeated (term-wise) differentiation of the expansion by referring to Ritt's theorem \cite[Theorem~1.4.2]{MR0435697} (which is proved  by representing derivatives in terms of Cauchy's integral formula).}
\item compute the coefficients of a normalized expansion;
\item match, by finding the constant of proportionality, $w$ to the normalized expansion.
\end{enumerate}
The literature focuses predominantly on the first, complex analytic task. It requires a delicate discussion of the underlying variable transformation in the complex plane. Even though there are some general theorems (cf. \cite[Theorems~11.9.1/2]{MR0435697}) that can serve as a guideline, the details have to be worked out attentively in each case anew. 

The second and third computational tasks are often surprisingly hard to work out explicitly enough to be of any use, and the literature seldom provides any explicit coefficients beyond the leading order approximation. It is therefore not entirely surprising that, while all three tasks were tackled in the case of the Hermite polynomials of large orders (or more general, Weber's parabolic cylinder functions) by Olver \cite{MR0109898} in 1959, Task (1) was studied for the large order and parameter case of the Laguerre polynomials (or more general, Whittaker's confluent hypergeometric functions) by Dunster \cite{MR0990876} in 1989, whereas Task (2) and (3) were spelt out by Dunster, Gil and Segura \cite{MR3874026} just recently in 2018. 

However, since we want to have an algorithmic description of the expansion terms (Task~2) which would be suitable for a computer algebra system, and since we can provide a much more specific matching (Task~3) than put forward in \cite{MR3874026}, we explain the general method and its application to the Hermite and Laguerre polynomials. 
We focus thereby on the computational Tasks (2)/(3), whereas the complex analytic Task (1) can serendipitously be settled by referring to the existing literature.

\subsection{Classical uniform Airy expansions}
To be specific, following \cite[§§11.3/7/9]{MR0435697} we study solutions $w(t)$  of the differential equation \eqref{eq:dw}
which are recessive as $t\to+\infty$ and thus unique up to a $u$-dependent constant. Here, $u$ denotes a large positive parameter and the functions $f, g$ are assumed to be real on the real interval $(t_0,\infty)$, extending analytically to an unbounded domain $D$ in the complex plane. We assume that there is a unique zero $t_*>t_0$ of $f$ in $D$, which is simple and satisfies $f'(t_*) > 0$. Then, $t_*$ is a turning point separating oscillatory behavior in the interval $t_0<t<t_*$ from exponential decay for $t>t_*$.

An asymptotic description as $u\to\infty$ along the real axis is then facilitated by the Liouville transform $\zeta^{1/2}\,d\zeta = f(t)^{1/2}dt$ with the constant of integration fixed by $\lim_{t\to+\infty}\zeta(t)=\infty$. That is, assuming a sufficient growth of $f$,
\begin{equation}\label{eq:liouville}
\frac{2}{3}\zeta^{3/2} = \xi, \quad \xi = \int_{t_*}^t f^{1/2}(\tau)\,d \tau,\quad W = \left(\frac{f(t)}{\zeta}\right)^{1/4} w,
\end{equation}
which transforms the turning point $t_*$ to $\zeta=0$ and the differential equation \eqref{eq:dw} to the perturbed Airy equation
\begin{equation}\label{eq:dW}
d^2 W/d\zeta^2 = \big(u^2 \zeta + \psi(\zeta) \big) W,
\end{equation}
where \cite[Eq.~(11.3.06)]{MR0435697}\footnote{Local power series expansions show that the singularities at $\zeta=0$ in  quantities such as $\psi(\zeta)$ and $(\zeta/f(t))^{1/4}$ are removable, see \eqref{eq:powerZetaGen}. The same applies for the coefficients $A_k, B_k$ as defined in \eqref{eq:AB}.}

\begin{equation}\label{eq:LGpsi}
\psi(\zeta) := \frac{5}{16\zeta^2} + \zeta \phi(t),\quad \phi(t) := \frac{4 f(t) f''(t) - 5f'(t)^2}{16f(t)^3} + \frac{g(t)}{f(t)}.
\end{equation}
Since we have $\lim_{t\to+\infty} \xi(t) = \infty$, the 
change of variables $t \mapsto \zeta$ extends from $t\geq t_*$ to a biholomorphic function on some domain of the complex plane which contains $(t_0,\infty)$. We note that, for $t_0 < t \leq t_*$, the analytic continuation of $\zeta$ is given by
\[
\frac{2}{3}(-\zeta)^{3/2} = \int_{t}^{t_*} (-f)^{1/2}(\tau)\,d\tau
\]
and $\zeta$ maps the interval $t_0 < t <\infty$ strictly increasing to 
\begin{equation}\label{eq:zeta0}
 -\left(\frac{3}{2}\int_{t_0}^{t_*} (-f(t))^{1/2}\,dt\right)^{2/3} < \zeta < \infty.
\end{equation}
Under suitable assumptions on $\psi(\zeta)$ (see \cite[Theorem~11.9.2]{MR0435697}), each recessive solution $w(t)$ of the differential equation \eqref{eq:dw} has a compound asymptotic expansion of the form
\begin{equation}\label{eq:wAiry}
w(t) \sim c \left(\frac{\zeta}{f(t)}\right)^{1/4}\left( \Ai(u^{2/3} \zeta) \sum_{k=0}^\infty \frac{A_k(\zeta)}{u^{2k}} +  \frac{\Ai'(u^{2/3} \zeta)}{u^{4/3}} \sum_{k=0}^\infty \frac{B_k(\zeta)}{u^{2k}} \right)
\end{equation}
which is uniformly valid for sufficiently large domains in the complex $t$-plane, typically at least when $\Re t\geq t_0 + \delta$ as $u\to\infty$, with $\delta>0$ being any fixed positive number. 
Here, $c$ is a constant (which may, of course, depend on $u$), which reflects the fact that the space of recessive solutions is one-dimensional,\footnote{Note that $c$ is unique up to an additive term that is smaller than any positive power of $u^{-1}$ as $u\to\infty$. We use the symbol `$\simeq$' to denote equality up to such terms.\label{fn:simeq}} and $A_k(\zeta)$, $B_k(\zeta)$ are analytic coefficients, recursively defined by (see \cite[Eqs.~(11.7.08/09)]{MR0435697})%
\begin{subequations}\label{eq:AB}
\begin{align}
A_0(\zeta) = 1, \quad A_{k+1}(\zeta) &= -\frac{1}{2}B_k'(\zeta) + \frac{1}{2} \int \psi(\zeta)B_k(\zeta)\,d\zeta,\\*[2mm]
B_k(\zeta) &= \frac{1}{2\zeta^{1/2}} \int_0^\zeta \big(\psi(v) A_k(v) - A_k''(v)\big) \frac{dv}{v^{1/2}} \quad (\zeta>0).
\end{align}
\end{subequations}
Whereas, for $B_k(\zeta)$ to be analytic at $\zeta=0$, the lower integration limit has to be chosen as $0$, there is no such restriction in the choice of the integration constant for $A_{k+1}(\zeta)$.

Using the well-known leading order term of the large $\zeta$ asymptotics of the Airy function \cite[Eq.~(11.1.07)]{MR0435697}, as $\zeta\to+\infty$ for fixed $u>0$,
\begin{equation}\label{eq:airy1}
\Ai(u^{2/3} \zeta) \sim \frac{e^{-u\xi}}{2\sqrt{\pi}\, \zeta^{1/4}} u^{-1/6}, \quad 
\Ai'(u^{2/3} \zeta) \sim -\frac{e^{-u\xi}\zeta^{1/4}}{2\sqrt{\pi} } u^{1/6},
\end{equation}
we get an asymptotic expansion of the constant $c$ by letting $t\to+\infty$ in the expansion \eqref{eq:wAiry}. In fact, computing term-wise because of the uniformity in $t$ (the limits are known to exist under the assumptions mentioned for $\psi$), we obtain the asymptotic series
\begin{subequations}\label{eq:cuOlver}
\begin{align}
c &\sim 2\sqrt{\pi}\, u^{1/6} \left(1 + \sum_{k=1}^\infty \frac{\alpha_{k+1}}{ u^{2k+2}} - \sum_{k=0}^\infty \frac{\beta_k}{u^{2k+1}}\right)^{-1} \lim_{t\to+\infty} e^{u\xi} f(t)^{1/4} w(t),\\*[2mm]
\alpha_{k+1} &:=\lim_{\zeta\to +\infty} A_{k+1}(\zeta),\quad \beta_k:=\lim_{\zeta\to+\infty} \zeta^{1/2} B_{k}(\zeta) \qquad (k=0,1,2,\ldots).\label{eq:limAB}
\end{align}
\end{subequations}
Note that it is common in the literature to fix the arbitrary constants of integration in the recurrence relations \eqref{eq:AB} by specifying the conditions $\alpha_{k+1}=0$. However, we will use a different specification in \eqref{eq:limABexp} which considerably simplifies the computations and resulting expressions. 

The repeated integration in the recursion \eqref{eq:AB} makes it very hard to explicitly compute the functions $A_1,A_2, \ldots$ and $B_0,B_1,\ldots$\,. Often, one does not even find in the literature any explicit formulas for the first correction to the leading order approximation -- that is, for the coefficient $B_0$,  let alone its limit $\beta_0$.\footnote{For instance, \cite[Example~11.7.2]{MR0435697} leaves the study of Hermite polynomials of large orders computationally unresolved in such manner. The same holds with \cite[Example~11.7.3]{MR0435697} and \cite{MR0990876} for the study of the Whittaker functions (related to the Laguerre polynomials).}

 Though, for the purposes of this paper, it would suffice to use the recursion \eqref{eq:AB}
with some power series expansions near $\zeta=0$ (which are rather easy to compute with), and discuss just some rough decay estimate of the Airy expansion \eqref{eq:wAiry} to get the global picture as $t\to+\infty$, we would still need to know the first terms of the expansion for $c$, and thus the values of the first limits $\alpha_1,\alpha_2,\ldots$ and $\beta_0,\beta_1,\ldots$
 -- which are inherently inaccessible to local power series expansions.\footnote{See Section~\ref{subsect:expansions} and Theorem~\ref{thm:localExpan} for more details on this issue.}

The computation of the coefficients $A_k(\zeta)$, $B_k(\zeta)$ simplifies by relating the Airy expansion~\eqref{eq:wAiry} to the Liouville--Green expansion as $u\to\infty$, which describes the exponential decay of $w(t)$  when keeping $t>t_*$ at a safe distance to the transition point $t_*$. Formulas for the coefficients $A_k(\zeta)$, $B_k(\zeta)$, which are obtained there, will then extend to the transition region by analytic continuation. On the level of Poincaré-type expansions, such a procedure was suggested 
by Olver \cite[Footnote p.~410]{MR0435697} as being `useful in applications' and, in fact, he had used it (attributing it to a remark of Cherry) in his early studies 
of Bessel functions of large orders \cite[§6]{MR67250} and of the Weber parabolic cylinder functions  \cite[§8]{MR0109898} (of which the Hermite polynomials are a special case). 

More recently, Dunster, Gil, and Segura \cite{MR3735704} suggested using this procedure with exponential-form expansions (that is, with Poincaré-type expansions appearing inside exponentials), which often allow for even simpler computations. In \cite{MR3735704}, they illustrated their version of the procedure with yet another study of the Bessel functions in the transition region and, in the subsequent work \cite{MR3874026}, they applied it to the Laguerre polynomials of large orders and parameter.

\subsection{Examples of exponential-form expansions}

We give two examples that turn out to be useful in our study; here, $\delta>0$ is any fixed positive number.

First, though there is no simple explicit formula for the coefficients $\gamma_k$ of the Stirling-type formula (see, e.g., \cite[Eqs. (12.10.16/17)]{MR2723248}),
\begin{subequations}\label{eq:stirling}
\begin{equation}
\Gamma(z+1/2) \sim \sqrt{2\pi} \,(z/e)^z \sum_{k=0}^\infty \frac{\gamma_k}{z^k} \qquad (z \to \infty,\, |\!\arg z| \leq \pi - \delta),
\end{equation}
exponentiating the Stirling-series \cite[Eqs. (5.11.8)]{MR2723248} yields the fairly explicit exponential-form expansion (with $B_j(x)$ denoting the Bernoulli polynomials \cite[Eq.~(24.2.3)]{MR2723248})
\begin{equation}\label{eq:gammaOlver}
\sum_{k=0}^\infty \frac{\gamma_k}{z^k} \sim \exp\left( \sum_{k=1}^\infty \frac{B_{2k}(1/2)}{2k(2k-1) z^{2k-1}}\right).
\end{equation}
\end{subequations}
Expanding the right-hand side by computing with truncated series gives the numerical values of any finite number of the $\gamma_k$ (see, e.g., \cite[Eq.~(2.25)]{MR0109898}).

Second, Dunster, Gil and Segura \cite[Eqs.~(A.3/4)]{MR3735704} show that (recall $\xi = \frac{2}{3}\zeta^{3/2}$)
\begin{subequations}\label{eq:airy2}
\begin{align}
\Ai(u^{2/3}\zeta) &\sim \frac{e^{-u\xi} }{2\sqrt{\pi} \,\zeta^{1/4}} u^{-1/6} \exp\left(\sum_{k=1}^\infty (-1)^k \frac{a_k}{k u^k \xi^k}\right),\\*[2mm]
\intertext{}\notag\\*[-13mm]
\Ai'(u^{2/3} \zeta) &\sim -\frac{e^{-u\xi}\zeta^{1/4}}{2\sqrt{\pi} } u^{1/6}\exp\left(\sum_{k=1}^\infty (-1)^k \frac{a_k'}{k u^k \xi^k}\right)
\end{align}
as $\zeta\to \infty$, uniformly for $|\!\arg \zeta| \leq \pi - \delta$ and $u\geq \delta$, where the coefficients $a_k, a_k'$ are solutions of the quadratic recurrence relation
\begin{equation}
a_{k+1} = \frac{1}{2}(k+1) a_k + \frac{1}{2}\sum_{j=1}^{k-1} a_j a_{k-j} \quad (k=1,2,\ldots)
\end{equation}
with initial values $a_1=5/72$ and $a_1' = -7/72$.
\end{subequations}
By expanding the exponential form, we get the better known Poincaré-type expansions \cite[Eqs.~(9.7.5/6)]{MR2723248} of the Airy function for large arguments, which exhibit more unwieldy recurrence formulas for their coefficients, though.

\subsection{Exponential form of the uniform Airy expansions}

With the notation as in \eqref{eq:liouville}, 
Dunster, Gil and Segura \cite[Eqs.~(2.20/21)]{MR3735704} show that the asymptotic series of the classical Airy expansion \eqref{eq:wAiry} can be represented in the exponential form
\begin{subequations}\label{eq:dunster}
\begin{align}
\sum_{k=0}^\infty \frac{A_k(\zeta)}{u^{2k}} &\sim \exp\left(\sum_{k=1}^\infty\frac{E_{2k}(t) + a_{2k}'\xi^{-2k}/(2k)}{u^{2k}}\right) \\*[1mm] &\qquad\times\cosh\left(\sum_{k=1}^\infty\frac{E_{2k-1}(t) - a_{2k-1}'\xi^{-(2k-1)}/(2k-1)}{u^{{2k-1}}}\right),\notag\\*[2mm]
%\intertext{}\notag\\*[-13mm]
\sum_{k=0}^\infty \frac{B_k(\zeta)}{u^{2k}}&\sim \frac{u}{\zeta^{1/2}}\exp\left(\sum_{k=1}^\infty\frac{E_{2k}(t) + a_{2k}\xi^{-2k}/(2k)}{u^{2k}}\right) \\*[1mm] &\qquad\times \sinh\left(\sum_{k=1}^\infty\frac{E_{2k-1}(t) - a_{2k-1}\xi^{-(2k-1)}/(2k-1)}{u^{{2k-1}}}\right),\notag
\end{align}
\end{subequations}
where the coefficients $a_k$, $a_k'$ are from the exponential-form expansion \eqref{eq:airy2} of the Airy function and its derivative, whereas the functions $E_k(t)$ are constructed as follows.

Rewriting the recursion \cite[Eqs.~(2.6)/(2.7)]{MR3735704} in terms of the original variable $t$, we first introduce the auxiliary functions 
\begin{subequations}\label{eq:FE}
\begin{equation}
F_1(t) = \frac{1}{2}\phi(t),\quad F_{k+1}(t) = -\frac{F_k'(t)}{2 f(t)^{1/2}}  - \frac{1}{2} \sum_{j=1}^{k-1} F_j(t)F_{k-j}(t)\quad (k=1,2,\ldots).
\end{equation}
They are clearly well-defined for $t>t_*$, but we see inductively that these functions extend because of the Liouville relation
\[
f(t)^{1/2}\,dt = \zeta^{1/2}\,d\zeta,
\]
such that $F_{2k-1}(t)$ and $\zeta^{1/2} F_{2k}(t)$  are meromorphic functions of $\zeta$ (or, equivalently, of $t$) in all of the domain. Upon introducing the integrals (note that there is no repeated integration)
\begin{equation}\label{eq:Ek}
E_k(t) = \int F_k(t) f(t)^{1/2}\,dt = \int F_k(t(\zeta)) \zeta^{1/2}\,d\zeta,
\end{equation}
\end{subequations}
we see that the $E_{2k}(t)$ are functions of $\zeta$, which are meromorphic for all choices of the integration constants, and that there are unique integration constants  such that the $\zeta^{1/2} E_{2k-1}(t)$ are also meromorphic: in fact, by plugging the local Laurent series 
\[
F_{2k-1}(t(\zeta)) = \sum_{j=-{n_k}}^\infty a_{k,j} \zeta^j 
\]
into the integral, term-wise integration with an arbitrary integration constant $c_{2k-1}$ gives
\[
\zeta^{1/2} E_{2k-1}(t) = \zeta^{1/2} \int \zeta^{1/2} F_{2k-1}(t(\zeta))\,d\zeta = c_{2k-1} \zeta^{1/2} +  \sum_{j=-{n_k}}^\infty \frac{a_{k,j}}{j+3/2} \zeta^{j+2},
\]
which is meromorphic at $\zeta=0$ iff $c_{2k-1}=0$. By expanding \eqref{eq:dunster}, we reclaim $A_0(\zeta)=1$ and get the first expansion terms as
\begin{subequations}\label{eq:A1B0gen}
\begin{align}
B_0(\zeta) &= -\frac{5}{48\zeta^2} + \frac{\zeta^{1/2} E_1(t)}{\zeta}, \\*[1mm]
A_1(\zeta) &= -\frac{455}{4608\zeta^3} + \frac{7 \zeta^{1/2} E_1(t)}{48\zeta^2} + \frac{\big(\zeta^{1/2} E_1(t)\big)^2}{2 \zeta} + E_2(t),
\end{align}
\end{subequations}
writing them in a form such that the meromorphic nature at $\zeta=0$ as a function of $\zeta$ (and thus of $t$) becomes clearly visible. By a uniqueness argument, the pole at $\zeta=0$ is removable and the functions are actually analytic.

Under the same assumption on $\psi$ which provides the limits \eqref{eq:limAB}, also the limits%
\begin{subequations}\label{eq:lambda}
\begin{equation}\label{eq:LimE}
\lambda_k = \lim_{t\to+\infty} E_k(t)
\end{equation}
exist, and we now fix the arbitrary integration constants in the definition of the $E_{2k}(t)$ by specifying the conditions
\begin{equation}\label{eq:lambdaEven}
\lambda_{2k}=0 \quad (k=1,2,\ldots).
\end{equation}
\end{subequations}
This choice  specifies the limits $\alpha_{k+1}$ and $\beta_k$ in \eqref{eq:cuOlver} by expanding the following asymptotic series as $u\to\infty$:
\begin{equation}\label{eq:limABexp}
\begin{aligned}
1 + \sum_{k=0}^\infty \frac{\alpha_{k+1}}{u^{2k+2}} &\sim \cosh\left(\sum_{k=1}^\infty \frac{\lambda_{2k-1}}{u^{2k-1}}\right), \\*[3mm]
\sum_{k=0}^\infty \frac{\beta_{k}}{u^{2k+1}} &\sim \sinh\left(\sum_{k=1}^\infty \frac{\lambda_{2k-1}}{u^{2k-1}}\right).
\end{aligned}
\end{equation}
Finally, using the series on the right transforms the expansion \eqref{eq:cuOlver} of the constant $c$ to 
\begin{equation}\label{eq:cu}
c \sim 2\sqrt{\pi}\, u^{1/6} \exp\left(\sum_{k=1}^\infty \frac{\lambda_{2k-1}}{u^{2k-1}}\right) \lim_{t\to+\infty} e^{u\xi} f(t)^{1/4} w(t)\quad (u\to\infty).
\end{equation}
Yet another convenient formula for $c$, using connection coefficients, is given in Theorem~\ref{thm:connect}.

\section{Application to Hermite polynomials of large orders}\label{subsect:HerGen}

By expressing the wave functions $\phi_n(x)$  associated with the Hermite polynomials, as given in \eqref{eq:HermitePhiN}, in terms of the Weber parabolic cylinder function $U(a,z)$ \cite[Eq.~(18.11.3)]{MR2723248}, 
\begin{equation}\label{eq:Weber}
\phi_n(x) = \frac{1}{\pi^{1/4} \sqrt{n!}} U(-n-1/2,2^{1/2} x),
\end{equation}
we could, at least in principle, apply the formulas and results of Olver's extensive study~\cite{MR0109898}. However, based on the exponential form of the general method in Section~\ref{subsect:gen}, we are led to simpler computations, which also serve as a model for the subsequent discussion of the Laguerre polynomials. By writing
\begin{subequations}\label{eq:gfHer}
\begin{equation}\label{eq:wHer}
w(t) = \phi_n(u^{1/2} t),\quad u = 2n+1,
\end{equation}
we infer from the differential equation \cite[Table~18.8.1]{MR2723248} satisfied by the Hermite polynomials that
$w(t)$ is a recessive (as $t\to+\infty$) solution of an equation of the form \eqref{eq:dw} with 
\begin{equation}\label{eq:fgHer}
f(t) = t^2-1,\quad g(t)=0,
\end{equation}
\end{subequations}
possessing the turning points $\pm 1$. The larger one $t_1=1$ takes the role of $t_*$ in Section~\ref{subsect:gen}, whereas $t_0=-1$ limits the domain of validity of the Airy expansion. In the following theorem, we restrict ourselves,  for convenience, to domains in the complex plane that are certain half-spaces. Maximal domains of uniformity are displayed in \cite[Figure~7.1(a)]{MR0109898}.

\begin{theorem}\label{thm:genHer} {\em (I)} With $u=2n+1$, there holds the compound asymptotic series\footnote{\label{fn:olverPre}This expansion has a considerably simpler pre-factor than Olver's \cite[Eqs.~(6.2)/(8.11)]{MR0109898} formula applied to 
\[
\phi_n(u^{1/2} t) = \frac{1}{\pi^{1/4} \sqrt{n!}} U(-u/2,(2u)^{1/2} t).
\] The simplification is caused by a different fix of the integration constants that specify the coefficients $A_k$ in the recurrence relations \eqref{eq:AB}: Olver \cite[Eq.~(8.3)]{MR0109898} takes $\alpha_{k+1}=0$ while we choose \eqref{eq:limABexp}.}
\begin{equation}\label{eq:expanHer}
\phi_n(u^{1/2}t) \sim u^{-1/12} \left(\frac{4\zeta}{t^2-1}\right)^{1/4} \left(\Ai(u^{2/3}\zeta) \sum_{k=0}^\infty \frac{A_k(\zeta)}{u^{2k}} + \frac{\Ai'(u^{2/3}\zeta)}{u^{4/3}} \sum_{k=0}^\infty \frac{B_k(\zeta)}{u^{2k}}\right),
\end{equation}
which is uniformly valid as $n\to\infty$ when $t$ belongs to the complex half-plane $\Re t \geq -1 + \delta$, with $\delta$ being any fixed positive number. Preserving uniformity, the expansion can be repeatedly differentiated w.r.t. the variable $t$. 

\smallskip
{\em (II)} Writing\/\footnote{Here, $S(t)$ is positive for $t>1$, meromorphic at $\infty$ and has a branch cut along the interval $[-1,1]$.}  $S(t)=\sqrt{t^2-1}$, the transformation $\zeta=\zeta(t)$ is given by
\begin{equation}\label{eq:xiHer}
\frac43\zeta^{3/2} =  t  S(t) - \arcosh t \quad (t>1),
\end{equation}
extending analytically to $\Re t > -1$ with $\zeta(1)=0$ and mapping the real interval $-1< t <\infty$ strictly increasing to 
\begin{equation}\label{eq:zeta0Her}
- \left(\frac{3\pi}{4}\right)^{2/3} < \zeta < \infty.
\end{equation}

{\em (III)} The coefficients $A_k(\zeta)$, $B_k(\zeta)$ are analytic for $\Re t > -1$. They are defined by expanding the asymptotic series \eqref{eq:dunster}, with the functions $E_k(t)$ given by
\begin{equation}\label{eq:EkHer}
E_k(t) = \frac{P_k(t)}{S(t)^{3k}},
\end{equation}
where $P_k \in \Q[t]$ are polynomials such that $P_{2k}(t)$ is even of degree $2k$ and $P_{2k-1}(t)$ is odd of degree $6k-3$. Subject to the conditions on their degree, the polynomials 
$P_{k}$ are uniquely determined by the differential recurrence relation
\begin{subequations}
\begin{gather}
(t^2-1) P_{k}'(t) - 3k t P_{k}(t) = Q_{k}(t),\label{eq:pkHer}\\*[1mm]
Q_{k+1}(t) =\frac{3}{2} (k+1)t Q_k(t) -  \frac{1}{2} (t^2-1) Q_k'(t) - \frac{1}{2}\sum_{j=1}^{k-1}Q_j(t)Q_{k-j}(t), \label{eq:qkHer}\\*[1mm]
Q_1(t) = -(3t^2+2)/8.\label{eq:q1Her}
\end{gather}
\end{subequations}
The limits $\lambda_k := E_k(\infty)$ are $\lambda_{2k}=0$ and
\begin{equation}\label{eq:lambdaOddHer}
\lambda_{2k-1} = \frac{2^{2k-1}B_{2k}(1/2)}{4k(2k-1)}.
\end{equation}

{\em (IV)} 
The first instances evaluate to
\begin{subequations}\label{eq:ABHer}
\begin{equation}
P_1(t)  = t(6-t^2)/24, \qquad P_2(t) = (3t^2+2)/16,
\end{equation}
and \eqref{eq:A1B0gen} can thus be written in the form\footnote{Note that $S(t)\zeta^{1/2}$ is analytic at $t=1$.}
\begin{align}
A_0(\zeta)=1, \quad A_1(\zeta) &= -\frac{455}{4608\zeta^3} + \frac{7}{48S(t)\zeta^{3/2}}\frac{P_1(t)}{S(t)^2} + \frac{P_1(t)^2+2P_2(t)}{2S(t)^6},\\*[2mm]
B_0(\zeta) &= -\frac{5}{48\zeta^2} + \frac{1}{S(t)\zeta^{1/2}} \frac{P_1(t)}{S(t)^2}.
\end{align}
\end{subequations}
\end{theorem}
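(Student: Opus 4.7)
The plan is to apply the general turning-point machinery of Section~\ref{subsect:gen} to the Hermite wave function $w(t):=\phi_n(u^{1/2}t)$ with $u=2n+1$. Using the Weber representation~\eqref{eq:Weber} together with Weber's differential equation for $U(-n-1/2,\cdot)$, a direct rescaling yields that $w$ is a recessive (as $t\to+\infty$) solution of $w''=u^2(t^2-1)w$, which is of the form~\eqref{eq:dw} with $f(t)=t^2-1$ and $g(t)=0$. The right turning point is $t_*=1$, and the Liouville integral of Part~(II) is then evaluated in closed form: for $t>1$, $\int_1^t\sqrt{\tau^2-1}\,d\tau=(tS(t)-\arcosh t)/2$, which rearranges to~\eqref{eq:xiHer}. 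Analytic continuation across the turning point, using $\int_{-1}^1\sqrt{1-\tau^2}\,d\tau=\pi/2$, evaluates the left endpoint $\zeta(-1)=-(3\pi/4)^{2/3}$; monotonicity of $\zeta$ on $(-1,\infty)$ is immediate from the sign of $f^{1/2}$ on each sub-interval.

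For Part~(I), the complex-analytic question of uniform validity on the half-plane $\Re t\geq -1+\delta$ is settled by invoking Olver's treatment~\cite{MR0109898} of the Weber parabolic cylinder functions, whose maximal domains of uniformity cover the stated half-plane; repeated differentiability follows from Ritt's theorem after the expansion is extended across a complex neighborhood. The exponential-form coefficients~\eqref{eq:dunster} replace Olver's Poincaré-type ones. The simpler pre-factor $u^{-1/12}(4\zeta/(t^2-1))^{1/4}$ in~\eqref{eq:expanHer}, as opposed to Olver's original formula, reflects our alternative normalization~\eqref{eq:lambdaEven}--\eqref{eq:limABexp} of the integration constants in the recursion~\eqref{eq:AB}, which absorbs the $\cosh/\sinh$ factors back into the proportionality constant; comparing the general form $c(\zeta/f)^{1/4}$ with~\eqref{eq:expanHer} fixes $c=\sqrt{2}\,u^{-1/12}$.

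For Part~(III), I would proceed by induction on~$k$. A direct computation from $f(t)=t^2-1$ gives $\phi(t)=-(3t^2+2)/(4(t^2-1)^3)$, hence $F_1=\phi/2$, and an elementary integration yields $E_1(t)=t(6-t^2)/(24\,S(t)^3)$, so $P_1$ is odd of degree~$3$. The ansatz $E_k=P_k/S^{3k}$ is equivalent to $F_k=Q_k/S^{3k+3}$ with $Q_k=(t^2-1)P_k'-3ktP_k$, so the Dunster--Gil--Segura recursion~\eqref{eq:FE} for $F_k$ translates verbatim into the polynomial recursion~\eqref{eq:qkHer} for $Q_k$. A parity-and-leading-coefficient bookkeeping then propagates the claimed pattern; the crucial input is the exact cancellation of the top-degree terms of $(t^2-1)P_{2k-1}'$ and $3(2k-1)tP_{2k-1}$, which forces $\deg Q_{2k-1}\leq 6k-4$ and thereby leaves just enough room to invert $(t^2-1)P_{2k}'-6ktP_{2k}=Q_{2k}$ for an even $P_{2k}$ of degree $2k$ (existence and uniqueness of $P_{k+1}$ within the degree constraint reduce to non-degenerate coefficient matching at each step). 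The limit formula~\eqref{eq:lambdaOddHer} is obtained by matching in~\eqref{eq:cu}: with $c=\sqrt{2}\,u^{-1/12}$ already known, one computes $\lim_{t\to+\infty}e^{u\xi}(t^2-1)^{1/4}\phi_n(u^{1/2}t)$ using $H_n(x)\sim(2x)^n$ and the refined expansion $\xi=t^2/2-1/4-\log(2t)/2+O(t^{-2})$, observes that $\Gamma(u/2+1/2)=n!$, and invokes the Stirling series~\eqref{eq:gammaOlver} evaluated at $z=u/2$; term-by-term identification then yields $\lambda_{2k-1}=2^{2k-1}B_{2k}(1/2)/(4k(2k-1))$. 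Part~(IV) is routine: expand~\eqref{eq:dunster} to first order, insert the explicit $E_1$ and the analogously computed $E_2=P_2/S^6$ with $P_2(t)=(3t^2+2)/16$, and read off $A_1,B_0$.

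The main obstacle I anticipate is the limit computation: tracking logarithmic subleading contributions in $u\xi(t)$ as $t\to\infty$ while simultaneously keeping the $u\to\infty$ asymptotic expansion consistent, and pinpointing the substitution $z=u/2$ that turns the Stirling series of $\sqrt{n!}$ into the series $\sum\lambda_{2k-1}/u^{2k-1}$, demands meticulous bookkeeping. A useful cross-check is that the leading-order prediction $\lambda_1=2\,B_2(1/2)/4=-1/24$ from~\eqref{eq:lambdaOddHer} matches $\lim_{t\to+\infty}E_1(t)=\lim t(6-t^2)/(24(t^2-1)^{3/2})=-1/24$, independently confirming that the integration constants have been fixed consistently.
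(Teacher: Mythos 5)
Your Parts (I), (II) and (IV) follow the paper's route (Olver's uniform validity result for Weber's function, Ritt's theorem, explicit evaluation of the Liouville integral, expansion of \eqref{eq:dunster}), but your Task (3) — the normalization — is circular in a way the paper avoids by a different device. You take $c=\sqrt{2}\,u^{-1/12}$ as ``already known'' by comparing with \eqref{eq:expanHer}, which is precisely the statement to be proved, and then extract \eqref{eq:lambdaOddHer} from \eqref{eq:cu} via Stirling. But \eqref{eq:cu} is a \emph{single} relation containing \emph{two} unknowns, namely $c$ and the series $\exp\bigl(\sum_k\lambda_{2k-1}u^{1-2k}\bigr)$: carried out, your computation yields only a relation of the form $c\simeq 2^{1/2}u^{-1/12}\exp(\cdots)$ in which the exponent involves the differences $\lambda_{2k-1}-2^{2k-1}B_{2k}(1/2)/(4k(2k-1))$, so neither the value of $c$ nor the closed form for $\lambda_{2k-1}$ follows without an independent determination of one of them. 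The paper breaks this circle in Appendix~\ref{app:weber}: it introduces the companion solutions $w_\pm(t)=U(u/2,\mp i\sqrt{2u}\,t)$, uses the explicitly known connection coefficients of Weber's function, and in Theorem~\ref{thm:connect} takes the geometric mean of the two normalization relations so that the unknown exponential factor cancels (formula \eqref{eq:cu_no_xi}); this pins down $c$ to all orders, after which \eqref{eq:lambdaGen} delivers \eqref{eq:lambdaOddHer}. Your $k=1$ cross-check is a consistency check, not a substitute for this step.

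A secondary gap sits in Part (III). The $Q_k$ are defined by their own recursion \eqref{eq:qkHer} (they encode the $F_k$ of \eqref{eq:FE}); one must then \emph{solve} the overdetermined system \eqref{eq:pkHer} for a polynomial $P_k$ of the stated degree. Since $\deg Q_k=k+1$ while the left-hand side a priori produces a polynomial of degree up to $\deg P_k+1$, the existence of a polynomial solution requires a compatibility condition on $Q_k$; ``non-degenerate coefficient matching'' addresses only uniqueness. The paper establishes existence by decomposing $F_kS$ into partial fractions, showing that the residue $\beta_{2k,1}$ at the turning point vanishes (so no logarithm arises in $E_{2k}$), and using the antiderivative formula \eqref{hyperHer} for odd index to see that $S\,E_{2k-1}$ is rational for the right choice of integration constant; some such argument is needed before the ansatz $E_k=P_k/S(t)^{3k}$ may be assumed. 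Your degree bookkeeping is also off: the cancellation of top-degree terms in $(t^2-1)P_{2k-1}'-3(2k-1)tP_{2k-1}$ lowers the degree by one, whereas $\deg Q_{2k-1}=2k$ is far smaller, so many more cancellations are implicitly being asserted than your argument accounts for.
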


\begin{proof} We organize the proof along the Tasks (1)--(3) of the general method in  Section~\ref{subsect:gen}.

\subsubsection*{\quad Task {\rm (1):} domain of validity}

In terms of the Weber function \eqref{eq:Weber}, the compound asymptotic series \eqref{eq:wAiry} was shown by Olver \cite[Eq.~(8.2)]
{MR0109898} to be uniformly valid for $\Re t \geq -1 +\delta$. By Ritt's theorem, preserving uniformity, the expansion can be repeatedly differentiated.

\subsubsection*{\quad Task {\rm (2):}\! computation of the expansion terms}

Based on \eqref{eq:fgHer}, an explicit integration evaluates \eqref{eq:liouville} and \eqref{eq:zeta0} to \eqref{eq:xiHer} and  \eqref{eq:zeta0Her}. The function $\phi(t)$ in \eqref{eq:LGpsi} becomes
\[
\phi(t) = -\frac{3t^2+2}{4S(t)^6} =: \frac{2Q_1(t)}{S(t)^6},
\]
so that the functions $F_k(t)$ in \eqref{eq:FE} take, by induction, the form $F_k(t) =Q_k(t)/S(t)^{3k+3}$,
with certain rational polynomials $Q_k \in \Q[t]$ of degree $k+1$ such that $Q_{2k-1}$ is even and $Q_{2k}$ is odd. In fact, this follows inductively from the recurrence relation \eqref{eq:qkHer}, which 
is a direct implication of \eqref{eq:FE} when observing $S(t)S'(t) = t$. 

In particular, $F_{2k-1}(t)$ and $S(t) F_{2k}(t)$ extend as rational (and thus meromorphic) functions of $t$. To prepare
for the integration that yields $E_k$ in \eqref{eq:FE}, we decompose into partial fractions which, based on the parities of the polynomials $Q_k$, take the form
\begin{subequations}
\begin{align}
F_{2k}(t) S(t) &= \sum_{j=1}^{3k+1} \beta_{2k,j} \left( \frac{1}{(t+1)^j}+\frac{(-1)^{j-1}}{(t-1)^j}\right),\label{eq:FevenHer}\\*[2mm]
F_{2k-1}(t) S(t) &= \frac{1}{S(t)} \sum_{j=1}^{3k-1}  \beta_{2k-1,j} \left( \frac{1}{(t+1)^j}+\frac{(-1)^{j}}{(t-1)^j}\right),\label{eq:FoddHer} 
\end{align}
\end{subequations}
with certain rational numbers $\beta_{k,j}$. From the discussion in Section~\ref{subsect:gen}, we know that 
\[
E_k(t) = \int F_k(t) f(t)^{1/2}\,dt = \int F_k(t) S(t) \,dt
\]
actually gives, for even index $2k$, a function $E_{2k}(t)$ which is meromorphic for all choices of the integration constant, whereas for odd index $2k-1$ there is a unique integration constant such that $S(t) E_{2k-1}(t)$ becomes meromorphic. 

Thus, when integrating \eqref{eq:FevenHer}, we have necessarily $\beta_{2k,1}=0$ since otherwise $E_{2k}(t)$ would exhibit a logarithmic singularity at $t=1$. Specifying the arbitrary integration constant by the condition that $\lambda_{2k}:=E_{2k}(\infty)=0$, 
we obtain the rational function
\[
E_{2k}(t) = \sum_{j=2}^{3k+1} \frac{\beta_{2k,j}}{1-j} \left( \frac{1}{(t+1)^{j-1}}+\frac{(-1)^{j-1}}{(t-1)^{j-1}}\right) = \frac{P_{2k}(t)}{S(t)^{6k}},
\]
with a certain even polynomial $P_{2k}(t)$ of degree less than $6k$. When plugged into the differential relation $E_{2k}'(t) = F_{2k}(t) S(t)$, 
we get the differential equation \eqref{eq:pkHer},
\[
(t^2-1) P_{2k}'(t) - 6k t P_{2k}(t) = Q_{2k}(t).
\]
Since the kernel of the differential operator acting on the left is spanned by the polynomial $(t^2-1)^{3k}$ of degree $6k$,
the polynomial solution $P_{2k}$ (having degree less than $6k$) is uniquely determined by the differential equation.\footnote{It is important to note that the differential equation \eqref{eq:pkHer} constitutes an {\em overdetermined} linear system for the coefficients of a polynomial solution $P_k$. Hence, one has to justify the {\em existence} of a polynomial solution before one can work with the ansatz $E_k(t) = P_k(t)/S(t)^{3k}$, implicitly showing thereby that the right-hand side $Q_k$ satisfies the necessary one-dimensional compatibility condition of the solvability of the linear system. We have done so by using the partial fraction decomposition and noting that $\beta_{2k,1}=0$. The discussion in the first paragraph in \cite[p.~141]{MR0109898} skips over this subtle point.} Because $Q_{2k} \in \Q[t]$ has degree $2k+1$, we see that also $P_{2k}\in \Q[t]$ and its degree is $2k$.\footnote{We note that the first concrete instance $P_2$ can be obtained without any further computation by comparing the differential equation \eqref{eq:pkHer} with \eqref{eq:qkHer}, which gives by uniqueness $P_2(t) = -\frac{1}{2} Q_1(t)$.}

In the case of an odd index $2k-1$, using the integral formula\footnote{Which is, up to an affine transformation, \cite[Eq.~(1.15.3.8)]{MR1054647} applied to $_2 F_1(0,b;c;x)=1$.} (where for $j=1,2,\ldots$ the hypergeometric term is actually a polynomial of degree $j-1$ in $t$)
\begin{equation}\label{hyperHer}
\int \frac{dt}{S(t) (t\pm1)^j} = \pm \frac{S(t)\,_2F_1\big(1,-j+1;3/2;(1\mp t)/2\big)}{(t\pm1)^j},
\end{equation}
we infer from integrating \eqref{eq:FoddHer} that the meromorphic function $S(t)E_{2k-1}(t)$ is, in fact, a rational function of the form
\[
S(t) E_{2k-1}(t) = \frac{P_{2k-1}(t)}{S(t)^{6k-4}},
\]
with a certain odd polynomial $P_{2k-1}(t)$ of degree $\leq 6k-3$. When plugged into the differential relation $E_{2k-1}'(t) = F_{2k-1}(t) S(t)$, we get the differential equation  \eqref{eq:pkHer} once again, 
\[
(t^2-1) P_{2k-1}'(t) - (6k-3) t P_{2k-1}(t) = Q_{2k-1}(t).
\]
This time, the kernel of the differential operator acting on the left is spanned by the algebraic function $(t^2-1)^{3k-3/2}$,
so that the polynomial solution $P_{2k-1}$ is uniquely determined by the differential equation. Since $Q_{2k} \in \Q[t]$, we see that also $P_{2k}\in \Q[t]$. From \eqref{eq:EkHer}, writing $\lambda_{2k-1}:= E_{2k-1}(\infty)$, we get
\begin{equation}\label{eq:pk2lambdaHer}
P_{2k-1}(t) = \lambda_{2k-1} t^{6k-3} + \text{terms of lower degree}.
\end{equation}
Since we establish $\lambda_{2k-1}\neq 0$ in the next step of the proof, the degree of $P_{2k-1}$ is $6k-3$.

\subsubsection*{\quad Task {\rm (3):}\protect\footnote{\label{fn:lambda}Computationally, and for the purposes of this paper, we
could have worked with computing  the $\lambda_{2k-1}$ recursively from \eqref{eq:pk2lambdaHer} while expanding \eqref{eq:cu} to match the order of truncation of the compound asymptotic series \eqref{eq:wAiry}. Therefore, we defer the proof of the specific formulas of this part to Appendix~\ref{app:connect}; see also Footnotes~\ref{fn:Dunster}/\ref{fn:lambdaLag}.} matching to the particular recessive solution} Using
a connection formula for the Weber function $U(a,z)$, we will prove in Appendix~\ref{app:weber} that the particular so\-lu\-tion~$w(t)$, which is defined in \eqref{eq:wHer}, possesses a constant $c$ in its Airy expansion \eqref{eq:wAiry} that evaluates (subject to the indeterminacy discussed in Footnote~\ref{fn:simeq}) to
\begin{equation}\label{eq:cHer}
c \simeq 2^{1/2} u^{-1/12}.
\end{equation}
By writing $2^{1/2} = 4^{1/4}$, this gives the pre-factor in \eqref{eq:expanHer}. Likewise, from \eqref{eq:lambdaOddWeber}, we read off the values of $\lambda_{2k-1}$ displayed in \eqref{eq:lambdaOddHer} and get, in particular, $0\neq \lambda_{2k-1}\in \Q$.
\end{proof}

\section{Application to Laguerre polynomials of large orders and parameter}\label{subsect:LagGen}

In addressing the wave functions $\phi_n^{(\alpha)}(x)$ associated with the Laguerre polynomials, as given 
in~\eqref{eq:LaguerrePhiN}, we follow the parametrization in \cite[§1]{MR3874026} and write 
\begin{subequations}\label{eq:gfLag}
\begin{equation}
w(t) = t^{1/2} \phi_n^{(\alpha)}(ut),\quad \alpha = (a^2-1) u \quad (a>0), \quad u = n+1/2.\label{eq:wLag}
\end{equation}
From the confluent hypergeometric equation \cite[Table~18.8.1]{MR2723248} satisfied by the Laguerre polynomials, we infer that $w(t)$ satisfies an equation of the form \eqref{eq:dw} with 
\begin{equation}\label{eq:fgLag}
f(t) = \frac{(t-t_0)(t-t_1)}{4t^2},\quad g(t) = -\frac{1}{4t^2},
\end{equation}
possessing the turning points\footnote{For $a=1$, the turning point $t_0$ coalesces with the double pole of $f$ at $t=0$.} $t_0< t_1$ where
\begin{equation}
t_0 = (a-1)^2, \quad t_1 = (a+1)^2.
\end{equation}
\end{subequations}
The larger one, $t_1$, takes the role of $t_*$ in Section~\ref{subsect:gen}, whereas $t_0$ limits the domain of validity of the Airy expansion. In the following theorem, we restrict ourselves,  for convenience, to domains in the complex plane that are certain half-spaces. Maximal domains of uniformity (and of analytic continuation) are displayed in \cite[Figure~3(a)]{MR0990876} and \cite[Figure~6]{MR3874026}.

\begin{theorem}\label{thm:genLag} {\em (I)} With $u=n+1/2$, $\alpha=(a^2-1)u$, $t_0=(a-1)^2$, $t_1 = (a+1)^2$, there holds the compound asymptotic series\/\footnote{\label{fn:Dunster}Note that the pre-factor here is much more specific than in the expansion \cite[Eq.~(5.17)]{MR3874026} where an unresolved exponential series $\exp\big(\sum_{k=1}^\infty \lambda_{2k-1}/u^{2k-1}\big)$ enters. While missing the formula \eqref{eq:lambdaOdd} for of $\lambda_{2k-1}$, a recursive computation via \eqref{eq:pk2lambda} suffices to compute as many concrete values of the limits $\lambda_{2k-1}$ as would be needed when truncating the series in \eqref{eq:expanLag}  at some positive power of $u^{-1}$.}
\begin{equation}\label{eq:expanLag}
\begin{aligned}
\phi_n^{(\alpha)}(ut) &\sim u^{-1/3} \left(\frac{4\zeta}{(t-t_0)(t-t_1)}\right)^{1/4} \\*[2mm]
&\qquad\qquad \times\left(\Ai(u^{2/3}\zeta) \sum_{k=0}^\infty \frac{A_k(\zeta)}{u^{2k}} + \frac{\Ai'(u^{2/3}\zeta)}{u^{4/3}} \sum_{k=0}^\infty \frac{B_k(\zeta)}{u^{2k}}\right),
\end{aligned}
\end{equation}
which is uniformly valid as $n\to\infty$ when $0< a_0 \leq a \leq a_1 < \infty$ and when $t$ belongs to the complex half-plane $\Re t \geq t_0 + \delta$, with $a_0,a_1,\delta$ being any fixed positive numbers. Preserving uniformity, the expansion can be repeatedly differentiated w.r.t. the variable $t$.

\smallskip
{\em (II)}
Writing\/\footnote{Here, $S(t)$ is positive for $t>t_1$, meromorphic at $\infty$ and has a cut along the interval $[t_0,t_1]$.} $S(t) = \sqrt{(t-t_0)(t-t_1)}$, the transformation $\zeta=\zeta(t)$ is given, as $t>t_1$, by 
\begin{equation}\label{eq:xiLag}
\frac43\zeta^{3/2} =  S(t) - \frac{t_0+t_1}{2}\arcosh\frac{2t-(t_0+t_1)}{t_1-t_0} + \sqrt{t_0t_1} \arcosh\frac{2t^{-1}-(t_0^{-1}+t_1^{-1})}{t_1^{-1}-t_0^{-1}},
\end{equation}
extending analytically to $\Re t > t_0$ with $\zeta(t_1)=0$ and mapping the real interval $t_0 < t <\infty$ strictly increasing to 
\begin{equation}\label{eq:intvalLag}
-(a\wedge 1)^{4/3} \left(\frac{3\pi}{2}\right)^{2/3} < \zeta < \infty.
\end{equation}

{\em (III)} The coefficients $A_k(\zeta)$,~$B_k(\zeta)$ are analytic for $\Re t > t_0$. They are defined by expanding the asymptotic series \eqref{eq:dunster}, with the functions $E_k(t)$ given by
\begin{equation}\label{eq:EkLag}
E_k(t) = \frac{P_k(t)}{S(t)^{3k}},
\end{equation}
where $P_k$ are polynomials  such that $P_{2k} \in \Q[a^2][t]$ has degree $4k$ in $t$ and $P_{2k-1} \in \Q[a^{-2},a^2][t]$ has degree $6k-3$ in $t$. Subject to the conditions on their degree, the polynomials $P_k$ are uniquely determined by the
differential recurrence relation
\begin{subequations}
\begin{gather}
S(t)^2 P_{k}'(t) - 3k S(t)S'(t) P_{k}(t) = Q_{k}(t), \label{eq:pkLag}\\*[1mm]
Q_{k+1}(t) = \big(3(k+1)t S(t)S'(t) -S(t)^2\big)Q_k(t) - t S(t)^2 Q_k'(t) - t \sum_{j=1}^{k-1}Q_j(t)Q_{k-j}(t),\label{eq:qkLag}\\*[1mm]
Q_1(t) = -\big(t^3 - (3a^2-1)(a^2-3)t + 2(a^2+1)(a^2-1)^2\big)/4.
\end{gather}
\end{subequations}
The limits $\lambda_k :=  E_k(\infty)$ are $\lambda_{2k}=0$ and\/\footnote{This formula generalizes the explicit numerical values computed in \cite[Eq.~(2.25)]{MR3874026} for $k=1,2,3$. In fact, by observing \eqref{eq:pk2lambda}, it can be used as a sanity check for the computation of the polynomials $P_{2k-1}$.}
\begin{equation}\label{eq:lambdaOdd}
\lambda_{2k-1} =  \frac{B_{2k}(1/2)(1+1/a^{4k-2})}{4k(2k-1)}.
\end{equation}

{\em (IV)} 
The first instances evaluate to
\begin{subequations}\label{eq:ABLag}
\begin{align}
P_1(t) &= \frac{(a^2 - 1)^2 ((a^2 - 1)^2 - 4 a^2) - 3 (a^2 + 1)^3 t + 
   3 (a^4 + 6 a^2 + 1) t^2 - (a^2 + 1) t^3}{48a^2},\label{eq:p1}\\*[2mm]
P_2(t) &=  t \big(t^3 - t(3a^2-1)(a^2-3) + 2 (a^2+1)(a^2-1)^2\big)/4,\label{eq:p2}
\end{align}
and \eqref{eq:A1B0gen} can thus be written in the form\/\footnote{Note that $S(t)\zeta^{1/2}$ is analytic at $t=t_1$.}
\begin{align}
A_0(\zeta)=1, \quad A_1(\zeta) &= -\frac{455}{4608\zeta^3} + \frac{7}{48S(t)\zeta^{3/2}}\frac{P_1(t)}{S(t)^2} + \frac{P_1(t)^2+2P_2(t)}{2S(t)^6},\\*[2mm]
B_0(\zeta) &= -\frac{5}{48\zeta^2} + \frac{1}{S(t)\zeta^{1/2}} \frac{P_1(t)}{S(t)^2}.
\end{align}
\end{subequations}
\end{theorem}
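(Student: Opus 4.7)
The plan is to follow the same three-task framework identified in Section~\ref{subsect:gen} that structured the proof of Theorem~\ref{thm:genHer}, adapted to accommodate the two novelties of the Laguerre setting compared to the Hermite case: the double pole of $f(t)$ at $t=0$ (from the factor $1/(4t^2)$) and the nonzero coefficient $g(t)=-1/(4t^2)$ in the differential equation for $w(t)$.

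For Task~(1), the complex-analytic part, I would rely on the earlier work of Dunster \cite{MR0990876} on Whittaker confluent hypergeometric functions of large parameter, which establishes the uniform validity of the classical Airy expansion \eqref{eq:wAiry} in a maximal domain that contains the half-plane $\Re t \ge t_0 + \delta$, and which remains uniform for $a$ in any compact subinterval of $(0,\infty)$ since the conformal map $\zeta = \zeta(t)$ of \eqref{eq:xiLag} depends analytically on $a$ in this range. Ritt's theorem then justifies term-wise differentiation while preserving uniformity.

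For Task~(2), the computation of the expansion terms, I would first evaluate the Liouville quantities associated with \eqref{eq:fgLag}. An explicit integration of $\zeta^{1/2}\,d\zeta = (S(t)/(2t))\,dt$ produces \eqref{eq:xiLag} and \eqref{eq:intvalLag}. A routine differentiation shows that \eqref{eq:LGpsi} takes the form
\[
\phi(t) = \frac{2\,Q_1(t)}{t\,S(t)^6},
\]
with $Q_1(t)$ exactly as stated; the appearance of the factor $1/t$ is the genuinely new feature compared with the Hermite case, and is ultimately what introduces $a^{-2}$-dependence into the odd-index polynomials. Inductively, the recursion \eqref{eq:FE} produces functions of the shape $F_k(t) = Q_k(t)/(t^{e_k} S(t)^{3k+3})$ with a controllable exponent~$e_k$, and \eqref{eq:qkLag} follows after clearing denominators by means of the identity $2\,S(t)S'(t) = 2t - t_0 - t_1$. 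To carry out the integration defining $E_k$, I would decompose into partial fractions at the poles $t\in\{0,t_0,t_1\}$: the even-index contributions integrate to rational functions directly, while the odd-index contributions require the hypergeometric primitive \eqref{hyperHer}, recentered at each of the three poles. Imposing the meromorphy of $E_{2k}$ (absence of logarithmic singularities at $t=t_1$) together with the normalization $\lambda_{2k}=0$ pins down $P_{2k}$ uniquely; the condition that $S(t)E_{2k-1}(t)$ is rational does the same for $P_{2k-1}$ after invoking the degree bound. The claimed degrees ($4k$ for $P_{2k}$ and $6k-3$ for $P_{2k-1}$) and coefficient rings ($\Q[a^2]$ versus $\Q[a^{-2},a^{2}]$) are then read off by tracking the leading and singular contributions in the partial-fraction expansions.

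For Task~(3), matching to the particular recessive solution, I would appeal in an appendix to a connection formula for Whittaker's $W$ function, in parallel with the Weber-function argument sketched in Appendix~\ref{app:weber}. Starting from a representation of $\phi_n^{(\alpha)}(ut)$ in terms of $W_{\kappa,\mu}$, and combining the large-argument asymptotics of $W$, the Stirling exponential expansion \eqref{eq:gammaOlver}, and the exponential-form representation \eqref{eq:cu} of the constant $c$, one would extract both the prefactor $c \simeq u^{-1/3}$ appearing in \eqref{eq:expanLag} and the closed form \eqref{eq:lambdaOdd} for $\lambda_{2k-1}$. I expect this matching step to be the main obstacle: the symmetric factor $1 + 1/a^{4k-2}$ in \eqref{eq:lambdaOdd} reflects contributions from both the point at infinity and the confluence at $t=0$, and must be tracked rigorously through the full Stirling-type product of gamma factors entering the connection coefficient of $W_{\kappa,\mu}$. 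For the purposes of computing any fixed number of expansion terms in \eqref{eq:expanLag}, one can of course circumvent this by reading off each $\lambda_{2k-1}$ recursively from the leading coefficient of the already-computed polynomial $P_{2k-1}$, but the clean closed form \eqref{eq:lambdaOdd} itself appears to demand the connection-formula route.
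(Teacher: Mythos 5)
Your three-task plan coincides with the paper's actual proof: Task~(1) is settled by citing Dunster's uniform Airy expansion for $W_{\kappa,\mu}$ together with Ritt's theorem, and Task~(3) by the Whittaker connection formula exactly as you describe (this is carried out in Appendix~\ref{app:whittaker}; your expectation that the symmetric factor $1+1/a^{4k-2}$ emerges from the two gamma factors $\Gamma(\kappa\mp\mu+1/2)$ with $\kappa-\mu=u$, $\kappa+\mu=a^2u$ via Stirling is borne out). The problem is in Task~(2), where your central formula is wrong: the correct evaluation of \eqref{eq:LGpsi} is
\[
\phi(t)=\frac{4t\,Q_1(t)}{S(t)^6},
\]
with the factor $t$ in the \emph{numerator}, not $\phi(t)=2Q_1(t)/(tS(t)^6)$. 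Two sanity checks expose the error: as $t\to\infty$ one has $f\to 1/4$, $f'=O(t^{-2})$, $f''=O(t^{-3})$ and $g/f=-1/S(t)^2$, so $\phi(t)\sim -t^{-2}$, which matches $4tQ_1/S^6\sim -t^4/t^6$ but not your $2Q_1/(tS^6)\sim -1/(2t^4)$; and $\phi$ is in fact regular at $t=0$ (the order-six poles of $4ff''-5f'^2$ and of $16f^3$ cancel, while $g/f=-1/S^2$ has no pole there), whereas your expression has a genuine pole at the origin.

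This error propagates through everything downstream. With the correct $\phi$, induction gives $F_k(t)=2tQ_k(t)/S(t)^{3k+3}$, so the integrand $F_k(t)S(t)/(2t)=Q_k(t)/S(t)^{3k+2}$ has poles only at $t_0$ and $t_1$; the partial-fraction decompositions \eqref{eq:Feven}--\eqref{eq:Fodd} and the primitive \eqref{hyperLag} involve only those two points, and no recentering of the hypergeometric primitive at $t=0$ is needed. Under your version the integrand would carry poles of growing order at $t=0$, whose simple-pole residues would generically produce logarithmic (even $k$) or inverse-hyperbolic (odd $k$) terms at the origin, destroying the claimed rational structure $E_k=P_k/S^{3k}$ and making the stated recursion \eqref{eq:qkLag} underivable. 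Relatedly, your attribution of the $a^{-2}$-dependence of $P_{2k-1}$ to the $1/t$ factor is an artifact of the same mistake: in the paper it enters only through the leading coefficient $\lambda_{2k-1}\in\Q[a^{-2}]$ in \eqref{eq:pk2lambda}, which is supplied by the Task-(3) matching and then propagated by back-substitution (all divisions being by integers of the form $j-(6k-3)$, independent of $a$). The fix is purely local --- redo the differentiation of $f$ --- but as written the Task-(2) computation fails.
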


\begin{proof} We organize the proof along the Tasks (1)--(3) of the general method in  Section~\ref{subsect:gen}.

\subsubsection*{Task {\rm (1):} domain of validity}
Writing $w(t)$ in terms of the Whittaker function $W_{\kappa,\mu}(z)$ as in \eqref{eq:Whittaker}, Dunster \cite[Eq.~(6.17)]{MR0990876} (see also \cite[Eq.~(5.17)]{MR3874026})  proved the uniform validity of the compound asymptotic series  \eqref{eq:wAiry}  as $n\to\infty$ when $0< a_0 \leq a \leq a_1 < \infty$ and when $t$ belongs to the complex half-plane $\Re t \geq t_0 + \delta$ (with $a_0,a_1,\delta$ being any fixed positive numbers). By Ritt's theorem, preserving uniformity, the expansion can be repeatedly differentiated.

\subsubsection*{Task {\rm (2):} computation of the expansion terms}

Based on \eqref{eq:gfLag}, an explicit integration evaluates \eqref{eq:liouville} and \eqref{eq:zeta0} to \eqref{eq:xiLag} and  \eqref{eq:intvalLag}. The function $\phi(t)$ in \eqref{eq:LGpsi} becomes  (see \cite[Eq.~(2.8)]{MR3874026}) 
\begin{equation}\label{eq:phiLag}
\phi(t) = -\frac{t\big(t^3 - (3a^2-1)(a^2-3)t + 2(a^2+1)(a^2-1)^2\big)}{S(t)^6} =: \frac{4t Q_1(t)}{S(t)^6},
\end{equation}
so that the functions $F_k(t)$ as defined in \eqref{eq:FE} take the form
\[
F_k(t) =\frac{2 t Q_k(t)}{S(t)^{3k+3}},
\]
with certain rational polynomials $Q_k \in \Q[a^2][t]$ of degree $2k+1$ in $t$. In fact, using 
\begin{align*}
S(t)S'(t) &= t - \frac{t_0+t_1}{2} = t-(a^2+1) \in \Q[a^2][t],\\*[1mm]
S(t)^2 &= t^2 - 2 (a^2 + 1) t + (a^2 - 1)^2 \in \Q[a^2][t],
\end{align*}
this follows inductively from the recurrence relation \eqref{eq:qkLag}, which is a straightforward implication of \eqref{eq:FE}.

In particular, $F_{2k-1}(t)$ and $S(t) F_{2k}(t)$ extend as rational (and thus meromorphic) functions of $t$. To prepare
for the integration that yields $E_k$ in \eqref{eq:FE}, we decompose into partial fractions which, 
based on the symmetry with respect to $a \leftrightarrow -a$ (which exchanges $t_0$ and $t_1$), take the form
\begin{subequations}
\begin{align}
\frac{F_{2k}(t) S(t)}{2t} &= \sum_{j=1}^{3k+1} \left( \frac{\beta_{2k,j}(-a)}{(t-t_0)^j}+\frac{\beta_{2k,j}(a)}{(t-t_1)^j}\right),\label{eq:Feven}\\*[2mm]
\frac{F_{2k-1}(t) S(t)}{2t} &= \frac{1}{S(t)} \sum_{j=1}^{3k-1}  \left( \frac{\beta_{2k-1,j}(-a)}{(t-t_0)^j}+\frac{\beta_{2k-1,j}(a)}{(t-t_1)^j}\right),\label{eq:Fodd} 
\end{align}
\end{subequations}
with certain rational functions $\beta_{k,j}(a)$ of the parameter $a$. From the discussion in Section~\ref{subsect:gen}, we know that
\[
E_k(t) = \int F_k(t) f(t)^{1/2}\,dt = \int \frac{F_k(t) S(t)}{2t} \,dt
\]
actually gives, for even index $2k$, a function $E_{2k}(t)$ which is meromorphic for all choices of the integration constant, whereas for odd index $2k-1$ there is a unique integration constant such that $S(t) E_{2k-1}(t)$ becomes meromorphic.

Thus, when integrating \eqref{eq:Feven}, we have necessarily $\beta_{2k,1}(a)\equiv0$ since otherwise $E_{2k}(t)$ would exhibit a logarithmic singularity at $t=t_1$. Specifying the arbitrary integration constant by the condition that $\lambda_{2k}:=E_{2k}(\infty)=0$, we obtain the rational function
\begin{equation}\label{eq:Eeven}
E_{2k}(t) = \sum_{j=2}^{3k+1} \frac{1}{1-j} \left( \frac{\beta_{2k,j}(-a)}{(t-t_0)^{j-1}}+\frac{\beta_{2k,j}(a)}{(t-t_1)^{j-1}}\right) = \frac{P_{2k}(t)}{S(t)^{6k}},
\end{equation}
with a certain polynomial $P_{2k}(t)$ of degree less than $6k$. When plugged into the differential relation $E_{2k}'(t) = F_{2k}(t) S(t)/(2t)$,
we get the differential equation \eqref{eq:pkLag}, 
\[
S(t)^2 P_{2k}'(t) - 6k S(t)S'(t) P_{2k}(t) = Q_{2k}(t).
\]
Since the kernel of the differential operator acting on $P_{2k}$ is spanned by the polynomial $S(t)^{6k}$ of degree $6k$,
the polynomial solution $P_{2k}$  (having degree less than $6k$) is uniquely determined by this differential equation.\footnote{The discussion in \cite[p.~1448]{MR3874026} skips over the details of showing the existence of the polynomials $P_k$ and how to actually compute them.} Because $Q_{2k} \in \Q[a^2,t]$ has degree $4k+1$ in $t$, we see that $P_{2k}$ has degree $4k$ in $t$. Observing that the only divisions when solving the differential equation for the coefficients of $P_{2k}$, starting with the largest power and working backward, are by numbers of the form $j-6k$ (independent of $a$), we see that also $P_{2k} \in \Q[a^2][t]$.\footnote{We note that the first concrete instance $P_2$ can be obtained without any further computation by comparing the differential equation \eqref{eq:pkLag} with \eqref{eq:qkLag}, which gives by uniqueness $P_2(t) = -t Q_1(t)$; cf. \cite[Eq.~(2.23)]{MR3874026}.}

In the case of an odd index $2k-1$, using the integral formula\footnote{Which is, up to an affine transformation, \cite[Eq.~(1.15.3.8)]{MR1054647} applied to $_2 F_1(0,b;c;x)=1$.} (where for $j=1,2,\ldots$ the hypergeometric term is actually a polynomial of degree $j-1$ in $t$)
\begin{equation}\label{hyperLag}
\int \frac{dt}{S(t) (t-t_1)^j} = \frac{2S(t)\,_2F_1\big(-j+1,1,3/2;(t-t_0)/(t_1-t_0)\big)}{(t_0-t_1) (t-t_1)^j}
\end{equation}
and its counterpart with $t_0$ and $t_1$ interchanged, we infer from integrating \eqref{eq:Fodd} that the meromorphic function $S(t)E_{2k-1}(t)$ is, in fact, a rational function of the form
\begin{equation}\label{eq:Eodd}
S(t) E_{2k-1}(t) = \frac{P_{2k-1}(t)}{S(t)^{6k-4}},
\end{equation}
with a certain polynomial $P_{2k-1}(t)$ of degree $\leq 6k-3$. When plugged into the differential relation $E_{2k-1}'(t) = F_{2k-1}(t) S(t)/(2t)$ 
we get the differential equation \eqref{eq:pkLag},
\begin{equation}\label{eq:diffpOdd}
S(t)^2 P_{2k-1}'(t) - (6k-3) S(t)S'(t) P_{2k-1}(t) = Q_{2k-1}(t).
\end{equation}
This time, the kernel of the differential operator acting on the left is spanned by the algebraic function $S(t)^{6k-3}$
so that the polynomial solution $P_{2k-1}$ is uniquely determined by the differential equation. From \eqref{eq:EkLag}, writing
$\lambda_{2k-1}:=E_{2k-1}(\infty)$, we get
\begin{equation}\label{eq:pk2lambda}
P_{2k-1}(t) = \lambda_{2k-1} t^{6k-3} + \text{terms of lower degree}.
\end{equation}
Since we establish $0 \neq \lambda_{2k-1} \in \Q[a^{-2}]$ in the next step of the proof, the degree of $P_{2k-1}$ is actually $6k-3$. Observing that the only divisions when solving the differential equation for the rest of the coefficients of $P_{2k-1}$, starting with the next-to-largest power and working backward, are by numbers of the form $j-(6k-3)$ (independent of $a$), we infer from $\lambda_{2k-1} \in \Q[a^{-2}]$ and $Q_{2k-1}\in \Q[a^2][t]$ that $P_{2k-1} \in \Q[a^{-2},a^2][t]$.

\subsubsection*{Task {\rm (3):}\protect\footnote{\label{fn:lambdaLag}The remark of Footnote~\ref{fn:lambda} applies here, mutatis mutandis, too.} matching the particular recessive solution} Using a connection formula for the Whittaker function $W_{\kappa,\mu}(z)$, we prove in Appendix~\ref{app:whittaker} that the particular solution $w(t)$, as defined in \eqref{eq:wLag},
possesses a constant $c$ in the Airy expansion \eqref{eq:wAiry} which evaluates, subject to the indeterminacy discussed in Footnote~\ref{fn:simeq}, to
\begin{equation}\label{eq:cuLag}
c \simeq u^{-1/3}.
\end{equation}
By canceling $t^{1/2}$ on both sides, this gives the pre-factor in \eqref{eq:expanLag}. Likewise, from \eqref{eq:lambdaOddWhittaker}, we read off the values of $\lambda_{2k-1}$ displayed in \eqref{eq:lambdaOdd}  and, in particular, $0\neq \lambda_{2k-1}\in \Q[a^{-2}]$.
\end{proof}

\section{Local expansions with a uniform tail bound}\label{subsect:expansions}

In continuing the discussion of Section~\ref{subsect:gen}, we consider a particular solution 
$\Phi_u(t)$ of \eqref{eq:dw}, recessive for $t\to+\infty$ and having a factor $c\simeq1$ in the expansion \eqref{eq:wAiry},
\begin{equation}\label{eq:Phi1}
\Phi_u(t) \sim \left(\frac{\zeta}{f(t)}\right)^{1/4}\left( \Ai(u^{2/3} \zeta) \sum_{k=0}^\infty \frac{A_k(\zeta)}{u^{2k}} +  \frac{\Ai'(u^{2/3} \zeta)}{u^{4/3}} \sum_{k=0}^\infty \frac{B_k(\zeta)}{u^{2k}} \right) \quad (u\to\infty),
\end{equation}
which we assume to be uniformly valid when $t$ belongs to the complex half-plane $\Re t \geq t_0 + \delta$, with $\delta$ being any fixed positive number. In particular, the quantities $f(t), \zeta, A_k(\zeta), B_k(\zeta)$ are analytic for $\Re t > t_0$.
The coefficients $A_k, B_k$ satisfy the recurrence \eqref{eq:AB}, with the arbitrary constants of integration fixed by specifying that the limits in \eqref{eq:limAB} are given by the exponential expansions \eqref{eq:limABexp}, subject to the conditions in \eqref{eq:lambda}.

With $f_1:=f'(t_*)>0$, the local power series of $f$ and $g$ at the turning point $t_*$ can be written in the form
\begin{subequations}\label{eq:powerZetaGen}
\begin{equation}\label{eq:powerf}
f_1^{-1} f(t) =  (t-t_*) + f_2 (t-t_*)^2 + f_3 (t-t_*)^3 + \cdots, \quad g(t) = g_0 + g_1(t-t_*) + \cdots,
\end{equation}
converging for $|t-t_*| < \rho:= t_*-t_0$. 

Since the power series of $f_1^{-1} f$ starts with the term $1\cdot (t-t_*)$, the induced power series of $\zeta, \psi(\zeta)$ and $(\zeta/f(t))^{1/4}$ can be rescaled with a real power $f_1$, such that all of their coefficients are {\em independent} of $f_1$ and, in fact, {\em rational polynomials} in  $f_2,f_3,\ldots$ and $g_0,g_1,\ldots$\,:
\begin{align}
f_1^{-1/3} \zeta &= (t-t_*) + \frac{f_2}{5} (t-t_*)^2 + \frac{25f_3-8f_2^2}{175} (t-t_*)^3\label{eq:powerZetaScaled}\\*[0mm]
&\qquad + \frac{148f_2^3-550f_2f_3+875f_4}{7875}(t-t_*)^4 + \cdots,\notag\\*[2mm]
f_1^{2/3} \psi(\zeta) &= \frac{- 9 f_2^2 + 15f_3 + 35 g_0}{35}\\*[0mm]
&\qquad + \frac{56 
f_2^3 - 140 f_2 f_3+ 100 f_4 - 
 60 f_2 g_0 + 75 g_1}{75}(t-t_*) + \cdots,\notag\\*[1mm]
f_1^{1/6} \left(\frac{\zeta}{f(t)}\right)^{1/4} &= 1-\frac{f_2}{5} (t-t_*) + \frac{9f_2^2-15f_3}{70}(t-t_*)^2 + \cdots.\label{eq:powerZetaFactor}
\end{align}
\end{subequations}
From the recurrence \eqref{eq:AB}, written for $A_{k+1}$ in the form 
\[
A_{k+1}(\zeta) = -\frac{1}{2}B_k'(\zeta) + \frac{1}{2} \int_0^\zeta \psi(v)B_k(v)\,dv + c_{k+1}
\]
to include the constants $c_{k+1}$ of integration (which are chosen from matching \eqref{eq:Phi1} as $t\to\infty$ via \eqref{eq:limAB}, \eqref{eq:limABexp} and are thus {\em inaccessible} to using local expansions only), we get inductively that the rescaled coefficients
\[
f_1^{k} A_k(\zeta),\quad f_1^{k+2/3} B_k(\zeta)
\]
expand, as a function of $t$, at $t=t_*$ into power series with coefficients that are independent of $f_1$, while being rational polynomials in the coefficients $f_2,f_3,\ldots$, $g_0,g_1,\ldots$ as well as in the integration constants $c_{k+1}$. 

We now zoom into the vicinity of the turning point $t_*$ by a linear `stretching transformation'%
\begin{subequations}\label{eq:ts}
\begin{align}
t &= t_* + h_u s,\\
\intertext{subject to the condition} 
u^{2/3}\zeta &= s + O(h_u),
\end{align}
which by \eqref{eq:powerZetaScaled} is uniquely solved by
\begin{equation}\label{eq:hu}
h_u = f_1^{-1/3} u^{-2/3}, \quad u^2 =f_1^{-1} h_u^{-3}.
\end{equation}
\end{subequations}

With these preparations, we can formulate and prove the following theorem, which matches a local expansion at the turning point to a uniform tail bound (to the right).

\begin{theorem}\label{thm:localExpan} In addition to the notation and assumptions above, let us assume that $f(t) \geq c$ for $t\geq t_* + \rho/2$, with some $c>0$. Then, there holds,
for any fixed non-negative integer $m$ and any  fixed real $s_0$, as $u\to\infty$ through positive real values,
\begin{equation}\label{eq:PhiAsymp}
f_1^{1/6}\Phi_u(t_* + h_u s) =   \Ai(s) \sum_{k=0}^m p_k(s) h_u^k + \Ai'(s) \sum_{k=0}^m q_k(s) h_u^k  + h_u^{m+1} O(e^{-s}),
\end{equation}
uniformly for $s_0 \leq s < \infty$. Here, $p_k(s)$, $q_k(s)$ are polynomials in $s$ which have coefficients that are rational polynomial expressions in $f_2,f_3,\ldots$, $g_0,g_1,\ldots$ and $c_1,c_2,\ldots$ -- in fact, up to $m=2$, they are actually independent of the $c_{k+1}$\/{\em :}
\begin{subequations}\label{eq:pq2}
\begin{align}
p_0(s) &= 1, & q_0(s) &= 0, \\*[1mm] p_1(s) &= -\frac{sf_2}{5}, & q_1(s) &= \frac{s^2f_2}{5},\\*[1mm]
p_2(s) &= \frac{15s^2(3f_2^2-5f_3) + 7 s^5 f_2^2}{350},& q_2(s) &= \frac{-(s^3+3)(3 f_2^2-5 f_3) +35 g_0}{35}.
\end{align}
\end{subequations}
 Preserving uniformity, the expansion \eqref{eq:PhiAsymp} can be repeatedly differentiated w.r.t. $s$.
\end{theorem}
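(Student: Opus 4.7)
The plan is to derive the expansion by re-expanding, in three stages, the compound asymptotic series \eqref{eq:Phi1} under the substitution $t = t_* + h_u s$. First, the identity $u^{-2} = f_1 h_u^3$ from \eqref{eq:hu} turns the truncated sums $\sum_{k\leq m} A_k(\zeta)u^{-2k}$ and $u^{-4/3}\sum_{k\leq m}B_k(\zeta) u^{-2k}$ into formal series in $h_u$, whose $\zeta$-dependent coefficients, after rescaling by the appropriate powers of $f_1$, are analytic at $t_*$ and admit power-series expansions in $t-t_* = h_u s$. Second, the pre-factor $(\zeta/f(t))^{1/4}$ is re-expanded at $t_*$ via \eqref{eq:powerZetaFactor}, whose coefficients are, after the rescaling by $f_1^{1/6}$, rational polynomials in $f_2,f_3,\ldots$. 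Third, the relation $u^{2/3}\zeta = s + O(h_u)$ from \eqref{eq:ts}, combined with the power series \eqref{eq:powerZetaScaled} for $u^{2/3}\zeta - s$, lets us Taylor expand the Airy factors $\Ai(u^{2/3}\zeta)$ and $\Ai'(u^{2/3}\zeta)$ about $s$.

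Repeated application of the Airy replacement rule \eqref{eq:airyrule}, which uses $\Ai''(s) = s\Ai(s)$, reduces every derivative $\Ai^{(j)}(s)$ arising in the Taylor expansions to a combination of $\Ai(s)$ and $\Ai'(s)$ with polynomial coefficients in $s$. Collecting powers of $h_u$ from the product of the three re-expansions yields the asserted form \eqref{eq:PhiAsymp}, with $p_k(s),q_k(s)$ rational polynomial expressions in $f_2,f_3,\ldots$, $g_0,g_1,\ldots$ and the integration constants $c_1,c_2,\ldots$; the closed-form expressions in \eqref{eq:pq2} then follow by carrying out this computation explicitly for $m\leq 2$, using \eqref{eq:powerZetaGen} and the recurrence \eqref{eq:AB}. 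The independence of $p_0,p_1,p_2,q_0,q_1,q_2$ from the $c_{k+1}$ is a consequence of the fact that $c_1$ enters only $A_1(\zeta)$, which contributes to the full expansion at order $u^{-2} = f_1 h_u^3$, i.e.\ at $h_u$-order $3$, while $c_j$ for $j\geq 2$ enter at even higher $h_u$-orders.

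The main obstacle is establishing the uniform remainder $h_u^{m+1} O(e^{-s})$ over the unbounded range $s_0 \leq s < \infty$. I would split this range into a compact piece $s_0 \leq s \leq M$ and a tail $s \geq M$ with $M$ any fixed large constant. On the compact piece, the local power series \eqref{eq:powerZetaGen} converge absolutely and the standard Taylor remainder bounds yield an error of size $O(h_u^{m+1})$, which is at once $O(h_u^{m+1}e^{-s})$ since $e^{-s}$ is bounded below on $[s_0,M]$. On the tail, the super-exponential decay $\Ai^{(j)}(s) = O(s^{(2j-1)/4}e^{-\frac23 s^{3/2}})$ dominates any polynomial growth in $s$ coming from $p_k, q_k$ or from the Taylor remainders $u^{2/3}\zeta - s = O(h_u s^2)$, and for $s$ large enough that $t-t_*$ is bounded away from zero, the difference between $\Phi_u(t)$ and its truncation is controlled directly by the original error bound in \eqref{eq:Phi1}, whose exponential decay is inherited from its Airy factors. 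The bookkeeping of these tail estimates follows closely the Bessel-function argument in \cite[§2.1]{arxiv:2301.02022}. Finally, repeated term-wise differentiability w.r.t.\ $s$, with uniformity preserved, follows from Ritt's theorem, since all the series involved admit analytic continuation into a complex neighborhood of the real half-line $[s_0,\infty)$.
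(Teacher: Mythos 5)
Your proposal follows essentially the same route as the paper's proof: re-expand the uniform Airy expansion \eqref{eq:Phi1} locally at the turning point in powers of $h_u$ (via $u^{-2}=f_1h_u^3$, the rescaled power series \eqref{eq:powerZetaGen}, and the Airy equation to reduce $\Ai^{(j)}(s)$ to $\Ai(s),\Ai'(s)$), then control the remainder by splitting the $s$-range and absorbing polynomial growth into the super-exponential Airy decay; the observation that $c_1$ first enters at $h_u$-order $3$ is also the paper's argument for the $c$-independence up to $m=2$. Two points in your tail analysis need sharpening. First, the split at a \emph{fixed} $M$ is not the operative one: the local power series in $t-t_*=h_us$ converge only for $|h_us|<\rho$, so the correct partition is $s\leq\rho h_u^{-1}/2$ (where the re-expansion is a valid approximation to $\Phi_u$ and the Airy decay absorbs all polynomial factors, including those from the truncation error) versus $s>\rho h_u^{-1}/2$ (where the re-expansion is meaningless and one must instead show that $E_m$ and $\Phi_u$ are \emph{each} $h_u^{m+1}O(e^{-s})$, using $e^{-h_u^{-1}}=O(h_u^{m+1})$). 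Your phrase ``for $s$ large enough that $t-t_*$ is bounded away from zero'' implicitly acknowledges this $u$-dependent threshold, but the difference $\Phi_u-E_m$ cannot be ``controlled directly by the original error bound in \eqref{eq:Phi1}'' there, since that bound compares $\Phi_u$ to Airy functions evaluated at $u^{2/3}\zeta$, not to the polynomials-times-$\Ai(s)$ object $E_m$. Second, the hypothesis $f(t)\geq c$ for $t\geq t_*+\rho/2$, which you never invoke, is precisely what the outer-region bound rests on: it gives $\xi\geq c_1h_us$ and hence $\Phi_u(t_*+h_us)=h_u^{1/4}O\big(e^{-c_2sh_u^{-1/2}}\big)=h_u^{m+1}O(e^{-s})$ for $s>\rho h_u^{-1}/2$; without it the claimed decay of $\Phi_u$ in $s$ is not justified.
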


\begin{proof} We write 
\[
\Phi_u(t_* + h_u s) = E_m(u;s) + R_m(u;s),
\]
where $E_m$ denotes the sum of the expansion terms in \eqref{eq:PhiAsymp} and $R_m$ is the remainder to be estimated.
We split the range of $s$ into the two intervals
\[
\text{(I):} \; s_0 \leq s \leq \rho h_u^{-1}/2, \quad \text{(II):} \; \rho h_u^{-1}/2 < s < \infty.
\]
In the interval (I), the power series
\eqref{eq:powerZetaScaled} and  \eqref{eq:powerZetaFactor} become the uniformly convergent series
\begin{align*}
u^{2/3} \zeta &= s  + \frac{f_2}{5}s^2 h_u   + \frac{25f_3-8f_2^2}{175} s^3 h_u^2 +\cdots, \\*[2mm]
\left(\frac{\zeta}{f(t)}\right)^{1/4} &= f_1^{-1/6}\left(1-\frac{f_2}{5} s  h_u + \frac{9f_2^2-15f_3}{70} s^2 h_u^2 + \cdots\right),
\end{align*}
in powers of $h_u$. Likewise 
\[
\frac{A_k(\zeta(t))}{u^{2k}} = f_1^k A_k(\zeta(t)) \cdot h_u^{3k}, \qquad u^{-4/3}\frac{B_k(\zeta(t))}{u^{2k}} = f_1^{k+2/3} B_k(\zeta(t)) \cdot h_u^{3k+2}
\]
turn into series in powers of $h_u$ which converge uniformly in the interval (I). 
If we plug these uniformly convergent series into the uniform asymptotic expansion \eqref{eq:Phi1}, we get by Taylor expanding $\Ai(u^{2/3}\zeta)$ and $\Ai'(u^{2/3}\zeta)$ (using the Airy differential equation) a compound asymptotic series of the form
\begin{equation}\label{eq:PhiAsymp2}
\Phi_u(t_* + h_u s) \sim f_1^{-1/6} \left( \Ai(s) \sum_{k=0}^\infty p_k(s) h_u^k + \Ai'(s) \sum_{k=0}^\infty q_k(s) h_u^k \right),
\end{equation}
which is uniformly valid as $u\to\infty$ when $s$ belongs to the interval (I).
By construction, the polynomials $p_k, q_k$ inherit the properties of their coefficients from the expansions which were discussed at the beginning of this section. We observe that $q_0 = 0$ because the second sum, with the $B_k$ coefficients in \eqref{eq:Phi1}, starts with the power $h_u^2$, while $p_0=1$ is inherited from the normalization $A_0(\zeta)=1$. Since the constants of integration $c_{k+1}$ only effect the coefficients $A_{k+1}, B_{k+1}$ for $k=1,2,\ldots$, the expansion terms up to the power $h_u^2$ are, in fact, independent of the $c_k$. 

By truncating \eqref{eq:PhiAsymp2}, we get
\[
R_m(u;s) = h_u^{m+1} O(p_{m+1}(s) \Ai(s)) + h_u^{m+1} O(q_{m+1}(s) \Ai'(s)),
\]
uniformly for $s$ in the interval (I). Now, the super-exponential decay (see \eqref{eq:airy2}) of the Airy function $\Ai(s)$ and its derivative, as $s\to+\infty$, absorbs the polynomial growth of $p_{m+1}$ and $q_{m+1}$ into a common decay estimate $O(e^{-s})$. This implies the asserted uniform bound
\[
 R_m(u;s) = h_u^{m+1} O(e^{-s}).
\]
in the interval (I). 

On the other hand, in the interval (II) of the range of $s$, we infer from \eqref{eq:airy2}, by absorbing the polynomial growth of the coefficients $p_{k}$ and $q_{k}$ for $k=0,1,\ldots,m$ into  the super-exponential decay of the Airy function $\Ai(s)$ and its derivative,
\[
E_m(u;s) = O\Big(e^{-s(1+2/\rho)}\Big) = h_u^{m+1} O(e^{-s})
\]
since $e^{-h_u^{-1}} = O(h_u^{m+1})$. We now show that also
\begin{equation}\label{eq:PhiUEst}
\Phi_u(t_*+h_u s) = h_u^{m+1} O(e^{-s})
\end{equation}
uniformly in the interval (II) of the range of $s$, so that all terms in \eqref{eq:PhiAsymp} are absorbed in the asserted error term. On taking the leading order terms in \eqref{eq:Phi1}, \eqref{eq:airy2}, using the assumed lower bound on $f(t)$ 
and \eqref{eq:hu}, we get
\[
\Phi_u(t_* + h_u s) = h_u^{1/4} O(e^{-u \xi}),
\]
uniformly as $u\to\infty$ when $s$ belongs to the interval (II). By the definition of $\xi$ in \eqref{eq:liouville} and the lower bound on $f(t)$ we have
\[
\xi \geq c_1 h_u s 
\] 
for some $c_1 >0$. Hence, for some $c_2 > 0$ 
\[
\Phi_u(t_* + h_u s) = h_u^{1/4} O\big(e^{-c_2 s h_u^{-1/2}}\big) = h_u^{1/4} e^{-c_2 \rho h_u^{-3/2}/4} O\big(e^{-c_2 s h_u^{-1/2}/2}\big),
\]
where we used that $s$ belongs to the interval (II). On restricting ourselves to $u$ large enough such that 
\[
c_2 h_u^{-1/2}/2 \geq 1
\]
and observing $e^{-c_2 \rho h_u^{-3/2}/4} = O(h_u^{m+3/4})$, we get the asserted estimate \eqref{eq:PhiUEst}.
\end{proof}

\subsection{Expansions of  Hermite polynomials} 
From \eqref{eq:fgHer}, we get 
\[
2^{-1} f(t) = (t-1) + \frac{1}{2}(t-1)^2, \quad g(t) = 0,
\]
so that the the power series \eqref{eq:powerZetaScaled} becomes
\begin{equation}\label{eq:powerHer}
2^{-1/3} \zeta = (t - 1) + \frac{1}{10} (t - 1)^2 - \frac{2}{175} (t - 1)^3 + \frac{37}{15750} (t - 1)^4 + \cdots\, \quad (|t - 1| < 2)
\end{equation}
with coefficients on the right that are rational numbers. Since Theorem~\ref{thm:genHer} shows that $\zeta$ is analytic in the half-plane $\Re t > -1$, the power series converges in the disk $|t-1|<2$.

We now get the following corollary to Theorems~\ref{thm:genHer} and \ref{thm:localExpan}.

\begin{corollary}\label{cor:Hermite}
With $n_+=n+1/2$ and the scaling parameters\footnote{The particular scaling $h=h_u/2$ was made with Remark~\ref{rem:Lag2Her2} in mind. In expanding on \eqref{eq:ts}, we note that by \eqref{eq:powerHer} this set of parameters yields the power series
\begin{equation}\label{eq:sHer}
u^{2/3} \zeta\big|_{x = \mu +\sigma s} = s +\frac{s^2}{5}h -\frac{8s^3}{175}h^2 + \frac{148s^4}{7875}h^3 + \cdots \quad(|h s|<1).
\end{equation}
}
\begin{equation}\label{eq:HerScaling}
\mu=\sqrt{2n_+}, \quad \sigma = 2^{-1/2}n_+^{-1/6}, \quad h = 4^{-1} n_+^{-2/3},
\end{equation}
there holds, for any fixed non-negative integer $m$ and any fixed real $s_0$, as $n\to\infty$,
\begin{equation}\label{eq:expanHer2}
\frac{\phi_n(\mu + \sigma s)}{2^{1/2}h^{1/8}} =   \Ai(s) \sum_{k=0}^m p_k(s) h^k + \Ai'(s) \sum_{k=0}^m q_k(s) h^k  + h^{m+1} O(e^{-s}),
\end{equation}
uniformly  for $s_0 \leq s < \infty$. The first instances of the rational polynomials $p_k, q_k \in \Q[s]$ are
$p_0(s) = 1$, $q_0(s) = 0$,
\begin{equation}\label{eq:pqHer}
\begin{aligned}
p_1(s) &= -\frac{s}{5}, & q_1(s) &= \frac{s^2}{5},\\*[1mm]
p_2(s) &= \frac{9 s^2}{70} + \frac{s^5}{50},& q_2(s) &= -\frac{9}{35} -\frac{3 s^3}{35},\\*[1mm]
p_3(s) &= -\frac{28}{225}-\frac{473 s^3}{3150}-\frac{31 s^6}{2625}, &
q_3(s) &= \frac{473 s}{1575}+\frac{169 s^4}{3150} + \frac{s^7}{750}.
\end{aligned}
\end{equation}
Preserving uniformity, the expansion \eqref{eq:expanHer2} can be repeatedly differentiated w.r.t. $s$.
\end{corollary}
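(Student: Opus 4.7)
The plan is to apply Theorem~\ref{thm:localExpan} to the particular recessive solution $w(t) = \phi_n(u^{1/2}t)$ of the Weber-type equation analyzed in Theorem~\ref{thm:genHer}, where $u = 2n_+ = 2n+1$, $f(t) = t^2-1$, $g(t)=0$. The turning point is $t_* = 1$ with $f_1 = f'(1) = 2$, so the natural stretching parameter of Theorem~\ref{thm:localExpan} becomes $h_u = f_1^{-1/3} u^{-2/3} = 2^{-1/3}u^{-2/3}$. Rewriting the prefactor of \eqref{eq:expanHer} as $u^{-1/12}(4\zeta/(t^2-1))^{1/4} = 2^{1/2} u^{-1/12}(\zeta/f(t))^{1/4}$, we identify Theorem~\ref{thm:genHer}(I) with the normalized form \eqref{eq:Phi1} via
\[
\phi_n(u^{1/2} t) = 2^{1/2} u^{-1/12}\, \Phi_u(t),
\]
where $\Phi_u$ is the distinguished solution with $c \simeq 1$ (up to the indeterminacy of Footnote~\ref{fn:simeq}).

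Next I would verify that the linear substitution $t = 1 + h_u s$ coincides with $x = \mu + \sigma s$ for the scalings \eqref{eq:HerScaling}: indeed $u^{1/2} = \sqrt{2n_+} = \mu$ and $u^{1/2} h_u = 2^{-1/3}u^{-1/6} = 2^{-1/2} n_+^{-1/6} = \sigma$, while $h = 4^{-1} n_+^{-2/3} = 2^{-4/3} u^{-2/3}$ satisfies the bookkeeping identity $h_u = 2h$. Similarly $h^{1/8} = 2^{-1/6} u^{-1/12}$, so $2^{1/2} h^{1/8} = 2^{1/3} u^{-1/12}$, and hence
\[
\frac{\phi_n(\mu+\sigma s)}{2^{1/2}h^{1/8}} = 2^{1/6}\, \Phi_u(1+h_u s) = f_1^{1/6}\, \Phi_u(t_* + h_u s),
\]
which is exactly the quantity expanded by Theorem~\ref{thm:localExpan}.

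Applying Theorem~\ref{thm:localExpan} produces an expansion in powers of $h_u$ with polynomial coefficients $\tilde p_k(s), \tilde q_k(s)$ and the tail bound $h_u^{m+1} O(e^{-s})$; substituting $h_u = 2h$ gives the asserted form of \eqref{eq:expanHer2} with $p_k(s) = 2^k \tilde p_k(s)$ and $q_k(s) = 2^k \tilde q_k(s)$, preserving the exponential tail estimate. The term-wise differentiability in $s$ is inherited from the same property in Theorem~\ref{thm:localExpan}. To obtain the tabulated polynomials, I would feed in the concrete Hermite data: from $f_1^{-1} f(t) = (t-1) + \tfrac12(t-1)^2$ one reads off $f_2 = 1/2$, $f_k = 0$ for $k \geq 3$, and $g_k \equiv 0$. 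Plugging these values into \eqref{eq:pq2} directly yields $p_0,q_0,p_1,q_1,p_2,q_2$ as listed in \eqref{eq:pqHer}. For $p_3,q_3$ the same procedure is carried one order further: expand $u^{2/3}\zeta$ and $(\zeta/f(t))^{1/4}$ using \eqref{eq:powerHer}, Taylor expand $\Ai(u^{2/3}\zeta)$ and $\Ai'(u^{2/3}\zeta)$ about $s$ using the Airy equation, and collect the coefficient of $h_u^3$, after which multiplication by $2^3$ yields the stated $p_3, q_3$.

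The only step requiring input beyond local power series is the matching constant $c\simeq 2^{1/2} u^{-1/12}$ and, at order $k\geq 2$, the integration constants $c_{k+1}$ hidden in the recursion \eqref{eq:AB}; both are pinned down globally by \eqref{eq:limABexp} together with $\lambda_{2k-1}$ from \eqref{eq:lambdaOddHer} (whose derivation is deferred to Appendix~\ref{app:weber}). So the main technical obstacle is not in the expansion itself but in ensuring that the normalizing factor $2^{1/2} h^{1/8}$ in the statement absorbs precisely the global connection constant $c$, and that the conventions for $\alpha_k, \beta_k, \lambda_k$ used here agree with those of \eqref{eq:limABexp}; once the bookkeeping of powers of $2$ and $u^{1/12}$ is tracked carefully, the explicit formulas in \eqref{eq:pqHer} follow by mechanical substitution.
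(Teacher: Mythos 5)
Your proposal is correct and follows the paper's own route: identify $\phi_n(u^{1/2}t)=2^{1/2}u^{-1/12}\Phi_u(t)$ via Theorem~\ref{thm:genHer}, apply Theorem~\ref{thm:localExpan} with $f_1=2$, $f_2=1/2$, $f_{k\geq3}=g_k=0$, check that $t=1+h_u s$ matches $x=\mu+\sigma s$ with $h_u=2h$, and read off $p_k=2^k\tilde p_k$, $q_k=2^k\tilde q_k$ from \eqref{eq:pq2} (with the third-order coefficients obtained by re-expanding \eqref{eq:expanHer} using \eqref{eq:powerHer} and \eqref{eq:ABHer}). The bookkeeping of the powers of $2$ and of $u^{-1/12}$, and the remark that the global matching constant and the integration constants $c_{k+1}$ are supplied by \eqref{eq:cHer} and \eqref{eq:limABexp}, are exactly as in the paper.
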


\begin{proof}
By \eqref{eq:wHer}, we have $u=2n+1$ and the stretching transformation
\eqref{eq:ts} becomes, because of $h_u = 2h$,
\[
t= 1 + 2h s, \quad x=u^{1/2} t = \mu + \sigma s.
\]
From \eqref{eq:expanHer}, \eqref{eq:Phi1}, and \eqref{eq:PhiAsymp}, we get that the factor in front of the bracket in \eqref{eq:expanHer2} is
\[
u^{-1/12} 4^{1/4} 2^{-1/6} = 2^{1/2} h^{1/8}
\]
and the range of validity of the expansion \eqref{eq:expanHer2} follows from combining Theorem~\ref{thm:genHer} with Theorem~\ref{thm:localExpan}.

The expressions for $p_0,p_1,p_2$ and $q_0,q_1,q_2$ follow directly from \eqref{eq:pq2}, whereas $p_3$ and $q_3$ are obtained from plugging \eqref{eq:powerHer} and $t=1+2h s$ into \eqref{eq:ABHer} and re-expanding \eqref{eq:expanHer}.
\end{proof}

\begin{remark}\label{rem:Her_m01} In the literature, the case $m=0$ of \eqref{eq:expanHer2} can be found with a scaling parameter 
\[
\tilde \sigma = 2^{-1/2} n^{-1/6}
\]
instead of $\sigma =2^{-1/2} (n+1/2)^{-1/6}$, which gives, as $n\to\infty$,
\[
2^{-1/4} n^{1/12} \phi_n(\mu + \tilde \sigma s) = \Ai(s) + n^{-2/3} O(e^{-s}),
\]
uniformly valid for $s$ being bounded from below. Except for exhibiting a slightly weaker error term of the form $n^{-2/3} O(e^{-s/2})$, this result was proved by Johnstone and Ma \cite[Prop.~1]{MR3025686}, who also emphasized the differentiability w.r.t. $s$. If only bounded $s$ are to be considered, a predecessor is the classical Plancherel--Rotach formula \cite[Eq.~(8.22.14)]{MR0372517}
 \[
2^{-1/4} n^{1/12} \phi_n(\mu + \tilde \sigma s) = \Ai(s) + O(n^{-2/3}) \quad (n\to\infty).
\] 
We note that for $m\geq 1$ the use of $\tilde\sigma$ instead of $\sigma$ would pollute the expansion \eqref{eq:expanHer2} with terms of order $n^{-(2k+1)/3}$ for $k=1,2,\ldots$\,; likewise if we  used (a multiple of) $\tilde h=n^{-2/3}$ as the expansion parameter instead of $h=\tfrac14 (n-1/2)^{-2/3}$.
\end{remark}

\subsection{Expansions of Laguerre polynomials} From \eqref{eq:fgLag}, we get, for $|t-t_1| < t_1 = (a+1)^2$,
\begin{subequations}
\begin{align}
f(t) &= \frac{a}{(a+1)^4} \bigg( (t-t_1) + \sum_{j=2}^\infty \frac{(-1)^{j}\big(j(a^2-1)^2-(a+1)^4\big)}{4a(a+1)^{2j}}(t-t_1)^j\bigg),\\*[1mm]
g(t) &= \sum_{j=0}^\infty \frac{(-1)^{j-1}(j+1)}{4(a+1)^{2j+4}}(t-t_1)^j,
\end{align}
\end{subequations}
so that the power series \eqref{eq:powerZetaGen} becomes
\begin{multline}\label{eq:powerLag}
\frac{(a+1)^{4/3}}{a^{1/3}} \zeta = (t - t_1) + \frac{1 - 6 a + a^2}{20 a (a + 1)^2} (t - t_1)^2 \\*[2mm] - \frac{1 + 13 a - 62 a^2 + 13 a^3 + a^4}{350a^2 (a + 1)^4} (t - t_1)^3 \\*[2mm]+ \frac{37 + 434 a + 3607 a^2 - 15724 a^3 + 3607 a^4 + 434 a^5 + 37 a^6}{126000a^3 (a + 1)^6} (t - t_1)^4 + \cdots \quad (|t-t_1| < 4a)
\end{multline}
where the coefficients on the right belong to $\Q[a^{-1}(a+1)^{-2},a]$. Since  $\zeta$ is analytic in the half-plane $\Re t > t_0$, the power series converges in the disk $|t-t_1|<t_1-t_0 = 4a$.

We now get the following corollary to Theorems~\ref{thm:genLag} and \ref{thm:localExpan}. 
%Note that, by \eqref{eq:gfLag}, it is the function $x^{1/2} \phi_n^{(\alpha)}(x)\big|_{x=ut}$ which solves a differential equation of the form \eqref{eq:dw} here.

\begin{corollary}\label{cor:Laguerre1}
With $u=n+1/2$, $\alpha=(a^2-1)u$ and the scaling parameters
\begin{equation}\label{eq:LagScaling}
\mu_u=(a+1)^2 u, \quad \sigma_u = \frac{(a+1)^{4/3}}{a^{1/3}}u^{1/3}, \quad h_u =\frac{(a+1)^{4/3}}{a^{1/3}} u^{-2/3},
\end{equation}
there holds, as $n\to\infty$,
\begin{equation}\label{eq:expanLag3}
\phi_n^{(\alpha)}(\mu_u + \sigma_u s) = \frac{h_u^{1/2}}{a+1}  \left( \Ai(s) \sum_{k=0}^m p_k(s) h_u^k + \Ai'(s) \sum_{k=0}^m q_k(s) h_u^k  + h_u^{m+1} O(e^{-s}) \right),
\end{equation}
uniformly  for $s_0 \leq s < \infty$ and $0< a_0 \leq a \leq a_1 < \infty$, with $m$ being any fixed non-negative integer and
$s_0$, $0<a_0<a_1$ being any real numbers. The $p_k, q_k$ are polynomials in the polynomial space $\Q[a^{-1},(a+1)^{-1},a][s]$ -- the first instances are\/\footnote{We refrain from displaying the lengthy expressions of the polynomials $p_3(s), q_3(s)$, since we will give a much more compact symmetric version of Corollary~\ref{cor:Laguerre1} in Section~\ref{subsect:symmetric}, see \eqref{eq:pqLagTau}.} $p_0(s) = 1$, $q_0(s) = 0$,
\begin{align*}
p_1(s) &= -\frac{(a^2+4 a+1)s}{20 a (a+1)^2}, \qquad q_1(s) = \frac{(a^2-6 a+1)s^2}{20 a (a+1)^2},\\*[1mm]
p_2(s) &= \frac{\left(9 a^4+40 a^3+114 a^2+40 a+9\right) s^2}{1120 a^2 (a+1)^4} + \frac{\left(a^2-6 a+1\right)^2 s^5}{800 a^2 (a+1)^4},\\*[1mm]
q_2(s) &= -\frac{9 a^4+12 a^3+2 a^2+12 a+9+\left(3 a^4+18 a^3-130 a^2+18 a+3\right) s^3}{560 a^2 (a+1)^4}.
\end{align*}
Preserving uniformity, the expansion \eqref{eq:expanLag3} can be repeatedly differentiated w.r.t. $s$.
\end{corollary}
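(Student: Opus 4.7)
\medskip\noindent\textbf{Proof proposal.} The plan is to mimic the argument for Corollary~\ref{cor:Hermite}, combining Theorem~\ref{thm:genLag} with Theorem~\ref{thm:localExpan}, while carefully handling two additional complications relative to the Hermite case: the spurious factor $t^{1/2}$ appearing in the definition \eqref{eq:wLag} of the recessive solution $w(t)$, and the need to maintain uniformity in the parameter $a$ throughout. First I would verify that the stretching transformation of Theorem~\ref{thm:localExpan}, namely $t=t_*+h_us$ with $h_u=f_1^{-1/3}u^{-2/3}$, coincides with the one induced by $x=\mu_u+\sigma_u s$ under $x=ut$. Since $f_1:=f'(t_1)=(t_1-t_0)/(4t_1^2)=a/(a{+}1)^4$, one gets $f_1^{-1/3}=(a{+}1)^{4/3}a^{-1/3}$, so $h_u$ from \eqref{eq:hu} matches verbatim the $h_u$ of \eqref{eq:LagScaling}, and $\mu_u/u=t_1$, $\sigma_u/u=h_u$.

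Next, Theorem~\ref{thm:genLag}(I)--(II) provides the compound asymptotic series \eqref{eq:expanLag} for $\phi_n^{(\alpha)}(ut)$, valid uniformly for $\Re t\geq t_0+\delta$ and $a\in[a_0,a_1]$. Together with the matching \eqref{eq:cuLag}, this identifies $\Phi_u(t):=u^{1/3}t^{1/2}\phi_n^{(\alpha)}(ut)$ as a recessive solution of \eqref{eq:dw} with $c\simeq1$, so Theorem~\ref{thm:localExpan} is applicable provided its hypothesis $f(t)\geq c>0$ for $t\geq t_*+\rho/2$ can be verified uniformly. This is immediate from $f(t)=(t-t_0)(t-t_1)/(4t^2)\to 1/4$ as $t\to\infty$, combined with $\rho=t_1-t_0=4a\geq 4a_0$. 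Applying the theorem yields a local expansion for $\Phi_u(t_1+h_us)$ with a tail bound $h_u^{m+1}O(e^{-s})$. Dividing by $u^{1/3}t^{1/2}=u^{1/3}(a{+}1)(1+h_us/(a{+}1)^2)^{1/2}$ and expanding the factor $(1+h_us/(a{+}1)^2)^{-1/2}$ as a convergent power series in $h_u$ on the interval $|h_us|<(a{+}1)^2$ (handled together with the tail region exactly as in the proof of Theorem~\ref{thm:localExpan}), one arrives at the asserted expansion with the stated prefactor $h_u^{1/2}/(a{+}1)$, after the algebraic identity $u^{-1/3}f_1^{-1/6}/(a{+}1)=h_u^{1/2}/(a{+}1)$.

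The polynomial structure $p_k,q_k\in\Q[a^{-1},(a{+}1)^{-1},a][s]$ would then be traced back as follows. By Theorem~\ref{thm:localExpan}, the polynomials $\tilde p_k,\tilde q_k$ controlling the expansion of $\Phi_u$ are rational polynomial expressions in the local Taylor coefficients $f_2,f_3,\ldots,g_0,g_1,\ldots$ of \eqref{eq:powerf} (and, from $k\geq 3$ onward, in the integration constants $c_{k+1}$, which by Theorem~\ref{thm:genLag}(III) are determined by the limits $\lambda_{2k-1}\in\Q[a^{-2}]$). A direct inspection of $f(t)=(t-t_0)(t-t_1)/(4t^2)$ expanded at $t_1$ shows $f_j\in\Q[a^{-1},(a{+}1)^{-1},a]$, with $a^{-1}$ entering through denominators of the form $t_1-t_0=4a$ and $(a{+}1)^{-1}$ through $t_1^{-j}=(a{+}1)^{-2j}$; likewise for $g_j$. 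The Taylor expansion of $t^{-1/2}$ at $t_1$ adds only further powers of $(a{+}1)^{-1}$. Combining, $p_k,q_k$ lie in the claimed ring. The explicit formulas for $p_0,\ldots,p_2,q_0,\ldots,q_2$ are then obtained by plugging \eqref{eq:powerLag} into the general formulas \eqref{eq:pq2}, and multiplying by the first few coefficients of the expansion of $(1+h_us/(a{+}1)^2)^{-1/2}$ in $h_u$.

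The main obstacle will be the \emph{uniform} bookkeeping in $a\in[a_0,a_1]$: every power series truncation entering the proof of Theorem~\ref{thm:localExpan} --- the Liouville transform \eqref{eq:powerLag}, the factor $(\zeta/f(t))^{1/4}$, the coefficients $A_k(\zeta(t)),B_k(\zeta(t))$ produced by Theorem~\ref{thm:genLag}, and the auxiliary expansion of $t^{-1/2}$ --- must be shown to have a common radius of convergence bounded below and remainder estimates independent of $a$. This is where the assumption $a_0\leq a\leq a_1$ is used decisively: the common radius is $\geq 4a_0$ and the super-exponential Airy decay dominates any polynomial in $(a,s)$ whose coefficients are bounded on $[a_0,a_1]$. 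Once this uniformity is secured, differentiability with respect to $s$ is inherited by Ritt's theorem from the uniform validity of \eqref{eq:expanLag} in a complex half-plane, completing the proof.
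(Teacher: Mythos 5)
Your proposal is correct and follows essentially the same route as the paper: combine Theorem~\ref{thm:genLag} (with the matching constant \eqref{eq:cuLag}) and Theorem~\ref{thm:localExpan} applied to the recessive solution $t^{1/2}\phi_n^{(\alpha)}(ut)$, then strip off the $t^{1/2}$ factor by expanding $\big((a+1)^2+h_us\big)^{-1/2}$ as a uniformly convergent power series on the inner interval while bounding it on the tail interval, exactly as in the paper's two-step argument via \eqref{eq:expanLag2}. Your verification of the prefactor identity and of the ring $\Q[a^{-1},(a+1)^{-1},a][s]$ for the coefficients matches the paper's computation, and your extra attention to uniformity in $a\in[a_0,a_1]$ is a welcome (if routine) elaboration of a point the paper passes over quickly.
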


\begin{proof} 
Here, by \eqref{eq:gfLag}, it is $x^{1/2} \phi_n^{(\alpha)}(x)\big|_{x=ut}$ which solves a differential equation \eqref{eq:dw}, so that we have to apply Theorem~\ref{thm:localExpan} accordingly. By \eqref{eq:wLag}, we have $u=n+1/2$ and the stretching transformation
\eqref{eq:ts} becomes
\[
t= 1 + h_u s, \quad x=u t = \mu_u + \sigma_u s.
\]
From \eqref{eq:wLag}, \eqref{eq:expanLag}, \eqref{eq:Phi1}, and \eqref{eq:PhiAsymp}, we get  the factor in front of the bracket in \eqref{eq:expanHer2} as
\[
u^{1/2-1/3} f_1^{-1/6} = \frac{a+1}{a^{1/4}}h_u^{-1/4}
\]
from combining Theorem~\ref{thm:genLag} with \ref{thm:localExpan} we get, as $n\to\infty$,
\begin{multline}\label{eq:expanLag2}
x^{1/2} \phi_n^{(\alpha)}(x)\Big|_{x=\mu_u + \sigma_u s} \\*[2mm]
= \frac{a+1}{a^{1/4}} h_u^{-1/4} \left( \Ai(s) \sum_{k=0}^m \tilde p_k(s) h_u^k + \Ai'(s) \sum_{k=0}^m \tilde q_k(s) h_u^k  + h_u^{m+1} O(e^{-s}) \right),
\end{multline}
uniformly for $s_0 \leq s < \infty$ and $0< a_0 \leq a \leq a_1 < \infty$, with $m$ being any fixed non-negative integer and
$s_0$, $0<a_0<a_1$ being any real numbers. The $\tilde p_k, \tilde q_k$ are polynomials in $\Q[a^{-1},(a+1)^{-1},a][s]$ -- the first instances follow directly from \eqref{eq:pq2}\/{\rm :}%
\begin{align*}
\tilde p_0(s) &= 1, \quad \tilde q_0(s) = 0,\\*[1mm] 
\tilde p_1(s) &= -\frac{(a^2-6 a+1)s}{20 a (a+1)^2}, \quad
\tilde q_1(s) = \frac{(a^2-6 a+1)s^2}{20 a (a+1)^2},\\*[1mm]
\tilde p_2(s) &= \frac{3 \left(3 a^4+4 a^3-46 a^2+4 a+3\right) s^2}{1120 a^2 (a+1)^4} + \frac{\left(a^2-6 a+1\right)^2 s^5}{800 a^2 (a+1)^4},\\*[1mm]
\tilde q_2(s) &= -\frac{9 a^4+12 a^3+2 a^2+12 a+9+\left(3 a^4+4 a^3-46 a^2+4 a+3\right) s^3}{560 a^2 (a+1)^4}.
\end{align*}
Now, by multiplying \eqref{eq:expanLag2} with
\[
x^{-1/2} \Big|_{x=\mu_u + \sigma_u s} = u^{-1/2} \big((a+1)^{2} + h_u s\big)^{-1/2},
\]
which  expands into the uniformly convergent power series
\[
x^{-1/2} \Big|_{x=\mu_u + \sigma_u s} = \frac{u^{-1/2}}{a+1}\sum_{k=0}^\infty \binom{-1/2}{k} \frac{s^k h_u^k}{(a+1)^{2k}}
\qquad (s_0 \leq s \leq \rho h_u^{-1}/2)
\]
in the interval (I) of the proof of Theorem~\ref{thm:localExpan}, while staying bounded in the complementary interval (II) $s > \rho h_u^{-1}/2$, we get \eqref{eq:expanLag3} and its domain of uniform validity. 
\end{proof}

\begin{remark}\label{rem:Lag_m01} 
In his  2001 study of the Tracy--Widom limit laws of the real and complex Wishart ensembles, the leading order of the expansion \eqref{eq:expanLag2} was obtained by Johnstone  in the form \cite[Eq.~(5.1)]{MR1863961},
\[
n^{-1/6} x^{1/2} \phi_n^{(\alpha)}(x) \Big|_{x=\mu_u + \sigma_u s} 
\begin{cases}
\to \frac{(a+1)^{2/3}}{a^{1/6}} \Ai(s) &\quad \text{pointwise in $s\in\R$,} \\*[2mm]
= O(e^{-s}) & \quad \text{uniformly in $[s_0,\infty)$, $s_0\in\R$.}
\end{cases}
\]
Johnstone considered  parameters $\alpha=p-n$ of the Laguerre polynomials   with a fixed limit
\[
\frac{p}{n} \to \gamma \in (0,\infty) \quad (n\to\infty),
\]
a setting which fits into the uniformity of \eqref{eq:expanLag2} for bounded $a$, and he also emphasized the differentiability w.r.t. $s$ as being an important property when lifting the inherited kernel estimates to Fredholm determinants. In 2006, El Karoui \cite[§3]{MR2294977} added a rate estimate of the form $n^{-2/3} O(e^{-s/2})$, which corresponds to the case $m=0$ of the expansion~\eqref{eq:expanLag2}.
\end{remark}

\begin{remark}\label{rem:Lag_m01_alpha} In the classical literature on the Laguerre polynomials,  the cases $m=0,1$ 
of the expansion \eqref{eq:expanLag2} have been studied for fixed parameter $\alpha>-1$ and bounded $s$. Here,
\[
a^2 = \frac{\alpha}{n+1/2} +1 \to 1 \quad (n\to\infty),
\]
which fits into the uniformity of \eqref{eq:expanLag2} w.r.t. to a bounded quantity $a$. Re-expanding the case $m=0$ of \eqref{eq:expanLag2} in terms of powers of $n^{-1/3}$, gives, as $n\to\infty$,
\begin{equation}\label{eq:PlancherelLag}
(-1)^n e^{-x/2} L_n^{(\alpha)}(x) \Big|_{x=4n+2\alpha+2+2(2n)^{1/3} s} = 2^{-\alpha-1/3} n^{-1/3} \big(\!\Ai(s) + O(n^{-2/3})\big),
\end{equation}
uniformly for bounded $s$ and fixed $\alpha$, which is  the classical Plancherel--Rotach formula \cite[Eq.~(8.22.11)]{MR0372517}. Re-expanding the case $m=1$ of \eqref{eq:expanLag2}, using the expansion parameter 
\[
\nu = 2n+\alpha+1,
\]
gives, as $n\to\infty$,
\begin{multline}
(-1)^n e^{-x/2} L_n^{(\alpha)}(x)\Big|_{x=2\nu + 2\nu^{1/3} s}  \\*[2mm]
= 2^{-\alpha} \nu^{-1/3} \Big(\Ai(s)- \frac{(5\alpha+3)s\Ai(s) + 2s^2 \Ai'(s)}{10} \nu^{-2/3} + O(\nu^{-4/3}) \Big),
\end{multline}
uniformly for bounded $s$ and fixed $\alpha$, which is a 1949 result of Tricomi's \cite[Eq.~(40)]{MR0036355}.\footnote{Without reference to Tricomi, using the same parametrization as for the Plancherel--Rotach formula \eqref{eq:PlancherelLag}, and expanding into powers of $n^{-1/3}$, 
Choup \cite[Lemma~2.1]{MR2233711} gets for bounded $s$ and fixed $\alpha$ that
\begin{multline}
(-1)^n e^{-x/2} L_n^{(\alpha)}(x)\Big|_{x=4n+2\alpha+2+2(2n)^{1/3} s}  \\*[2mm] = 2^{-\alpha-1/3} n^{-1/3} \Big(\Ai(s)- \frac{(5\alpha+3)s\Ai(s) + 2s^2 \Ai'(s)}{10} (2n)^{-2/3}
- \frac{\alpha+1}{3}\big(\Ai(s) + s\Ai'(s)\big) (2n)^{-1} + O(n^{-4/3}) \Big),
\end{multline}
where we have corrected an erroneous polynomial factor of the $O(n^{-1})$ term.}
\end{remark}

\subsection{Symmetrized expansions of Laguerre polynomials}\label{subsect:symmetric}

The expansion in Corollary~\ref{cor:Laguerre1} unveils additional structure and simplified expressions of the polynomials $p_k$, $q_k$, if we respect a  hidden symmetry of the wave functions $\phi_n^{(\alpha)}(x)$: upon introducing
\begin{equation}\label{eq:LaguerreSymmetricParameters}
p := n + \alpha, \quad p_+ = p + 1/2, \quad n_+ = n+1/2,
\end{equation}
we get that the parameter $a$, which we have used so far to write $\alpha = (a^2-1)n_+$, satisfies
\[
a = p_+^{1/2}/n_+^{1/2}.
\]
Using Whittaker's confluent hypergeometric function $W_{\kappa,\mu}(z)$ (see Appendix~\ref{app:whittaker}), we get, by~\eqref{eq:Whittaker}, the representation
\begin{equation}\label{eq:LaguerreWhittaker}
\phi_{n,p}(x):=\phi_n^{(p-n)}(x) = \frac{x^{-1/2} W_{(p+n+1)/2,(p-n)/2}(x)}{\sqrt{p!\,n!}} \qquad (x>0).
\end{equation}
The general symmetry \cite[Eq.~(13.14.31)]{MR2723248}, which follows from Kummer's transformations,
\[
W_{\kappa,\mu}(z) = W_{\kappa,-\mu}(z),
\]
implies that the right-hand side of \eqref{eq:LaguerreWhittaker} is invariant under the permutation\footnote{Note that this symmetry reflects (and generalizes) the corresponding symmetry of a complex $n$-dimensional Wishart distribution with $p$ degrees of freedom, see the discussion of \eqref{eq:WishartSymmetry}.} $n \leftrightarrow p$ and can be taken as a general definition of $\phi_{n,p}(x)=\phi_n^{(p-n)}(x)$ for any real $p, n > -1$, so that
\begin{equation}\label{eq:Kummer}
\phi_{n,p}(x) = \phi_{p,n}(x)\quad (x>0).
\end{equation}
Because Corollary~\ref{cor:Laguerre1} was proved on the basis of the differential equation, which is satisfied by the Whittaker function, it generalizes to the more comprehensive definition of $\phi_{n,p}(x)$.

Now, if we restrict ourselves to $n_+, p_+ >0$, the invariance with respect to the permutation $n \leftrightarrow p$ corresponds to the invariance with respect to the transformation $a  \leftrightarrow  a^{-1}$. It turns out that the scaling parameters in \eqref{eq:LagScaling} are already symmetric in $n$ and $p$ (cf. \cite[p.~315]{MR1863961} and recall $u=n_+$),
\[
\mu_u =\mu:= \big(\sqrt{n_+}+\sqrt{p_+}\,\big)^2,\quad \sigma_u=\sigma:= \big(\sqrt{n_+}+\sqrt{p_+}\,\big) \left(\frac{1}{\sqrt{n_+}} + \frac{1}{\sqrt{p_+}}\right)^{1/3},
\]
and the expansion parameter in \eqref{eq:LagScaling} is proportional to a likewise symmetric quantity,
\begin{subequations}\label{eq:htau}
\begin{equation}
h_u = 4a h, \quad h := \frac{1}{4} \left(\frac{1}{\sqrt{n_+}} + \frac{1}{\sqrt{p_+}}\right)^{4/3} \asymp (n\wedge p)^{-2/3}.
\end{equation}
Finally, we introduce the symmetric quantity 
\begin{equation}\label{eq:tau}
%\tau := \left(\frac{2}{\sqrt{a}+1/\sqrt{a}}\right)^2 = \frac{4a}{(a+1)^2} = 4\left(\left(\frac{n_+}{p_+}\right)^{1/4}+\left(\frac{p_+}{n_+}\right)^{1/4}\right)^{-2},
\tau := \frac{4}{(a+1)(a^{-1}+1)} =
\frac{4}{\big(\sqrt{n_+}+\sqrt{p_+}\,\big) \left(\dfrac{1}{\sqrt{n_+}} + \dfrac{1}{\sqrt{p_+}}\right)},
\end{equation}
\end{subequations}
and note that the constant pre-factors in $h$ and $\tau$ have been chosen so that 
\begin{equation}\label{eq:tauhrationale}
0 < \tau \leq 1, \quad \mu + \sigma s = u t_1 (1+\tau h s),\quad \sigma = \tau h \mu.  
\end{equation}
This way, the expansion \eqref{eq:expanLag3} takes the form
\[
\frac{\phi_{n,p}(\mu + \sigma s)}{\tau^{1/2} h^{1/2}}
= \Ai(s) \sum_{k=0}^m p_k(s)(4a)^k h^k + \Ai'(s) \sum_{k=0}^m q_k(s)(4a)^k h^k  + h^{m+1} O(e^{-s}),
\]
where the polynomial coefficients $p_k(s)(4a)^k$, $q_k(s)(4a)^k$ must be invariant with respect to the transformation $a  \leftrightarrow  a^{-1}$. Inductively, from the proof of Corollary~\ref{cor:Laguerre1}, those coefficients can be written in the form
\[
\frac{4^k r_k(a;s)}{(a+1)^{2k}} = \tau^k a^{-k} r_k(a;s), \quad r_k \in \Q[a,s],
\]
where $r_k$ has degree $2k$ w.r.t. $a$. The invariance implies that $a^{-k} r_k(a;s)$ is a  linear combination of terms of the form $a^j + a^{-j}$ with coefficients in $\Q[s]$ ($j=0,1,\ldots,k)$. Recalling that the Chebyshev polynomials $T_j\in \Z[x]$ transmit the representations \cite[Eq.~(18.5.1)]{MR2723248}
\[
a^j + a^{-j} = 2T_j\left((a+a^{-1})/2\right),
\]
and substituting $(a+a^{-1})/2 = 2\tau^{-1}-1$, we finally obtain the form
\[
\tau^k a^{-k} r_k(a;s) = \tilde r_k(\tau;s), \quad \tilde r_k \in \Q[\tau,s],
\]
where $\tilde r_k$ has degree $k$ w.r.t. $\tau$. Summarizing, we have proved the following corollary. %; the expressions for $p_3(\tau,s)$ and $q_3(\tau,s)$ are computed from plugging \eqref{eq:powerLag} and $t=t_1+4ah s$ into~\eqref{eq:ABLag}, re-expanding \eqref{eq:expanLag}.

\begin{corollary}\label{cor:Laguerre2}
For real $n,p$, with $n_+=n+1/2>0$, $p_+=p+1/2>0$ and the induced symmetric parameters\footnote{In expanding on \eqref{eq:ts}, we note that by \eqref{eq:powerLag} this set of parameters yields the power series
\begin{multline}\label{eq:sLag}
u^{2/3} \zeta\big|_{x = \mu +\sigma s} = s-\frac{(2\tau-1)s^2}{5}h + \frac{(43 \tau ^2-18 \tau-8)s^3}{175}   h^2 \\*[0mm]
-\frac{ (1384 \tau ^3-551 \tau ^2-212 \tau -148)s^4}{7875}h^3 + \cdots \quad(|h s|<1).
\end{multline}
}
\begin{subequations}\label{eq:LagScaling3}
\begin{align}
\mu &= \big(\sqrt{n_+}+\sqrt{p_+}\,\big)^2,\hspace*{-1cm}& \sigma&= \big(\sqrt{n_+}+\sqrt{p_+}\,\big) \bigg(\frac{1}{\sqrt{n_+}} + \frac{1}{\sqrt{p_+}}\bigg)^{1/3},\\*[2mm]
h &= \frac{1}{4} \bigg(\frac{1}{\sqrt{n_+}} + \frac{1}{\sqrt{p_+}}\bigg)^{4/3},\hspace*{-1cm} & \tau &= \frac{4}{\big(\sqrt{n_+}+\sqrt{p_+}\,\big) \left(\dfrac{1}{\sqrt{n_+}} + \dfrac{1}{\sqrt{p_+}}\right)},
\end{align}
\end{subequations}
there holds for the generalized function $\phi_{n,p}(x)$, defined as in \eqref{eq:LaguerreSymmetricParameters} in terms of the Whittaker confluent hypergeometric function $W_{\kappa,\mu}(z)$, the expansion
\begin{equation}\label{eq:expanLag4}
\frac{\phi_{n,p}(\mu + \sigma s)}{\tau^{1/2} h^{1/2}} =  \Ai(s) \sum_{k=0}^m p_k(\tau,s) h^k + \Ai'(s) \sum_{k=0}^m q_k(\tau,s) h^k  + h^{m+1} O(e^{-s}),
\end{equation}
as the expansion parameter goes to zero,
\[
h \asymp (n\wedge p)^{-2/3}\to0^+.
\]
The expansion is uniformly valid for $s_0 \leq s < \infty$ and $0< \tau_0 \leq \tau \leq 1$, with $m$ being any fixed non-negative integer and
$s_0, \tau_0$ being any real numbers. The $p_k(\tau,s), q_k(\tau,s)$ are polynomials in $\Q[\tau,s]$ of degree $k$ w.r.t. $\tau$ -- the first instances are $p_0(\tau,s) = 1$, $q_0(\tau,s) = 0$,
\begin{equation}\label{eq:pqLagTau}
\begin{aligned}
p_1(\tau,s) &= -\frac{\tau +2}{10}s, \\*[1mm]
q_1(\tau,s) &= -\frac{2 \tau -1}{5}s^2,\\*[1mm]
p_2(\tau,s) &= \frac{13 \tau ^2+4 \tau +36}{280}s^2+ \frac{(2 \tau -1)^2 }{50}s^5,\\*[1mm]
q_2(\tau,s) &= \frac{\tau ^2+24 \tau -36}{140} + \frac{20 \tau ^2-3 \tau -6}{70}s^3,\\*[1mm]
p_3(\tau,s) &= \frac{\tau ^3-14 \tau ^2+112 \tau -112}{900} - \frac{803 \tau ^3+1838 \tau ^2-3244 \tau +3784}{25200}s^3\\*[1mm]
& \qquad  -\frac{(2 \tau -1) \left(614 \tau ^2-209 \tau -124\right)}{10500}s^6,\\*[1mm]
q_3(\tau,s) &= -\frac{37 \tau ^3-158 \tau ^2+3244 \tau -3784}{12600}s - \frac{2758 \tau ^3-437 \tau ^2-44 \tau -676}{12600}s^4\\*[1mm]
&\qquad -\frac{(2 \tau -1)^3}{750} s^7.
\end{aligned}
\end{equation}
Preserving uniformity, the expansion \eqref{eq:expanLag4} can be repeatedly differentiated w.r.t. $s$.
\end{corollary}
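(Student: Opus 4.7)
The plan is to derive Corollary~\ref{cor:Laguerre2} as an algebraic repackaging of Corollary~\ref{cor:Laguerre1} that makes the hidden $n\leftrightarrow p$ symmetry manifest. First I would extend the wave function $\phi_n^{(\alpha)}$ from integer-$n$, $\alpha>-1$ to the generalized object $\phi_{n,p}(x)$ for real $n_+,p_+>0$ via the Whittaker representation \eqref{eq:LaguerreWhittaker} and invoke Kummer's identity $W_{\kappa,\mu}=W_{\kappa,-\mu}$ to establish the symmetry $\phi_{n,p}(x)=\phi_{p,n}(x)$. Since Corollary~\ref{cor:Laguerre1} was ultimately proved via the differential equation satisfied by the Whittaker function, it remains valid in this extended setting after substituting $\alpha=(a^2-1)u$ with $a=p_+^{1/2}/n_+^{1/2}$.

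Next I would verify the arithmetic relations linking the two parameterizations: a direct computation shows that $\mu_u=\mu$ and $\sigma_u=\sigma$ as defined in \eqref{eq:LagScaling3} are already symmetric in $(n_+,p_+)$, while $h_u=4ah$ with $h$ and $\tau$ as in \eqref{eq:htau}. The identity $\mu+\sigma s = u t_1(1+\tau h s)$ and the factoring $\sigma=\tau h\mu$ in \eqref{eq:tauhrationale} convert the expansion~\eqref{eq:expanLag3} of Corollary~\ref{cor:Laguerre1} into
\[
\frac{\phi_{n,p}(\mu+\sigma s)}{\tau^{1/2} h^{1/2}} = \Ai(s)\sum_{k=0}^m p_k(s)(4a)^k h^k + \Ai'(s)\sum_{k=0}^m q_k(s)(4a)^k h^k + h^{m+1} O(e^{-s}),
\]
uniformly on $s\geq s_0$ and $0<\tau_0\leq\tau\leq 1$ (the latter interval corresponds to $a$ staying in a bounded positive interval, matching the uniformity hypothesis of Corollary~\ref{cor:Laguerre1}).

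The main step is to show that the coefficients $p_k(s)(4a)^k$ and $q_k(s)(4a)^k$, viewed as elements of $\Q[a^{-1},(a+1)^{-1},a][s]$, are invariant under $a\leftrightarrow a^{-1}$. This is forced by the symmetry $\phi_{n,p}=\phi_{p,n}$ established above, together with the uniqueness of asymptotic expansions: since the left-hand side, the pre-factor $\tau^{1/2}h^{1/2}$, and the Airy factors are all invariant under $a\leftrightarrow a^{-1}$, the $s$-polynomial coefficients must be. Inductively from the proof of Corollary~\ref{cor:Laguerre1} one sees that these coefficients admit the form $4^k r_k(a;s)/(a+1)^{2k} = \tau^k a^{-k} r_k(a;s)$ with $r_k\in\Q[a,s]$ of degree $2k$ in $a$. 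Invariance then means $a^{-k}r_k(a;s)$ is a $\Q[s]$-linear combination of $a^j+a^{-j}$ for $0\leq j\leq k$. Invoking the Chebyshev identity $a^j+a^{-j}=2T_j((a+a^{-1})/2)$ and the substitution $(a+a^{-1})/2 = 2\tau^{-1}-1$, the factor $\tau^k$ absorbs all negative powers of $\tau$ arising from this substitution, yielding the asserted form $\tilde r_k(\tau,s)\in\Q[\tau,s]$ of degree $k$ in $\tau$.

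The hard part is really this last algebraic bookkeeping --- tracking that the negative powers of $\tau$ generated by expanding $T_j(2\tau^{-1}-1)$ are exactly canceled by the $\tau^k$ prefactor, with no overhang, so that the degree in $\tau$ is precisely $k$. Once the structural result is in place, the concrete formulas \eqref{eq:pqLagTau} follow by plugging the explicit polynomials from Corollary~\ref{cor:Laguerre1} into the substitution and simplifying; the power series \eqref{eq:sLag} is verified analogously from \eqref{eq:powerLag} after rewriting in $(\tau,h)$-coordinates. Finally, the differentiability w.r.t.\ $s$ preserving uniformity is inherited directly from Corollary~\ref{cor:Laguerre1}, as the change of parameterization is a pure algebraic reshuffling that commutes with $\partial_s$.
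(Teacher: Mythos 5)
Your proposal is correct and follows essentially the same route as the paper's own argument in Section 9.3: extend $\phi_{n,p}$ via the Whittaker representation and Kummer's symmetry, rewrite Corollary~\ref{cor:Laguerre1} with $h_u=4ah$, deduce the $a\leftrightarrow a^{-1}$ invariance of the coefficients $p_k(s)(4a)^k$, $q_k(s)(4a)^k$ from $\phi_{n,p}=\phi_{p,n}$ and uniqueness of asymptotic expansions, and convert to $\Q[\tau,s]$ via the Chebyshev identity and the substitution $(a+a^{-1})/2=2\tau^{-1}-1$. The only minor remark is that the "hard" cancellation you flag is automatic: $\tau^k\,T_j(2\tau^{-1}-1)$ is a polynomial in $\tau$ of degree exactly $k$ whenever $j\leq k$, so no further bookkeeping is needed.
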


\begin{remark}\label{rem:Lag2Her} Using the generalized function $\phi_{n,p}(x)$, as defined in \eqref{eq:LaguerreSymmetricParameters},
there is yet another interrelation between the Hermite and the Laguerre polynomials, which does not need to distinguish between odd and even order as in \eqref{eq:Lag2Her} -- namely, by \cite[Eqs.~(13.18.16/17)]{MR2723248},
\begin{equation}\label{eq:Lag2Her2}
\phi_n(x) = x^{1/2} \phi_{(n-1)/2,n/2}(x^2),
\end{equation}
where, for negative $x$, the multivalued natures of $x^{1/2}$ and $\phi_{(n-1)/2,n/2}(x^2)$ cancel each other.\footnote{In fact, we could have used \eqref{eq:Lag2Her2} to define the Hermite wave functions $\phi_n$ for all real $n>-1$.} Using this interrelation, we can reclaim Corollary~\ref{cor:Hermite} by re-expanding the result of Corollary~\ref{cor:Laguerre2} as follows. With the notation of both corollaries but discriminating  the same-letter quantities of the Laguerre case by using a hat, a routine calculation shows
\begin{gather*}
\hat h = 4h + 32h^4 + O(h^7),\quad  \tau = 1 - 4h^3 + O(h^6),\\*[1mm]
 \tau^{1/2} \hat h^{1/2} x^{1/2}\Big|_{x=\mu+\sigma s} = 2^{1/2}h^{1/8} \left(1+sh-\frac{s^2}{2}h^2 +\frac{s^3+4}{2} h^3+O\big(h^4\big)\right)
\end{gather*}
and, if we define $\hat s$  by $(\mu+\sigma s)^2 =\hat \mu + \hat \sigma \hat s$, we get 
\[
\hat s = s + s^2h + h^2 + O(h^5).
\]
Now, upon using these expansions together with \eqref{eq:expanLag4} and \eqref{eq:pqLagTau} to expand, up to $m=3$, the right hand side of
\[
\phi_n(\mu+\sigma s) =  \tau^{1/2} \hat h^{1/2} x^{1/2}\Big|_{x=\mu+\sigma s} \cdot \frac{\phi_{(n-1)/2,n/2}(x^2) }{ \tau^{1/2} \hat h^{1/2}}\Big|_{x^2=\hat\mu+\hat\sigma \hat s},
\]
gives \eqref{eq:expanHer2} and, by uniqueness, the polynomials $p_0,\ldots,p_3$, $q_0,\ldots,q_3$ as displayed in \eqref{eq:pqHer}. 
\end{remark}

\begin{remark}\label{rem:Lag2Her2} In continuing the discussion of the previous remark (once more using hats to discriminate the Laguerre quantities from the Hermite ones), we observe that
yet another, much shorter way of reclaiming Corollary~\ref{cor:Hermite} from Corollary~\ref{cor:Laguerre2} would be possible if the latter result was uniformly valid for all $0< \tau \leq 1$ (which we conjecture to be the case). For given $n$, we could then use the limit relation
\[
\lim_{p\to \infty} \hat \sigma^{1/2} \phi_{n,p}(\hat\mu + \hat\sigma s) = \sigma^{1/2} \phi_n(\sigma+\mu s),
\]
which follows straightforwardly from Theorem~\ref{thm:phiTrans}, to pass from \eqref{eq:expanLag4} to \eqref{eq:expanHer2}
by taking the limit $p\to\infty$, while observing
\[
\lim_{p\to\infty}  \tau^{1/2} \hat h^{1/2} \hat \sigma^{1/2}  = 2^{1/2} h^{1/8}\sigma^{1/2},\quad \lim_{p\to\infty} \hat h = h,\quad \lim_{p\to\infty} \tau = 0.
\]
In particular, by the uniqueness of the expansion terms, this argument would give 
\[
p_k(s) = p_k(\tau,s)|_{\tau=0},\qquad  q_k(s) = q_k(\tau,s)|_{\tau=0},
\]
which is indeed true for the polynomials displayed in \eqref{eq:pqHer} and \eqref{eq:pqLagTau}.
\end{remark}

\part*{Appendices}

\appendix
\renewcommand{\sectionname}{}
\section{Finite rank perturbations of the Airy kernel determinant}\label{app:airykernel} 

We recall the calculations for the hard-to-soft edge transition that we discussed in \cite[§3]{arxiv:2301.02022}, turning them into a template by replacing the concrete coefficients with general parameters. Specifically, we assume a kernel expansion (after applying, e.g., the simplifying transformations in Lemmas  \ref{lem:KhGUEexpan}/\ref{lem:KhLUEexpan}) of the form
\[
K_{(h)}(x,y) = K_{\Ai}(x,y) + \sum_{j=1}^3  K_j(x,y)h^j + h^4 O\big(e^{-(x+y)}\big),
\]
which holds uniformly in $t_0 \leq x,y < \infty$ as $h\to 0^+$, $t_0$ being any fixed real number, and which can be repeatedly
differentiated w.r.t. $x,y$ preserving uniformity. The  kernels $K_1,K_2,K_3$  are assumed to be symmetric finite rank kernels of the form\footnote{The following discussion is kept general enough to accommodate arbitrary expansions with finite-rank kernels $K_1,\ldots,K_m$
that are  linear combinations of terms of the form $\Ai^{(j)}\otimes \Ai^{(k)}$ -- see \eqref{eq:E_hGen} and \eqref{eq:DhMinor}.} 
\begin{equation}\label{eq:K1K3}
\begin{aligned}
K_1 &= a_{01}(\Ai\otimes\Ai' + \Ai'\otimes\Ai), \\*[1mm]
K_2 &= a_{00} \Ai\otimes \Ai\, + \,a_{03} (\Ai\otimes \Ai''' + \Ai'''\otimes \Ai) + a_{12} (\Ai'\otimes \Ai'' + \Ai''\otimes \Ai'),\\*[1mm]
K_3 &= a_{02} (\Ai\otimes \Ai'' + \Ai''\otimes \Ai) + a_{05} (\Ai\otimes \Ai^{\rm V} + \Ai^{\rm V}\otimes \Ai) + a_{11} \Ai'\otimes \Ai' \\*[1mm]
& \qquad +a_{14} (\Ai'\otimes \Ai^{\rm IV} + \Ai^{\rm IV}\otimes \Ai') +a_{23} (\Ai''\otimes \Ai''' + \Ai'''\otimes \Ai'').
\end{aligned}
\end{equation}
Then, by \cite[Theorem~2.1]{arxiv:2301.02022}, the Fredholm determinant of $K_{(h)}$ on $(t,\infty)$ satisfies an expansion 
\begin{equation}\label{eq:detexpand1}
\det\big(I-K_{(h)}\big)\big|_{L^2(t,\infty)} = F_2(t) + F_2(t) \sum_{j=1}^3 d_j(t) h^j + h^4 O(e^{-2t}),
\end{equation}
uniformly for $t_0 \leq t < \infty$ as $h\to0^+$. Here, the $d_j(t)$ are certain smooth functions depending on the kernels $K_1,\ldots,K_j$, with right tail bounds 
\[
d_j(t) = O(e^{-2t}) \quad (t\to \infty).
\] 
Instead of calculating the coefficients $d_j(t)$ from the trace formulas given in \cite[Eq.~(2.10)]{arxiv:2301.02022} and \cite[Theorem~A.1]{2306.03798} (as we did in \cite[§3.3]{arxiv:2301.02022}), we present a computationally simpler approach here. For fixed $t$, the proof of \cite[Theorem~2.1]{arxiv:2301.02022} gives 
\begin{equation}\label{eq:detexpand2}
\begin{aligned}
\det\big(I-K_{(h)}\big)\big|_{L^2(t,\infty)} &= F_2(t) \det\left.\left({\mathbf I} - {\mathbf E_h}  \right)\right|_{L^2(t,\infty)} + O(h^4),\\*[0mm]
{\mathbf E_h} &:= ({\mathbf I}-{\mathbf K}_{\Ai})^{-1} \sum_{j=1}^3 {\mathbf K}_j h^j,
\end{aligned}
\end{equation}
as $h\to 0^+$, where we denote the induced integral operators of trace class, acting on $L^2(t,\infty)$, in boldface. By \eqref{eq:K1K3},
the operator ${\mathbf E_h}$
has at most rank $r=16$ (counting the number of different terms of the form $a_{j k}\Ai^{(j)} \otimes \Ai^{(k)}$ that enter the kernels $K_1$, $K_2$, $K_3$) and can be recast in the form
\begin{equation}\label{eq:E_hGen}
{\mathbf E_h} =  \sum_{\mu=1}^r h^{q_\mu} a_{j_\mu k_\mu} ({\mathbf I}-{\mathbf K}_{\Ai})^{-1} \Ai^{(j_\mu)} \otimes \Ai^{(k_\mu)},
\end{equation}
with certain integers $1\leq q_\mu \leq 3$ and $0\leq j_\mu,k_\mu \leq 5$.

Now, the operator determinant of a rank $r$ perturbation of the identity is known to be expressible in terms of a $r\times r$ matrix determinant (see, e.g., \cite[Theorem~I.3.2]{MR1744872}), 
\begin{align}
D_h(t) & := \det\left.\left({\mathbf I} - {\mathbf E_h}  \right)\right|_{L^2(t,\infty)} = \det_{\mu,\nu=1}^r \left(\delta_{\mu\nu} - h^{q_\mu} a_{j_\mu k_\mu}  u_{j_\mu k_\nu}(t)\right),\notag
\intertext{where we use, as in \cite[§3.3.1]{arxiv:2301.02022} and \cite[p.~63]{MR2787973}, the notation}
u_{jk}(t) &:= \left.\left\langle({\mathbf I}-{\mathbf K}_{\Ai})^{-1} \Ai^{(j)},\Ai^{(k)}\right\rangle\right|_{L^2(t,\infty)}=  \tr\left.\left({\mathbf I}-{\mathbf K}_{\Ai})^{-1} \Ai^{(j)}\otimes \Ai^{(k)} \right)\right|_{L^2(t,\infty)}.\label{eq:ujk}
\end{align}
Note that the symmetry of the kernel $K_{\Ai}$ induces the symmetry $u_{jk}(t) = u_{kj}(t)$.
Calculating the finite-dimensional determinant $D_h(t)$ by von Koch's expansion into minors (see, e.g., \cite[Eq.~(5.3)]{MR1744872}),
\begin{equation}\label{eq:DhMinor}
D_h(t) = 
1 + \sum_{m=1}^r(-1)^m \!\!\! \sum_{1\leq p_1< \cdots < p_m\leq r} h^{q_{p_1}+\,\cdots\,+\,q_{p_m}} \left(\prod_{\mu=1}^m a_{j_{p_\mu}k_{p_\mu}}\right)
\det_{\mu,\nu=1}^m u_{j_{p_\mu}k_{p_\nu}},
\end{equation}
gives, if truncated at $O(h^4)$,
\begin{multline*}
D_h(t) = 1 - 2  a_{01} u_{01}(t) h - \bigg(a_{00} u_{00}(t)+2 a_{03} u_{03}(t)+2 a_{12} u_{12}(t) + a_{01}^2  
\begin{vmatrix}
 u_{00}(t) & u_{01}(t)\\
 u_{10}(t) & u_{11}(t) 
\end{vmatrix} \bigg)h^2 \\*[2mm] -  \bigg(2 a_{02} u_{02}(t)+2 a_{05} u_{05}(t)+a_{11} u_{11}(t)+2 a_{14} u_{14}(t)+2 a_{23} u_{23}(t) \\*[2mm]
 + 2 a_{01} a_{03} 
\begin{vmatrix}
 u_{00}(t) & u_{03}(t) \\
 u_{10}(t) & u_{13}(t) 
\end{vmatrix}
 +2 a_{01} a_{12} 
\begin{vmatrix}
 u_{11}(t) & u_{12}(t) \\ 
  u_{01}(t) & u_{02}(t)
\end{vmatrix}
\bigg)h^3 +O(h)^4.
\end{multline*}
By uniqueness, we thus get from comparing the expansions \eqref{eq:detexpand1} and \eqref{eq:detexpand2} that, in fact,
\begin{equation}\label{eq:G1G3}
\begin{aligned}
d_1(t) &= - 2  a_{01} u_{01}(t), \\*[0mm]
d_2(t) &=  - a_{00} u_{00}(t)-2 a_{03} u_{03}(t)-2 a_{12} u_{12}(t) - a_{01}^2  
\begin{vmatrix}
 u_{00}(t) & u_{01}(t)\\
 u_{10}(t) & u_{11}(t) 
\end{vmatrix},\\*[2mm]
d_3(t) &= -  2 a_{02} u_{02}(t)-2 a_{05} u_{05}(t)-a_{11} u_{11}(t)-2 a_{14} u_{14}(t)-2 a_{23} u_{23}(t) \\*[1mm]
&\qquad\qquad   - 2 a_{01} a_{03} 
\begin{vmatrix}
 u_{00}(t) & u_{03}(t) \\
 u_{10}(t) & u_{13}(t) 
\end{vmatrix}
 -2 a_{01} a_{12} 
\begin{vmatrix}
 u_{11}(t) & u_{12}(t) \\ 
  u_{01}(t) & u_{02}(t)
\end{vmatrix}.
\end{aligned}
\end{equation}
Note that, without the simplifying transformations of Lemmas~\ref{lem:KhGUEexpan}/\ref{lem:KhLUEexpan}), if the method were applied directly to the kernel expansions in Lemmas~\ref{lem:GUEkernexpan}/\ref{lem:LUEkernexpan} after trading, by repeated application of \eqref{eq:airyrule}, the polynomial factor for higher derivatives of the Airy function, we would get terms that exhibit a much higher complexity -- namely,
\begin{gather*} d_1 = [\text{4:  $1\times 1$ minors}], \quad 
d_2 = [\text{12: $1\times 1$ minors}] + [\text{10: $2\times 2$ minors}],\\*[1mm]
d_3 = [\text{24:  $1\times 1$ minors}] + [\text{51:  $2\times 2$ minors}] + [\text{10:  $3\times 3$ minors}].
\end{gather*}
Here, the numbers before the colon give the tallies of different $k\times k$ minors.
%\begin{remark}\label{rem:hardtosoftCoeff}
%Using the concrete coefficients $a_{00},a_{01},a_{03},a_{12}$ for the hard-to-soft edge transition, as displayed in \cite[Lemma~3.3]{arxiv:2301.02022}, we reclaim the formulas \cite[Eqs.~(43a/b, 46a)]{arxiv:2301.02022}.
%\end{remark}

Upon advancing the set of formulas of the Tracy--Widom theory \cite{MR1257246}, which represents the distribution $F_2$ in terms of the Hastings--McLeod solution of Painlevé II, Shinault and Tracy  showed through an explicit inspection of each single case in the range $0\leq j+k \leq 8$ that the functions $u_{jk}$ are, at least for that range of indices, linear combinations of the form \cite[p.~68]{MR2787973}
\[
u_{jk} = p_{1} \frac{F_2'}{F_2} + p_{2} \frac{F_2''}{F_2} + \cdots + p_{j+k+1} \frac{F_2^{(j+k+1)}}{F_2},
\]
with coefficients (which depend, of course, on $j$, $k$) that are certain  polynomials  $p_{l} \in \Q[t]$. By `reverse engineering' the remarks of Shinault and Tracy about validating their table, we have found an algorithm to compile such a table in the first place, see \cite[Appendix~B]{arxiv:2301.02022}.\footnote{Since the multilinear structure of $u_{jk}$ comes from integrating their derivatives within $\Q[t][q,q']$, where~$q$ is the Hastings--McLeod solution of Painlevé II, it amounts for `integrability' in a very literal sense.\label{fn:integrable}} This algorithm can also be applied to terms $T$ that are {\em nonlinear} rational polynomials
of the~$u_{jk}$, resulting for a given positive integer $n$ either in a linear combination of the form
\[
T=p_{1} \frac{F_2'}{F_2} + p_{2} \frac{F_2''}{F_2} + \cdots + p_{n} \frac{F_2^{(n)}}{F_2}, \quad p_1,\ldots,p_n \in \Q[t],
\]
or a message that such a {\em linear $F_2$-form of order $n$} does not exist for the term $T$. Extensive tabulations reported in 
\cite[Footnote~43]{arxiv:2301.02022} have let us to conjecture that all rational linear combinations of (finite) minors of $(u_{jk})_{j,k=0}^\infty$ -- that is, of determinants of the form
\[
\begin{vmatrix}
u_{j_1k_1} & \cdots & u_{j_1k_m} \\
\vdots & & \vdots \\
u_{j_mk_1} & \cdots & u_{j_mk_m}
\end{vmatrix},
\]
which are exactly those entering the formula \eqref{eq:DhMinor} for $D_h(t)$ -- are linear $F_2$-forms of order $$n=j_1 + \cdots + j_m + k_1 + \cdots + k_m + m.$$ This conjecture predicts, just by looking at the indices entering the expressions \eqref{eq:G1G3}, that the coefficient functions $d_1$, $d_2$, $d_3$ are linear $F_2$-forms of order $n=2,4,6$.

Indeed, by either a direct application of the algorithm presented in \cite[Appendix~B]{arxiv:2301.02022} or by combining the tabulated expressions (displayed in \cite[p.~68]{MR2787973}) for the $1\times 1$ minors
\[
u_{00}(t),\; u_{01}(t),\; u_{02}(t),\; u_{03}(t),\; u_{05}(t),\; u_{11}(t),\; u_{12}(t),\; u_{14}(t),\; u_{23}(t)
\]
 with those (displayed in \cite[Eqs.~(B.7/B.8)]{arxiv:2301.02022}) for the three $2\times 2$ minors 
\[
\begin{vmatrix}
 u_{00}(t) & u_{01}(t)\\
 u_{10}(t) & u_{11}(t) 
\end{vmatrix}, \quad
\begin{vmatrix}
 u_{00}(t) & u_{03}(t) \\
 u_{10}(t) & u_{13}(t) 
\end{vmatrix}, \quad
\begin{vmatrix}
 u_{11}(t) & u_{12}(t) \\ 
  u_{01}(t) & u_{02}(t)
\end{vmatrix},
\]
we obtain the specific representations
\begin{subequations}\label{eq:G1G3nice}
\begin{align}
F_2(t) d_1(t) &= -a_{01} F_2''(t), \\*[2mm]
F_2(t) d_2(t) &= -\frac{6a_{00} +7 a_{03} - 3a_{12} + a_{01}^2 }{6}F_2'(t) - \frac{2 a_{03}-a_{01}^2}{3}\, tF_2''(t)\\*[1mm]
&\qquad -\frac{a_{03}+3a_{12} + a_{01}^2}{12} F_2^{(4)}(t),\notag\\*[2mm]
\intertext{}\notag\\*[-13mm]
F_2(t) d_3(t) &= -\frac{30 a_{02}+101 a_{05} -15 a_{11}-65 a_{14} +20 a_{23} +9 a_{01} a_{03}+5 a_{01} a_{12}}{45}\,t F_2'(t)\\*[1mm]
&\hspace*{-0.5cm}-\frac{23 a_{05}-5 a_{14}+5 a_{23} -18 a_{01} a_{03}-10 a_{01} a_{12}}{45}\, t^2 F_2''(t)\notag\\*[1mm]
&\hspace*{-1.5cm}
-\frac{20 a_{02}+59 a_{05}+20 a_{11} +55 a_{14}-10 a_{23} -9 a_{01} a_{03}+5 a_{01} a_{12}}{60} F_2'''(t)\notag\\*[1mm]
&\hspace*{-2.5cm}
-\frac{a_{05}+2 a_{14}+a_{23}+a_{01} a_{12}}{9}\, t F_2^{(4)}(t)
-\frac{a_{05}+5 a_{14}+10 a_{23} + 9 a_{01} a_{03}-5 a_{01} a_{12}}{360} F_2^{(6)}(t).\notag
\end{align}
\end{subequations}
\begin{remark}\label{rem:u01}
As shown in \cite[Remark~3.1]{arxiv.2206.09411}, a simple twofold differentiation suffices to get
\[
F_2''(t) = 2 F_2(t) \cdot u_{01}(t).
\]
So, there is no need to harness the power of the Tracy--Widom theory when inferring the first equation
in \eqref{eq:G1G3nice} from the first one in \eqref{eq:G1G3}.
\end{remark}
%\begin{remark}
%Upon using the coefficients for the hard-to-soft edge transition (see Remark~\ref{rem:hardtosoftCoeff}), these formulas allow us to reclaim \cite[Eqs.~(45a/c, 46b)]{arxiv:2301.02022}.
%\end{remark}

\section{Large parameter L$\beta$E-to-G$\beta$E transition law}\label{sect:Laguerre2Gauss}

For real Wishart matrices,  limit laws for large degrees of freedom have been studied in multivariate statistical analysis quite early on. Hsu \cite[Lemma 2]{MR5576} observed in 1941 that such laws follow from applying the multivariate central limit theorem to sample covariance matrices. Namely, for a matrix $X_{n,p}$  drawn from the standard $n$-variate Wishart  distribution of $p$ degrees of freedom, the multivariate central limit theorem (cf., e.g., \cite[Theorem~3.4.4]{MR1990662}) gives, in distribution,
\begin{equation}\label{eq:multiCLT}
\frac{X_{n,p} - p I}{\sqrt{2p}} \;\to\, \GOE_n \quad (p\to\infty).
\end{equation}
By continuity, the limit can be lifted to eigenvalue distributions; see, e.g., \cite[pp.~125--126]{MR145620} and \cite[Corollary 13.3.2]{MR1990662}. Muirhead \cite[Corollary~9.5.7]{MR652932} sketched a direct approach for establishing that limit of the joint eigenvalue distribution, which we generalize to $\beta$-ensembles here.

We recall from Section~\ref{sect:intro} that, for the $n$-dimensional Gaussian and Laguerre $\beta$-ensembles (with parameter~$\alpha>-1$), the joint probability densities of the eigenvalues are
\begin{subequations}\label{eq:jointEVpdf}
\begin{align}
p_{\GbE}(x_1,\ldots,x_n) &= \frac{1}{Z} |\Delta(x)|^\beta \prod_{k=1}^n  e^{-\gamma x_k^2},\\*[1mm]
p_{\LbE,\alpha}(x_1,\ldots,x_n)&= \frac{1}{Z_\alpha} |\Delta(x)|^\beta \prod_{k=1}^n  x_k^\alpha e^{-\gamma x_k} \chi_{\{x_k > 0\}}.
\end{align}
\end{subequations}
We suppress the dependence on $\beta,\gamma$, $n$ from the notation, focusing on the dependence on the parameter $\alpha$ of the Laguerre ensemble. 
The quantity $\gamma$ is introduced to accommodate different scalings used in the literature; for 
$\beta=1,2,4$, which are considered in this paper, we follow Forrester and Rains \cite{MR1842786} in choosing
$\gamma=\gamma_\beta$ as in \eqref{eq:alphabetagamma}.

By Selberg's integral (see, e.g., \cite[p.~59]{MR2760897}), the normalizations (`partition functions') are
\begin{subequations}\label{eq:Z}
\begin{align}
Z &=(2\pi)^{n/2} \frac{n! }{\Gamma(\beta/2)^n}\prod_{k=0}^{n-1} \frac{\Gamma((k+1)\beta/2)}{(2\gamma)^{(1+k\beta)/2}},\\*[1mm]
Z_\alpha &= \frac{n!}{\Gamma(\beta/2)^n}\prod_{k=0}^{n-1}\frac{ \Gamma(\alpha+1+k\beta/2)\Gamma((k+1)\beta/2)}{\gamma^{\alpha+1+k\beta}}.
\end{align}
\end{subequations}

Recalling that,  for $\beta=1$, we have $\alpha=(p-n-1)/2$ and $\gamma=1/2$, the centering and scaling parameters in  Hsu's matrix central limit law \eqref{eq:multiCLT} therefore correspond to the choices 
\[
\mu_\alpha = p = \frac{\alpha}{\gamma} + n + 1 ,\quad \sigma_\alpha = \sqrt{2p} = \sqrt\frac{2\alpha}{\gamma} + o(1)
\]
of the centering and scaling parameters in the following L$\beta$E-to-G$\beta$E transition law.

\begin{theorem}\label{thm:Laguerre2Gauss} For given  $n, \beta,\gamma$, consider, as $\alpha\to\infty$, some scaling parameters of the form
\[
\mu_\alpha = \frac{\alpha}{\gamma} + c_\mu + o(1), \quad \sigma_\alpha = \sqrt\frac{2\alpha}{\gamma} + c_\sigma + o(1),
\]
where $c_\mu, c_\sigma$ are real constants (which may, of course, depend on $n,\beta,\gamma$). Then, there holds, locally uniform in $x_1,\ldots,x_n$, the Laguerre-to-Gaussian transition
\[
\sigma_\alpha^n \cdot p_{\LbE,\alpha}(\mu_\alpha + \sigma_\alpha x_1, \ldots, \mu_\alpha + \sigma_\alpha x_n) = p_{\GbE}(x_1, \ldots, x_n)(1+ o(1)) \quad (\alpha\to\infty).
\]
Furthermore, by Scheffé's theorem,\footnote{See, e.g., \cite[Theorem~16.12]{MR1324786}, \cite[Corollary~2.30]{MR1652247}. In the literature on $L^p$ spaces, Scheffé's theorem is named after F. Riesz.} the a.e. convergence of some probability densities to yet another probability density implies that the densities converge in $L^1$ and that the induced probability measures converge in total variation.
\end{theorem}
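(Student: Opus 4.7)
The plan is to substitute $y_k := \mu_\alpha + \sigma_\alpha x_k$ directly into the explicit formula for $p_{\LbE,\alpha}$ and then track three factors separately: the Vandermonde $|\Delta(y)|^\beta$, the product of weights $\prod_k y_k^\alpha e^{-\gamma y_k}$, and the normalization constant $Z_\alpha$. The Vandermonde is the easy one, since translation leaves it invariant while scaling pulls out a power, giving $|\Delta(\mu_\alpha + \sigma_\alpha x)|^\beta = \sigma_\alpha^{\beta n(n-1)/2}|\Delta(x)|^\beta$. Combined with the Jacobian $\sigma_\alpha^n$, this produces an overall prefactor $\sigma_\alpha^{n+\beta n(n-1)/2}$ that I expect Stirling's formula applied to $Z_\alpha$ to cancel against.

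The heart of the calculation is the weight $y_k^\alpha e^{-\gamma y_k}$. I would factor off the value at the mode,
\begin{equation*}
y_k^\alpha e^{-\gamma y_k} = \mu_\alpha^\alpha e^{-\gamma\mu_\alpha}\cdot\left(1 + \frac{\sigma_\alpha x_k}{\mu_\alpha}\right)^\alpha e^{-\gamma\sigma_\alpha x_k},
\end{equation*}
and observe that the hypotheses give $\sigma_\alpha/\mu_\alpha \sim \sqrt{2\gamma/\alpha}$. A second-order Taylor expansion of $\log(1+u)$ then yields
\begin{equation*}
\alpha\log\!\left(1 + \frac{\sigma_\alpha x_k}{\mu_\alpha}\right) = \sqrt{2\gamma\alpha}\,x_k + \gamma c_\sigma x_k - \gamma x_k^2 + o(1)
\end{equation*}
locally uniformly in $x_k$. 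The linear terms cancel exactly against $-\gamma\sigma_\alpha x_k = -\sqrt{2\gamma\alpha}\,x_k - \gamma c_\sigma x_k + o(1)$, producing the crucial Gaussian decay $e^{-\gamma x_k^2}$. Taking the product over $k$ gives $(\mu_\alpha^\alpha e^{-\gamma\mu_\alpha})^n \prod_k e^{-\gamma x_k^2}(1+o(1))$, locally uniform in the $x_k$.

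The remaining step is to verify that the constant prefactors line up, namely that $\sigma_\alpha^{n+\beta n(n-1)/2}(\mu_\alpha^\alpha e^{-\gamma\mu_\alpha})^n/Z_\alpha = (1+o(1))/Z$. I would use Stirling in the form $\Gamma(\alpha+1)\sim\sqrt{2\pi\alpha}(\alpha/e)^\alpha$ together with $\Gamma(\alpha+1+c) \sim \Gamma(\alpha+1)\alpha^c$ (for fixed $c$) to reduce the $\alpha$-dependent part of $Z_\alpha$ to $(2\pi\alpha)^{n/2}(\alpha/e)^{n\alpha}\alpha^{\beta n(n-1)/4}\gamma^{-n\alpha-n-\beta n(n-1)/2}$, multiplied by the $\alpha$-independent prefactor $(n!/\Gamma(\beta/2)^n)\prod_k \Gamma((k+1)\beta/2)$ that coincides term-for-term with the prefactor in the formula for $Z$. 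Together with $\mu_\alpha^\alpha e^{-\gamma\mu_\alpha} = (\alpha/(\gamma e))^\alpha(1+o(1))$ and $\sigma_\alpha^{n+\beta n(n-1)/2}\sim(2\alpha/\gamma)^{n/2+\beta n(n-1)/4}$, all powers of $\alpha$, $e$, and $\gamma$ then match. The main obstacle is tracking the $o(1)$ terms carefully enough to confirm that the $O(1)$ corrections $c_\mu, c_\sigma$ to the scaling parameters leave no residue; in particular, the expansion $\mu_\alpha^\alpha = (\alpha/\gamma)^\alpha e^{\gamma c_\mu + o(1)}$ must cancel exactly against $e^{-\gamma\mu_\alpha} = e^{-\alpha - \gamma c_\mu + o(1)}$, which is precisely why the theorem's hypotheses permit arbitrary constants $c_\mu, c_\sigma$ but no corrections of order $\sqrt\alpha$ in $\mu_\alpha$ or $O(1)$ in the leading coefficient of $\sigma_\alpha$. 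Once this locally uniform pointwise convergence of the densities on $\R^n$ is established, Scheffé's theorem, applied directly to probability densities, delivers $L^1$-convergence and hence convergence in total variation.
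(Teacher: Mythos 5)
Your proposal is correct and follows essentially the same route as the paper's proof: the Vandermonde scaling identity, a second-order Taylor expansion of the logarithm of the weight in which the contributions of $c_\mu$ and $c_\sigma$ cancel to leave $e^{-\gamma x_k^2}$, and Stirling's formula to match the normalization constants, with Scheffé's theorem supplying the $L^1$ and total-variation conclusions. The only difference is organizational — you center the weight at $\mu_\alpha$ and do one global bookkeeping step, whereas the paper normalizes by $(\gamma e/\alpha)^\alpha$ and distributes the constants factor-by-factor — and your power counting checks out.
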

\begin{proof} For brevity, we write $\xi = \mu_\alpha + \sigma_\alpha x$ and $\xi_k = \mu_\alpha + \sigma_\alpha x_k$.
The claim follows from the following three asymptotic relations, as $\alpha\to\infty$ -- the first two being elementary and the last one  an application of Stirling's formula:
\begin{subequations}
\begin{gather}
|\Delta(\xi)| = |\Delta(x)| \,\sigma_\alpha^{n(n-1)/2} \sim |\Delta(x)| \prod_{k=0}^{n-1} \left(\frac{2\alpha}{\gamma}\right)^{k/2},\label{eq:DeltaLim}\\*[0mm]
\left(\frac{\gamma e}{\alpha}\right)^\alpha \xi^\alpha e^{-\gamma \xi} \sim e^{-\gamma x^2},\label{eq:eLim}\\*[2mm]
\sigma_\alpha \left(\frac{2\alpha}{\gamma}\right)^{k\beta/2}\left(\frac{\alpha}{e}\right)^\alpha \frac{\gamma^{1+k\beta} }{\Gamma(\alpha+1+k\beta/2)} \sim \frac{(2\gamma)^{(1+k\beta)/2}}{\sqrt{2\pi}},\label{eq:StirLim}
\end{gather}
\end{subequations}
where the second holds locally uniform in $x\in\R$. Here, we have used $\sigma_\alpha \sim (2\alpha/\gamma)^{1/2}$ and the fact that, when Taylor expanding the logarithm in 
\[
\alpha\log\left(\frac{\mu_\alpha +\sigma_\alpha x}{\alpha}\right) - \gamma (\mu_\alpha + \sigma_\alpha x) = -\alpha -\gamma x^2 + o(1) \quad (\alpha\to\infty),
\]
the terms involving the constants $c_\mu,c_\sigma$ cancel.
\end{proof}

\begin{remark} The convergence in total variation as stated in Theorem~\ref{thm:Laguerre2Gauss} can already be found in the work of Jiang and Li \cite[Theorem~1(ii)]{MR3413957}. Additionally, due to a more elaborate proof technique, those authors were able to extend the result to the simultaneous limits 
\[
n\to\infty, \quad \alpha/n^3 \to \infty,
\]
see \cite[Theorem~1(i)]{MR3413957}.
Subsequently, for real Wishart matrices with $n\to \infty$, a smooth phase transition between 
the  limiting cases  $p/n^3 \to 0$ and $p/n^3 \to \infty$ was established in \cite{MR3959632}.
\end{remark}

For $\beta=2$, using the correlation kernels of  GUE and LUE, Theorem~\ref{thm:Laguerre2Gauss} is also implied by the following limit relation between the Laguerre and Hermite `wave functions'.

\begin{theorem}\label{thm:phiTrans} For a given $n$, consider, as $\alpha\to\infty$, some scaling parameters of the form
\[
\mu_\alpha = \alpha + c_\mu + o(1), \quad \sigma_\alpha = \sqrt{2\alpha} + c_\sigma + o(1),
\]
where $c_\mu, c_\sigma$ are real constants (which may, of course, depend on $n$). Then, there holds, locally uniform in $x$,
\[
\sigma_\alpha^{1/2} \phi_n^{(\alpha)}(\mu_\alpha + \sigma_\alpha x) = \phi_n(x) (1 + o(1)) \quad (\alpha\to\infty).
\]
\end{theorem}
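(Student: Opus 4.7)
\medskip

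The plan is to combine three ingredients: the explicit formula for the Laguerre wave function in \eqref{eq:LaguerrePhiN}, the asymptotic cancellation of the weight factor from the proof of Theorem~\ref{thm:Laguerre2Gauss}, and the classical Szegő limit that relates Laguerre polynomials of large parameter to Hermite polynomials. Writing briefly $\xi=\mu_\alpha+\sigma_\alpha x$, we decompose
\[
\sigma_\alpha^{1/2}\phi_n^{(\alpha)}(\xi) \;=\; \underbrace{\sigma_\alpha^{1/2}\sqrt{\tfrac{n!}{\Gamma(n+\alpha+1)}}}_{\text{(A)}}\;\cdot\;\underbrace{\xi^{\alpha/2}e^{-\xi/2}}_{\text{(B)}}\;\cdot\;\underbrace{(-1)^n L_n^{(\alpha)}(\xi)}_{\text{(C)}},
\]
and analyse each of the three factors separately as $\alpha\to\infty$, locally uniformly in $x$.

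For factor (A), Stirling's formula yields $\Gamma(n+\alpha+1)\sim\sqrt{2\pi}\,\alpha^{n+\alpha+1/2}e^{-\alpha}$ (the $n$-dependent corrections $(1+n/\alpha)^{n+\alpha+1/2}$ contribute $e^{n}$, which cancels against the factor $e^{-n}$ coming from rewriting $e^{-(n+\alpha)}$); hence
\[
\text{(A)} \sim \frac{(2\alpha)^{1/4}\sqrt{n!}}{(2\pi)^{1/4}}\,\alpha^{-(n+\alpha+1/2)/2}\,e^{\alpha/2}.
\]
For factor (B), exactly the local-uniform Taylor expansion that produced \eqref{eq:eLim} in the proof of Theorem~\ref{thm:Laguerre2Gauss} (specialized to $\gamma=1$) gives, because the constants $c_\mu,c_\sigma$ drop out of the $O(1)$ remainder,
\[
\text{(B)} \;=\; \big(\alpha/e\big)^{\alpha/2}\,e^{-x^{2}/2}\,(1+o(1)).
\]

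For factor (C), I would invoke the classical limit (cf.\ Szegő's \emph{Orthogonal Polynomials}, Thm.~5.6.4, adjusted for the DLMF normalization \cite[\S18.3]{MR2723248} of $H_n$)
\[
(2\alpha)^{-n/2}\,L_n^{(\alpha)}\bigl(\alpha+(2\alpha)^{1/2}y\bigr)\;\longrightarrow\;\frac{(-1)^n}{2^n\,n!}\,H_n(y),
\]
locally uniformly in $y\in\R$. Because $\mu_\alpha+\sigma_\alpha x=\alpha+(2\alpha)^{1/2}\,\tilde x_\alpha$ with $\tilde x_\alpha=x+(c_\mu+c_\sigma x+o(1))/\sqrt{2\alpha}\to x$, and because $L_n^{(\alpha)}$ is a polynomial of fixed degree~$n$ (so that the convergence survives the $o(1)$-perturbation of the argument), one obtains
\[
\text{(C)} \;=\; \frac{\alpha^{n/2}}{2^{n/2}\,n!}\,H_n(x)\,(1+o(1))
\]
locally uniformly in $x$.

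Multiplying (A), (B), (C) and collecting powers of $\alpha$ yields
\[
\alpha^{-(n+\alpha+1/2)/2}\cdot\alpha^{\alpha/2}\cdot\alpha^{n/2}=\alpha^{-1/4},
\]
while the $e^{\pm\alpha/2}$ factors cancel, leaving
\[
\sigma_\alpha^{1/2}\phi_n^{(\alpha)}(\xi)\;\sim\;\frac{(2\alpha)^{1/4}\alpha^{-1/4}}{2^{n/2}(2\pi)^{1/4}\sqrt{n!}}\,H_n(x)\,e^{-x^{2}/2}\;=\;\frac{1}{\pi^{1/4}\sqrt{2^{n}\,n!}}\,H_n(x)\,e^{-x^{2}/2}\;=\;\phi_n(x),
\]
as required. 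The only non-routine step is factor (C): it is available ready-made in the literature, but its local uniformity (and the robustness under replacing $\alpha+\sqrt{2\alpha}\,x$ by the perturbed scaling $\mu_\alpha+\sigma_\alpha x$) must be noted, which here is trivial because $L_n^{(\alpha)}$ has fixed polynomial degree in~$\xi$. The other two steps are pure Stirling/Taylor bookkeeping, entirely analogous to what was already carried out in the proof of Theorem~\ref{thm:Laguerre2Gauss}.
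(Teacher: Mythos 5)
Your proposal is correct and follows essentially the same route as the paper: the paper likewise factors $\sigma_\alpha^{1/2}\phi_n^{(\alpha)}(\xi)$ into the weight, the normalization, and the polynomial, handling the first two as the square roots of \eqref{eq:eLim} and \eqref{eq:StirLim} (the latter being exactly your Stirling computation) and the third via the classical Laguerre-to-Hermite limit \cite[Eq.~(18.7.26)]{MR2723248}. Your observation that local uniformity and robustness under the $o(1)$-perturbed argument are automatic because $L_n^{(\alpha)}$ has fixed degree $n$ is a clean justification of the point the paper relegates to a footnote.
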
 
\begin{proof} For brevity, we write $\xi =\mu_\alpha + \sigma_\alpha x = \sqrt{2\alpha} \, x_\alpha + \alpha$. Then,
we have $\lim_{\alpha\to \infty} x_\alpha = x$ and
\[
\begin{gathered}
\left(\frac{e}{\alpha}\right)^{\alpha/2} \xi^{\alpha/2} e^{-\xi/2} \sim e^{-x^2/2},\\*[2mm]
\frac{\sigma_\alpha^{1/2}}{\sqrt{\Gamma(n+\alpha+1)}} \left( \frac{\alpha}{2}\right)^{n/2} \left(\frac{\alpha}{e}\right)^{\alpha/2} \sim \frac{1}{\pi^{1/4} \sqrt{2^n}},\\*[2mm]
n! \left( \frac{2}{\alpha}\right)^{n/2} (-1)^n L_n^{(\alpha)}\big(\sqrt{2\alpha} \,x_\alpha + \alpha\big) \sim H_n(x).
\end{gathered}
\]
The first of these relations is the square root of \eqref{eq:eLim} with $\gamma=1$, the second one the square root of \eqref{eq:StirLim} with $\beta=2$, $k=n$ and $\gamma=1$ and the last one is the limit relation \cite[Eq.~(18.7.26)]{MR2723248} between Laguerre and Hermite polynomials.\footnote{A  peruse of the proofs in, e.g., \cite{MR0510617,MR0005178} shows that this limit relation, usually stated for fixed $x$ as
\[
\lim_{\alpha\to\infty} n! \left( \frac{2}{\alpha}\right)^{n/2} (-1)^n L_n^{(\alpha)}\big(\sqrt{2\alpha} \,x + \alpha\big) = H_n(x),
\]
holds, in fact,  locally uniform in $x$.} Multiplication of these three asymptotic relations proves the assertion.
\end{proof}

\begin{corollary}\label{cor:LUE2GUE} With the notation introduced in Theorems~\ref{thm:softGUE} and \ref{thm:softLUE}, as well as with the definition \eqref{eq:nprime} for $n'$ and $p'$, given $n$, there holds the convergence of probability distributions
\begin{equation}\label{eq:LUE2GUE}
\lim_{p\to\infty} E_\beta\big(n,p;\mu_{n',p'} + \sigma_{n',p'} t\big)  = E_\beta\big(n;\mu_{n'} + \sigma_{n'} t\big) \qquad (\beta=1,2,4),
\end{equation}
uniformly  in $t$. 
\end{corollary}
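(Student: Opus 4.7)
The plan is to reduce the claim to the large-parameter joint-density convergence of Theorem \ref{thm:Laguerre2Gauss}, then match the Forrester--Johnstone scaling $(\mu_{n',p'}, \sigma_{n',p'})$ to the canonical scaling via an affine transformation whose coefficients converge to $(\sigma_{n'}, \mu_{n'})$, and finally upgrade pointwise to uniform convergence by Polya's theorem.

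First, I apply Theorem \ref{thm:Laguerre2Gauss} with $\gamma=\gamma_\beta$ and the canonical choice $\mu_\alpha^{*}=\alpha/\gamma$, $\sigma_\alpha^{*}=\sqrt{2\alpha/\gamma}$ (which satisfies its hypothesis trivially with $c_\mu=c_\sigma=0$); holding $n$ fixed, $p\to\infty$ forces $\alpha\to\infty$ by \eqref{eq:alphabetagamma}. The theorem yields total variation convergence of the joint density of $\bigl((\lambda_1-\mu_\alpha^{*})/\sigma_\alpha^{*},\ldots,(\lambda_n-\mu_\alpha^{*})/\sigma_\alpha^{*}\bigr)$ to the $G\beta E$ joint density. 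Evaluating on the half-space $A_s:=\{\bar x\in\R^n:\max_k x_k\le s\}$, TV convergence of the induced measures gives
\[
\sup_{s\in\R}\bigl|E_\beta\bigl(n,p;\mu_\alpha^{*}+\sigma_\alpha^{*} s\bigr)-E_\beta(n;s)\bigr|\longrightarrow 0\quad(p\to\infty).
\]

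Next, I write $\mu_{n',p'}+\sigma_{n',p'}\,t=\mu_\alpha^{*}+\sigma_\alpha^{*}\,s_p(t)$ with $s_p(t)=a_p\,t+b_p$, where $a_p:=\sigma_{n',p'}/\sigma_\alpha^{*}$ and $b_p:=(\mu_{n',p'}-\mu_\alpha^{*})/\sigma_\alpha^{*}$. A routine Taylor expansion of \eqref{eq:LUEscaling}, combined with the $\beta$-dependent modifications \eqref{eq:nprime}, shows as $p\to\infty$ that $a_p\to\sigma_{n'}$ and $b_p\to\mu_{n'}$. For the unitary case ($\beta=2$, $n'=n$, $p'=p$, $\gamma=1$, $\alpha=p-n$), one has $\mu_{n,p}-\alpha=2n+2\sqrt{np}$ and hence $b_p=(2n+2\sqrt{np})/\sqrt{2(p-n)}\to\sqrt{2n}=\mu_n$, while $\sigma_{n,p}/\sqrt{2(p-n)}=(\sqrt{n}+\sqrt{p})^{4/3}(np)^{-1/6}/\sqrt{2(p-n)}\to 2^{-1/2}n^{-1/6}=\sigma_n$. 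The $\beta=1,4$ cases are handled by the same elementary expansion, with the $O(1)$ shifts embedded in $n',p'$ contributing lower-order corrections absorbed into the limits.

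Substituting $s=s_p(t)$ in the first display and using the triangle inequality yields
\[
\sup_{t\in\R}\bigl|E_\beta(n,p;\mu_{n',p'}+\sigma_{n',p'}t)-E_\beta(n;\mu_{n'}+\sigma_{n'}t)\bigr|\le \varepsilon_p + \sup_{t\in\R}\bigl|E_\beta(n;s_p(t))-E_\beta(n;\mu_{n'}+\sigma_{n'}t)\bigr|,
\]
where $\varepsilon_p\to0$ by Step~1. Both maps $t\mapsto E_\beta(n;s_p(t))$ and $t\mapsto E_\beta(n;\mu_{n'}+\sigma_{n'}t)$ are continuous CDFs in $t$, and pointwise convergence of the former to the latter follows from $a_p\to\sigma_{n'}$, $b_p\to\mu_{n'}$ and the continuity of $E_\beta(n;\cdot)$. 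Since the limiting CDF is continuous, Polya's theorem upgrades this to uniform convergence in $t$, establishing \eqref{eq:LUE2GUE}.

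The proof is essentially routine: the three ingredients (Theorem \ref{thm:Laguerre2Gauss}, elementary expansion of the scaling parameters, and Polya's theorem) combine cleanly. The only bookkeeping-heavy step is the parameter-matching of Step~2 for $\beta=1,4$, but all three cases follow the template of the $\beta=2$ computation displayed above.
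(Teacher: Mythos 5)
Your proof is correct and takes essentially the same route as the paper: both reduce the claim to the joint-density transition law of Theorem~\ref{thm:Laguerre2Gauss} via an affine matching of the Laguerre and Gaussian soft-edge scalings (your computation of $a_p\to\sigma_{n'}$, $b_p\to\mu_{n'}$ checks out for all three $\beta$), followed by the standard continuity-of-the-limit-CDF argument for uniformity. The only difference is organizational — the paper writes $\mu_{n',p'}+\sigma_{n',p'}t=\tilde\mu_\alpha+\tilde\sigma_\alpha x$ with $x=\mu_{n'}+\sigma_{n'}t$ and absorbs your affine correction directly into the constants $c_\mu,c_\sigma$ permitted in the hypothesis of Theorem~\ref{thm:Laguerre2Gauss}, which makes your separate triangle-inequality step unnecessary.
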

\begin{proof} We recall from  \eqref{eq:alphabetagamma} the choice $\gamma=\gamma_\beta$ and
that $p = 2(\alpha+1)/\beta + n - 1$.
Upon writing
\[
x = \mu_{n'} + \sigma_{n'} t, \quad \xi = \mu_{n',p'} + \sigma_{n',p'} t =: \tilde \mu_\alpha + \tilde \sigma_\alpha x,
\]
we get, for $\alpha\to\infty$,
\[
\tilde \mu_\alpha = \frac{\alpha}{\gamma} - \frac{2}{3} n' + \frac{2-\beta}{2\gamma} + O(\alpha^{-1/2}),\quad \tilde \sigma_\alpha = \sqrt{\frac{2\alpha}{\gamma}} + \frac{4}{3} \sqrt{2n'} + O(\alpha^{-1/2}),
\]
so that Theorem~\ref{thm:Laguerre2Gauss} implies the convergence in distribution of the scaled largest eigenvalue of the $\LbE$ to the one of the $\GbE$. 
Since the limit distribution is continuous, the convergence of distributions is uniform (cf., e.g., \cite[Lemma~2.11]{MR1652247}). 
\end{proof}

\section{`Histogram adjusted' expansions of probability distributions}\label{app:histo}

We consider, for sufficiently small $h>0$, a family of probability distributions $F_h(t)$ with a limit law 
\[
\lim_{h\to0^+} F_h(t) = F(t),
\]
assuming that the expansion
\begin{equation}\label{eq:EVanillaexpansion}
F_h(t) = F(t) + E_1(t) h + E_2(t) h^2 + E_3(t) h^3 + O(h^4) \quad (h\to 0^+)
\end{equation}
holds locally uniform in $t$ and that, preserving uniformity, the expansion can be repeatedly differentiated w.r.t. the variable $t$. 
When checking, as we did in Figures~\ref{fig:GUESimulation}, \ref{fig:LUESimulation}, \ref{fig:GOEGSESimulation} and \ref{fig:LOELSESimulation} such an expansion against a histogram of sampling data obtained from sampling $F_h(t)$, we have to exercise some care for the following reasons. 

A histogram of bin size $H=\eta h$ (for some fixed $\eta >0$)\footnote{In Figures~\ref{fig:GUESimulation}, \ref{fig:LUESimulation}, \ref{fig:GOEGSESimulation} and \ref{fig:LOELSESimulation}, for the $\GbE$ and $\LbE$, we have six different values of $\eta$ that satisfy $1.6 < \eta < 5.1$.} gives, up to  random sampling errors, the values 
\begin{equation}\label{eq:centralDiff}
\frac{1}{H} \int_{t-H/2}^{t+H/2} d F_h(s) = \frac{F_h(t+H/2) - F_h(t-H/2)}{H}
\end{equation}
on equispaced gridpoints $t$. Being central differences, these values are not just simply equal to the probability density
\[
F_h'(t) = F'(t) + E_1'(t) h + E_2'(t) h^2 + E_3'(t) h^3 + O(h^4) \quad (h\to 0^+),
\]
but differ from it by an error of order $O(H^2)=O(h^2)$. Hence, if we do not adjust for this difference, the histogram cannot be used to validate the expansion terms beyond the first order term $E_1'$.

To specify the necessary adjustments of the higher order coefficients, a Taylor expansion of the central difference formula \eqref{eq:centralDiff}, followed by inserting the expansion \eqref{eq:EVanillaexpansion}, shows that
\begin{equation}
\begin{aligned}
\frac{1}{H} \int_{t-H/2}^{t+H/2} d F_h(s) &=  F_h'(t) + \frac{H^2}{24} F_h'''(t) + O(H^4) \\*[1mm]
&= F'(t) + E_1'(t) h + \tilde E_2'(t)h^2 + \tilde E_3'(t) h^3 + O(h^4),
\end{aligned}
\end{equation}
locally uniform in $t$, where
\begin{equation}
\tilde E_2(t) := E_2(t) + \frac{\eta^2}{24} F''(t), \quad \tilde E_3(t) := E_3(t) + \frac{\eta^2}{24} E_1''(t),
\end{equation}
are the  `histogram adjusted' variants of the expansion terms $E_2, E_3$. As noted before, the term $E_1$ does not need to be adjusted.

\section{Using connection formulas in turning point analysis}\label{app:connect}
We establish yet another, very convenient formula for the constant $c$ in the Airy expansion~\eqref{eq:wAiry} which uses suitable companion solutions of the differential equation \eqref{eq:dw} and their connection formulas.\footnote{Such a procedure was used, en passant, in \cite{MR4312523,MR0109898}.}

In continuing the discussion of Section~\ref{subsect:gen}, we consider a particular solution 
$\Phi(t)$ of \eqref{eq:dw}, recessive for $t\to+\infty$ and having a factor $c\simeq1$ in the expansion \eqref{eq:wAiry},
\begin{subequations}\label{eq:Phis}
\begin{equation}\label{eq:Phi}
\Phi(t) \sim \left(\frac{\zeta}{f(t)}\right)^{1/4}\left( \Ai(u^{2/3} \zeta) \sum_{k=0}^\infty \frac{A_k(\zeta)}{u^{2k}} +  \frac{\Ai'(u^{2/3} \zeta)}{u^{4/3}} \sum_{k=0}^\infty \frac{B_k(\zeta)}{u^{2k}} \right) \quad (u\to\infty),
\end{equation}
which is valid at least locally uniform for $t \in D_\rho$ with $D_\rho \subset D$ being any disk centered at the turning point $t_*$. If we replace $u$ by $u e^{\mp \pi i}$ in the compound asymptotic series on the right, we get two formal companion solutions of the differential equation \eqref{eq:dw}, which can be shown (see \cite[Theorem~2.1]{MR3735704}) to represent actual solutions $\Phi_\pm(t)$ in the disk $D_\rho \subset D$; their analytic continuations are recessive for $\zeta \to \infty e^{\pm 2\pi i/3}$ in the $\zeta$-plane. Upon introducing the corresponding companion solutions of the Airy differential equation,
\[
\Ai_\pm(\zeta) := \Ai(e^{\mp 2\pi i/3} \zeta) \quad (\zeta\in\C)
\]
which are recessive for $\zeta \to \infty e^{\pm 2\pi i/3}$, we thus get
\begin{equation}\label{eq:PhiPM}
\Phi_\pm(t) \sim \left(\frac{\zeta}{f(t)}\right)^{1/4}\left( \Ai_\pm(u^{2/3} \zeta) \sum_{k=0}^\infty \frac{A_k(\zeta)}{u^{2k}} +  \frac{\Ai_\pm'(u^{2/3} \zeta)}{u^{4/3}} \sum_{k=0}^\infty \frac{B_k(\zeta)}{u^{2k}} \right) \quad (u\to\infty),
\end{equation}
\end{subequations}
which is valid at least locally uniform for $t\in D_\rho$. 

The pair $\Ai_\pm$ constitutes a fundamental system of solutions of the Airy equation (the Wronskian evaluates to $1/(2\pi i)$, see \cite[Eq.~(9.2.9)]{MR2723248}) and there holds the connection formula \cite[Eq.~(9.2.12)]{MR2723248}
\begin{equation}\label{eq:AiryConnect}
\Ai(t) = i e^{-\pi i/6} \Ai_+(t) - i e^{\pi i/6} \Ai_-(t).
\end{equation}
The factor $i$ is introduced here and in the following for convenience.
Using the expansions in $D_\rho$, these properties are inherited by the solutions $\Phi$ and $\Phi_\pm$ of the differential equation~\eqref{eq:dw}, and we can arrange for
\begin{equation}\label{eq:PhiConnect}
\Phi(t) =  i e^{-\pi i/6}\Phi_+(t) - i e^{\pi i/6}  \Phi_-(t)
\end{equation}
by specifying a representative of the constant $c\simeq 1$ that is implicit in \eqref{eq:Phi}. 

After these preparations, we can state and prove the following theorem.

\begin{theorem}\label{thm:connect} With the assumptions and notation of Section~\ref{subsect:gen}, let $w(t), w_\pm(t)$ be solutions of 
the differential equation \eqref{eq:dw} such that the asymptotic expansions
\begin{subequations}\label{eq:wExpand}
\begin{align}
w(t) &\sim c \left(\frac{\zeta}{f(t)}\right)^{1/4}\left( \Ai(u^{2/3} \zeta) \sum_{k=0}^\infty \frac{A_k(\zeta)}{u^{2k}} +  \frac{\Ai'(u^{2/3} \zeta)}{u^{4/3}} \sum_{k=0}^\infty \frac{B_k(\zeta)}{u^{2k}} \right), \\*[2mm]
w_\pm(t) &\sim c_\pm \left(\frac{\zeta}{f(t)}\right)^{1/4}\left( \Ai_\pm(u^{2/3} \zeta) \sum_{k=0}^\infty \frac{A_k(\zeta)}{u^{2k}} +  \frac{\Ai_\pm'(u^{2/3} \zeta)}{u^{4/3}} \sum_{k=0}^\infty \frac{B_k(\zeta)}{u^{2k}} \right),
\end{align}
\end{subequations}
are uniformly valid as $u\to\infty$ along the real axis when the real argument $t$ satisfies $t \geq t_0 + \delta$, with $\delta>0$ being any fixed positive number. Here, $c, c_\pm \neq 0$ are assumed to be nonzero constants (which may depend on $u$), so that the pair $w_\pm$ constitutes a fundamental system, and there is a connection formula
\begin{equation}\label{eq:wConnect}
w(t) = i \lambda_+ w_+(t) - i \lambda_- w_-(t),
\end{equation}
with certain connection coefficients $\lambda_\pm$ (which may depend on $u$). Then, in terms of the
connection coefficients $\lambda_\pm$, we have the relations\/\footnote{Rather conveniently, only the original variable $t$ of the problem enters the limit in \eqref{eq:cu_no_xi}.}
\begin{align}
c_{\pm}^{-1} &\simeq e^{\pm  \pi i/6} \lambda_\pm  c^{-1},\label{eq:cPM}\\*[1mm]
c & \simeq 2\sqrt{\pi}\, u^{1/6}  \lim_{t\to +\infty} f(t)^{1/4} \big(\lambda_\pm w_\pm(t) w(t) \big)^{1/2},\label{eq:cu_no_xi}
\end{align}
where $\simeq$ denotes equality up to an error that is smaller than any positive power of $u^{-1}$ for large $u$. The limits $\lambda_{2k-1}$, as defined in \eqref{eq:lambda}, can be obtained from expanding 
\begin{equation}\label{eq:lambdaGen}
\lim_{t\to+\infty} e^{-u\xi} \sqrt{\frac{\lambda_\pm w_\pm(t)}{w(t)}} \sim \exp\left(\sum_{k=1}^\infty \frac{\lambda_{2k-1}}{u^{2k-1}}\right) \quad (u\to\infty).
\end{equation}
\end{theorem}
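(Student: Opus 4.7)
The plan is to derive the three formulas in the order they appear, since each builds on the previous.

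For \eqref{eq:cPM}, I first observe that the solutions $\Phi$ and $\Phi_\pm$ constructed in \eqref{eq:Phis} form an asymptotic fundamental system of the differential equation \eqref{eq:dw} on the disk $D_\rho$: the companions $\Phi_\pm$ are recessive in two distinct asymptotic directions in the $\zeta$-plane, so their Wronskian, computed to leading order via the Airy asymptotic \eqref{eq:airy1} and the Wronskian of $\Ai_\pm$, is nonzero. Applying the Airy connection formula \eqref{eq:AiryConnect} to the expansions \eqref{eq:Phis} yields the connection formula \eqref{eq:PhiConnect} after fixing the implicit representative of $c \simeq 1$ in \eqref{eq:Phi}. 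Since the compound asymptotic series \eqref{eq:wExpand} for $w, w_\pm$ coincide, up to the scalar factors $c, c_\pm$, with those of $\Phi, \Phi_\pm$, the matchings $w \simeq c \Phi$ and $w_\pm \simeq c_\pm \Phi_\pm$ hold. Substituting into \eqref{eq:wConnect} and comparing with \eqref{eq:PhiConnect} in the fundamental system $\{\Phi_+, \Phi_-\}$ then gives $\lambda_\pm c_\pm \simeq c\, e^{\mp \pi i/6}$, which rearranges to \eqref{eq:cPM}.

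For \eqref{eq:cu_no_xi}, I compute leading-order asymptotics as $t\to+\infty$. The formula \eqref{eq:airy1} directly gives $w(t) \sim c\, e^{-u\xi}/(2\sqrt{\pi}\,f(t)^{1/4}\,u^{1/6})$. For $w_\pm(t)$ I apply the same leading Airy asymptotic to $\Ai_\pm(u^{2/3}\zeta) = \Ai(e^{\mp 2\pi i/3} u^{2/3}\zeta)$ along the positive real $\zeta$-axis, using the principal-branch identities $(e^{\mp 2\pi i/3}\zeta)^{1/4} = e^{\mp \pi i/6}\zeta^{1/4}$ and $(e^{\mp 2\pi i/3}\zeta)^{3/2} = -\zeta^{3/2}$, which yield $w_\pm(t) \sim c_\pm\, e^{\pm \pi i/6}\, e^{u\xi}/(2\sqrt{\pi}\,f(t)^{1/4}\,u^{1/6})$. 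The product of $w$ and $w_\pm$ cancels the exponentials $e^{\pm u\xi}$, and invoking \eqref{eq:cPM} to replace $\lambda_\pm c_\pm\, e^{\pm \pi i/6}$ by $c$ gives $\lambda_\pm w_\pm(t)\, w(t) \sim c^2/(4\pi\, f(t)^{1/2}\, u^{1/3})$. Taking the square root, with the branch fixed so that the right-hand side of \eqref{eq:cu_no_xi} is asymptotic to the nominally positive $c$, completes the derivation.

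For \eqref{eq:lambdaGen}, rather than re-expanding through the exponential forms \eqref{eq:dunster} and \eqref{eq:airy2}, I combine \eqref{eq:cu_no_xi} with the previously established \eqref{eq:cu}. The latter, rearranged, reads
\[
\lim_{t\to+\infty} e^{u\xi}\, f(t)^{1/4}\, w(t) \;\simeq\; \frac{c}{2\sqrt{\pi}\, u^{1/6}} \exp\left(-\sum_{k=1}^\infty \frac{\lambda_{2k-1}}{u^{2k-1}}\right),
\]
while \eqref{eq:cu_no_xi} reads $\lim_{t\to+\infty} f(t)^{1/4}\sqrt{\lambda_\pm w_\pm(t)\, w(t)} \simeq c/(2\sqrt{\pi}\, u^{1/6})$. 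Dividing the second limit by the first and observing that the ratio simplifies to $e^{-u\xi}\sqrt{\lambda_\pm w_\pm(t)/w(t)}$ delivers \eqref{eq:lambdaGen}. The main technical obstacle will be in the first step: justifying that sharing a compound asymptotic expansion up to exponentially small error terms promotes the formal identifications of $w$ with $c\Phi$ (and of $w_\pm$ with $c_\pm \Phi_\pm$) into genuine $\simeq$-identities, i.e.\ equalities up to terms smaller than any negative power of $u$. The branch tracking in the second step and the algebraic cancellation in the third are then routine by comparison.
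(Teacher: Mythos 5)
Your derivations of \eqref{eq:cPM} and \eqref{eq:lambdaGen} follow essentially the same route as the paper (matching $w\simeq c\Phi$, $w_\pm\simeq c_\pm\Phi_\pm$ against the two connection formulas \eqref{eq:AiryConnect}/\eqref{eq:PhiConnect}, then eliminating the $\lambda$-series between \eqref{eq:cu} and \eqref{eq:cu_no_xi}). The gap is in your step for \eqref{eq:cu_no_xi}: you compute the $t\to+\infty$ behaviour of $w$ and $w_\pm$ using only the \emph{leading} Airy asymptotic \eqref{eq:airy1}, and hence write $e^{u\xi}f(t)^{1/4}w(t)\to c\,u^{-1/6}/(2\sqrt{\pi})$ and its analogue for $w_\pm$. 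But the compound series $\sum A_k/u^{2k}$, $\sum B_k/u^{2k}$ contribute, in that limit, the factors $1+\sum\alpha_{k+1}u^{-2k-2}-\sum\beta_k u^{-2k-1}=\exp\bigl(-\sum\lambda_{2k-1}u^{-(2k-1)}\bigr)$ for $w$ and (after $u\mapsto ue^{\mp\pi i}$) $\exp\bigl(+\sum\lambda_{2k-1}u^{-(2k-1)}\bigr)$ for $w_\pm$ — see \eqref{eq:cuOlver} and \eqref{eq:limABexp}. These factors are $1+O(u^{-1})$, so each of your two intermediate asymptotics is already wrong at first subleading order, while the theorem asserts \eqref{eq:cu_no_xi} with $\simeq$, i.e.\ up to errors beyond \emph{all} orders in $u^{-1}$.

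The formula is nevertheless exact, but only because those two correction factors are exact reciprocals and cancel in the product $\lambda_\pm w_\pm w$ — this is precisely the ``geometric mean'' step in the paper's proof, and it is the entire point of \eqref{eq:cu_no_xi} (the footnote's ``rather conveniently''). You must make this cancellation explicit rather than silently dropping both factors. The omission also propagates: if \eqref{eq:cu_no_xi} were only known to leading order, your division of it by \eqref{eq:cu} in the final step would determine $\lambda_1$ but none of the higher $\lambda_{2k-1}$, so \eqref{eq:lambdaGen} would lose its content. Your closing worry about promoting the formal identifications $w\simeq c\Phi$, $w_\pm\simeq c_\pm\Phi_\pm$ to $\simeq$-identities is, by contrast, not where the difficulty lies — that is exactly how the paper proceeds, reading the coefficients off against the fundamental system $\{\Phi_+,\Phi_-\}$.
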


\begin{proof}
Comparing the expansions \eqref{eq:Phis} with \eqref{eq:wExpand} gives 
\[
w(t)\simeq c\,\Phi(t), \quad w_\pm(t)\simeq c_\pm \Phi_\pm(t) \quad (t>t_0),
\]
so that by  the connection formulas \eqref{eq:PhiConnect} and \eqref{eq:wConnect}
\[
i e^{-\pi i/6}\Phi_+(t) - i e^{\pi i/6}  \Phi_-(t) \simeq i c_+  \lambda_+ c^{-1} \Phi_+(t) - i c_-  \lambda_- c^{-1} \Phi_-(t) \quad (t>t_0).
\]
Since the pair of solutions $\Phi_\pm$ forms a fundamental system of \eqref{eq:dw}, we read off \eqref{eq:cPM}.

Similar to the derivation of \eqref{eq:cu},
\[
c \sim 2\sqrt{\pi} u^{1/6}  \exp\left(\sum_{k=1}^\infty \frac{\lambda_{2k-1}}{u^{2k-1}}\right) \lim_{t\to +\infty} e^{u\xi} f(t)^{1/4} w(t) \quad (u\to\infty),
\]
we get, recalling that the expansions of $\Phi_{\pm}$ are obtained by replacing $u$ with $u e^{\mp \pi i}$,
\[
c_{\pm} \sim 2\sqrt{\pi} u^{1/6} e^{\mp \pi i/6}  \exp\left(-\sum_{k=1}^\infty \frac{\lambda_{2k-1}}{u^{2k-1}}\right) \lim_{t\to +\infty} e^{-u\xi} f(t)^{1/4} w_\pm(t) \quad (u\to\infty),
\]
cf. also \cite[Eqs.~(5.15/19)]{MR3874026}. Hence, by using \eqref{eq:cPM},
\[
c \sim 2\sqrt{\pi} u^{1/6}  \exp\left(-\sum_{k=1}^\infty \frac{\lambda_{2k-1}}{u^{2k-1}}\right) \lim_{t\to +\infty} e^{-u\xi} f(t)^{1/4} \lambda_\pm w_\pm(t) \quad (u\to\infty).
\]
In taking the geometric mean of this expansion with the previous one \eqref{eq:cu}, the exponential terms cancel and we obtain the asserted formula \eqref{eq:cu_no_xi}. Finally, we can solve for the exponential of the $\lambda$-series in \eqref{eq:cu} and get \eqref{eq:lambdaGen}.
\end{proof}

\subsection{Application to Weber's parabolic cylinder function \for{toc}{$U(a,z)$}\except{toc}{\boldmath$U(a,z)$\unboldmath}}\label{app:weber}

Because of \eqref{eq:Weber}, we consider Weber's parabolic cylinder function $U(a,z)$ (see \cite[§12.2]{MR2723248} for its definition), which is an entire function of the parameter $a$ and of the argument $z$. By its differential equation \cite[Eq.~(12.2.2)]{MR2723248}, we get, once more, that
\begin{subequations}\label{eq:weberGen}
\begin{equation}
w(t) := U(-u/2, (2u)^{1/2} t)\quad (u>0)
\end{equation}
is a particular recessive (as $t\to+\infty$) solution of the differential equation \eqref{eq:dw}, with $f,g$ as displayed in \eqref{eq:fgHer}. Hence, the two companion solutions, with $u$ being replaced by $ue^{\mp\pi i}$, are 
\[
w_\pm(t) := U(u/2, \mp i \sqrt{2u} t) \quad (u>0).
\]
In studying complex values of $u$, and not just $u>0$, Olver \cite[Eq.~(8.11)]{MR0109898} proved, in particular, that the assumptions of Theorem~\ref{thm:connect} are met. The coefficients in the connection formula \eqref{eq:wConnect} are known to be (see \cite[Eq.~(3.26)]{MR0240343} or \cite[Eq.~(12.2.18)]{MR2723248})
\[
\lambda_\pm = \frac{\Gamma((u+1)/2)}{\sqrt{2\pi}} e^{\mp \pi i (u+1)/4}
\]
and the leading order asymptotic for $t\to+\infty$ is given by (see \cite[Eq.~(7.5a/b)]{MR0240343}, \cite[Eq.~(12.9.1)]{MR2723248})
\begin{align*}
w(t) &\sim e^{-u t^2/2} (2ut^2)^{(u-1)/4},\\*[1mm]
w_\pm(t) &\sim e^{u t^2/2} (2ut^2)^{-(u+1)/4} e^{\mp \pi i(u+1)/4}.
\end{align*}
Hence, using $f(t)\sim t^2$ as $t\to +\infty$, the formula \eqref{eq:cu_no_xi} for the constant $c$ in  \eqref{eq:wExpand} evaluates straightforwardly
to\footnote{For the reasons discussed in Footnote~\ref{fn:olverPre}, this is much simpler than the pre-factor in \cite[Eq.~(8.11)]{MR0109898}.} 
\begin{equation}
c \simeq 2^{1/2} \pi^{1/4} u^{-1/12} \sqrt{\Gamma((u+1)/2)}.
\end{equation}
\end{subequations}
For the particular case of the wave functions $\phi_n(ut)$ associated with the Hermite polynomials, where  $u=2n+1$ and thus
\[
\Gamma((u+1)/2) = n!,
\]
a rescaling by $1/\pi^{1/4} \sqrt{n!}$ as in \eqref{eq:Weber} gives the value of the associated constant shown in~\eqref{eq:cHer}.

In the same fashion, we evaluate formula \eqref{eq:lambdaGen} to compute the values of the limits $\lambda_{2k-1}$. 
From \eqref{eq:xiHer}, writing $\xi = \frac{2}{3}\zeta^{3/2}$, we infer (cf.~\cite[Eq.~(3.5)]{MR0109898})
\[
\xi = \frac{t^2}{2} - \frac{1}{2}\log2t - \frac{1}{4} +  O(t^{-2}) \quad (t\to+\infty),
\]
so that to leading order
\[
e^{-u\xi} \sim e^{-ut^2/2} (4et^2)^{u/4} \quad (t\to+\infty).
\]
On using the exponential form \eqref{eq:stirling} of the Stirling-type expansion, we get
\begin{equation}\label{eq:lambdaOddWeber}
\begin{aligned}
\exp\left(\sum_{k=1}^\infty \frac{\lambda_{2k-1}}{u^{2k-1}}\right)  &\sim \lim_{t\to+\infty} e^{-u\xi} \sqrt{\frac{\lambda_\pm w_\pm(t)}{w(t)}} = \sqrt{\frac{\Gamma(u/2+1/2)}{\sqrt{2\pi}(u/2e)^{u/2}}}\\*[2mm]
&\sim \exp\left(\sum_{k=1}^\infty \frac{2^{2k-1}B_{2k}(1/2)}{4k(2k-1)u^{2k-1}}\right),
\end{aligned}
\end{equation}
from which we read off the values of $\lambda_{2k-1}$ which are displayed in \eqref{eq:lambdaOddHer}.

\subsection{Application to  Whittaker's confluent hypergeometric function \for{toc}{$W_{\kappa,\mu}(z)$}\except{toc}{\boldmath $W_{\kappa,\mu}(z)$\unboldmath}}\label{app:whittaker} 
The function~\eqref{eq:gfLag} associated with the Laguerre polynomials, and studied in Section~\ref{subsect:LagGen}, can be expressed in the form \cite[Eq.~(13.18.17)]{MR2723248} 
\begin{equation}\label{eq:Whittaker}
\begin{gathered}
t^{1/2} \phi_n^{(\alpha)}(ut) = \frac{u^{-1/2}W_{\kappa,\mu}(ut)}{\sqrt{\Gamma(\kappa-\mu+1/2)\Gamma(\kappa+\mu +1/2)}},\\*[1mm]
u = n+1/2,\quad \alpha = (a^2-1)u, \quad \kappa = u(a^2+1)/2, \quad \mu = u(a^2-1)/2,
\end{gathered}
\end{equation}
in terms of Whittaker's confluent hypergeometric function $W_{\kappa,\mu}(z)$. Being defined in \cite[§13.14]{MR2723248}, $W_{\kappa,\mu}(z)$ is a many-valued analytic function of $z$ with a branch point at $z=0$ and its principal branch has a cut in the $z$-plane along the interval $(-\infty,0]$. Except when $z=0$, each branch is an entire function of the parameters $\kappa$ and $\mu$,
respecting the symmetry (implied by Kummer's transformation) \cite[Eq.~(13.14.31)]{MR2723248} 
\[
W_{\kappa,\mu}(z) = W_{\kappa,-\mu}(z).
\]
As discussed in Section~\ref{subsect:LagGen}, or by reference to Whittaker's differential equation \cite[Eq.~(13.14.1)]{MR2723248},
we have that
\begin{subequations}\label{eq:whittakerGen}
\begin{equation}
w(t) := W_{\kappa,\mu}(ut),\quad \kappa = u(a^2+1)/2, \quad \mu = u(a^2-1)/2, \quad u>0,
\end{equation}
satisfies the differential equation \eqref{eq:dw} with $f,g$ as in \eqref{eq:fgLag}. Hence,  
the two companion solutions with $u$ being replaced by $ue^{\mp\pi i}$ are 
\[
w_\pm(t) := W_{-\kappa,-\mu}(ut e^{\mp\pi i}).
\]
Since we are  in the parameter regime\footnote{The regime $1-\delta \leq |\mu|/\kappa \leq 1+\delta$ is studied in \cite{MR0592550}, $|\mu|/\kappa \geq 1+\delta$ in \cite{MR4312523}. The cases with $\kappa < 0$ are covered by analytic continuation and connection formulas.}
\[
0 \leq \frac{|\mu|}{\kappa} = \frac{|a^2-1|}{a^2+1} \leq 1 - \delta,
\]
for some $\delta>0$ when $0 < a_1 \leq a \leq a_1<\infty$ as in Theorem~\ref{thm:genLag}, the results of Dunster \cite[Eqs.~(6.17--19)]{MR0990876} apply and show, in particular, that the assumptions of Theorem~\ref{thm:connect} are met.
The coefficients in the connection formula \eqref{eq:wConnect} are (see \cite[Eq.~(2.21b)]{MR0240343}, \cite[Eq.~(2.9)]{MR0990876}, or \cite[Eq.~(2.5.19)]{MR0107026})
\[
\lambda_\pm = \frac{\Gamma(\kappa-\mu+1/2)\Gamma(\kappa+\mu+1/2)}{2\pi} e^{\mp \kappa \pi i}
\]
and the leading order asymptotic for $t\to+\infty$ is given by (see \cite[Eq.~(7.1b)]{MR0240343}, \cite[Eq.~(13.19.3)]{MR2723248})
\begin{align*}
w(t) &\sim e^{-u t/2} (ut)^{\kappa},\\*[1mm]
w_\pm(t) &\sim e^{u t/2} (ut)^{-\kappa} e^{\pm \kappa \pi i}.
\end{align*}
Hence, using $f(t)\sim 2^{-1/2}$ as $t\to +\infty$, the formula \eqref{eq:cu_no_xi} for the constant $c$ in the Airy expansion \eqref{eq:wExpand} evaluates straightforwardly
to
\begin{equation}
c \simeq  u^{1/6} \sqrt{\Gamma(\kappa-\mu+1/2)\Gamma(\kappa+\mu+1/2)}.
\end{equation}
\end{subequations}
A rescaling as in \eqref{eq:Whittaker} gives the value of the constant displayed in \eqref{eq:cuLag}.

In the same fashion, we evaluate formula \eqref{eq:lambdaGen} to compute the values of the limits $\lambda_{2k-1}$. From \eqref{eq:xiLag}, writing $\xi = \frac{2}{3}\zeta^{3/2}$, we infer (cf.~\cite[Eq.~(5.9)]{MR3874026})
\[
\xi(t) = \frac{t}{2} - \frac{1}{2}(a^2+1)(\log(t) +1) + a^2 \log(a) + O(t^{-1}) \quad (t\to+\infty),
\]
so that to leading order
\[
e^{-u\xi} \sim e^{-ut/2} (et)^{u/2} (e t/a^2) ^{a^2u/2} \quad (t\to+\infty).
\]
On using $\kappa-\mu = u$ and $\kappa + \mu = a^2 u$, and the exponential form \eqref{eq:stirling} of the Stirling-type expansion of $\Gamma(z+1/2)$ for large $z$,
we get 
\begin{equation}\label{eq:lambdaOddWhittaker}
\begin{aligned}
\exp\left(\sum_{k=1}^\infty \frac{\lambda_{2k-1}}{u^{2k-1}}\right)  &\sim \lim_{t\to+\infty} e^{-u\xi} \sqrt{\frac{\lambda_\pm w_\pm(t)}{w(t)}} = \sqrt{\frac{\Gamma(u+1/2)}{\sqrt{2\pi}(u/e)^u}\frac{\Gamma(a^2u+1/2)}{\sqrt{2\pi}(a^2u/e)^{a^2u}}}\\*[2mm]
&\sim \exp\left(\sum_{k=1}^\infty \frac{B_{2k}(1/2)(1+1/a^{4k-2})}{4k(2k-1)u^{2k-1}}\right) \quad (u\to\infty),
\end{aligned}
\end{equation}
from which we read off the values of $\lambda_{2k-1}$ which are displayed in \eqref{eq:lambdaOdd}.

\let\oldaddcontentsline\addcontentsline% Store \addcontentsline
\renewcommand{\addcontentsline}[3]{}%

\subsection*{Acknowledgements} 
We thank Peter Forrester for suggesting the general $\beta$-form of Theorem~\ref{thm:Laguerre2Gauss} and for highlighting its $\beta=1$ case in the book of Muirhead \cite{MR652932}.

\let\addcontentsline\oldaddcontentsline
\addtocontents{toc}{\vspace{1\baselineskip}}
\bibliographystyle{spmpsci}
\bibliography{paper}

\end{document}